\newtheorem{lemma}{Lemma}[section]
\newtheorem{proposition}[lemma]{Proposition}
\newtheorem{theorem}[lemma]{Theorem}
\newtheorem{corollary}[lemma]{Corollary}
\theoremstyle{remark}
\newtheorem{remark}[lemma]{Remark}
\newtheorem{conjecture}[lemma]{Conjecture}
\theoremstyle{definition}
\newtheorem{definition}[lemma]{Definition}
\numberwithin{equation}{section}
\renewcommand{\Re}{\mathrm{Re}\,}
\renewcommand{\Im}{\mathrm{Im}\,}
\newcommand{\mc}{\mathcal}
\newcommand{\R}{\mathbb{R}}
\newcommand{\C}{\mathbb{C}}
\newcommand{\N}{\mathbb{N}}
\newcommand{\X}{{X^{p,\alpha}_\delta}}
\newcommand{\Y}{{Y^{p,\alpha}}}
\title{Nonscattering solutions and blowup at infinity for the critical wave equation}
\author{Roland Donninger}
\address{\'Ecole Polytechnique F\'ed\'erale de Lausanne, 
Department of Mathematics, Station 8, CH-1015 Lausanne, Switzerland}
\email{roland.donninger@epfl.ch}
\thanks{The authors would like to thank T.~Duyckaerts for suggesting this problem.}
\author{Joachim Krieger}
\address{\'Ecole Polytechnique F\'ed\'erale de Lausanne, 
Department of Mathematics, Station 8, CH-1015 Lausanne, Switzerland}
\email{joachim.krieger@epfl.ch}
\begin{document}

\begin{abstract}
We consider the critical focusing wave equation $(-\partial_t^2+\Delta)u+u^5=0$ in $\R^{1+3}$ and
prove the existence of energy class solutions which are of the form
\[ u(t,x)=t^\frac{\mu}{2}W(t^\mu x)+\eta(t,x) \]
in the forward lightcone $\{(t,x)\in\R\times \R^3: |x|\leq t, t\gg 1\}$ 
where $W(x)=(1+\frac13 |x|^2)^{-\frac12}$ is the ground state soliton, 
$\mu$ is an arbitrary prescribed real number (positive or negative) with $|\mu|\ll 1$, and the
error $\eta$ satisfies
\[ \|\partial_t \eta(t,\cdot)\|_{L^2(B_t)}
+\|\nabla \eta(t,\cdot)\|_{L^2(B_t)}\ll 1,\quad B_t:=\{x\in\R^3: |x|<t\} \] for
all $t\gg 1$.
Furthermore, the kinetic energy of $u$ outside the cone is small.
Consequently, depending on the sign of $\mu$, 
we obtain two new types of solutions which either concentrate as $t\to\infty$ (with a continuum of rates)
or stay bounded but do not scatter.
In particular, these solutions contradict a strong version of the soliton resolution conjecture.
\end{abstract}

\maketitle

\section{Introduction}
\label{sec:intro}

In this paper we study the critical focusing wave equation
\begin{equation}
\label{eq:mainintro}
(-\partial_t^2 +\Delta)u(t,x)+u(t,x)^5=0 
\end{equation}
for $u: I\times \R^3\to\R$, $I\subset \R$ an interval.
It is well-known that the Cauchy problem for Eq.~\eqref{eq:mainintro} 
is well-posed for data in the energy space $\dot{H}^1\times L^2(\R^3)$, see 
e.g.~\cite{LS95}, \cite{Sog08}.
Furthermore, Eq.~\eqref{eq:mainintro} admits a static solution $W$, the \emph{ground state
soliton} given by $W(x)=(1+\frac13 |x|^2)^{-\frac12}$, which indicates the presence of interesting
dynamics.
Our main result is the following.

\begin{theorem}
\label{thm:main}
There exists an $\varepsilon_0>0$ such that for any $\delta>0$ and $\mu \in \R$ with $|\mu|\leq \varepsilon_0$ there exists
a $t_0\geq 1$ and an energy class solution $u: [t_0,\infty)\times \R^3\to \R$ of Eq.~\eqref{eq:mainintro}
of the form
\[ u(t,x)=t^{\frac{\mu}{2}}W(t^\mu x)+\eta(t,x),\quad |x|\leq t,\: t\geq t_0 \]
and
\begin{align*} 
\|\partial_t u(t,\cdot)\|_{L^2(\R^3\backslash B_t)}+\|\nabla u(t,\cdot)\|_{L^2(\R^3\backslash B_t)}&\leq \delta,
\\
\|\partial_t \eta(t,\cdot)\|_{L^2(B_t)}+\|\nabla \eta(t,\cdot)\|_{L^2(B_t)}&\leq \delta 
\end{align*}
for all $t\geq t_0$ where $B_t:=\{x\in\R^3: |x|<t\}$.
\end{theorem}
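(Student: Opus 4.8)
The plan is to reduce everything, via finite speed of propagation, to a construction in the interior of the forward light cone and to dispose of the exterior by a soft argument. Near and outside the cone the soliton profile $t^{\mu/2}W(t^\mu x)$ and its gradient carry $L^2$-mass $\int_{|y|>t^{1+\mu}}\bigl(|\nabla W|^2+\cdots\bigr)\,dy\to0$ as $t\to\infty$ (because $W\in\dot H^1$), so once a solution of the stated form, with $\eta$ small up to the boundary of the cone, has been built, it can be completed to an energy-class solution on $[t_0,\infty)\times\R^3$ by prescribing small compatible Cauchy data outside $B_{t_0}$ and solving the resulting small-data problem in the exterior, where the soliton is negligible; this gives the exterior bound. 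Hence the task becomes solving \eqref{eq:mainintro} in $\{|x|<t,\ t\ge t_0\}$ with the prescribed asymptotics, and $t_0$ large, $\mu$ small are at our disposal.

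In the cone I pass to the renormalized unknown $u(t,x)=t^{\mu/2}\tilde u(s,y)$ with $s=\log t$, $y=t^\mu x$, under which \eqref{eq:mainintro} becomes the degenerate equation
\[
\Delta_y\tilde u+\tilde u^5=e^{-2(1+\mu)s}(\partial_s+\mu\Lambda-1)(\partial_s+\mu\Lambda)\tilde u,\qquad \Lambda:=\tfrac12+y\cdot\nabla_y,
\]
on a spatial region $|y|\lesssim t^{1+\mu}$ that exhausts $\R^3$ as $s\to\infty$, and for which $\tilde u=W$ is a solution modulo a residual of size $e^{-2(1+\mu)s}(\mu^2\Lambda^2-\mu\Lambda)W$. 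The first step is to upgrade $W$ to an approximate solution $\tilde u_{\mathrm{app}}=W+\sum_{k=1}^{K}e^{-2k(1+\mu)s}V_k(y)$ by successively inverting the linearized operator $\mathcal{L}:=\Delta_y+5W^4$; the profiles $V_k$ grow polynomially in $|y|$ (inverting $\mathcal{L}$ against slowly decaying right-hand sides produces such growth), which is compensated by the decaying prefactors in the region $|x|\le\theta t$, $\theta<1$, whereas near the light cone one uses instead a self-similar/free-wave approximation and matches the two across an intermediate hypersurface. After finitely many steps the residual can be made $O(t^{-N})$ for any prescribed $N$ in suitable $\langle y\rangle$-weighted finite-energy norms.

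The heart of the proof is to solve for the remainder $\eta=u-u_{\mathrm{app}}$, which in renormalized variables satisfies a degenerate linear wave equation $e^{-2(1+\mu)s}(\partial_s+\mu\Lambda-1)(\partial_s+\mu\Lambda)\eta-\mathcal{L}\eta=F[\eta]$ with $F$ small (quadratic in $\eta$ plus the tiny residual). Since $-\mathcal{L}$ has exactly one negative eigenvalue, with exponentially localized eigenfunction $\phi$, the $\phi$-component $a(s)$ of $\eta$ obeys, to leading order, $a''(s)=c\,e^{2(1+\mu)s}a(s)+(\text{source})$ with $c>0$; its bounded solutions form a codimension-one set, and the one decaying as $s\to\infty$ is selected by the Green's function of this ODE that vanishes at $+\infty$ — the mechanism that both picks out the stable manifold and forces the scheme to be run backward from $s=\infty$ (equivalently, $\eta$ is obtained as a limit of finite-time solutions with vanishing data at $s=T$, together with uniform bounds). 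The complementary part $\eta^\perp$ of $\eta$ is controlled by energy identities for the degenerate operator: testing with $\partial_s\eta^\perp$ yields the energy $\tfrac12 e^{-2(1+\mu)s}\|\partial_s\eta^\perp\|^2+\tfrac12\langle-\mathcal{L}\eta^\perp,\eta^\perp\rangle$ (nonnegative since $-\mathcal{L}\ge0$ there) and a favorably signed damping term $(1+\mu)e^{-2(1+\mu)s}\|\partial_s\eta^\perp\|^2$ coming from differentiating the prefactor, but because $-\mathcal{L}$ is not coercive at the bottom of its essential spectrum one must also run weighted multipliers ($\langle y\rangle$-weights, of Hardy type) and time-exponential weights to control $\eta^\perp$ itself and to absorb the $\mu\Lambda$ terms; the near-threshold analysis of $\mathcal{L}$, in particular of the resonance $\Lambda W$, enters here. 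Combining the ODE bound for the unstable mode with the degenerate energy estimate closes a contraction on a small ball once $\varepsilon_0$ is small, $t_0$ large and $\delta$ as prescribed; reverting the change of variables and translating the weighted norms into the physical energy norm on $B_t$ then yields the bound on $\eta$. I expect the degenerate energy estimate — an $s$-uniform a priori bound for a wave equation whose hyperbolic character disappears as $s\to\infty$, against the non-coercive potential $5W^4$ and the polynomially growing profiles $V_k$ — to be the principal difficulty.
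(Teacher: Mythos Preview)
Your interior outline (renormalized variables, an approximate solution built by successively inverting the linearized operator, a separate ODE for the unstable mode, and an energy argument for the remainder) is a plausible alternative to the paper's distorted Fourier transform/transport equation machinery. Two cautions, though. First, your claim that the residual can be driven to $O(t^{-N})$ for arbitrary $N$ by iterating is almost certainly too optimistic here: in contrast to the finite-time blowup construction of \cite{KST09}, the elliptic corrections in this infinite-time setting stop improving after two steps (the paper says so explicitly in the roadmap), essentially because the self-similar correction you allude to near the cone introduces a mild singularity on $\{|x|=t\}$ that obstructs further iteration. You would have to close your energy argument with only $\sim t^{-4}$ decay of the residual, and your sketch of the degenerate energy estimate (in particular the treatment of the resonance $\Lambda W$ via ``Hardy-type weights'') is not detailed enough to see whether that suffices.

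The genuine gap, however, is your exterior step. You propose to take the cone solution, extend the data at $t_0$ by something small on $\{|x|>t_0\}$, and solve forward. But finite speed of propagation gives you nothing here: the forward domain of determination of $B_{t_0}$ is the \emph{shrinking} cone $\{|x|<2t_0-t\}$, not the expanding one $\{|x|<t\}$. Equivalently, in the energy identity for the difference $w$ on the region $\{|x|<t,\ t_0\le t\le T\}$ the null boundary $\{|x|=t\}$ is traversed in the direction that makes the flux term \emph{nonnegative}, so $E(t)$ can grow and you cannot conclude $w\equiv0$. Thus the forward evolution of an arbitrary small extension will \emph{not} coincide with your cone solution for large $t$, and there is no reason it should have the form $t^{\mu/2}W(t^\mu x)+\eta$ with small $\eta$ there. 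A separate (and nontrivial) point is that such data have energy close to $E(W,0)$, so global forward existence is not a small-data matter either.

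The paper resolves this by going the other way: it takes data at a sequence of late times $T_n\to\infty$, where the truncated cone $\{|x|<t-c\}$ \emph{is} the backward domain of determination of $B_{T_n-c}$, solves \emph{backward} to $t_0$ (global backward existence being established by an energy-trapping argument using that the kinetic energy outside the truncated cone stays small), and then extracts a limiting solution at $t_0$ via the Bahouri--G\'erard profile decomposition. This concentration-compactness step is not a cosmetic extension but the mechanism that produces data whose forward evolution genuinely equals the cone solution for all $t\ge t_0$; your outline is missing an analogue of it.
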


The Cauchy problem for Eq.~\eqref{eq:mainintro} has attracted a lot of interest in the recent past
and we briefly review the most important contributions.
Eq.~\eqref{eq:mainintro} is invariant under the scaling transformation
\begin{equation}
\label{eq:scaling}
u(t,x)\mapsto u^\lambda(t,x):=\lambda^\frac12 u(\lambda t, \lambda x),\quad \lambda>0 
\end{equation}
and its conserved energy 
\[ E(u(t,\cdot), u_t(t,\cdot))=\tfrac12 \|(u(t,\cdot),u_t(t,\cdot))\|_{\dot{H}^1\times L^2(\R^3)}^2
-\tfrac16 \|u(t,\cdot)\|_{L^6(\R^3)}^6 \]
satisfies $E(u^\lambda(t/\lambda,\cdot), u_t^\lambda(t/\lambda, \cdot))=E(u(t,\cdot),u_t(t,\cdot))$
which is why Eq.~\eqref{eq:mainintro} is called \emph{energy critical}.
Historically, the investigation of the global Cauchy problem for 
energy critical wave equations started with the \emph{defocusing case},
\[ (-\partial_t^2+\Delta)u(t,x)-u(t,x)^5=0, \]
where the sign of the nonlinearity is reversed compared to Eq.~\eqref{eq:mainintro}.
After the pioneering works \cite{Rau81} and \cite{Pec84}, the development culminated in a proof of global existence
and scattering for arbitrary data \cite{Str88}, \cite{Gri90}, \cite{GSV92}, \cite{SS93}, \cite{SS94},
\cite{Kap94}, \cite{BS98}, see also \cite{Tao06}.
However, the dynamics in the focusing case are much more complicated.
For instance, it is well-known that there exist solutions with compactly supported smooth initial data which 
blow up in finite time. 
This is most easily seen by observing that
\[ u(t,x)=(\tfrac34)^\frac14 (1-t)^{-\frac12} \]
is an explicit solution which, by finite speed of propagation, can be used to construct a blowup
solution of the aforementioned type.
This kind of breakdown is referred to as \emph{ODE blowup} and it is conjectured to comprise the 
``generic'' blowup
scenario \cite{BCT04}.
We remark in passing that (parts of) this conjecture have been proved for the \emph{subcritical case}
\[ (-\partial_t^2+\Delta ) u+u|u|^{p-1}=0,\quad p\in (1,3], \]
see \cite{MZ03}, \cite{MZ05}, \cite{DS12},
but for $p>3$ the problem is largely open (see, however, \cite{DonSch12a}).
Another (less explicit but classical) argument to obtain finite time blowup for focusing wave equations 
is due to Levine \cite{Lev74}.
Recently, Krieger, Schlag and Tataru \cite{KST09} constructed in the energy critical case $p = 5$ more ``exotic'' blowup solutions 
of the form \footnote{The existence of ground state solitons, i.e., positive static solutions with finite
energy such as $W$ requires $p=5$ in spatial dimension 3, cf.~\cite{GS81}, \cite{JPY93}.}
\[ u(t,x)=(1-t)^{-\frac12(1+\nu)} W((1-t)^{-1-\nu}x)+\eta(t,x),\quad |x|\leq 1-t \]
where $\nu>\frac12$ can be prescribed arbitrarily and 
$\eta$ is small in a suitable sense.
As a matter of fact, the proof of Theorem \ref{thm:main} makes extensive use of the techniques
developed in \cite{KST09}, see also \cite{KST08} and \cite{KST09a}
for analogous results in the case of critical wave maps and Yang-Mills equations.
In this respect we also mention another construction of blowup solutions for the critical wave equation by 
Hillairet and Rapha\"el \cite{HR10}, albeit
for the higher dimensional case $\R^{1+4}$.
Furthermore, Duyckaerts, Kenig and Merle \cite{DKM11}, \cite{DKM12} 
showed that any type II blowup solution \footnote{
A type II blowup solution stays bounded in the energy space. The solutions constructed in 
\cite{KST09} are of this type.}
which satisfies a suitable smallness condition decomposes
into a rescaled ground state soliton plus a small remainder.

Apart from the construction of blowup solutions, it is of interest to obtain conditions on the initial
data under which the solution exists globally.
As a consequence of Strichartz estimates it is relatively easy to establish global existence
and scattering for data with small energy, see \cite{Pec84}, \cite{Sog08}.
However, for energies close to the ground state the situation becomes much more involved.
Krieger and Schlag \cite{KS07} proved the existence of a small codimension one manifold in the 
space of initial data, containing $(W,0)$, which leads to solutions of the form
\[ u(t,x)=\lambda(t)^\frac12 W(\lambda(t)x)+\eta(t,x) \]
where $\lambda(t)\to a>0$ as $t\to \infty$ and $\eta$ scatters like a free wave.
In other words, the solutions arising from data on this manifold exist globally
and scatter to a rescaling of the ground state soliton, see also \cite{Spa01} for numerical
work in this direction.
A different line of investigation was pursued by Kenig and Merle \cite{KM08} who 
established the following celebrated dichotomy.

\begin{theorem}[Kenig-Merle \cite{KM08}]
\label{thm:KM}
Let $u$ be an energy class solution to Eq.~\eqref{eq:mainintro} with 
\[ E(u(0,\cdot),u_t(0,\cdot))<E(W,0). \]
\begin{itemize}
\item
If $\|u(0,\cdot)\|_{\dot{H}^1(\R^3)}<\|W\|_{\dot{H}^1(\R^3)}$ then the solution $u(t,x)$ exists
for all $t\in\R$ and scatters like a free wave as $t\to \pm \infty$.
\item If
$\|u(0,\cdot)\|_{\dot{H}^1(\R^3)}>\|W\|_{\dot{H}^1(\R^3)}$ then the solution $u(t,x)$ blows up
in finite time in both temporal directions.
\end{itemize}
\end{theorem}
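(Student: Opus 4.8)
\textbf{Proof proposal for Theorem \ref{thm:KM} (Kenig--Merle dichotomy).}

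The plan is to follow the concentration--compactness/rigidity scheme of Kenig and Merle. First I would record the variational structure of the sharp Sobolev inequality: since $W$ is (up to symmetries) the unique extremizer of $\|f\|_{L^6(\R^3)}\leq C_3\|\nabla f\|_{L^2(\R^3)}$, one has the dichotomy at the level of the energy functional that, whenever $E(u_0,u_1)<E(W,0)$, the quantity $\|u(t,\cdot)\|_{\dot H^1}-\|W\|_{\dot H^1}$ cannot change sign along the flow (its sign is a trapped invariant). On the subcritical side $\|u_0\|_{\dot H^1}<\|W\|_{\dot H^1}$ one then shows, using the conservation of energy together with the sharp Sobolev inequality, that the energy is coercive: $E(u(t,\cdot),u_t(t,\cdot))\gtrsim \|(u(t,\cdot),u_t(t,\cdot))\|_{\dot H^1\times L^2}^2$ uniformly in $t$, so the solution stays bounded in the energy space and, in particular, cannot blow up in finite time. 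On the supercritical side $\|u_0\|_{\dot H^1}>\|W\|_{\dot H^1}$ one uses a virial/concavity computation on $y(t)=\int |u(t,x)|^2\,dx$ (localized to a lightcone to handle the infinite-energy difficulties, à la Levine \cite{Lev74}) to force $y''<0$ with a quantitative rate, which is incompatible with global existence; hence finite-time blowup in both time directions. The global existence + boundedness claim on the subcritical side is the easy half once coercivity is in hand; the substantive work is to upgrade ``global existence with bounded energy norm'' to ``scattering''.

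For the scattering statement I would argue by contradiction using concentration--compactness. Suppose scattering fails for some datum below the threshold with $\|u_0\|_{\dot H^1}<\|W\|_{\dot H^1}$; define
\[
E_c:=\sup\bigl\{E_0\geq 0:\ \text{every solution with }E(u_0,u_1)<E_0,\ \|u_0\|_{\dot H^1}<\|W\|_{\dot H^1}\text{ scatters both ways}\bigr\}.
\]
Small-data scattering (via Strichartz estimates, as in \cite{Pec84}, \cite{Sog08}) gives $E_c>0$, and the failure hypothesis gives $E_c<E(W,0)$. The key extraction step is a profile decomposition for sequences of solutions whose energies approach $E_c$: by a Bahouri--Gérard type decomposition for the wave equation combined with a perturbation lemma (stability of the Cauchy problem in the Strichartz norm), a nonscattering solution at the critical level $E_c$ must be compact modulo the symmetries of the equation --- i.e., there exist $\lambda(t)>0$ and $x(t)\in\R^3$ such that
\[
K:=\Bigl\{\bigl(\lambda(t)^{-1/2}u(t,\lambda(t)^{-1}\cdot+x(t)),\ \lambda(t)^{-3/2}u_t(t,\lambda(t)^{-1}\cdot+x(t))\bigr):\ t\in I_{\max}\Bigr\}
\]
is precompact in $\dot H^1\times L^2(\R^3)$. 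One also shows $I_{\max}=\R$ for such a ``critical element'' (finite-time blowup is excluded below threshold by the coercivity above) and normalizes the translation parameter so that $x(t)\equiv 0$ after using finite speed of propagation to control $\lambda(t)$.

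The rigidity step then kills the critical element. Here I would run the localized virial identity: for a smooth cutoff $\phi_R$ one tests the equation against $\phi_R\bigl(\tfrac{x}{R}\bigr)\,x\cdot\nabla u$ (plus the appropriate lower-order term), obtaining a monotone quantity whose flux/error terms are controlled precisely because of the compactness of $K$ --- the orbit does not leak mass to spatial infinity --- and because the solution sits strictly below $E(W,0)$. Integrating in time forces the critical element to have vanishing kinetic energy and $\nabla u$ proportional (after rescaling) to $\nabla W$, so $(u,u_t)$ is a rescaled $(\pm W,0)$; but that solution has energy exactly $E(W,0)>E_c$, a contradiction, so no critical element exists and every sub-threshold solution with $\|u_0\|_{\dot H^1}<\|W\|_{\dot H^1}$ scatters. \textbf{The main obstacle} is the rigidity argument: making the localized virial estimate close requires a careful choice of the truncation radius $R=R(t)$ tied to the modulus of compactness of $K$ and to the profile $\lambda(t)$, and one must rule out the self-similar regime $\lambda(t)\to\infty$ (or $\to 0$) by a separate argument showing $\lambda(t)$ cannot escape every compact subinterval --- this is exactly the delicate part of \cite{KM08} and is where the strict inequality $E<E(W,0)$ is used in an essential way.
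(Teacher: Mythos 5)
The paper does not give a proof of this theorem; it is cited verbatim from Kenig--Merle \cite{KM08} and invoked only to set the stage for the soliton resolution discussion, so there is no internal proof to compare against. Your sketch is, however, a faithful high-level outline of the actual Kenig--Merle argument: the variational trapping below $E(W,0)$, the coercivity/blow-up dichotomy determined by the sign of $\|u_0\|_{\dot H^1}-\|W\|_{\dot H^1}$, the induction on the critical energy $E_c$ via a Bahouri--G\'erard profile decomposition plus a perturbation lemma to produce a critical element compact modulo scaling and translation, and the localized virial rigidity step that eliminates it. You also correctly flag the delicate points: the choice of truncation radius tied to the modulus of compactness, the control of the scaling parameter $\lambda(t)$, and the essential use of the strict inequality $E<E(W,0)$. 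One small inaccuracy: on the supercritical side you invoke a Levine-style $L^2$ virial ``localized to a lightcone''; Kenig--Merle's Section on finite-time blowup does use a concavity argument on a truncated $\int\phi_R|u|^2$, but the localization there is in space (via the cutoff $\phi_R$), with finite speed of propagation used to control the errors, rather than literally restricting to a cone --- the distinction is minor but worth getting right if you were to write this out in full.
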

Theorem \ref{thm:KM} was extended by Duyckaerts and Merle \cite{DM07} to include the case
$E(u(0,\cdot),u_t(0,\cdot))=E(W,0)$ which, in addition to the possibilities of Theorem \ref{thm:KM}, entails
solutions which scatter towards (a rescaling of) $W$.
We also refer the reader to the recent works by Krieger, Nakanishi and Schlag \cite{KNS10}, \cite{KNS11} where they 
consider
data with energies slightly above the ground state.
Based on the results in \cite{KST09}, \cite{KS07}, \cite{KM08} and \cite{DM07} it seemed plausible
to expect a strong version of the \emph{soliton resolution conjecture} to hold.
Roughly speaking, this conjecture states that the long time evolution 
splits into a finite sum of solitons plus radiation, see \cite{Sof06}.

\begin{conjecture}[Strong soliton resolution at energies close to the ground state]
\label{conj:solres}
Any radial energy class solution of Eq.~\eqref{eq:mainintro} with energy close to $E(W,0)$
either blows up in finite time or scatters to zero like a free wave or scatters towards a rescaling of $W$. 
\end{conjecture}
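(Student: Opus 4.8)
\emph{The plan.} I would follow the concentration--compactness and rigidity strategy of Kenig and Merle, reinforced by the exterior-energy (``channels of energy'') method of Duyckaerts, Kenig and Merle, which is particularly powerful in the odd dimension $d=3$. First, exploit the variational structure of $E$ near $E(W,0)$ --- via the sharp Sobolev inequality the ground state sits at the threshold between dispersion and concentration --- to split a radial energy class solution with energy in a small window around $E(W,0)$ into essentially three regimes, according to whether $\|u(t,\cdot)\|_{\dot{H}^1(\R^3)}$ stays below, crosses, or stays above $\|W\|_{\dot{H}^1(\R^3)}$. In the first regime Theorem~\ref{thm:KM} (below the ground state) gives scattering to zero; in the third it gives finite-time blowup; the only genuinely new case is the \emph{critical} one, in which the trajectory stays within $o(1)$ (in $\dot{H}^1\times L^2$) of the soliton orbit $\{\lambda^{1/2}W(\lambda\,\cdot):\lambda>0\}$.

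In that critical regime I would extract a \emph{critical element}: via linear and nonlinear profile decompositions and a minimality argument, a global forward solution that neither scatters nor blows up and whose rescaled trajectory $\{(\lambda(t)^{-1/2}u(t,\lambda(t)^{-1}\cdot),\,\lambda(t)^{-3/2}u_t(t,\lambda(t)^{-1}\cdot)):t\ge 0\}$ is precompact in $\dot{H}^1\times L^2(\R^3)$ for an appropriate scale $\lambda(t)>0$. One then rigidifies this element: combining the sharp exterior-energy lower bounds for free waves in $\R^3$ with the spectral picture of the linearized operator $-\Delta-5W^4$ (one negative eigenvalue, a scaling zero mode, no embedded eigenvalues) should force any such compact solution of energy close to $E(W,0)$ to be $\pm\lambda(t)^{1/2}W(\lambda(t)\,\cdot)$ up to an error that disperses. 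This yields a decomposition $u(t,x)=\lambda(t)^{1/2}W(\lambda(t)x)+\eta(t,x)$ with $\eta$ small and dispersive, so everything is reduced to the asymptotics of $\lambda(t)$.

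\emph{The obstacle.} The crucial remaining step --- and the one I do not expect to go through --- is to show that $\lambda(t)$ converges to a positive limit as $t\to\infty$ (which would give scattering towards a rescaling of $W$) or else exits every compact subset of $(0,\infty)$ in finite time (finite-time blowup). The trouble is structural: the scaling direction is a \emph{zero mode} of the linearized energy, hence not controlled by any coercivity or monotonicity inherited from $E$, and the modulation equation governing $\lambda(t)$ is only weakly constrained, so $\lambda(t)$ can drift. In fact this step is not merely hard but impossible as stated, and assuming Theorem~\ref{thm:main} one sees why: for every small $\mu\neq 0$ there is a global radial solution $u(t,x)=t^{\mu/2}W(t^{\mu}x)+\eta(t,x)$ inside the light cone with $\eta$ and the exterior data of small energy, hence with $E(u,u_t)$ as close to $E(W,0)$ as desired; it is global, does not blow up in finite time, does not scatter to zero, and has $\lambda(t)=t^{\mu}\to 0$ or $\infty$ rather than to a positive constant --- so it matches none of the three alternatives in Conjecture~\ref{conj:solres}. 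Thus the first three steps survive but the fourth fails, and what one actually ends up proving is that it is the rigidity of the \emph{scale}, not of the profile, that is obstructed. Consequently Conjecture~\ref{conj:solres} is false as formulated, and the correct statement must admit a fourth possibility --- concentration ($\mu>0$) or spreading ($\mu<0$) of the soliton as $t\to\infty$, with a continuum of admissible rates --- which is exactly the content of Theorem~\ref{thm:main}.
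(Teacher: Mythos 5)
Your proposal correctly recognizes that Conjecture~\ref{conj:solres} is not a theorem to be proved but a statement the paper \emph{refutes}: Theorem~\ref{thm:main} exhibits global, nonscattering radial solutions of the form $t^{\mu/2}W(t^{\mu}x)+\eta(t,x)$ with energy arbitrarily close to $E(W,0)$, whose scale $\lambda(t)=t^{\mu}$ tends to $0$ or $\infty$ rather than to a positive limit, so none of the three alternatives in the conjecture holds. This is exactly the paper's stance, and your diagnosis of where a Kenig--Merle-style rigidity argument would stall --- at controlling the scaling parameter, a zero mode of the linearization not constrained by energy coercivity --- correctly identifies the structural reason the conjecture fails and matches the paper's discussion (and the subsequent weaker resolution result of Duyckaerts--Kenig--Merle cited in the introduction).
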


Our Theorem \ref{thm:main}, however, shows that Conjecture \ref{conj:solres} is wrong.
In addition to the already known dynamics
\begin{itemize}
\item $\lambda(t)\to \infty$ as $t\to 1-$ (exotic blowup \cite{KST09})
\item $\lambda(t)\to a>0$ as $t\to \infty$ (scattering towards (a rescaling of) $W$ \cite{DM07}, 
\cite{KS07})
\end{itemize}
for solutions of the form
$u(t,x)=\lambda(t)^\frac12 W(\lambda(t)x)+\eta(t,x)$ with $\eta$ small, 
our result adds the two new possibilities 
\begin{itemize}
\item $\lambda(t)\to 0$ as $t\to \infty$ (``vanishing'')
\item $\lambda(t)\to \infty$ as $t\to\infty$ (``blowup at infinity'')
\end{itemize}
and either of which contradicts Conjecture \ref{conj:solres}.
Furthermore, there exists a continuum of rates at which the blowup (or vanishing) occurs.
It has to be remarked that the blowup does not take place in the energy norm (which stays bounded)
but in a type II fashion (i.e., only higher order norms blow up).
We also note that, although Conjecture \ref{conj:solres} in this sharp form does not hold,
a weaker version in the radial context was proved after the submission of the present paper \cite{DuyKenMer12b}.
The result in \cite{DuyKenMer12b} shows in particular that the solutions we 
construct are in a sense the only global solutions which do not scatter.
We also remark in passing that we expect the solutions of Theorem \ref{thm:main} to be smooth and therefore, 
unlike in the case of the exotic blowup in \cite{KST09}, there is no conjectured ``quantization'' 
of blowup rates as one passes to smooth solutions.
The intuitive reason for this is that we do not encounter a singularity at the lightcone as in \cite{KST09}
since we cut off our approximate solutions in such a way that they are supported 
inside the smaller cone $r\leq t-c$ for some constant $c$.
At the moment, however, we
cannot rigorously prove the smoothness of the full solutions since parts of our construction rely on a ``soft'' argument which only
yields energy class regularity.
Furthermore, it is worth mentioning that the technique used to prove Theorem~\ref{thm:main} appears to have much 
wider applicability for similar critical nonlinear problems, as did the construction in \cite{KST09}.
We also note that the restrictions on $\mu$ in our Theorem~\ref{thm:main} seem to be in part 
technical and nonessential. Thus, it appears to be natural to expect at least the range 
$-1<\mu<\varepsilon_0$ to be allowable, for suitable $\varepsilon_0>0$. 

While this paper was being written up, T.~Duyckaerts informed the authors of  the following result, obtained jointly with C. Kenig and F. Merle, 
which nicely combines with our Theorem \ref{thm:main} in a similar way as 
\cite{DKM11} and \cite{DKM12} are related to \cite{KST09}.

\begin{theorem}[Duyckaerts-Kenig-Merle \cite{DKM12b}, private communication by T. Duyckaerts]
If $u: [0,\infty)\times \R^3\to\R$ is a radial energy class solution of Eq.~\eqref{eq:mainintro} with
\[ \limsup_{t\to\infty}\left (\|\partial_t u(t,\cdot)\|_{L^2(\R^3)}+\|\nabla u(t,\cdot)\|_{L^2(\R^3)} \right )<
2 \|\nabla W\|_{L^2(\R^3)} \]
then (up to a change of sign)
\begin{align*} 
u(t,x)&=\lambda(t)^\frac12 W(\lambda(t)x)+v(t,x)+o_{\dot{H}^1(\R^3)}(1) \\
\partial_t u(t,x)&=\partial_t v(t,x)+o_{L^2(\R^3)}(1) 
\end{align*}
with $v$ a free wave and $t\lambda(t)\to \infty$ as $t \to \infty$.
\end{theorem}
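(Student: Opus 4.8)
The plan is to run the concentration–compactness and ``channel of energy'' program of Duyckaerts--Kenig--Merle; the assertion is essentially radial soliton resolution restricted to the sub-threshold regime $\limsup_{t\to\infty}\|(\nabla u,\partial_t u)(t,\cdot)\|_{L^2\times L^2}<2\|\nabla W\|_{L^2}$, which is more tractable than the general case precisely because only a single bubble can survive. First I would observe that the hypothesis, being a $\limsup$ over all $t\to\infty$, forces $u$ to be global in forward time with $\sup_{t\geq 0}\|(\nabla u,\partial_t u)(t,\cdot)\|_{L^2\times L^2}$ finite, and energy conservation together with Sobolev embedding then also bounds $\|u(t,\cdot)\|_{L^6}$. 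Applying the Bahouri--G\'erard profile decomposition along an arbitrary sequence $t_n\to\infty$, combined with forward dispersion of finite-energy free waves and the nonlinear profile superposition/stability theory, I would isolate a free wave $v$ such that $(u-v)(t,\cdot)$, after rescaling $x\mapsto\lambda(t)x$ by a suitable modulation parameter, has a precompact orbit in $\dot H^1\times L^2$ as $t\to\infty$; this is the non-radiative part of $u$.

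Next I would classify this compact-modulo-scaling remainder. The rigidity input is the DKM exterior-energy lower bound for radial free waves in $\R^{1+3}$ (the channels of energy), which, together with the variational characterization of $W$ among finite-energy stationary solutions, shows that a non-scattering radial solution with precompact rescaled orbit is asymptotically a superposition of rescaled copies of $\pm W$. The strict bound $<2\|\nabla W\|_{L^2}$ is exactly what excludes every configuration except one: by the asymptotic (Pythagorean) orthogonality of distinct profiles, two or more bubbles — or one bubble carrying a non-negligible amount of additional radiation — would push the limiting $L^2\times L^2$ norm up to or past $2\|\nabla W\|_{L^2}$. This produces the single-scale decomposition $u=\lambda(t)^\frac12 W(\lambda(t)x)+v+o_{\dot H^1}(1)$, $\partial_t u=\partial_t v+o_{L^2}(1)$, and upgrading it from ``along $t_n$'' to ``as $t\to\infty$'' uses continuity of $t\mapsto\lambda(t)$ together with uniqueness of the limiting profile.

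Finally I would rule out the bubble ever becoming as wide as the light cone. If $t_n\lambda(t_n)$ stayed bounded along some sequence, then in the rescaled variable $y=\lambda(t_n)x$ the interior ball $B_{t_n}$ shrinks to a neighborhood of the origin, so the entire bubble $W(y)$, which lives at $|y|\sim 1$, would sit outside the rescaled light cone; one then contradicts the compactness of the rescaled orbit (energy cannot escape to spatial infinity for the non-radiative part) and the finite-speed-of-propagation structure near $|x|=t$, which gives $\liminf_{t\to\infty}t\lambda(t)>0$. Excluding the borderline self-similar regime $t\lambda(t)\to c\in(0,\infty)$ — corresponding exactly to the rate $\lambda(t)\sim c/t$ that Theorem~\ref{thm:main} is unable to reach — requires passing $u$ to self-similar coordinates and invoking the non-existence of nontrivial finite-energy self-similar solutions (or a monotonicity/virial identity in those variables), combined with a connectedness argument on $t\mapsto t\lambda(t)$.

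The hard part is carrying out the rigidity/classification of the second step \emph{uniformly in $t$} rather than merely along sequences — controlling $\lambda(t)$ continuously and forbidding the bubble from ``vanishing and reappearing'' or splitting — which is exactly where the one-dimensional exterior-energy channels in $\R^{1+3}$ are indispensable and where the strict sub-threshold bound $2\|\nabla W\|_{L^2}$ does the essential combinatorial work of forcing the single-bubble picture; a close second obstacle is the exclusion of the self-similar rate, which sits right at the boundary of the construction underlying Theorem~\ref{thm:main}.
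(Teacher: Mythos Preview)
The paper does not contain a proof of this theorem. It is stated with the attribution ``Duyckaerts--Kenig--Merle \cite{DKM12b}, private communication by T.~Duyckaerts'' and serves purely as context: it is the companion classification result that complements the authors' construction in Theorem~\ref{thm:main}, in the same way that \cite{DKM11}, \cite{DKM12} relate to \cite{KST09}. There is therefore no proof in the paper to compare your proposal against.

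That said, your outline is broadly in the spirit of the Duyckaerts--Kenig--Merle program (profile decomposition, nonlinear profiles, exterior channels of energy, variational characterization of $W$), and the observation that the strict bound $2\|\nabla W\|_{L^2}$ forces at most one bubble via asymptotic Pythagorean orthogonality is the correct combinatorial point. Your sketch is, however, only a high-level roadmap rather than a proof: the passage from sequential to continuous-in-$t$ decomposition, the precise form of the rigidity argument, and especially your treatment of $t\lambda(t)\to\infty$ are not actually carried out. In particular, your argument for excluding bounded $t\lambda(t)$ is not right as stated --- if $t_n\lambda(t_n)$ stays bounded, the rescaled ball $\lambda(t_n)B_{t_n}$ has radius $\lambda(t_n)t_n$, which is bounded but need not shrink to a point, so the bubble need not sit ``outside the rescaled light cone''; and the exclusion of the self-similar rate is asserted rather than argued. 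These are exactly the delicate points in the DKM work, and a genuine proof would require the full machinery of \cite{DKM12b} (and its successor \cite{DuyKenMer12b}).
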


Theorem \ref{thm:main} should also be contrasted to previous works on other dispersive systems such as the
nonlinear Schr\"odinger equation.
We cannot do justice to the vast literature on this subject but
as an example we mention Tao's result \cite{Tao04} on the cubic focusing Schr\"odinger equation
in $\R^{1+3}$
which states that radial solutions which exist globally decouple into a smooth function localized near the origin, a 
radiative term, and an error that goes to zero as $t \to \infty$.
Unlike Theorem \ref{thm:main}, this result is in concordance with the soliton resolution conjecture. 
We refer the reader to
\cite{Sof06} and the references therein for more positive results in this direction.
Furthermore, the only system (to our knowledge) of ``wave type'' (i.e., 
either nonlinear wave or Schr\"odinger equation)  for which nonscattering solutions similar to ours
are known is the $L^2$-critical nonlinear Schr\"odinger equation in $\R^{1+1}$. 
For this system nonscattering solutions can be constructed by combining
the ``log log'' blowup of \cite{Per01} with the pseudo-conformal symmetry (the ``log log'' blowup
exists in other dimensions as well, see \cite{MerRap04}, \cite{MerRap05}, \cite{MerRap06}).
However, it is evident that the mechanism which furnishes these solutions is completely different and not related
to the situation here.

\subsection{A roadmap of the proof}
We give a brief overview of the proof of Theorem \ref{thm:main} without going into technical details.
As already mentioned, the construction is in parts based on the techniques developed in  \cite{KST08}, \cite{KST09}, 
\cite{KST09a}.
However, in order to deal with the present situation, the method has to be modified and extended considerably.
In the following we restrict ourselves to radial functions and the symbols $r$ and $|x|$ are used
interchangeably.
By a slight abuse of notation we also write $u(t,r)$ instead of $u(t,x)$ meaning that $u(t,\cdot)$
is a radial function.
Furthermore, throughout this paper we set
\footnote{We use this 
convention for ``historical'' reasons, cf.~\cite{KST09}.}
$\lambda(t):=t^{-(1-\nu)}$ where $\nu$ 
is assumed to be close to $1$.
Roughly speaking, the proof splits into three main parts which we now describe in more detail.

\begin{enumerate}
\item Construction of ``elliptic profile modifiers''. 
An obvious idea is to
insert the naive ansatz $u(t,r)=\lambda(t)^\frac12 W(\lambda(t)r)+\eta(t,r)$ into Eq.~\eqref{eq:mainintro}
and to derive an equation for $\eta$.
By doing so, however, one produces an error $\partial_t^2 [\lambda(t)^\frac12 W(\lambda(t)r)]$ which 
decays roughly like
$t^{-2}$ and this turns out to be insufficient.
Consequently, we first modify the profile $\lambda(t)^\frac12 W(\lambda(t)r)$ by a nonperturbative
procedure.
This is done in two steps where we solve suitable (linear) approximations to 
Eq.~\eqref{eq:mainintro} and thereby improve the error at the center and, in the second step, 
near the lightcone $r \approx t$.
This does not yield an actual solution but a function $u_2$ which solves Eq.~\eqref{eq:mainintro}
only up to an error.
However, this error now decays approximately like $t^{-4}$ and thus, we have gained two powers which
is sufficient to proceed. 
This is in contrast to the analogous procedure in \cite{KST09} where a very large number of 
modifications are added to the ground state. In our situation, it turns out that additional 
modifications do not improve the error further. 
The improvement in decay comes at the expense of differentiability at the lightcone which has to be accounted
for by using suitable cut-offs. Thus, in this first stage of the construction, we only obtain an 
approximate solution in a smaller forward light cone (i.e., of the form $|x|\leq t-c$ and hence
strictly contained inside the standard light cone $|x|\leq t$), in contrast to the procedure in \cite{KST09}. 

\item In a second step we insert the ansatz $u=u_2+\varepsilon$ into Eq.~\eqref{eq:mainintro}
and derive an equation for $\varepsilon$ which is of the form
\begin{equation}
\label{eq:epsintro}
(-\partial_t^2+\Delta) \varepsilon+5W_{\lambda(t)}^4\varepsilon=\mbox{nonlinear terms+error}
\end{equation}
where $W_\lambda(x):=\lambda^\frac12 W(\lambda x)$.
Due to the aforementioned lack of smoothness, the right-hand side of the equation is only defined in a forward lightcone
and for the moment we restrict ourselves to this region.
In order to obtain a time-independent potential on the left-hand side we use
$R:=\lambda(t)r$ as a spatial coordinate and, with an appropriate new time coordinate $\tau$,
Eq.~\eqref{eq:epsintro} transforms into
\begin{equation}
\label{eq:eps2intro}
\mc D^2 \varepsilon+c\tau^{-1}\mc{D}\varepsilon+(-\Delta+V)\varepsilon=\mbox{nonlinear terms+error}
\end{equation}
where $\mc{D}$ is a first order transport-type operator and $V=-5W^4$.
In order to solve Eq.~\eqref{eq:eps2intro} we apply the ``distorted Fourier transform'' relative
to the self-adjoint operator $-\Delta+V$.
This requires a careful spectral analysis which is only feasible since we are in the radial case.
The existence of a zero energy resonance plays a prominent role here.
As a result we obtain a transport-type equation for the Fourier coefficients which is then solved
by the method of characteristics combined with a fixed point argument.
The treatment of the error terms on the right-hand side of the transport equation is delicate
and requires a good amount of harmonic analysis. In particular, the functional framework employed differs from that in \cite{KST09}. 

\item The last step consists of a partially ``soft'' argument which is used to extend the solution
to the whole space and, second, to extract suitable initial data that lead to the desired solution 
(recall that we have solved the equation in a forward lightcone where the Cauchy problem
is not well-posed).
For this we rely on a concentration-compactness approach based on the celebrated 
Bahouri-G\'erard decomposition \cite{BG99}. 
\end{enumerate}

\subsection{Notation}
We write $\N$ for the natural numbers $\{1,2,3,\dots\}$ and set $\N_0:=\{0\}\cup \N$.
We use standard Lebesgue and (fractional) Sobolev spaces denoted by 
$L^p(\Omega)$, $W^{s,p}(\Omega)$ and $H^s:=W^{s,2}$ with $\Omega \subset \R^d$.
Our sign convention for the wave operator is $\Box:=-\partial_t^2+\Delta$.
Unless otherwise stated, the letter $C$ (possibly with indices) denotes a positive constant which may change from line to line.
As usual, we write $a\lesssim b$ if $a\leq Cb$ and if the constant $C$ has to be sufficiently large,
we indicate this by $a \ll b$. Similarly, we use $a \gtrsim b$ and $a\simeq b$ means $a\lesssim b$ and $b\lesssim a$.
Furthermore, we reiterate that $\lambda(t):=t^{-(1-\nu)}$ with $\nu$ a real number close to $1$.
Throughout the paper, $\nu$ is supposed to be fixed and sufficiently close to $1$.
Note also that Theorem \ref{thm:main} is trivial if $\nu=1$ since in this case $u(t,x)=W(x)$.
Thus, whenever convenient we exclude the case $\nu=1$ without further notice.
For $x\in\R^d$ we set $\langle x \rangle:=\sqrt{1+|x|^2}$ and write $O(f(x))$ to denote a generic 
\emph{real-valued} function which satisfies $|O(f(x))|\lesssim |f(x)|$ in a domain of $x$ that
is either specified explicitly or follows from the context. If the function attains
complex values as well we indicate this by a subscript, e.g.~$O_\C(x)$.
An $O$-term $O(x^\gamma)$, where $x,\gamma \in \R$, is said to behave like a symbol if 
$|\partial_x^k O(x^\gamma)|\leq C_k |x|^{\gamma-k}$ for all $k\in\N$.
A similar definition applies to symbol behavior of $O(\langle x \rangle^\gamma)$ with $|\cdot|$ 
substituted by $\langle \cdot \rangle$.

\section{Construction of an approximate solution}
\label{sec:approx}

Our intention is to construct a solution $u$ of the form $u(t,r)=\lambda(t)^\frac12
W(\lambda(t)r)+\eta(t,r)$ with $\lambda(t)=t^{-(1-\nu)}$ where $\nu$ is sufficiently close 
to $1$ and $\eta$ is small in a suitable sense.
We first improve the approximate solution $W_{\lambda(t)}(r):=\lambda(t)^\frac12 W(\lambda(t)r)$ by successively adding
two correction terms $v_0$ and $v_1$.
These corrections are obtained by approximately solving the equation in a way we describe in
the following.

\subsection{Improvement at the center}

We set $u_0(t,r):=W_{\lambda(t)}(r)$ and define the first error $e_0$ by
$$ e_0:=\Box u_0+u_0^5=-\partial_t^2 u_0 $$
with $\Box=-\partial_t^2+\Delta$.

\begin{lemma}
\label{lem:e0}
The error $e_0$ is of the form
$$ t^2 e_0(t,r)=c_\nu \lambda(t)^\frac12 \langle \lambda(t)r\rangle^{-1}+t^2 e_0^*(t,r) $$
where $c_\nu$ is a real constant and $e_0^*$ satisfies the bounds
$$ |\partial_t^\ell \partial_r^k 
t^2 e_0^*(t,r)|\leq C_{k,\ell} \lambda(t)^{\frac12+k+\ell} [\lambda(t)t]^{-\ell}
\langle \lambda(t)r\rangle^{-3-k} $$
for all $t \geq t_0>0$, $r \geq 0$ and $k,\ell \in \mathbb{N}_0$.
In addition, we have 
$$\partial_r^{2k+1}e_0(t,r)|_{r=0}=\partial_r^{2k+1}e_0^*(t,r)|_{r=0}=0. $$
\end{lemma}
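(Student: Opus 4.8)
The plan is to compute $e_0 = -\partial_t^2 u_0$ directly, where $u_0(t,r) = W_{\lambda(t)}(r) = \lambda(t)^{1/2} W(\lambda(t) r)$, and extract the leading-order behavior as $t\to\infty$ (equivalently as $\nu\to 1$, since $\lambda(t)\lambda'(t)t$ and related quantities are governed by the small parameter $1-\nu$). First I would introduce the self-similar variable $R := \lambda(t) r$ and write $u_0(t,r) = \lambda(t)^{1/2} W(R)$. Differentiating in $t$ and using $\lambda'(t) = -(1-\nu) t^{-1} \lambda(t)$, each $\partial_t$ produces a factor of $\lambda'(t)/\lambda(t) \simeq -(1-\nu)/t$ together with either a factor $\tfrac12$ (hitting the prefactor $\lambda^{1/2}$) or the operator $R\partial_R$ (hitting $W(R)$ through the rescaled argument). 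Thus $\partial_t u_0 = \frac{\lambda'(t)}{\lambda(t)} \lambda(t)^{1/2} \left( \tfrac12 + R\partial_R \right) W(R)$, and applying $\partial_t$ once more gives
\[
 -\partial_t^2 u_0 = \lambda(t)^{1/2}\Big[ a(t)\, \Lambda W(R) + b(t)\, \Lambda^2 W(R) \Big],
\]
where $\Lambda := \tfrac12 + R\partial_R$, $a(t) = -\big(\tfrac{\lambda'}{\lambda}\big)' - \big(\tfrac{\lambda'}{\lambda}\big)^2 \cdot (\text{sign adjustment})$ and $b(t) = -\big(\tfrac{\lambda'}{\lambda}\big)^2$; both $a(t)$ and $b(t)$ are $O(t^{-2})$ with symbol behavior, and $a(t), b(t)$ are explicit rational functions of $t$ times powers of $(1-\nu)$. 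So $t^2 e_0(t,r) = \lambda(t)^{1/2} g_\nu(t) \big[ \alpha \Lambda W(R) + \beta(t) \Lambda^2 W(R) \big]$ where $g_\nu(t) = 1 + O(t^{-1})$ behaves like a symbol.

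The next step is the asymptotic analysis of $\Lambda W(R)$ and $\Lambda^2 W(R)$ as $R\to\infty$. Since $W(R) = (1+\tfrac13 R^2)^{-1/2}$ behaves like $\sqrt{3}\, R^{-1}$ for large $R$ — in fact $W(R) = O(\langle R\rangle^{-1})$ with symbol behavior — one checks that $\Lambda W(R) = \tfrac12 W + R W'$, and using $RW'(R) = -\tfrac13 R^2 (1+\tfrac13 R^2)^{-3/2}\to -\sqrt3\, R^{-1}$, the leading $R^{-1}$ terms of $\tfrac12 W$ and $RW'$ combine: $\tfrac12\sqrt3 R^{-1} - \sqrt3 R^{-1} = -\tfrac{\sqrt3}{2} R^{-1}$, which is nonzero. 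Hence $\Lambda W(R) = (\text{const})\langle R\rangle^{-1} + O(\langle R\rangle^{-3})$ with the remainder having symbol behavior, while $\Lambda^2 W(R)$ — being a further application of $\Lambda$, which kills the exact $R^{-1}$ tail (as $\Lambda(R^{-1}) = \tfrac12 R^{-1} - R^{-1} = -\tfrac12 R^{-1}$, actually it does NOT kill it) — so I must be more careful: instead the point is that we only peel off ONE copy of the slowly decaying $\langle R\rangle^{-1}$ piece and absorb everything else into $e_0^*$. Concretely, set $c_\nu \lambda^{1/2}\langle R\rangle^{-1}$ equal to the leading term coming from $\alpha\Lambda W$ at $t=\infty$ (i.e. with $g_\nu \beta$ replaced appropriately), and define $t^2 e_0^* := t^2 e_0 - c_\nu\lambda^{1/2}\langle R\rangle^{-1}$. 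This remainder then consists of: (i) the $O(\langle R\rangle^{-3})$ tail of $\Lambda W$, (ii) all of $\Lambda^2 W$ — which I would check decays like $\langle R\rangle^{-3}$ once the precise cancellation in $\Lambda^2$ is tracked, or more simply observe $\Lambda^2 W = \Lambda(\Lambda W)$ and that $\Lambda$ maps a symbol of order $-1$ plus order $-3$ tail into... — and (iii) the subleading-in-$t$ corrections $(g_\nu(t)-1)$ times the order $-1$ piece. Part (iii) carries an extra $t^{-1}$, matching the factor $[\lambda(t)t]^{-\ell}$ for $\ell\geq 1$ after differentiation; part (i)–(ii) must genuinely be $O(\langle R\rangle^{-3})$.

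The bounds on $\partial_t^\ell\partial_r^k(t^2 e_0^*)$ then follow mechanically: $\partial_r = \lambda(t)\partial_R$ produces each factor $\lambda(t)$ and lowers the $\langle R\rangle$-decay by one, accounting for $\lambda(t)^k\langle R\rangle^{-3-k}$; $\partial_t$ either produces the explicit factor $\lambda'/\lambda \simeq t^{-1}$ (giving $[\lambda t]^{-1}\cdot\lambda$, hence the claimed $\lambda^{1+\ell}[\lambda t]^{-\ell}$ structure after $\ell$ derivatives) or acts through $R\partial_R = (\lambda r)\partial_R$ which is symbol-preserving and generates another power of $\lambda'/\lambda$. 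Here I would invoke that $W$, $\Lambda W$, $\Lambda^2 W$ and all their $R$-derivatives have symbol behavior in $\langle R\rangle$, which is immediate from the explicit formula for $W$. Finally, the parity statement $\partial_r^{2k+1} e_0|_{r=0} = 0$ is clear because $W$ is a smooth even function of $R$ (it depends on $R^2$), hence $u_0$ is a smooth even function of $r$, hence so is $\partial_t^2 u_0$; and the same parity argument applies to $e_0^*$ since $c_\nu\lambda^{1/2}\langle\lambda r\rangle^{-1}$ is also even in $r$.

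The main obstacle I anticipate is \emph{bookkeeping the precise cancellations} that upgrade the naive $O(\langle R\rangle^{-1})$ decay of $\Lambda^2 W$ (and of the remainder after subtracting $c_\nu\lambda^{1/2}\langle R\rangle^{-1}$) to the claimed $O(\langle R\rangle^{-3})$: one must verify that, after removing exactly one explicitly-chosen slowly-decaying term, no further $\langle R\rangle^{-1}$ tail survives. This is a finite computation using the exact form of $W$, but it is the crux — getting the constant $c_\nu$ and the structure of $\Lambda^2 W$ exactly right — and it is precisely the ``two powers gained'' referred to in the roadmap. Everything else (the symbol bounds, the parity) is routine given the explicit algebraic form of $W$ and of $\lambda(t) = t^{-(1-\nu)}$.
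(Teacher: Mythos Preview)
Your approach is correct and essentially coincides with the paper's: express $-t^2\partial_t^2 u_0$ through the dilation operator $\Lambda=\tfrac12+R\partial_R$, then peel off the $\langle R\rangle^{-1}$ tail. The paper shortcuts the computation by decomposing $W(R)=\sqrt{3}\langle R\rangle^{-1}+W^*(R)$ with $|\partial_R^k W^*|\leq C_k\langle R\rangle^{-3-k}$ \emph{before} differentiating, and then simply invokes the chain rule.

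Two remarks will dissolve the obstacle you anticipate. First, since $\lambda'/\lambda=-(1-\nu)t^{-1}$ \emph{exactly}, the quantities $t^2 a(t)$ and $t^2 b(t)$ are genuine constants, so your $g_\nu(t)\equiv 1$ and ``part (iii)'' is vacuous. This is not merely cosmetic: a nonzero term of size $t^{-1}\lambda^{1/2}\langle R\rangle^{-1}$ in $t^2 e_0^*$ would \emph{violate} the claimed $\langle R\rangle^{-3}$ bound at $\ell=0$, so it is important that no such term arises. Second, $\Lambda^2 W$ does \emph{not} decay like $\langle R\rangle^{-3}$; for instance $\Lambda\langle R\rangle^{-1}=-\tfrac12\langle R\rangle^{-1}+\langle R\rangle^{-3}$, so $\Lambda^2\langle R\rangle^{-1}$ still carries a $\langle R\rangle^{-1}$ piece. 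Hence its slow tail must be absorbed into $c_\nu$, not into $e_0^*$. Once you define $c_\nu$ as the total $\langle R\rangle^{-1}$-coefficient of $\alpha_\nu\Lambda W+\beta_\nu\Lambda^2 W$, the remainder is automatically $O(\langle R\rangle^{-3})$ with symbol behavior --- there is no cancellation to track.
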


\begin{proof}
Note first that $W(R)=\sqrt{3}\langle R \rangle^{-1}+W^*(R)$ where
$|\partial_R^k W^*(R)|\leq C_k \langle R \rangle^{-3-k}$
for all $R\geq 0$ and $k \in \mathbb{N}_0$.
Consequently, the claim follows from 
$$e_0(t,r)=-\partial_t^2 u_0(t,r)=-\partial_t^2 [\lambda(t)^\frac12 W(\lambda(t)r)]$$
by the chain rule.
\end{proof}

Note that the decay of $e_0(t,r)$ near the lightcone $r=t$ is better than at the
center.
Consequently, we first attempt to improve the approximation near $r=0$.
Ideally, we would like to add a correction $v_0$ such that $u_1:=u_0+v_0$ becomes an exact solution, 
i.e., 
$$ 0=\Box u_1+u_1^5=\Box v_0+5 u_0^4 v_0+N(u_0,v_0)+e_0 $$
where 
$$ N(u_0,v_0):=10 u_0^3 v_0^2+10 u_0^2 v_0^3 + 5 u_0 v_0^4 +v_0^5. $$
Near the center $r=0$ we expect the time derivative to be less important and therefore we neglect
it altogether and also drop the nonlinearity to obtain the approximate equation
\begin{equation}
\label{def:v0}
\Delta v_0+5 u_0^4 v_0=-e_0
\end{equation}
and the next error $e_1$ is defined as
\begin{equation}
\label{def:e1}
e_1:=\Box u_1+u_1^5.
\end{equation}
We solve Eq.~\eqref{def:v0} for $v_0$ and subsequently show that $e_1$ decays faster than $e_0$.

\begin{lemma}
\label{lem:e1}
There exists a function $v_0$ satisfying Eq.~\eqref{def:v0} such that
$$ v_0(t,r)=c_\nu \lambda(t)^{\frac12}[\lambda(t)t]^{-2}\lambda(t)r+d_\nu \lambda(t)^\frac12 [\lambda(t)t]^{-2}+v_0^*(t,r) $$
where $c_\nu, d_\nu$ are real constants and
$$ |\partial_t^\ell \partial_r^k v_0^*(t,r)|\leq C_{k,\ell} 
\lambda(t)^{\frac12+k+\ell}
[\lambda(t)t]^{-2-\ell}\langle \lambda(t) r \rangle^{-1-k}\left \langle \log \langle \lambda(t)r \rangle \right \rangle$$
for all $t\geq t_0>0$, $r \geq 0$ and $k,\ell \in \mathbb{N}_0$.
As a consequence, the error $e_1$ defined by Eq.~\eqref{def:e1} is of the form
$$ t^2 e_1(t,r)=c_\nu \lambda(t)^{\frac12}[\lambda(t)t]^{-2}\lambda(t)r+d_\nu \lambda(t)^\frac12 [\lambda(t)t]^{-2}+t^2 e_1^*(t,r) $$
with (different) real constants $c_\nu, d_\nu$ and $e_1^*$ satisfies the bounds
$$ |\partial_t^\ell \partial_r^k t^2 e_1^*(t,r)|\leq C_{k,\ell}\lambda(t)^{\frac12+k+\ell}
[\lambda(t)t]^{-2+\epsilon-\ell}
\langle \lambda(t)r\rangle^{-1-k} $$
for all $t\geq t_0>0$, $0\leq r\lesssim t$, any (fixed) $\epsilon>0$, and $k,\ell \in \mathbb{N}_0$.
In addition, we have $\partial_r^{2k+1}w(t,r)|_{r=0}=0$ where $w \in \{v_0,v_0^*,e_1,e_1^*\}$.
\end{lemma}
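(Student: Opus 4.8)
The plan is to solve the linear equation \eqref{def:v0} explicitly using the structure of the operator $\Delta + 5u_0^4$, which after rescaling to the variable $R = \lambda(t)r$ becomes the ODE operator associated with linearizing the static equation $\Delta W + W^5 = 0$ around $W$. Concretely, writing $v_0(t,r) = \lambda(t)^{\frac12} \phi(\lambda(t)r)$ we are led to solve $\Delta_R \phi + 5W(R)^4 \phi = -\lambda(t)^{-\frac52} e_0(t, R/\lambda(t))$, and by Lemma \ref{lem:e0} the right-hand side is (up to the $e_0^*$ correction) a constant multiple of $\langle R\rangle^{-1}$ with a controlled remainder. The homogeneous radial operator $\mathcal L := \partial_R^2 + \frac{2}{R}\partial_R + 5W^4$ has the explicit zero mode $\Lambda W(R) = \frac12 W + R W'$ coming from the scaling symmetry, and a second linearly independent solution $\psi$ with $\psi(R) \sim R$ growth at infinity (this is the ``zero energy resonance'' alluded to in the roadmap). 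First I would record these two fundamental solutions and their asymptotics at $R=0$ and $R=\infty$, together with the Wronskian.

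Next I would build $v_0$ by the variation-of-parameters formula, $\phi(R) = \psi(R)\int_0^R \Lambda W\, g\, s^2\, ds - \Lambda W(R)\int_0^R \psi\, g\, s^2\, ds$ (with the appropriate Wronskian normalization), where $g$ is the rescaled source. I would choose the integration constants so that $\phi$ is smooth and even at $R=0$ — this is what forces the odd-derivative vanishing $\partial_r^{2k+1} w|_{r=0} = 0$ — while accepting the slowest-decaying growth allowed at infinity, which produces precisely the leading terms $c_\nu \lambda^{\frac12}[\lambda t]^{-2}\lambda r$ (from the $\psi \sim R$ resonance acting on the $O(R^{-1})$ part of the source, giving a $\log$ in the primitive that is absorbed into the $\langle \log\langle\lambda r\rangle\rangle$ factor) and $d_\nu \lambda^{\frac12}[\lambda t]^{-2}$. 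The factor $[\lambda(t)t]^{-2} = t^{-2\nu}$ appears because $e_0 \sim t^{-2}$ carries the explicit $t^{-2}$ prefactor; each spatial derivative in $\partial_r^k$ costs a factor $\lambda(t)$ and each time derivative $\partial_t^\ell$ costs $\lambda(t)/(\lambda(t)t) = t^{-1}$ since $t$-dependence enters only through $\lambda(t)r$ and the slowly varying prefactors, which gives the claimed $[\lambda t]^{-2-\ell}$ and $\lambda^{\frac12 + k + \ell}$ scaling. The bounds on $v_0^*$ then follow from integrating the symbol-type bounds on $e_0^* $ from Lemma \ref{lem:e0} against the kernel and checking the resulting integrals converge with the stated rates.

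For the error $e_1$, I would substitute $u_1 = u_0 + v_0$ into \eqref{def:e1} and expand: $e_1 = \Box u_1 + u_1^5 = (\Delta v_0 + 5u_0^4 v_0 + e_0) - \partial_t^2 v_0 + N(u_0,v_0) = -\partial_t^2 v_0 + N(u_0,v_0)$, using \eqref{def:v0} to cancel the bulk of $e_0$. So $e_1$ has exactly two contributions: the time derivative $\partial_t^2 v_0$, whose leading part is $\partial_t^2$ applied to the explicit leading terms of $v_0$ and therefore reproduces (with new constants) the $c_\nu \lambda^{\frac12}[\lambda t]^{-2}\lambda r + d_\nu \lambda^{\frac12}[\lambda t]^{-2}$ shape with an extra $[\lambda t]^{-2}$ gain that upgrades $t^{-2}$ to $t^{-4}$-type decay — but wait, here the explicit leading terms are already at the $[\lambda t]^{-2}$ level, so $\partial_t^2$ of them lands at $[\lambda t]^{-2} \cdot t^{-2} \sim [\lambda t]^{-4}$ which is genuinely subleading and gets folded into $e_1^*$, while the non-smooth/slowly-decaying pieces generate the displayed leading terms of $t^2 e_1$ with the $[\lambda t]^{-2}$ factor; and the nonlinear term $N(u_0, v_0)$, whose lowest-order piece $10 u_0^3 v_0^2 \sim \langle\lambda r\rangle^{-3} \cdot (\lambda^{\frac12}[\lambda t]^{-2})^2$ is of size $\lambda^{\frac12}[\lambda t]^{-4}\langle\lambda r\rangle^{-\text{(large)}}$, again subleading. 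I would organize these into the statement by collecting the genuinely slowest terms into the explicit $c_\nu, d_\nu$ part and everything else — which carries at least a $[\lambda t]^{-2+\epsilon}$ improvement, the $\epsilon$ coming from the $\langle\log\rangle$ loss in $v_0^*$ traded against a tiny power — into $e_1^*$, with its derivative bounds obtained by Leibniz from the bounds on $v_0$ and $v_0^*$. The main obstacle I anticipate is the careful bookkeeping at the lightcone: because $e_0^*$ and hence $v_0^*$ are only controlled for $r \lesssim t$ (the bounds on $t^2 e_1^*$ are stated only on $0 \le r \lesssim t$), and because the resonance $\psi \sim R$ makes the variation-of-parameters integrals only marginally convergent, one must track the logarithmic factors precisely and verify that no term with slower decay than advertised sneaks in near $R \sim \lambda t$; this, together with propagating the symbol-type (arbitrary-order derivative) bounds through the kernel integration, is where the real work lies, whereas the cancellation producing $e_1 = -\partial_t^2 v_0 + N(u_0,v_0)$ is immediate.
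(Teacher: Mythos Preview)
Your strategy matches the paper's exactly: rescale to $R=\lambda(t)r$, solve the resulting linear ODE by variation of constants using the scaling mode and its companion, then compute $e_1 = -\partial_t^2 v_0 + N(u_0,v_0)$. The paper carries this out in the one-dimensional form (setting $\tilde v_0 = R v_0$ so the operator becomes $-\partial_R^2 - 5W^4$, with the explicit fundamental system $\phi_0(R) = R(1-\tfrac{R^2}{3})(1+\tfrac{R^2}{3})^{-3/2}$ and $\theta_0(R) = (1+\tfrac{R^2}{3})^{-3/2}(1-2R^2+\tfrac{R^4}{9})$), but your three-dimensional radial formulation is equivalent. One small correction: in the 3D picture the second solution is bounded at infinity ($\psi \sim$ const), not $\psi \sim R$ as you wrote; it is $\theta_0 = R\psi$ that grows linearly.

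The real slip is in your $e_1$ bookkeeping. In the ``but wait'' moment you conclude that $\partial_t^2$ applied to the explicit leading terms $c_\nu \lambda^{1/2}[\lambda t]^{-2}\lambda r + d_\nu \lambda^{1/2}[\lambda t]^{-2}$ of $v_0$ is subleading and should be absorbed into $e_1^*$. This is backwards. These leading terms are pure powers of $t$ (times $r$), so $\partial_t^2$ just multiplies them by a constant times $t^{-2}$ --- \emph{not} $[\lambda t]^{-2}$ --- producing precisely
\[
c_\nu' \,\lambda(t)^{1/2}[\lambda(t)t]^{-2} t^{-2}\lambda(t) r + d_\nu'\, \lambda(t)^{1/2}[\lambda(t)t]^{-2}t^{-2},
\]
which after multiplication by $t^2$ gives the displayed leading part of $t^2 e_1$. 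The remainder $e_1^*$ then consists of $-\partial_t^2 v_0^*$ (whence the $\epsilon$-loss, from trading the logarithm for a small power) together with $N(u_0,v_0)$, which is genuinely of higher order $\lambda^{1/2}[\lambda t]^{-4}\langle \lambda r\rangle^{-1}$ in the cone $r\lesssim t$. There are no additional ``non-smooth/slowly-decaying pieces'' to supply the leading terms of $e_1$; they come directly from $\partial_t^2$ acting on the explicit leading terms of $v_0$, and your plan works once you reverse this attribution.
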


\begin{proof}
Setting $\tilde{v}_0(t,R):=Rv_0(t,\lambda(t)^{-1}R)$ and $R:=\lambda(t)r$, Eq.~\eqref{def:v0} reads
\begin{equation}
\label{eq:tildev0}
\partial_R^2 \tilde{v}_0(t,R)+5 W(R)^4 \tilde{v}_0(t,R)=-\lambda(t)^{-2}Re_0(t,\lambda(t)^{-1}R) 
\end{equation}
which is an inhomogeneous ODE in $R$ and $t$ can be treated as a parameter.
Explicitly, the potential reads
$$ 5W(R)^4=\frac{5}{(1+\frac{R^2}{3} )^2} $$
and the homogeneous equation has the fundamental system $\{\phi_0,\theta_0\}$ given by
\begin{align*}
\phi_0(R)&=R(1-\tfrac{R^2}{3})(1+\tfrac{R^2}{3})^{-\frac32}=-\sqrt{3}+\phi_0^*(R) \\
\theta_0(R)&=(1+\tfrac{R^2}{3})^{-\frac32}(1-2 R^2+\tfrac{R^4}{9})=\tfrac{1}{\sqrt{3}}R+\theta_0^*(R) 
\end{align*}
where
$$ |\partial_R^k \phi_0^*(R)|\leq C_k \langle R \rangle^{-2-k},\quad
|\partial_R^k \theta_0^*(R)|\leq C_k \langle R \rangle^{-1-k} $$
for all $R\geq 0$ and $k\in \mathbb{N}_0$.
Furthermore, for the Wronskian we 
obtain $W(\theta_0,\phi_0)=\theta_0 \phi_0'-\theta_0' \phi_0=1$.
According to the variation of constants formula, a solution $\tilde{v}_0$ of Eq.~\eqref{eq:tildev0}
is therefore given by
\begin{align*}
\tilde{v}_0(t,R)=&-\lambda(t)^{-2}\phi_0(R)\int_0^R \theta_0(R')R'e_0(t,\lambda(t)^{-1}R')dR' \\
&+\lambda(t)^{-2}\theta_0(R)\int_0^R \phi_0(R')R'e_0(t,\lambda(t)^{-1}R')dR' 
\end{align*}
and by using Lemma \ref{lem:e0} 
we obtain the claim concerning $v_0$.

The assertion for $e_1$ now follows from $e_1=-\partial_t^2 v_0+N(u_0,v_0)$.
We have
$$ e_1(t,r)=c_\nu \lambda(t)^\frac12 [\lambda(t)t]^{-2}t^{-2}\lambda(t)r+d_\nu \lambda(t)^\frac12[\lambda(t)t]^{-2}t^{-2}
-\partial_t^2 v_0^*(t,r)
+N(u_0,v_0) $$
and $|\partial_t^2 v_0^*(t,r)|\lesssim \lambda(t)^{\frac12}[\lambda(t)t]^{-2+\epsilon}t^{-2}
\langle \lambda(t)r\rangle^{-1}$ where the $\epsilon$-loss comes from the logarithm.
The nonlinear contributions are of higher order and belong to $e_1^*$ since
$$ |N(u_0,v_0)(t,r)|\lesssim \lambda(t)^\frac52 [\lambda(t)t]^{-4}\langle \lambda(t)r\rangle^{-1} $$
where we use $[\lambda(t)t]^{-1}\lesssim \langle \lambda(t)r\rangle^{-1}$ for $0\leq r\lesssim t$.
The derivative bounds follow from the corresponding bounds on $v_0^*$ by the Leibniz rule.
\end{proof}

\subsection{Improvement near the lightcone}
We have to go one step further and continue improving our approximate solution.
Thus, we add another correction $v_1$ to $u_1$ and set $u_2:=u_1+v_1=u_0+v_0+v_1$.
This yields
\begin{align*} 
\Box u_2+u_2^5&=\Box v_1+5 u_1^4 v_1+N(u_1,v_1)+\Box u_1+u_1^5 \\
&=\Box v_1+5u_1^4 v_1+N(u_1,v_1)+e_1. 
\end{align*}
This time we intend to improve the approximate solution near the lightcone $r=t$ since the decay of the error
$e_1$ near the center is already good enough.
Of course, near the lightcone we cannot ignore the temporal derivative but thanks to
the decay of
$u_1(t,t)$ it turns out that we may safely neglect the 
potential and the
nonlinearity.
Furthermore, we also ignore the higher order error $e_1^*$ which already decays fast enough. 
Thus, we arrive at
the approximate equation
\begin{equation}
\label{eq:defv1}
\Box v_1(t,r)=-c_\nu \lambda(t)^{-\frac32}t^{-4} \lambda(t)r-d_\nu \lambda(t)^{-\frac32}t^{-4}
\end{equation}
and the next error $e_2$ is given by
\begin{equation}
\label{eq:defe2}
e_2:=\Box u_2+u_2^5.
\end{equation}
Note carefully in Lemma \ref{lem:v1} below that in order to gain decay in $t$ we sacrifice differentiability at
the lightcone.

\begin{lemma}
\label{lem:v1}
There exists a solution $v_1=v_{11}+v_{12}$ of Eq.~\eqref{eq:defv1} and a decomposition 
$v_{1j}=v_{1j}^g+v_{1j}^b$, $j=1,2$,  such that
\begin{align*}
|v_{11}(t,r)|&\lesssim \lambda(t)^{-\frac12}t^{-4}r^3, &
|v_{12}(t,r)|&\lesssim \lambda(t)^{-\frac32}t^{-4}r^2 &  (r&\leq \tfrac12 t) \\
|v_{11}^g(t,r)|&\lesssim \lambda(t)^{-\frac12}t^{-1}, &
|v_{12}^g(t,r)|&\lesssim \lambda(t)^{-\frac32}t^{-2} & (r&\leq 2t) \\
|v_{11}^b(t,r)|&\lesssim \lambda(t)^{-\frac12}t^{-1}(1-\tfrac{r}{t})^{\frac12(1-\nu)}, &
|v_{12}^b(t,r)|&\lesssim \lambda(t)^{-\frac32}t^{-2}(1-\tfrac{r}{t})^{\frac12(1-3\nu)} &(r&<t)
\end{align*}
for all $t\geq t_0>0$ and estimates for the derivatives follow by formally differentiating these bounds.
As a consequence, the error $e_2$ as defined by Eq.~\eqref{eq:defe2}, satisfies the bound
\begin{align*} 
|t^2 e_2(t,r)|&\lesssim \lambda(t)^\frac12
[\lambda(t)t]^{-2+\epsilon} t^{\frac52|1-\nu|}\langle \lambda(t)r\rangle^{-1} 
\end{align*}
for $0<r<t-c$, all $t\geq t_0>c_0\geq c$ (where $c_0$ is a fixed constant), and for any (fixed) $\epsilon>0$.
Finally, we have $\partial_r^{2k}v_{11}(t,r)|_{r=0}=\partial_r^{2k+1}v_{12}(t,r)|_{r=0}=0$, $k\in \N_0$.
\end{lemma}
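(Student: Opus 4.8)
The plan is to solve the wave equation~\eqref{eq:defv1} explicitly by splitting the right-hand side into its two pieces and treating each separately via the radial representation formula for $\Box$. Writing $f(t,r):=-c_\nu\lambda(t)^{-\frac32}t^{-4}\lambda(t)r=-c_\nu\lambda(t)^{-\frac12}t^{-4}r$ and $g(t,r):=-d_\nu\lambda(t)^{-\frac32}t^{-4}$, I would set $v_1=v_{11}+v_{12}$ where $v_{11}$ solves $\Box v_{11}=f$ and $v_{12}$ solves $\Box v_{12}=g$. The key observation is that $f$ and $g$ are, up to the slowly varying prefactor $\lambda(t)^{-\frac12}$ resp.\ $\lambda(t)^{-\frac32}$, pure powers of $t$ times a polynomial in $r$, so the inhomogeneous radial wave equation can be integrated more or less in closed form. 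Using the substitution $w=rv_{1j}$, so that $\partial_t^2 w-\partial_r^2 w=-rf$ (resp.\ $-rg$), and then the d'Alembert/Duhamel formula
\[
w(t,r)=\tfrac12\int_{t_0}^t\int_{|r-(t-s)|}^{r+(t-s)} \rho\, h(s,\rho)\,d\rho\,ds + \text{(homogeneous data terms)},
\]
I would compute the resulting integrals, choosing the free data (at some initial time $t_0$) to be zero so that $v_1$ is entirely the particular Duhamel solution. Because $rf$ is a polynomial of low degree in $\rho$ and a negative power of $s$, the $\rho$-integral is elementary and the $s$-integral produces powers of $t$ together with boundary contributions from $s=t_0$ and from the light-cone geometry; it is precisely the light-cone boundary terms that will carry the non-smooth factor $(1-r/t)^{\frac12(1-\nu)}$.

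The decomposition $v_{1j}=v_{1j}^g+v_{1j}^b$ should then be read off directly from this computation: the ``good'' part $v_{1j}^g$ collects the genuinely smooth (polynomial-in-$t,r$ times $\lambda$-powers) contributions, while the ``bad'' part $v_{1j}^b$ collects the terms that are singular — merely H\"older — at $r=t$. Concretely, when one integrates $\int^t s^{a}(1-\tfrac{\rho}{s})^{b}\cdots$ against the cone constraint $\rho\le r+(t-s)$, one picks up factors like $(1-\tfrac{r}{t})^{1+b}$; tracking the exponents with $\lambda(s)=s^{-(1-\nu)}$ inside the integrand is what produces the exponents $\tfrac12(1-\nu)$ and $\tfrac12(1-3\nu)$ in the statement (the $v_{12}$ piece involves one extra power of $\lambda^{-1}$, hence the shift from $1-\nu$ to $1-3\nu$). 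For the region $r\le\tfrac12 t$ I would instead expand the Duhamel kernel away from the cone, where everything is smooth, and Taylor-expand in $r$ to extract the vanishing $\partial_r^{2k}v_{11}|_{r=0}=\partial_r^{2k+1}v_{12}|_{r=0}=0$; this parity statement is automatic because $rf$ and $rg$ have the right parity in $r$ and the wave propagator preserves it. The derivative bounds ``follow by formally differentiating'' since differentiating the Duhamel formula in $t$ or $r$ only differentiates the explicit integrand and its elementary antiderivatives, costing the expected powers of $t$ (or of $(1-r/t)$) each time.

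Finally, to get the bound on the new error $e_2$ I would use
\[
e_2=\Box v_1+5u_1^4 v_1+N(u_1,v_1)+e_1^{\mathrm{nonleading}},
\]
where the leading parts of $e_1$ from Lemma~\ref{lem:e1} have been cancelled by construction of $v_1$ and only $t^2e_1^*$ survives among the $e_1$-terms. The term $t^2 e_1^*$ is already $\lesssim\lambda^{\frac12}[\lambda t]^{-2+\epsilon}\langle\lambda r\rangle^{-1}$ by Lemma~\ref{lem:e1}; the potential term $5u_1^4 v_1$ and the nonlinearity $N(u_1,v_1)$ I would estimate using the size of $u_1\simeq W_{\lambda(t)}$ and the just-proved bounds on $v_1$ — on $r\le\tfrac12 t$ these are governed by the fast-decaying $v_{1j}$ bounds, while on $\tfrac12 t\le r<t-c$ one uses the $v_{1j}^g$, $v_{1j}^b$ bounds together with $(1-\tfrac{r}{t})\gtrsim c/t$, which is exactly where the factor $t^{\frac52|1-\nu|}$ enters (the worst case being $v_{12}^b$ contributing roughly $(1-r/t)^{\frac12(1-3\nu)}$ which on $r<t-c$ is $\lesssim t^{\frac32|1-\nu|}$, and the cross terms in $N$ push this up to $\frac52|1-\nu|$). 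I expect the main obstacle to be bookkeeping: carefully separating smooth from H\"older parts in the Duhamel integrals, keeping all powers of $\lambda(t)=t^{-(1-\nu)}$ straight, and checking that every term genuinely fits into the claimed $t^{-2+\epsilon}t^{\frac52|1-\nu|}$ bound uniformly down to the inner cone $r=t-c$ — the absolute value $|1-\nu|$ (rather than $1-\nu$) signalling that both signs of $1-\nu$, i.e.\ both $\mu>0$ and $\mu<0$, must be accommodated simultaneously.
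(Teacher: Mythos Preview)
Your Duhamel approach has a genuine gap. A Duhamel solution with zero data at a finite time $t_0$ and smooth source is smooth on $\{t>t_0\}$; it carries no H\"older singularity at $r=t$, so your account of where the factor $(1-r/t)^{\frac12(1-\nu)}$ arises is simply incorrect. More seriously, this solution fails the stated bounds near the light cone. For $r$ comparable to $t$ the backward characteristic region from $(t,r)$ reaches down to $s=t_0$, and the contribution to $w_{11}=rv_{11}$ from $s$ near $t_0$ is of order
\[
t^3\int_{t_0}^{O(1)} s^{-\frac{7+\nu}{2}}\,ds \;\sim\; t^3\,t_0^{-\frac{5+\nu}{2}},
\]
so that $v_{11}(t,r)\sim t^2$ there, which violates $|v_{11}|\lesssim\lambda(t)^{-\frac12}t^{-1}$ by growing powers of $t$. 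These unwanted terms are free waves determined by the choice of initial time; to kill them you would have to prescribe very specific nonzero data at $t_0$, and identifying those data amounts to constructing the self-similar solution directly.

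The paper instead exploits the exact scaling of the source and makes the self-similar ansatz $v_{11}(t,r)=\lambda(t)^{-\frac12}t^{-1}\tilde v_{11}(a)$, $v_{12}(t,r)=\lambda(t)^{-\frac32}t^{-2}\tilde v_{12}(a)$ with $a=r/t$. This reduces each equation to a linear second-order ODE in $a$ with a regular singular point at $a=1$, whose homogeneous fundamental system is explicit: $\theta_\pm(a)=a^{-1}(1\pm a)^{\frac12(1-\nu)}$ (and the same with $\nu\mapsto 3\nu$ for $\tilde v_{12}$). Variation of constants then yields a particular solution that splits naturally into a piece regular at $a=1$ (defining $v_{1j}^g$) and a piece carrying the indicial factor $(1-a)^{\frac12(1-\nu)}$ (defining $v_{1j}^b$); the H\"older exponents in the lemma are precisely the indicial exponents of this ODE at the light cone, not artifacts of a Duhamel boundary. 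Your parity observations and your treatment of $e_2=5u_1^4 v_1+N(u_1,v_1)+e_1^*$ are, by contrast, correct once the right $v_1$ is in hand.
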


\begin{proof}
Instead of solving Eq.~\eqref{eq:defv1} directly, we set $v_1=v_{11}+v_{12}$ and consider
\[ \Box v_{11}=-c_\nu \lambda(t)^{-\frac32}t^{-4} \lambda(t)r,\quad
\Box v_{12}=-d_\nu \lambda(t)^{-\frac32}t^{-4} \]
separately.
In order to reduce these equations to ODEs, we use the self-similar coordinate $a=\frac{r}{t}$.
We start with
$$ t^2 \Box v_{11}(t,r)=-c_\nu \lambda(t)^{-\frac12} t^{-1}\tfrac{r}{t} $$
and make the self-similar ansatz
$$ v_{11}(t,r)=\lambda(t)^{-\frac12} t^{-1}\tilde{v}_{11}(\tfrac{r}{t}) $$
which yields
\begin{equation}
\label{eq:odea}
(1-a^2)[\tilde{v}_{11}''(a)+\tfrac{2}{a}\tilde{v}_{11}'(a)]-(1+\nu)a\tilde{v}_{11}'(a)
-[\tfrac12(1-\nu)-1][\tfrac12(1-\nu)-2]\tilde{v}_{11}(a)=-c_\nu a. 
\end{equation}
The homogeneous equation has the fundamental system $\{\theta_\pm\}$ given by
$$ \theta_\pm(a)=\tfrac{1}{a}(1\pm a)^{\frac12(1-\nu)} $$
with the Wronskian
$$ W(\theta_+,\theta_-)(a)=\frac{\nu-1}{a^2(1-a^2)^{\frac12(1+\nu)}}. $$
Furthermore, $\psi:=\theta_--\theta_+$ is another solution of the homogeneous equation which is
smooth at $a=0$ and clearly, $W(\theta_+,\psi)=W(\theta_+,\theta_-)$.
Consequently, a solution $\tilde{v}_{11}$ 
of Eq.~\eqref{eq:odea} is given by
\begin{equation} 
\label{eq:tildev1}
\tilde{v}_{11}(a)=\frac{c_\nu \theta_+(a)}{\nu-1}\int_0^a b^3(1-b^2)^{\frac12(\nu-1)} 
\psi(b)db 
-\frac{c_\nu \psi(a)}{\nu-1}\int_0^a b^3(1-b^2)^{\frac12(\nu-1)} 
\theta_+(b)db
\end{equation}
and it follows that 
$\tilde{v}_{11}(a)=O(a^3)$ as $a\to 0$.
Thus, we obtain $v_{11}(t,r)=v_{11}^g(t,r)+v_{12}^b(t,r)$ with
the stated bounds.
Next, we turn to the equation $t^2\Box v_{12}=-d_\nu\lambda(t)^{-\frac32}t^{-2}$.
Here, we make the ansatz $v_{12}(t,r)=\lambda(t)^{-\frac32}t^{-2}\tilde v_{12}(\frac{r}{t})$
which yields Eq.~\eqref{eq:odea} but with $\nu$ replaced by $3\nu$.
Thus, we obtain $\tilde v_{12}(a)=O(a^2)$ as $a\to 0$ and $v_{12}=v_{12}^g+v_{12}^b$ with the claimed bounds.

By construction, the error $e_2$ is given by
$$ e_2=5u_1^4 v_1+N(u_1,v_1)+e_1^* $$
and from Lemma \ref{lem:e1} we recall the bounds
\begin{align*}
|u_1(t,r)|&\leq |u_0(t,r)|+|v_0(t,r)|\lesssim \lambda(t)^{\frac12}\langle \lambda(t)r\rangle^{-1}
+\lambda(t)^{\frac12}
[\lambda(t)t]^{-2}\lambda(t)r \\
&\lesssim \lambda(t)^\frac12 \langle \lambda(t)r\rangle ^{-1}\\
|e_1^*(t,r)|&\lesssim \lambda(t)^{\frac12}
[\lambda(t)t]^{-2+\epsilon}t^{-2}\langle \lambda(t)r \rangle^{-1}.
\end{align*}
From above we have the bounds
\[ |v_1(t,r)|\lesssim \lambda(t)^\frac12 [\lambda(t)t]^{-4}\langle \lambda(t)r\rangle^3 \]
for $0\leq r <\frac12 t$ and 
\[ |v_{11}^b(t,r)|\lesssim \lambda(t)^\frac12 [\lambda(t)t]^{-1}(1-\tfrac{r}{t})^{\frac12(1-\nu)}
\lesssim \lambda(t)^\frac12 [\lambda(t)t]^{-1}t^{\frac12|1-\nu|} \]
for $\frac12 t\leq r\leq t-c$.
Furthermore,
\[ |v_{12}^b(t,r)|\lesssim \lambda(t)^\frac12 [\lambda(t)t]^{-2}(1-\tfrac{r}{t})^{\frac12(1-3\nu)}\lesssim 
\lambda(t)^\frac12 [\lambda(t)t]^{-1}t^{\frac12|1-\nu|} \]
for $\frac12 t\leq r\leq t-c$ and the stated bounds for $e_2$ follow.
\end{proof}

\begin{remark}
There is a slight nuisance associated with the function $v_{11}$ constructed in Lemma \ref{lem:v1}:
its odd derivatives with respect to $r$ do not vanish at the origin, i.e., $v_{11}$ is not smooth
at the center when viewed as a radial function on $\mathbb{R}^3$.
This inconvenient fact, however, is easily remedied if we replace $v_{11}$ by 
$\theta v_{11}$ where $\theta$ is a smooth function with $\theta(r)=r$ for, say, $r\in [0,\frac12]$ and
$\theta(r)=1$ for $r\geq 1$.
This modification does not affect the bounds given in Lemma \ref{lem:v1} (provided that $t_0>1$)
and it yields 
the desired behavior near the center.
Furthermore, since
$\Box (\theta v_{11})(t,r)=\Box v_{11}(t,r)$ for $r \geq 1$, the stated estimates for the corresponding
error $e_2$ are not altered either.
Consequently, we may equally well assume from the onset that 
$v_{11}$ and $e_2$ are smooth at the center (as functions on $\mathbb{R}^3$).
This remark will be useful later on.
\end{remark}

\section{The transport equation}
\label{sec:exact}

For the sake of clarity we outline the main results of this section.
\begin{enumerate}
\item We make the ansatz $u=u_2+\varepsilon$ and perform the change of variables $(t,r)\mapsto (\tau,R)$
where $\tau=\frac{1}{\nu}\lambda(t)t$, $R=\lambda(t)r$. From the requirement $\Box u+u^5=0$ we derive
an equation for $v(\tau,R):=R\varepsilon(\nu \tilde\lambda(\tau)^{-1}\tau,\tilde\lambda(\tau)^{-1}R)$ of the form
\begin{equation}
\label{eq:voutl} \mc D^2 v+\beta_\nu(\tau)\mc D v+\mc L v=\tilde \lambda(\tau)^{-2}[\mbox{r.h.s.}]  
\end{equation}
where $\mc D=\partial_\tau+\beta_\nu(\tau)(R\partial_R-1)$, $\beta_\nu(\tau)=(1-\frac{1}{\nu})\tau^{-1}$, and
$\mc L=-\partial_R^2-5W(R)^4$.

\item Next, we discuss the spectral theory of the Schr\"odinger operator $\mc L$. We derive the 
asymptotics of the spectral measure $\mu$ associated to $\mc L$ by applying Weyl-Titchmarsh theory. 
As a consequence, we obtain a precise description of the spectral transformation (the ``distorted Fourier transform'')
$\mc U: L^2(0,\infty)\to L^2(\sigma(\mc L),d\mu)$, the unitary map satisfying $\mc U \mc L f(\xi)=\xi \mc U f(\xi)$.

\item We apply this map to Eq.~\eqref{eq:voutl} in order to ``transform away'' the potential $-5W(R)^4$.
This yields an equation for \footnote{In fact, we have to work with a vector-valued function $x$ since
$\mc L$ has a negative eigenvalue. This is not essential for the argument but complicates the notation.
Thus, for the moment we ignore this issue.}
$x(\tau,\xi):=[\mc U v(\tau,\cdot)](\xi)$ of the form
\[ [\hat{\mc D}_c^2 +\beta_\nu(\tau) \hat{\mc D}_c+\xi]x(\tau,\xi)=\tilde \lambda^{-2}(\tau)[\mbox{r.h.s.}]
+\mbox{``$\mc K$-terms''} \]
where $\hat{\mc D}_c=\partial_\tau-2\beta_\nu \xi \partial_\xi+O(\tau^{-1})$.
The expression ``$\mc K$-terms'' stands for nonlocal error terms which arise from the application
of $\mc U$ to $R\partial_R$ (in $\mc D$).

\item Then we study the inhomogeneous equation
\[ [\hat {\mc D}_c^2+\beta_\nu(\tau) \hat{\mc D}_c+\xi]x(\tau,\xi)=b(\tau,\xi) \]
and solve it by the method of characteristics.
We obtain suitable pointwise bounds for the kernel of the solution operator (the ``parametrix'').

\item Finally, we introduce the basic solution spaces we work with and prove bounds for
the parametrix in these spaces.
\end{enumerate}

\subsection{Change of variables}
The function $u_2$ constructed in Section \ref{sec:approx} satisfies the critical wave equation
only up to an error $e_2$.
In this section we aim at constructing an \emph{exact} solution to
$$ \Box u+u^5=0 $$
of the form $u=u_2+\varepsilon$ where 
$\varepsilon(t,r)$
decays sufficiently fast as $t \to \infty$.
Recall that $u_2$ is nonsmooth at the lightcone and thus, we restrict our construction to a truncated
forward lightcone 
$$ K_{t_0,c}^\infty:=\{(t,x) \in \R\times \R^3: t\geq t_0, |x| \leq t-c\} $$
for $t_0>c>0$.
Consequently, we have to solve the equation
$$ \Box \varepsilon+5u_2^4 \varepsilon+N(u_2,\varepsilon)+e_2=0 $$
which we rewrite as
\begin{equation}
\label{eq:eps}
\Box \varepsilon+5u_0^4 \varepsilon=5(u_0^4-u_2^4)\varepsilon-N(u_2,\varepsilon)-e_2.
\end{equation}
As before, in order to obtain a time-independent potential, we use the new space variable
$R(t,r)=\lambda(t)r$.
Furthermore, it is convenient to introduce the new time variable $\tau(t)=\frac{1}{\nu}t^\nu$.
We write $\lambda(t)=\tilde{\lambda}(\tau(t))$ and note that
$$ \tau'(t)=\lambda(t),\quad \tilde{\lambda}'(\tau(t))=\frac{\lambda'(t)}{\lambda(t)}. $$
Consequently, the derivatives transform according to
$$ \partial_t=\tilde{\lambda}(\tau)\left (\partial_\tau+\tilde{\lambda}'(\tau)
\tilde{\lambda}(\tau)^{-1}R\partial_R \right ),\quad
\partial_r=\tilde{\lambda}(\tau)\partial_R
$$
and by setting $\tilde{v}(\tau(t),R(t,r))=\varepsilon(t,r)$ we obtain from
Eq.~\eqref{eq:eps} the problem
\begin{align}
\label{eq:tildev}
[\partial_\tau&+\beta_\nu(\tau)R\partial_R ]^2 \tilde{v}
+\beta_\nu(\tau)
[\partial_\tau+\beta_\nu(\tau)R\partial_R]\tilde{v} 
-\left [\partial_R^2+\tfrac{2}{R}\partial_R+5W(R)^4 \right ]\tilde{v} \\
&=\tilde{\lambda}(\tau)^{-2}\left [5(u_2^4-u_0^4)\tilde{v}+N(u_2,\tilde{v})+e_2 \right ]
\nonumber
\end{align}
with 
\begin{equation}
\label{eq:defbeta}
\beta_\nu(\tau)=\tilde{\lambda}'(\tau)\tilde{\lambda}(\tau)^{-1}=-(\tfrac{1}{\nu}-1) \tau^{-1}
\end{equation}
where it is understood, of
course, that the functions $u_0$, $u_2$ and $e_2$ be evaluated accordingly.
Finally, the standard substitution $\tilde{v}(\tau,R)=R^{-1}v(\tau,R)$ transforms the radial
$3d$ Laplacian into the radial $1d$ Laplacian and, by noting that
\[ [\partial_\tau+\beta_\nu(\tau)R\partial_R]\tfrac{v(\tau,R)}{R}=
\tfrac{1}{R}[\partial_\tau+\beta_\nu(\tau)R\partial_R-\beta_\nu(\tau)]v(\tau,R), \]
we end up with the main equation
\begin{align}
\label{eq:main}
\mc{D}^2 v+\beta_\nu(\tau)\mc{D} v+\mc{L}v
=\tilde{\lambda}(\tau)^{-2}\left [5(u_2^4-u_0^4)v+R N(u_2,R^{-1}v)+Re_2\right ]
\end{align}
where $\mc{D}=\partial_\tau+\beta_\nu(\tau)(R\partial_R-1)$ and $\mc{L}=-\partial_R^2-5W(R)^4$.
Our goal is to solve Eq.~\eqref{eq:main} backwards in time
with zero Cauchy data at $\tau=\infty$.
Roughly speaking, the idea is to perform a distorted Fourier transform with respect to the self-adjoint
operator $\mc{L}$ and to solve the remaining transport-type equation on the Fourier side
by the method of characteristics.

\subsection{Spectral theory of $\mc{L}$ and the distorted Fourier transform}

In the following we recall some standard facts about the spectral theory of $\mc{L}$, see
e.g.~ \cite{GZ06}, \cite{W03II}, \cite{T09}, \cite{DS63II}.
We write $V:=-5W^4$ and emphasize that $V(R)$ depends smoothly on $R$ and decays 
like $R^{-4}$ as $R \to \infty$.
The Schr\"odinger operator $\mc{L}=-\partial_R^2+V$ is self-adjoint in $L^2(0,\infty)$ with domain
$$ \mathrm{dom}(\mc{L})=\{f \in L^2(0,\infty): f,f' \in AC[0,R]\:\forall R>0, f(0)=0, \mc{L}f \in L^2(0,\infty)\} $$
since the endpoint $0$ is regular whereas $\infty$ is in the limit-point case.
Furthermore, there exists a zero energy resonance which is induced by the scaling symmetry
of the wave equation.
More precisely, the function 
\begin{equation}
\label{eq:res}
\phi_0(R)=2R \partial_\lambda|_{\lambda=1}\lambda^\frac12 W(\lambda R)=\frac{R(1-\frac{1}{3}R^2)}
{(1+\frac{1}{3}R^2)^{3/2}} 
\end{equation}
belongs to $L^\infty(0,\infty)$ and (formally) satisfies $\mc{L}\phi_0=0$.
Since $\phi_0$ has precisely one zero on $(0,\infty)$, it follows by Sturm oscillation theory
(see e.g.~ \cite{DS63II})
that $\mc{L}$ has exactly one simple negative eigenvalue $\xi_d<0$.
The corresponding eigenfunction $\phi_d$ is smooth and positive on $(0,\infty)$ and decays
exponentially towards $\infty$.
Thus, the spectrum of $\mc{L}$ is given by $\sigma(\mc{L})=\{\xi_d\}\cup [0,\infty)$ and 
thanks to the decay of $V$, the continuous part is in fact absolutely continuous. 

We denote by $\{\phi(\cdot,z),
\theta(\cdot,z)\}$, $z\in \mathbb{C}$, the standard fundamental system  of 
\begin{equation}
\label{eq:spec}
\mc{L}f=zf
\end{equation}
satisfying
$$ \phi(0,z)=\theta'(0,z)=0,\quad \phi'(0,z)=\theta(0,z)=1. $$
In particular we have $W(\theta(\cdot,z), \phi(\cdot,z))=1$ for all $z \in \mathbb{C}$.
Furthermore, $\psi(\cdot,z)$ for $z\in\mathbb{C}\backslash \mathbb{R}$ 
denotes the Weyl-Titchmarsh solution of Eq.~\eqref{eq:spec}, i.e., 
the unique solution of Eq.~\eqref{eq:spec} which belongs to $L^2(0,\infty)$ and 
satisfies
$\psi(0,z)=1$.
Consequently, for each $z \in \mathbb{C}\backslash \mathbb{R}$ there exists a number $m(z)$ such
that
$$ \psi(\cdot,z)=\theta(\cdot,z)+m(z)\phi(\cdot,z) $$
and we obtain
$m(z)=W(\theta(\cdot,z), \psi(\cdot,z))$.
The Weyl-Titchmarsh $m$-function is of crucial importance since it determines the spectral 
measure.

\begin{proposition}
\label{prop:Fourier}
\begin{enumerate}
\item For any $\xi>0$ the limit
$$ \rho(\xi):=\tfrac{1}{\pi}\lim_{\varepsilon \to 0+}\Im m(\xi+i\varepsilon) $$
exists but $\rho(\xi) \to \infty$ as $\xi \to 0+$.
\item 
Let $\mu$ be the Borel measure defined by
$$ d\mu(\xi)=\frac{d\Theta_{\xi_d}(\xi)}{\|\phi(\cdot,\xi_d)\|_{L^2(0,\infty)}^2}+\rho(\xi)d\xi $$
where $d\Theta_{\xi_d}$ denotes the Dirac measure at $\xi_d$.
Then there exists a unitary operator $\mc{U}: L^2(0,\infty) \to L^2(\sigma(\mc{L}),d\mu)$, 
the ``distorted Fourier transform'', which diagonalizes $\mc{L}$, i.e., 
$$ \mc{U}\mc{L}=M_{\mathrm{id}}\mc{U} $$
where $M_\mathrm{id}$ is the (maximally defined) operator of multiplication 
by the identity function. \footnote{In other words, $M_\mathrm{id}f(\xi)=\xi f(\xi)$.}

\item The distorted Fourier transform is explicitly given by
$$ \mc{U}f(\xi)=\lim_{b\to \infty}\int_0^b \phi(R,\xi)f(R)dR, \quad \xi\in \sigma(\mc{L}) $$
where the limit is understood with respect to $\|\cdot\|_{L^2(\sigma(\mc{L}),d\mu)}$.

\item The inverse transform $\mc{U}^{-1}$ reads
$$ \mc{U}^{-1}\hat{f}(R)=\frac{\phi(R,\xi_d)}{\|\phi(\cdot,\xi_d)\|_{L^2(0,\infty)}^2}
\hat{f}(\xi_d)+\lim_{b\to \infty}\int_0^b 
\phi(R,\xi)\hat{f}(\xi)\rho(\xi)d\xi $$
where the limit is understood with respect to $\|\cdot\|_{L^2(0,\infty)}$.
\end{enumerate}
\end{proposition}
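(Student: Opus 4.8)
\textbf{Plan of proof for Proposition \ref{prop:Fourier}.}
The proposition is essentially a concrete incarnation of the Weyl--Titchmarsh theory for the half-line Schr\"odinger operator $\mc L=-\partial_R^2+V$ with $V=-5W^4$, together with a refined analysis at the bottom of the continuous spectrum where the zero energy resonance $\phi_0$ lives. The plan is to combine (i) soft functional-analytic input (the spectral theorem in the Weyl form, which already gives a measure $d\mu$ and a unitary $\mc U$ diagonalizing $\mc L$ with kernel $\phi(\cdot,\xi)$) with (ii) a quantitative study of the Jost solution and the $m$-function near $\xi=0$ to extract the stated asymptotics $\rho(\xi)\to\infty$.

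First I would record the classical facts: since $0$ is a regular endpoint and $\infty$ is limit point (because $V(R)=O(R^{-4})$ is short range, so no oscillation at infinity), Weyl--Titchmarsh theory produces a unique Herglotz $m$-function $m(z)$ with $\psi(\cdot,z)=\theta(\cdot,z)+m(z)\phi(\cdot,z)\in L^2$ near $\infty$, a spectral measure $d\mu$ on $\sigma(\mc L)$, and an isometry $\mc U$ with $\mc U f(\xi)=\lim_b\int_0^b\phi(R,\xi)f(R)\,dR$ satisfying $\mc U\mc L=M_{\mathrm{id}}\mc U$; this is exactly statements (2), (3) and (4) modulo the explicit splitting of $d\mu$ into the point mass at $\xi_d$ and the absolutely continuous part $\rho(\xi)\,d\xi$. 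The splitting follows from the Stieltjes inversion formula: $\mu(\{\xi_d\})=\|\phi(\cdot,\xi_d)\|_{L^2}^{-2}$ is the standard normalization of the eigenfunction (using that $\phi(\cdot,\xi_d)\in L^2$ precisely at the eigenvalue), while for $\xi>0$ one has $\frac{d\mu}{d\xi}(\xi)=\rho(\xi):=\frac1\pi\lim_{\varepsilon\to0+}\Im m(\xi+i\varepsilon)$ and the existence of this limit for every $\xi>0$ comes from absolute continuity of the continuous spectrum, which in turn follows from the standard limiting absorption principle for short-range potentials on the half-line (equivalently, from boundary values of the Jost function). So parts (2)--(4) and the mere \emph{existence} of $\rho(\xi)$ in part (1) are essentially bookkeeping on top of well-known theory, for which I would cite \cite{GZ06}, \cite{W03II}, \cite{T09}, \cite{DS63II}.

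The only genuinely substantive point is the blow-up $\rho(\xi)\to\infty$ as $\xi\to0+$, and this is where the zero energy resonance enters. The plan is: let $f(R,z)$ be the Jost solution normalized by $f(R,z)\sim e^{i\sqrt z R}$ as $R\to\infty$ (with $\Im\sqrt z>0$), so that $\psi(\cdot,z)=f(\cdot,z)/f(0,z)$ and hence $m(z)=\theta'(\cdot)$-free expression; concretely $m(z)=W(\theta(\cdot,z),f(\cdot,z))/f(0,z)$ and, using $\phi(0,z)=0,\phi'(0,z)=1$, one gets $\Im m(\xi+i0)=\frac{\sqrt\xi}{|f(0,\xi+i0)|^2}$ after normalizing the flux (the numerator being the Wronskian $W(f,\bar f)=2i\sqrt\xi$ up to constants). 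Thus $\rho(\xi)\simeq \sqrt\xi/|f(0,\xi)|^2$ and the claim reduces to showing $f(0,\xi)\to 0$ \emph{faster} than $\xi^{1/4}$ as $\xi\to 0$, i.e. the potential is \emph{resonant} at $0$: the zero-energy Jost solution $f(R,0)$ is bounded and does not vanish at $\infty$, yet is proportional to the resonance $\phi_0$, and since $\phi_0(0)=0$ we get $f(0,0)=0$. A Volterra/Levinson perturbation argument on the integral equation for $f(R,z)$ (valid because $\int_1^\infty R|V(R)|\,dR<\infty$) gives $f(0,\xi)=f(0,0)+c\sqrt\xi+o(\sqrt\xi)=c\sqrt\xi(1+o(1))$ with $c\neq0$ (here $c$ is essentially $\int_0^\infty \phi_0(R)\cdot(\text{something})\,dR$, nonzero by the resonance condition), whence $\rho(\xi)\simeq \sqrt\xi/\xi=\xi^{-1/2}\to\infty$. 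The main obstacle, and the step deserving the most care, is precisely this small-$z$ expansion of the Jost solution and the verification that the leading coefficient is nonzero — i.e. rigorously pinning down that the zero-energy solution that is $L^2$-admissible near infinity is the resonance $\phi_0$ (which is bounded but not decaying) rather than a genuine eigenfunction, so that the Wronskian normalization forces $f(0,\xi)\sim c\sqrt\xi$ instead of $f(0,\xi)\to f(0,0)\neq0$. Everything else is standard Weyl--Titchmarsh machinery.
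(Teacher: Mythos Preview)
Your plan is correct and mirrors the paper's approach: both reduce (2)--(4) to standard Weyl--Titchmarsh theory (citing the same references), identify the Weyl solution with the Jost function via $c_0(\xi)=1/f_+(0,\xi)$, and trace the blow-up of $\rho(\xi)$ to the vanishing $f_+(0,0)=0$ forced by the zero-energy resonance $\phi_0$. Your derivation of the explicit formula $\rho(\xi)=\tfrac{1}{\pi}\sqrt{\xi}/|f_+(0,\xi)|^2$ and the expansion $f_+(0,\xi)\sim c\sqrt{\xi}$ is in fact more than the paper proves at this stage---the paper's proof of the proposition only asserts $|c_0(\xi)|\to\infty$ and defers the precise $\xi^{-1/2}$ rate to the subsequent Lemmas \ref{lem:Phi}--\ref{lem:smxi}, but your sketch correctly anticipates that analysis.
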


\begin{proof}
Let $\Im z>0$ (and $\Im \sqrt{z}>0$). 
The Weyl-Titchmarsh solution is given by $\psi(\cdot,z)=c_0(z)f_+(\cdot,z)$ where 
the Jost function $f_+(\cdot,z)$ is defined by
$\mc{L}f_+(\cdot,z)=zf_+(\cdot,z)$ and $f_+(R,z)\sim e^{i\sqrt{z}R}$ as $R \to \infty$.
The coefficient $c_0(z)$ is chosen such that $\psi(0,z)=1$, i.e.,
\begin{equation}
\label{eq:defc0}
c_0(z)=\frac{1}{W(f_+(\cdot,z),\phi(\cdot,z))}. 
\end{equation}
The Jost function
satisfies the Volterra equation
\begin{equation} 
\label{eq:Volterraf+}
f_+(R,z)=e^{i\sqrt{z}R}+\frac{1}{\sqrt{z}}\int_R^\infty \sin(\sqrt{z}(R'-R))
V(R')f_+(R',z)dR' 
\end{equation}
and from this representation it follows immediately that 
$$ f_+(R,\xi)=\lim_{\varepsilon\to 0+}f_+(R,\xi+i\varepsilon)$$
exists provided that $\xi>0$ and moreover, $f_+(R,\xi)$ satisfies Eq.~\eqref{eq:Volterraf+} with $z=\xi$, cf.~\cite{DT79}.
From Eq.~\eqref{eq:Volterraf+} we also have $W(f_+(\cdot,z),\overline{f_+(\cdot,z)})=-2i\sqrt{z}$
and thus, by expanding
$$ f_+(\cdot,\xi)=a(\xi)\phi(\cdot,\xi)+b(\xi)\theta(\cdot,\xi) $$
we obtain (recall that $\phi(\cdot,\xi)$ and $\theta(\cdot,\xi)$ are real-valued)
$$ -2i\sqrt{\xi}=W(f_+(\cdot,\xi),\overline{f_+(\cdot,\xi)})=2i\Im(\overline{a(\xi)}b(\xi)) $$
which in particular implies $b(\xi)\not=0$ if $\xi>0$.
Consequently, we infer
$$ W(f_+(\cdot,\xi),\phi(\cdot,\xi))=b(\xi)\not=0 $$
and this shows that $c_0(\xi)$ (and therefore $\psi(R,\xi)$) is well-defined and finite 
provided that $\xi>0$.
However, due to the zero energy resonance we clearly have $|c_0(\xi)| \to \infty$ as $\xi \to 0+$.
The connection between the Weyl-Titchmarsh $m$-function and the spectral measure $\mu$ 
is provided by the classical formula
$$ \tilde{\mu}(\xi)=\tfrac{1}{\pi}\lim_{\delta \to 0+}\lim_{\varepsilon \to 0+}
\int_\delta^{\xi+\delta}\Im m(t+i\varepsilon)dt $$
where the distribution function $\tilde{\mu}$ determines $\mu$ in the sense of Lebesgue-Stieltjes.
The statements about the distorted Fourier transform are well-known and classical, see e.g.~ 
\cite{W03II},
\cite{T09}, \cite{DS63II}, \cite{GZ06}.
\end{proof}

\subsection{Asymptotics of the spectral measure for small $\xi$}
In order to be able to apply the distorted Fourier transform, we require more detailed information
on the behavior of the spectral measure. We start with the asymptotics as $\xi \to 0+$ where
the spectral measure blows up due to the existence of the zero energy resonance.
We also obtain estimates for the fundamental system $\{\phi(\cdot,\xi),\theta(\cdot,\xi)\}$ which
will be relevant later on.
As before, $\phi_0(R)=\phi(R,0)$ is the resonance function given in Eq.~\eqref{eq:res} and we write
$\theta_0(R):=\theta(R,0)$. Explicitly, we have
$$ \theta_0(R)=\frac{1-2R^2+\frac19 R^4}{(1+\frac13 R^2)^{3/2}}. $$

\begin{lemma}
\label{lem:Phi}
There exists a (complex-valued) function $\Phi(\cdot,\xi)$ satisfying 
$\mc{L}\Phi(\cdot,\xi)=\xi \Phi(\cdot,\xi)$ such that
$$ \Phi(R,\xi)=[\phi_0(R)+i\theta_0(R)][1+a(R,\xi)] $$ where 
$a(0,\xi)=a'(0,\xi)=0$ and 
\begin{align*} 
|\partial_\xi^\ell \partial_R^k [\Re a(R,\xi)]|&\leq C_{k,\ell}\langle R \rangle^{2-k}\xi^{1-\ell} \\
|\partial_\xi^\ell \partial_R^k [\Im a(R,\xi)]|&\leq C_{k,\ell}\left [\langle R \rangle^{1-k}\xi^{1-\ell} 
+\langle R\rangle^{4-k}\xi^{2-\ell} \right ]
\end{align*}
for all $R \in [0,\xi^{-\frac12}]$, $0<\xi\lesssim  1$ and $k,\ell \in \mathbb{N}_0$.
In particular, we have $\phi=\Re \Phi$ and $\theta=\Im \Phi$.
\end{lemma}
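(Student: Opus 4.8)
The plan is to construct $\Phi(\cdot,\xi)$ by a perturbative (Picard/Volterra) argument built on the zero-energy fundamental system $\{\phi_0,\theta_0\}$, treating $\xi$ as the small parameter. First I would record the relevant facts about the zero-energy solutions: $\phi_0$ and $\theta_0$ solve $\mc L f = 0$ with Wronskian $W(\theta_0,\phi_0)=1$, $\phi_0(R)=O(\langle R\rangle^{-1})$ is the bounded resonance, while $\theta_0(R)=O(\langle R\rangle^{2})$ is the unbounded companion (both with symbol-type derivative bounds, visible from their explicit rational expressions). Set $\Phi_0(R):=\phi_0(R)+i\theta_0(R)$, which solves $\mc L\Phi_0=0$ with $\Phi_0(0)=1$, $\Phi_0'(0)=i$ (matching the normalization forced by $a(0,\xi)=a'(0,\xi)=0$). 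Writing $\Phi(R,\xi)=\Phi_0(R)[1+a(R,\xi)]$ and plugging into $\mc L\Phi=\xi\Phi$, the equation $-\Phi''+V\Phi=\xi\Phi$ becomes, after using $\mc L\Phi_0=0$, a first-order equation for $(\Phi_0^2 a')'=\xi\,\Phi_0^2(1+a)$ (the usual reduction-of-order identity, since $\Phi_0$ is an exact zero-energy solution). Integrating twice from $0$ with the initial conditions $a(0,\xi)=a'(0,\xi)=0$ turns this into the Volterra integral equation
\[
a(R,\xi)=\xi\int_0^R \Phi_0(s)^{-2}\int_0^s \Phi_0(s')^2\big(1+a(s',\xi)\big)\,ds'\,ds,
\]
equivalently $a(R,\xi)=\xi\int_0^R K(R,s)\big(1+a(s,\xi)\big)\,ds$ with a kernel $K(R,s)$ built from $\phi_0,\theta_0$. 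Here one must be slightly careful because $\Phi_0$ has a zero of the real part at $R=\sqrt3$, but since $\Im\Phi_0=\theta_0>0$ there (and $\Phi_0(0)=1\ne0$, $|\Phi_0|\gtrsim\langle R\rangle^{-1}$ everywhere by inspection), $\Phi_0^{-2}$ is smooth and bounded polynomially, so the kernel is well-behaved.

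Next I would estimate the kernel. On the range $0\le s\le R\le \xi^{-1/2}$ one has the crude bounds $|\Phi_0(s')^2|\lesssim \langle s'\rangle^{4}$ and $|\Phi_0(s)^{-2}|\lesssim \langle s\rangle^{2}$, giving $|\xi K(R,s)|\lesssim \xi\langle R\rangle^{2}\langle s\rangle^{\text{(small power)}}$; integrating and using $R\le\xi^{-1/2}$ one sees the total mass of the kernel operator on this interval is $\lesssim \xi\langle R\rangle^{4}\lesssim 1$. More precisely, iterating shows the $n$-th Picard term is bounded by $(C\xi\langle R\rangle^{4})^{n}/n!$-type quantities, so the Neumann series converges and $|a(R,\xi)|\lesssim \xi\langle R\rangle^{4}$ on $[0,\xi^{-1/2}]$. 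This already gives the claimed $\Im a$ bound in the regime where the second term $\langle R\rangle^{4}\xi$ dominates. To get the sharper split — $|\Re a|\lesssim\langle R\rangle^{2}\xi$ and $|\Im a|\lesssim \langle R\rangle\xi+\langle R\rangle^{4}\xi^{2}$ — I would separate real and imaginary parts of the integral equation and exploit a cancellation: the leading term $\xi\int_0^R\Phi_0^{-2}\int_0^s\Phi_0^2\,ds'\,ds$ has real part controlled by $\langle R\rangle^{2}$ times $\xi$ (the $\langle R\rangle^4$ growth of $|\Phi_0|^2=\phi_0^2+\theta_0^2$ is compensated in $\Re(\Phi_0^{-2}\cdot\int\Phi_0^2)$ because the $\theta_0^2$-parts pair up into something real of size $\langle R\rangle^{2}$ after the $\Phi_0^{-2}$ weighting), while the imaginary part, being a cross term $\phi_0\theta_0$-type, carries one fewer power, giving $\langle R\rangle$; the $\langle R\rangle^{4}\xi^{2}$ contribution to $\Im a$ then comes only from the second Picard iterate. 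I expect this bookkeeping of the exact polynomial weights — tracking which combinations of $\phi_0,\theta_0$ appear and verifying the gain of powers in $\Re$ versus $\Im$ — to be the main obstacle; it is the place where the explicit formulas for $\phi_0$, $\theta_0$ (and hence of $|\Phi_0|^{-2}$) must be used rather than just their size.

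Finally, the derivative bounds: differentiating the Volterra equation in $\xi$ and in $R$ yields Volterra equations of the same type for $\partial_\xi^\ell\partial_R^k a$ with inhomogeneities built from lower-order derivatives, and since $\phi_0,\theta_0$ (and $\Phi_0^{-2}$) satisfy symbol bounds $|\partial_R^k f|\lesssim\langle R\rangle^{\deg-k}$, the same Neumann-series/Gr\"onwall argument propagates the estimates with the stated $\langle R\rangle^{-k}$ and $\xi^{-\ell}$ factors (each $\partial_\xi$ either hits the explicit prefactor $\xi$, lowering the power by one, or lands inside, where an integration against the same bounded-mass kernel absorbs it; each $\partial_R$ differentiates the outer integral, removing one $\int$ and costing one power of $\langle R\rangle$). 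The identities $a(0,\xi)=a'(0,\xi)=0$ are automatic from the double integration from $0$. Since $\phi_0,\theta_0$ are real, $\phi=\Re\Phi=\phi_0(1+\Re a)-\theta_0\Im a$ and $\theta=\Im\Phi=\theta_0(1+\Re a)+\phi_0\Im a$ are real solutions of $\mc L f=\xi f$ with $\phi(0,\xi)=\theta'(0,\xi)=0$, $\phi'(0,\xi)=\theta(0,\xi)=1$ (using $\phi_0(0)=0,\phi_0'(0)=1,\theta_0(0)=1,\theta_0'(0)=0$ and $a(0,\xi)=a'(0,\xi)=0$), so by uniqueness they coincide with the fundamental system of Proposition \ref{prop:Fourier}, giving the last assertion.
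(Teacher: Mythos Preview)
Your strategy is exactly the paper's: write $\Phi_0=\phi_0+i\theta_0$, reduce to a Volterra equation for $a$, run the iteration on $[0,\xi^{-1/2}]$, then feed the crude bound back in and split real and imaginary parts of the kernel to get the finer $\Im a$ estimate. The derivative bounds and the identification $\phi=\Re\Phi$, $\theta=\Im\Phi$ are handled just as you describe.

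There is, however, a concrete computational error that breaks your iteration as written. You assert $\theta_0(R)=O(\langle R\rangle^2)$ and $|\Phi_0|\gtrsim\langle R\rangle^{-1}$; in fact the explicit formula $\theta_0(R)=(1-2R^2+\tfrac19 R^4)(1+\tfrac13 R^2)^{-3/2}$ gives $\theta_0(R)\sim R/\sqrt3$, so $\theta_0=O(\langle R\rangle)$ and $|\Phi_0|\simeq\langle R\rangle$ (nonvanishing follows from $W(\theta_0,\phi_0)=1$, not from any sign of $\theta_0(\sqrt3)$, which is actually negative). Consequently $|\Phi_0|^2\lesssim\langle R\rangle^2$ and $|\Phi_0|^{-2}\lesssim\langle R\rangle^{-2}$, and the kernel $K(R,R')=\Phi_0(R')^2\int_{R'}^R\Phi_0^{-2}$ obeys $|K|\lesssim\langle R'\rangle$. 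The Volterra mass on $[0,\xi^{-1/2}]$ is then $\xi\int_0^{\xi^{-1/2}}\langle R'\rangle\,dR'\lesssim 1$, and the iteration closes with $|a|\lesssim\langle R\rangle^2\xi$ --- which is already the stated $\Re a$ bound. With your powers ($|\Phi_0|^2\lesssim\langle R\rangle^4$, $|\Phi_0|^{-2}\lesssim\langle R\rangle^2$) the mass would be $\sim\xi^{-1}$ and the iteration would not close; your claim ``$\xi\langle R\rangle^4\lesssim 1$ on $[0,\xi^{-1/2}]$'' is false for small $\xi$.

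Once the asymptotics of $\theta_0$ are corrected, your plan for the refined $\Im a$ bound goes through and matches the paper: one checks $\Phi_0^{-2}=O(\langle R\rangle^{-2})+iO(\langle R\rangle^{-3})$, hence $\int_{R'}^R\Phi_0^{-2}\cdot\Phi_0(R')^2=O(\langle R'\rangle)+iO(1)$; integrating the imaginary part of the kernel against the ``$1$'' gives $O(\langle R\rangle\xi)$, and against $a=O(\langle R'\rangle^2\xi)$ gives the $O(\langle R\rangle^4\xi^2)$ contribution. (Incidentally, the Volterra equation has a minus sign, $(\Phi_0^2 a')'=-\xi\Phi_0^2(1+a)$, but this is irrelevant for the bounds.)
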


\begin{proof}
We write $\Phi_0:=\phi_0+i\theta_0$ and note that $\Phi_0$ does not vanish anywhere on $[0,\infty)$.
Inserting the ansatz $\Phi(\cdot,\xi)=\Phi_0[1+a(\cdot,\xi)]$ into $\mc{L}\Phi(\cdot,\xi)=\xi \Phi(\cdot,\xi)$ 
yields the Volterra equation
\begin{equation}
\label{eq:volterraa}
a(R,\xi)=-\xi \int_0^R \int_{R'}^R \Phi_0(R'')^{-2}dR''\;\Phi_0(R')^2 [1+a(R',\xi)]dR' 
\end{equation}
which is of the form
$$ a(R,\xi)=\int_0^R K(R,R',\xi)[1+a(R',\xi)]dR' $$ 
with a kernel satisfying $|K(R,R',\xi)|\lesssim \langle R'\rangle \xi$ for all $0\leq R' \leq R$ and $\xi>0$.
Consequently, we have
$$ \int_0^{\xi^{-\frac12}}\sup_{R \in (R',\xi^{-\frac12})}|K(R,R',\xi)|dR' \lesssim 1 $$
and a standard Volterra iteration yields the existence of $a(\cdot,\xi)$ with
$|a(R,\xi)|\lesssim \langle R\rangle^2 \xi$ for all $R \in [0,\xi^{-\frac12}]$ and $0<\xi\lesssim 1$.
Obviously, we have $a(0,\xi)=a'(0,\xi)=0$ and this immediately yields $\phi=\Re \Phi$ and 
$\theta=\Im \Phi$.
Now observe that, for $0\leq R' \leq R$,
$$ \int_{R'}^R \Phi_0(R'')^{-2}dR''=\int_{R'}^R \left [O(\langle R'' \rangle^{-2})
+iO(\langle R'' \rangle^{-3})\right ]dR''=O(\langle R'\rangle^{-1})+iO(\langle R'\rangle^{-2}) $$
which implies
$$ \int_{R'}^R \Phi_0(R'')^{-2}dR''\;\Phi_0(R')^2=O(\langle R'\rangle)
+iO(1). $$
Consequently, with $|a(R,\xi)|\lesssim \langle R \rangle^2 \xi$ from above and 
Eq.~\eqref{eq:volterraa} we infer
$$ \Im a(R,\xi)=O(\langle R \rangle \xi)+\xi \Im \int_0^R O_\mathbb{C}(\langle R'\rangle )a(R',\xi)dR'
=O(\langle R \rangle \xi)+O(\langle R \rangle^4 \xi^2). $$
The derivative bounds follow inductively from Eq.~\eqref{eq:volterraa}
by symbol calculus.
\end{proof}

Next, we consider the Jost function $f_+(\cdot,\xi)$.

\begin{lemma}
\label{lem:Jost}
The Jost function $f_+(\cdot,\xi)$ of the operator $\mc{L}$ is of the form
$$ f_+(R,\xi)=e^{i\sqrt{\xi} R}[1+b(R,\xi)] $$
where $b(\cdot,\xi)$ satisfies the bounds
$$ |\partial_\xi^\ell \partial_R^k b(R,\xi)|\leq C_{k,\ell}\langle R \rangle^{-3-k}\xi^{-\frac12-\ell} $$
for all $R\geq \xi^{-\frac16}$, $0<\xi\lesssim 1$ and $k,\ell \in \mathbb{N}_0$.
\end{lemma}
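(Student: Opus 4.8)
The plan is to study the Volterra equation \eqref{eq:Volterraf+} for $f_+(\cdot,\xi)$ after factoring out the free oscillation $e^{i\sqrt\xi R}$, i.e.\ to derive an integral equation for the correction $b(R,\xi)$ defined by $f_+(R,\xi)=e^{i\sqrt\xi R}[1+b(R,\xi)]$. Substituting this ansatz into \eqref{eq:Volterraf+} and using $\sin(\sqrt\xi(R'-R))=\tfrac{1}{2i}(e^{i\sqrt\xi(R'-R)}-e^{-i\sqrt\xi(R'-R)})$ converts the equation into
\[
b(R,\xi)=\frac{1}{2i\sqrt\xi}\int_R^\infty\left[1-e^{2i\sqrt\xi(R'-R)}\right]V(R')[1+b(R',\xi)]\,dR',
\]
which is again a Volterra equation, now in the variable $R$ running to $+\infty$, with kernel $\widetilde K(R,R',\xi)=\tfrac{1}{2i\sqrt\xi}[1-e^{2i\sqrt\xi(R'-R)}]V(R')$ for $R'\geq R$. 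Since $|1-e^{2i\sqrt\xi(R'-R)}|\leq 2$ and, crucially, also $|1-e^{2i\sqrt\xi(R'-R)}|\lesssim \sqrt\xi(R'-R)$, the kernel admits the two bounds $|\widetilde K(R,R',\xi)|\lesssim \xi^{-1/2}|V(R')|$ and $|\widetilde K(R,R',\xi)|\lesssim (R'-R)|V(R')|$, and we use whichever is more favorable in a given range.

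The main point is then the contraction/iteration estimate: for $R\geq \xi^{-1/6}$ one needs
\[
\int_R^\infty \sup_{R''\geq R'}|\widetilde K(R'',R',\xi)|\,dR'\lesssim \langle R\rangle^{-3}\xi^{-1/2},
\]
after which a standard Volterra iteration produces $b(\cdot,\xi)$ with $|b(R,\xi)|\lesssim \langle R\rangle^{-3}\xi^{-1/2}$. To get this, recall $V(R)=-5W(R)^4=O(\langle R\rangle^{-4})$ with symbol behavior. Using $|\widetilde K|\lesssim \xi^{-1/2}|V(R')|$ and $\int_R^\infty\langle R'\rangle^{-4}dR'\lesssim \langle R\rangle^{-3}$ gives exactly the claimed bound; the restriction $R\geq \xi^{-1/6}$ is what makes the total kernel mass small (it is $\lesssim \xi^{-1/2}R^{-3}\lesssim \xi^{-1/2}\xi^{1/2}=1$, in fact $\ll 1$), so the Neumann series converges geometrically. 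One should double-check that the alternative bound $|\widetilde K|\lesssim(R'-R)|V(R')|$ is not actually needed for the zeroth-order estimate but will be convenient for the higher $R$-derivatives, where differentiating $e^{2i\sqrt\xi(R'-R)}$ in $R$ brings down a factor $\sqrt\xi$ that must be absorbed.

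The derivative bounds $|\partial_\xi^\ell\partial_R^k b(R,\xi)|\lesssim \langle R\rangle^{-3-k}\xi^{-1/2-\ell}$ follow by differentiating the Volterra equation and running the iteration in the corresponding weighted spaces, using the symbol-type bounds on $V$ together with the elementary estimates $|\partial_\xi^\ell(\xi^{-1/2}[1-e^{2i\sqrt\xi(R'-R)}])|\lesssim \xi^{-1/2-\ell}\langle R'-R\rangle^{?}$ (each $\partial_\xi$ either hits $\xi^{-1/2}$, losing a power of $\xi$, or hits the exponential, bringing a factor $(R'-R)\xi^{-1/2}$ which is controlled since $R'\geq R\geq\xi^{-1/6}$ keeps the geometry favorable). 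The main obstacle, and the only place real care is needed, is bookkeeping the interplay between the powers of $\xi$ gained or lost from the oscillatory kernel and the polynomial weights in $R$: one must verify that the region $R\geq \xi^{-1/6}$ is exactly the threshold at which all these competing factors balance so that the iteration closes, and that each $\xi$-derivative costs only one power of $\xi^{-1}$ and no extra powers of $R$ beyond what the weight $\langle R\rangle^{-3-k}$ allows. Everything else is a routine Volterra argument of the same type as in the proof of Lemma \ref{lem:Phi}.
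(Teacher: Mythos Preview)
Your proposal is correct and follows essentially the same approach as the paper: derive the Volterra equation for $b$ from \eqref{eq:Volterraf+}, use the kernel bound $|K(R,R',\xi)|\lesssim \xi^{-1/2}\langle R'\rangle^{-4}$ to run the iteration on $R\geq \xi^{-1/6}$, and then differentiate the equation to get the higher-order bounds. Two minor remarks: for Volterra equations the iteration converges via factorial (not geometric) decay, so boundedness of the total kernel mass suffices and smallness is not needed; and for the $\xi$-derivatives, the factor $(R'-R)\xi^{-1/2}$ produced when $\partial_\xi$ hits the exponential is not controlled by the naive absolute-value bound alone (it yields $\xi^{-1}\langle R\rangle^{-2}$, which exceeds $\xi^{-3/2}\langle R\rangle^{-3}$ for $R\gtrsim\xi^{-1/2}$)---one integration by parts against $e^{2i\sqrt\xi(R'-R)}$ recovers the stated bound, which is what the paper's ``symbol calculus'' encompasses.
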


\begin{proof}
The function $b(\cdot,\xi)$ satisfies the Volterra equation
\begin{align}
\label{eq:volterrab}
b(R,\xi)&=\frac{1}{2i\sqrt{\xi}}\int_R^\infty \left [e^{2i\sqrt{\xi}(R'-R)}-1 \right ]V(R')
[1+b(R',\xi)]dR' \\
&=\int_R^\infty K(R,R',\xi)[1+b(R',\xi)]dR'. \nonumber 
\end{align}
Thanks to the strong decay of $V$ we have 
$|K(R,R',\xi)|\lesssim \langle R' \rangle^{-4}\xi^{-\frac12}$ for $R\leq R'$ and thus,
$$ \int_{\xi^{-\frac16}}^\infty \sup_{R \in (\xi^{-\frac16},R')}|K(R,R',\xi)|dR' \lesssim 1. $$
Consequently, a standard Volterra iteration yields the claim. The derivative bounds follow 
inductively by symbol calculus.
\end{proof}

The information provided by Lemmas \ref{lem:Phi} and \ref{lem:Jost} already suffices to obtain 
the asymptotics of the spectral measure for $\xi \to 0+$.

\begin{lemma}
\label{lem:smxi}
For the functions $c_0$ and $\rho$ given in Eq.~\eqref{eq:defc0} and Proposition \ref{prop:Fourier},
respectively, we have
$$
c_0(\xi)=-\tfrac{i}{\sqrt{3}}\xi^{-\frac12}[1+O_\mathbb{C}(\xi^\frac15)],\quad
\rho(\xi)=\tfrac{1}{3\pi}\xi^{-\frac12}[1+O(\xi^\frac15)]
$$
for $0<\xi\ll 1$ where the $O$-terms behave like symbols under differentiation.
\end{lemma}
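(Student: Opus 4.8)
The plan is to compute the small-$\xi$ asymptotics of $c_0(\xi)=W(f_+(\cdot,\xi),\phi(\cdot,\xi))^{-1}$ by matching the two descriptions of solutions of $\mc L f=\xi f$ that we already have: near the origin (and out to $R\sim\xi^{-1/2}$) the solution $\phi(\cdot,\xi)=\Re\Phi(\cdot,\xi)$ from Lemma \ref{lem:Phi}, and near infinity (from $R\gtrsim \xi^{-1/6}$ on) the Jost solution $f_+(\cdot,\xi)=e^{i\sqrt\xi R}[1+b(R,\xi)]$ from Lemma \ref{lem:Jost}. Since the Wronskian is $R$-independent, I would evaluate it in the overlap region, say at $R=\xi^{-1/5}$ (which lies in both $[0,\xi^{-1/2}]$ and $[\xi^{-1/6},\infty)$ for $\xi$ small), where both representations and their derivatives are valid with good error control.

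First I would record the behavior of $\phi_0,\theta_0$ and their derivatives for large $R$: from the explicit formulas $\phi_0(R)=-\sqrt3+O(\langle R\rangle^{-2})$ and $\theta_0(R)=\tfrac{1}{\sqrt3}R+O(\langle R\rangle^{-1})$ (behaving like symbols), so that $\Phi_0(R)=\phi_0(R)+i\theta_0(R)=\tfrac{i}{\sqrt3}R-\sqrt3+O(\langle R\rangle^{-1})$ and $\Phi_0'(R)=\tfrac{i}{\sqrt3}+O(\langle R\rangle^{-2})$. Combined with the bounds on $a(R,\xi)$ from Lemma \ref{lem:Phi}, at $R=\xi^{-1/5}$ one gets $\Phi(R,\xi)=\tfrac{i}{\sqrt3}R[1+O(\xi^{1/5})]$ and similarly for $\Phi'(R,\xi)$, hence $\phi(R,\xi)=\Re\Phi(R,\xi)$ and $\phi'(R,\xi)$ with explicit leading terms plus relative errors that are powers of $\xi$. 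On the other side, at $R=\xi^{-1/5}$ the bound $|b(R,\xi)|\lesssim \langle R\rangle^{-3}\xi^{-1/2}=\xi^{3/5-1/2}=\xi^{1/10}$ gives $f_+(R,\xi)=e^{i\sqrt\xi R}[1+O(\xi^{1/10})]$ and $f_+'(R,\xi)=i\sqrt\xi e^{i\sqrt\xi R}[1+O(\xi^{1/10})]$, and since $\sqrt\xi R=\xi^{3/10}\to 0$ we may also expand $e^{i\sqrt\xi R}=1+O(\xi^{3/10})$. Plugging everything into $W(f_+,\phi)=f_+\phi'-f_+'\phi$ and collecting the dominant contribution (which comes from $f_+\,\phi'$, since $f_+'\phi\sim\sqrt\xi\cdot\xi^{-1/5}$ is lower order) yields $W(f_+(\cdot,\xi),\phi(\cdot,\xi))=\tfrac{i}{\sqrt3}[1+O_\C(\xi^{1/5})]$, and therefore $c_0(\xi)=-\tfrac{i}{\sqrt3}\,\xi^{-1/2}[1+O_\C(\xi^{1/5})]$ after restoring the normalization; I should be a little careful that the claimed statement has no $\xi^{-1/2}$ in $W$ — in fact the $m$-function is $m(\xi)=W(\theta,\psi)=c_0(\xi)W(\theta,f_+)$, and it is $c_0$ itself (not $W(f_+,\phi)$) that carries the $\xi^{-1/2}$, so I need to track the normalization $\psi(0,\xi)=1$ correctly; reconciling the exact power of $\xi$ in $c_0$ with the definition \eqref{eq:defc0} is the one place to be vigilant.

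From the expansion of $c_0$, the formula for $\rho$ follows from $\rho(\xi)=\tfrac1\pi\Im m(\xi)$ together with the relation between $\psi$, $\theta$, $\phi$ and $f_+$: writing $\psi(\cdot,\xi)=c_0(\xi)f_+(\cdot,\xi)=\theta(\cdot,\xi)+m(\xi)\phi(\cdot,\xi)$ and using $W(\theta(\cdot,\xi),\phi(\cdot,\xi))=1$, one has $m(\xi)=c_0(\xi)W(f_+(\cdot,\xi),\theta(\cdot,\xi))$. The standard identity $\Im m(\xi)=\tfrac{\sqrt\xi}{2}\,\overline{c_0(\xi)}c_0(\xi)\cdot(\text{const})$ — equivalently, using $W(f_+,\overline{f_+})=-2i\sqrt\xi$ and $\psi(0,\xi)=1$ — gives $\Im m(\xi)=\sqrt\xi\,|c_0(\xi)|^2 = \sqrt\xi\cdot\tfrac13\xi^{-1}[1+O(\xi^{1/5})]=\tfrac13\xi^{-1/2}[1+O(\xi^{1/5})]$, so $\rho(\xi)=\tfrac{1}{3\pi}\xi^{-1/2}[1+O(\xi^{1/5})]$. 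The symbol-type bounds on the $O$-terms under differentiation in $\xi$ come for free, since all the inputs (the bounds in Lemmas \ref{lem:Phi} and \ref{lem:Jost}, and the $\xi$-derivatives of $e^{i\sqrt\xi R}$) behave like symbols, and differentiating the Wronskian identity preserves this.

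The main obstacle I anticipate is bookkeeping the error exponents through the matching: the overlap point $R=\xi^{-1/5}$ is a compromise, and one must check that every error term — $O(\xi^{1/5})$ from $a$ in Lemma \ref{lem:Phi} at $R=\xi^{-1/5}$, $O(\xi^{1/10})$ from $b$ in Lemma \ref{lem:Jost}, $O(\xi^{3/10})$ from expanding $e^{i\sqrt\xi R}$, and the relative size $\sqrt\xi R=\xi^{3/10}$ of the subleading Wronskian term $f_+'\phi$ — is indeed $O(\xi^{1/5})$ or better, or alternatively that a different choice of overlap point (and a correspondingly adjusted exponent in the final $O$-term) works; the value $\tfrac15$ in the statement suggests the authors optimized this and I would simply follow suit. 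Everything else is a direct, if slightly tedious, computation using the explicit forms of $\phi_0$, $\theta_0$ and the Volterra bounds already established.
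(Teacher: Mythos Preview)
Your overall strategy---evaluate the constant Wronskian $W(f_+(\cdot,\xi),\phi(\cdot,\xi))$ at a point in the overlap of the regions covered by Lemmas~\ref{lem:Phi} and~\ref{lem:Jost}---is exactly the paper's approach. However, your execution of the Wronskian computation contains a genuine error: you have conflated $\phi=\Re\Phi$ with $\Phi$ itself (equivalently, used the asymptotics of $\theta_0$ in place of those of $\phi_0$). At large $R$ the leading behavior of $\Phi_0=\phi_0+i\theta_0$ is indeed $\tfrac{i}{\sqrt3}R$, but that leading term is purely imaginary; the real part satisfies $\phi_0(R)\to-\sqrt3$ and $\phi_0'(R)=O(R^{-3})$. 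Hence at your matching point $\phi(\xi^{-1/5},\xi)\approx-\sqrt3$ (not $\sim\xi^{-1/5}$) and $\phi'(\xi^{-1/5},\xi)=O(\xi^{3/5})$ (not $\sim i/\sqrt3$, which is not even real). The dominant term in $W(f_+,\phi)=f_+\phi'-f_+'\phi$ is therefore $-f_+'\phi\approx -i\sqrt\xi\cdot(-\sqrt3)=i\sqrt3\,\xi^{1/2}$, not $f_+\phi'$. This yields $W(f_+,\phi)=i\sqrt3\,\xi^{1/2}[1+O_\C(\xi^{1/5})]$ and hence $c_0(\xi)=1/W=-\tfrac{i}{\sqrt3}\xi^{-1/2}[1+O_\C(\xi^{1/5})]$ directly from \eqref{eq:defc0}; there is no ``normalization'' to restore.

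A secondary point: the bound $|\partial_R b|\lesssim\langle R\rangle^{-4}\xi^{-1/2}$ from Lemma~\ref{lem:Jost}, evaluated at $R=\xi^{-1/5}$, gives only $|b'|\lesssim\xi^{3/10}$, which is \emph{larger} than $\sqrt\xi$; so your claim $f_+'=i\sqrt\xi\,e^{i\sqrt\xi R}[1+O(\xi^{1/10})]$ is not justified by that lemma as stated. You can either extract the sharper bound $|b'|\lesssim R^{-3}$ directly from differentiating the Volterra equation \eqref{eq:volterrab}, or---as the paper does---evaluate at $R=\xi^{-3/10}$, where Lemma~\ref{lem:Jost} already suffices. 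Your identity $\Im m(\xi)=\sqrt\xi\,|c_0(\xi)|^2$ (from $W(\psi,\overline\psi)=|c_0|^2W(f_+,\overline{f_+})$ combined with $\psi=\theta+m\phi$) is correct and is a clean alternative to the paper's direct computation of $W(\theta,f_+)$.
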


\begin{proof}
For the Weyl-Titchmarsh solution $\psi(\cdot,\xi)$ we have $\psi(\cdot,\xi)=c_0(\xi)f_+(\cdot,\xi)$
where the coefficient $c_0(\xi)$ is given by
$$ c_0(\xi)=\frac{1}{W(f_+(\cdot,\xi),\phi(\cdot,\xi))}, $$
cf.~the proof of Proposition \ref{prop:Fourier}.
From Lemma \ref{lem:Phi} we have 
\begin{align*} 
\phi(R,\xi)&=\Re \Phi(R,\xi)=\phi_0(R)[1+\Re a(R,\xi)]-\theta_0(R)\Im a(R,\xi) \\
&=\phi_0(R)[1+O(\langle R\rangle^2 \xi)+O(\langle R\rangle^5\xi^2)],\quad R\geq 1 
\end{align*}
and, by noting that $|\phi_0'(R)|\simeq \langle R\rangle^{-3}$ for $R\geq 1$,
$$\phi'(R,\xi)=O(\langle R\rangle^{-3})[1+O(\langle R \rangle^4 \xi)+O(\langle R\rangle^7 \xi^2)],
\quad R\geq 1.$$ 
We evaluate the Wronskian at $R=\xi^{-\frac{3}{10}}$.
Thus, we use
\begin{align*} 
\phi(\xi^{-\frac{3}{10}},\xi)&=\phi_0(\xi^{-\frac{3}{10}})[1+O(\xi^\frac25)]=-\sqrt{3}[1+O(\xi^\frac25)] \\
\phi'(\xi^{-\frac{3}{10}},\xi)&=O(\xi^{\frac{9}{10}})[1+O(\xi^{-\frac{1}{5}})]=O(\xi^{\frac{7}{10}})
\end{align*}
and, from Lemma \ref{lem:Jost},
\begin{align*}
f_+(\xi^{-\frac{3}{10}},\xi)&=e^{i\xi^{1/5}}[1+O_\mathbb{C}(\xi^\frac25)]=1+O_\mathbb{C}(\xi^\frac15) \\
f_+'(\xi^{-\frac{3}{10}},\xi)&=i\xi^\frac12 e^{i\xi^{1/5}}[1+O_\mathbb{C}(\xi^\frac15)]
=i\xi^\frac12 [1+O_\mathbb{C}(\xi^\frac15)]
\end{align*}
and all $O$-terms behave like symbols under differentiation.
Consequently, we obtain
$$ W(f_+(\cdot,\xi),\phi(\cdot,\xi))=\sqrt{3}i\xi^\frac12[1+O_\mathbb{C}(\xi^{\frac{1}{5}})] $$
and thus, 
$c_0(\xi)=-\frac{i}{\sqrt{3}}\xi^{-\frac12}[1+O_\mathbb{C}(\xi^{\frac15})]$ where the $O$-term
behaves like a symbol.

The Weyl-Titchmarsh $m$-function is given by 
$$m(\xi)=W(\theta(\cdot,\xi),\psi(\cdot,\xi))
=c_0(\xi)W(\theta(\cdot,\xi),f_+(\cdot,\xi)).$$
From Lemma \ref{lem:Phi} we have
$$ \theta(\xi^{-\frac{3}{10}},\xi)=O(\xi^{-\frac{3}{10}}),\quad \theta'(\xi^{-\frac{3}{10}},\xi)
=\tfrac{1}{\sqrt{3}}+O(\xi^\frac15) $$
and thus, $W(\theta(\cdot,\xi),f_+(\cdot,\xi))=-\frac{1}{\sqrt{3}}+O_\mathbb{C}(\xi^\frac15)$.
This yields $\rho(\xi)=\tfrac{1}{\pi}\Im m(\xi)=\frac{1}{3\pi} \xi^{-\frac12}[1+O(\xi^\frac15)]$ with an $O$-term that 
behaves like a symbol.
\end{proof}

\subsection{Asymptotics of the spectral measure for large $\xi$}
In this section we study the behavior of $\rho(\xi)$ as $\xi \to \infty$.
This is considerably easier than the limit $\xi \to 0+$.
In order to get a small factor in front of the potential, it is convenient to rescale the equation
$\mc{L}f=\xi f$ by setting $f(R)=\tilde{f}(\xi^\frac12 R)$ which yields
\begin{equation}
\label{eq:specrescaled}
\tilde{f}''(y)+\tilde{f}(y)=\xi^{-1}V(\xi^{-\frac12}y)\tilde{f}(y)
\end{equation}
for $y \geq 0$, $\xi \gtrsim 1$ and this form already suggests to treat the right-hand side
perturbatively.

\begin{lemma}
\label{lem:Jostlgxi}
The Jost function $f_+(\cdot,\xi)$ of $\mc{L}$ is of the form
$$ f_+(R,\xi)=e^{i\sqrt{\xi}R}[1+b(R,\xi)] $$
where
$$ |\partial_\xi^\ell \partial_R^k b(R,\xi)|\leq C_{k,\ell}\langle R\rangle^{-3-k}
\xi^{-\frac12-\ell} $$
for all $R \geq 0$, $\xi\gtrsim 1$ and $k,\ell \in \mathbb{N}_0$.
\end{lemma}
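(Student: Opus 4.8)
The plan is to derive the asymptotics of $b(\cdot,\xi)$ as $\xi\to\infty$ from the Volterra equation in the rescaled variable. Concretely, I would set $f_+(R,\xi)=e^{i\sqrt\xi R}[1+b(R,\xi)]$ exactly as in Lemma \ref{lem:Jost}, so that $b(\cdot,\xi)$ satisfies
\begin{equation*}
b(R,\xi)=\int_R^\infty K(R,R',\xi)[1+b(R',\xi)]\,dR',\qquad
K(R,R',\xi)=\frac{1}{2i\sqrt\xi}\left[e^{2i\sqrt\xi(R'-R)}-1\right]V(R').
\end{equation*}
The point of the rescaling $f(R)=\tilde f(\xi^{1/2}R)$, i.e. Eq.~\eqref{eq:specrescaled}, is that it exhibits the small parameter $\xi^{-1}$ in front of the potential; but for the kernel estimate it is actually cleaner to work directly with the displayed $K$ above, since $|e^{2i\sqrt\xi(R'-R)}-1|\leq 2$ gives at once $|K(R,R',\xi)|\leq \xi^{-1/2}|V(R')|\lesssim \langle R'\rangle^{-4}\xi^{-1/2}$ for all $0\le R\le R'$, with no restriction $R\geq\xi^{-1/6}$ needed. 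This is precisely the feature distinguishing this lemma from Lemma \ref{lem:Jost}: the bound holds for \emph{all} $R\geq 0$, because in the large-$\xi$ regime the oscillatory factor is never "resolved" away and the crude bound on it is already good enough.

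The next step is the standard Volterra iteration. Since
\begin{equation*}
\int_0^\infty \sup_{R\in(0,R')}|K(R,R',\xi)|\,dR'\lesssim \xi^{-\frac12}\int_0^\infty\langle R'\rangle^{-4}\,dR'\lesssim \xi^{-\frac12}\ll 1
\end{equation*}
for $\xi$ large, the Neumann series $b=\sum_{n\geq 1}b_n$ with $b_1(R,\xi)=\int_R^\infty K(R,R',\xi)\,dR'$ and $b_{n+1}(R,\xi)=\int_R^\infty K(R,R',\xi)b_n(R',\xi)\,dR'$ converges absolutely and uniformly, and one gets
\begin{equation*}
|b(R,\xi)|\lesssim \int_R^\infty |K(R,R',\xi)|\,dR'\lesssim \xi^{-\frac12}\langle R\rangle^{-3},
\end{equation*}
which is the claimed bound for $k=\ell=0$. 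For the $R$-derivatives one differentiates the Volterra equation: $\partial_R b(R,\xi)=-K(R,R,\xi)[1+b(R,\xi)]+\int_R^\infty \partial_R K(R,R',\xi)[1+b(R',\xi)]\,dR'$; here $K(R,R,\xi)=0$, and $\partial_R K(R,R',\xi)=-e^{2i\sqrt\xi(R'-R)}V(R')$, whose modulus is $\lesssim\langle R'\rangle^{-4}$, giving $|\partial_R b(R,\xi)|\lesssim \langle R\rangle^{-3}$ — but one needs the extra $\xi^{-1/2}$, which comes from integrating by parts once in $R'$ in the $\partial_R K$ integral to recover a factor $\xi^{-1/2}$ (the oscillation $e^{2i\sqrt\xi(R'-R)}$ integrates to $\xi^{-1/2}$ against the smooth decaying $V$ and its derivatives). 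Iterating this bookkeeping gives $|\partial_R^k b(R,\xi)|\lesssim \xi^{-1/2}\langle R\rangle^{-3-k}$; the key inputs are the symbol-type bounds $|\partial_R^j V(R)|\lesssim\langle R\rangle^{-4-j}$ from the smoothness and decay of $V=-5W^4$.

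For the $\xi$-derivatives, I would differentiate the Volterra equation in $\xi$; each $\partial_\xi$ hitting $K$ produces either a factor $\sqrt\xi$ from the exponent $2i\sqrt\xi(R'-R)$ multiplied by $(R'-R)$, or a factor $\xi^{-1}$ from the prefactor $\xi^{-1/2}$ — the dangerous term is the one with the growing factor $(R'-R)e^{2i\sqrt\xi(R'-R)}$, but again an integration by parts in $R'$ trades $(R'-R)e^{2i\sqrt\xi(R'-R)}$ for $\xi^{-1/2}$ times boundary and lower-order terms, so net each $\partial_\xi$ costs exactly one more power $\xi^{-1}$, yielding the stated $\xi^{-1/2-\ell}$. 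Mixed derivatives are handled by the same Leibniz/integration-by-parts combination. The main obstacle — really the only subtlety — is this repeated oscillatory integration by parts needed to extract the full $\xi^{-1/2-\ell}$ decay uniformly in $R\geq 0$; the contraction-mapping/Volterra convergence itself is completely routine given the $\lesssim\xi^{-1/2}$ smallness of the kernel norm. I would phrase it as: "the derivative bounds follow inductively by symbol calculus, using integration by parts in $R'$ to exploit the oscillation," mirroring the phrasing already used in the proofs of Lemmas \ref{lem:Jost} and \ref{lem:Phi}.
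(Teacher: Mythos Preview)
Your approach is correct and reaches the stated bounds, but it diverges from the paper's in how the derivative estimates are handled. The paper does not abandon the rescaling after the $k=\ell=0$ step; on the contrary, the rescaling is the whole point for the derivatives. After passing to $y=\xi^{1/2}R$ and then shifting $u=y'-y$, the Volterra equation reads
\[
\tilde b(y,\xi)=\tfrac{1}{2i}\,\xi^{-1}\int_0^\infty [e^{2iu}-1]\,V\bigl(\xi^{-1/2}(u+y)\bigr)\,[1+\tilde b(u+y,\xi)]\,du,
\]
in which the oscillatory factor $[e^{2iu}-1]$ is independent of both $y$ and $\xi$. Hence every $\partial_y$ and every $\partial_\xi$ lands only on $V(\xi^{-1/2}(u+y))$ and on $\tilde b$ itself; the symbol bounds $|\partial_\xi^\ell\partial_y^k V(\xi^{-1/2}(u+y))|\lesssim\xi^{-k/2-\ell}\langle\xi^{-1/2}(u+y)\rangle^{-4-k}$ feed straight into an induction with no integration by parts anywhere, and the bounds on $b(R,\xi)=\tilde b(\xi^{1/2}R,\xi)$ follow at the end by the chain rule.

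Your route in the original variable also closes, but each $\partial_\xi$ hitting the exponent $e^{2i\sqrt\xi(R'-R)}$ produces a growing factor $(R'-R)$ that must be traded back for $\xi^{-1/2}$ via oscillatory integration by parts, which in turn throws $\partial_{R'}$ onto $b$ and couples the $\xi$- and $R$-inductions. That bookkeeping is exactly what the rescaling sidesteps. A smaller remark: even for the $R$-derivatives your integration by parts is avoidable---the same shift $u=R'-R$ in the \emph{unrescaled} equation makes the kernel $\tfrac{1}{2i\sqrt\xi}[e^{2i\sqrt\xi u}-1]$ independent of $R$, so $\partial_R$ falls directly on $V(R+u)$ and $b(R+u,\xi)$, giving a Volterra equation for $\partial_R b$ with the same small kernel and the factor $\xi^{-1/2}$ already present.
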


\begin{proof}
We start by constructing a solution $\tilde{f}_+(\cdot,\xi)$ to Eq.~\eqref{eq:specrescaled}
of the form $\tilde{f}_+(y,\xi)=e^{iy}[1+\tilde{b}(y,\xi)]$.
Inserting this ansatz into Eq.~\eqref{eq:specrescaled} yields the Volterra equation
\begin{align}
\label{eq:volterraf+}
\tilde{b}(y,\xi)&=\tfrac{1}{2i}\xi^{-1}\int_y^\infty [e^{2i(y'-y)}-1]V(\xi^{-\frac12}y')
[1+\tilde{b}(y',\xi)]dy' \\
&=\int_y^\infty K(y,y',\xi)[1+\tilde{b}(y',\xi)]dy' \nonumber
\end{align}
where $|K(y,y',\xi)|\lesssim \xi^{-1}\langle \xi^{-\frac12}y'\rangle^{-4}$ for all $0\leq y\leq y'$
and $\xi \gtrsim 1$.
Consequently, we have 
$$ \int_0^\infty \sup_{y \in (0,y')}|K(y,y',\xi)|dy'\lesssim \xi^{-\frac12}\lesssim 1 $$
and a Volterra iteration yields $|\tilde{b}(y,\xi)|\lesssim \xi^{-\frac12}\langle \xi^{-\frac12}
y\rangle^{-3}$.
Furthermore, by introducing the new variable $u=y'-y$, we rewrite Eq.~\eqref{eq:volterraf+} as
$$ \tilde{b}(y,\xi)=\tfrac{1}{2i}\xi^{-1}\int_0^\infty [e^{2iu}-1]V(\xi^{-\frac12}(u+y))
[1+\tilde{b}(u+y,\xi)]du $$
and with 
$$ |\partial_\xi^\ell \partial_y^k V(\xi^{-\frac12}(u+y))|\leq C_{k,\ell}
\xi^{-\frac12 k-\ell}\langle \xi^{-\frac12}(u+y)\rangle^{-4-k}, \quad k,\ell \in \mathbb{N}_0 $$
for all $y, u\geq 0$, $\xi \gtrsim 1$,
we obtain inductively the bounds
$$ |\partial_\xi^\ell \partial_y^k \tilde{b}(y,\xi)|\leq C_{k,\ell}\xi^{-\frac12-\frac12 k-\ell}
\langle \xi^{-\frac12}y \rangle^{-3-k} $$
for all $y \geq 0$, $\xi \gtrsim 1$ and $k,\ell \in \mathbb{N}_0$.
We have $\tilde{f_+}(\xi^\frac12 R)\sim e^{i\sqrt{\xi}R}$ as $R \to \infty$ and thus, 
as already suggested by the notation,
the Jost solution is given by $f_+(R,\xi)=\tilde{f}_+(\xi^\frac12 R,\xi)$ and by setting
$b(R,\xi)=\tilde{b}(\xi^\frac12 R,\xi)$ we obtain the stated form of $f_+(\cdot,\xi)$ and
the bounds for $b$ follow from the ones for $\tilde{b}$ by the chain rule.
\end{proof}

\begin{lemma}
\label{lem:lgxi}
The functions $c_0$ and $\rho$ are of the form
$$ c_0(\xi)=1+O_\mathbb{C}(\xi^{-\frac12}),\quad \rho(\xi)=\tfrac{1}{\pi}\xi^\frac12 [1+O(\xi^{-\frac12})] $$
for $\xi \gtrsim 1$ where the $O$-terms behave like symbols under differentiation.
\end{lemma}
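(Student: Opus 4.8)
The plan is to compute the large-$\xi$ asymptotics of the Wronskian $W(f_+(\cdot,\xi),\phi(\cdot,\xi))$ directly at the regular endpoint $R=0$, which is the natural place since the fundamental system $\{\phi(\cdot,\xi),\theta(\cdot,\xi)\}$ has explicit initial data there. First I would invoke Lemma \ref{lem:Jostlgxi} to write $f_+(R,\xi)=e^{i\sqrt{\xi}R}[1+b(R,\xi)]$ with $|\partial_\xi^\ell\partial_R^k b(R,\xi)|\lesssim \langle R\rangle^{-3-k}\xi^{-\frac12-\ell}$ uniformly for $R\geq 0$ and $\xi\gtrsim 1$. Evaluating at $R=0$ gives $f_+(0,\xi)=1+b(0,\xi)=1+O_\C(\xi^{-1/2})$ and $f_+'(0,\xi)=i\sqrt{\xi}[1+b(0,\xi)]+b'(0,\xi)=i\sqrt{\xi}[1+O_\C(\xi^{-1/2})]$, with all $O$-terms of symbol type by the derivative bounds on $b$. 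Since $\phi(0,\xi)=0$ and $\phi'(0,\xi)=1$, we get
\[ W(f_+(\cdot,\xi),\phi(\cdot,\xi))=f_+(0,\xi)\phi'(0,\xi)-f_+'(0,\xi)\phi(0,\xi)=f_+(0,\xi)=1+O_\C(\xi^{-\frac12}). \]
Then by Eq.~\eqref{eq:defc0}, $c_0(\xi)=1/W(f_+(\cdot,\xi),\phi(\cdot,\xi))=1+O_\C(\xi^{-1/2})$, and since the reciprocal of a symbol-type $1+O_\C(\xi^{-1/2})$ is again of the same form (for $\xi\gtrsim 1$ the denominator is bounded away from zero), the symbol behavior under differentiation is preserved.

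Next I would treat $\rho(\xi)=\frac1\pi\Im m(\xi)$. From the proof of Proposition \ref{prop:Fourier} we have $m(\xi)=W(\theta(\cdot,\xi),\psi(\cdot,\xi))=c_0(\xi)\,W(\theta(\cdot,\xi),f_+(\cdot,\xi))$. Evaluating the Wronskian at $R=0$ using $\theta(0,\xi)=1$, $\theta'(0,\xi)=0$ gives $W(\theta(\cdot,\xi),f_+(\cdot,\xi))=\theta(0,\xi)f_+'(0,\xi)-\theta'(0,\xi)f_+(0,\xi)=f_+'(0,\xi)=i\sqrt{\xi}[1+O_\C(\xi^{-1/2})]$. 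Hence $m(\xi)=c_0(\xi)\,i\sqrt{\xi}[1+O_\C(\xi^{-1/2})]=i\sqrt{\xi}[1+O_\C(\xi^{-1/2})]$, and taking imaginary parts (the error term being complex, $\Im O_\C(\xi^{-1/2})=O(\xi^{-1/2})$, but note the leading term $i\sqrt\xi$ contributes $\sqrt\xi$ so we must be careful: write $m(\xi)=\sqrt\xi\,[i+O_\C(\xi^{-1/2})]$, whence $\Im m(\xi)=\sqrt\xi\,[1+O(\xi^{-1/2})]$). This gives $\rho(\xi)=\frac1\pi\xi^{1/2}[1+O(\xi^{-1/2})]$ with the $O$-term of symbol type, as claimed.

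I expect the only mildly delicate point to be bookkeeping of the symbol behavior under differentiation, i.e.\ verifying that $|\partial_\xi^\ell[O(\xi^{-1/2})]|\lesssim \xi^{-1/2-\ell}$ propagates through the algebraic operations (multiplication, taking reciprocals, extracting real/imaginary parts). This is routine once one notes that the symbol class $\{g: |\partial_\xi^\ell g(\xi)|\lesssim \xi^{\gamma-\ell}\}$ is closed under products (Leibniz) and that $1/(1+g)$ with $g$ a symbol of negative order is again such a symbol for $\xi\gtrsim 1$ (expand geometrically or use the Fa\`a di Bruno/chain-rule estimate). The derivative bounds on $b$ from Lemma \ref{lem:Jostlgxi} furnish exactly the needed input at $R=0$. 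No genuine obstacle arises here; the content has essentially already been extracted in Lemmas \ref{lem:Jostlgxi}, and this lemma is the short clean payoff, in contrast to the small-$\xi$ case (Lemma \ref{lem:smxi}) where the Wronskian had to be evaluated at an intermediate scale $R=\xi^{-3/10}$ because of the resonance.
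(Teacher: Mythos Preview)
Your proposal is correct and follows essentially the same approach as the paper: both evaluate the relevant Wronskians at $R=0$ using the initial conditions $\phi(0,\xi)=\theta'(0,\xi)=0$, $\phi'(0,\xi)=\theta(0,\xi)=1$ together with the Jost function asymptotics from Lemma~\ref{lem:Jostlgxi}, and then read off $c_0$ and $\rho$. Your additional remarks on the propagation of symbol behavior through products and reciprocals are a welcome elaboration of what the paper leaves implicit.
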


\begin{proof}
By evaluation at $R=0$ we obtain from Lemma \ref{lem:Jostlgxi}
$$ W(f_+(\cdot,\xi),\phi(\cdot,\xi))=f_+(0,\xi)=1+b(0,\xi)=1+O_\mathbb{C}(\xi^{-\frac12}) $$
which, by Eq.~\eqref{eq:defc0}, implies $c_0(\xi)=1+O_\mathbb{C}(\xi^{-\frac12})$ 
where the $O$-term 
behaves like a symbol.
Furthermore, we have
$$ W(\theta(\cdot,\xi),f_+(\cdot,\xi))=f_+'(0,\xi)=i\xi^\frac12[1+b(0,\xi)]
+b'(0,\xi)=i\xi^\frac12[1+O_\mathbb{C}(\xi^{-\frac12})] $$
and we infer 
$$ \rho(\xi)=\tfrac{1}{\pi}\Im\left [c_0(\xi)W(\theta(\cdot,\xi),f_+(\cdot,\xi))\right ]
=\tfrac{1}{\pi}\xi^{\frac12}[1+O(\xi^{-\frac12})] $$
with an $O$-term that behaves like a symbol.
\end{proof}

It is now a simple matter to obtain a convenient representation of $\phi(\cdot,\xi)$ in terms
of the Jost function $f_+(\cdot,\xi)$.

\begin{corollary}
\label{cor:phi}
The function $\phi(\cdot,\xi)$ has the representation
$$ \phi(R,\xi)=a(\xi)f_+(R,\xi)+\overline{a(\xi)f_+(R,\xi)} $$
where
\begin{align*}
a(\xi)&=\tfrac{\sqrt{3}}{2}+O_\mathbb{C}(\xi^\frac15),\quad 0 < \xi \ll 1 \\
a(\xi)&=\tfrac{1}{2i}\xi^{-\frac12}[1+O_\mathbb{C}(\xi^{-\frac12})],\quad \xi \gtrsim 1
\end{align*}
and the $O$-terms behave like symbols.
\end{corollary}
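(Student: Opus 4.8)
The plan is to derive the representation of $\phi(\cdot,\xi)$ from the fact that the Jost solution $f_+(\cdot,\xi)$ and its complex conjugate $\overline{f_+(\cdot,\xi)}$ form a fundamental system of the real equation $\mc{L}f=\xi f$ for $\xi>0$, and that $\phi(\cdot,\xi)$ is the real-valued solution singled out by the initial conditions $\phi(0,\xi)=0$, $\phi'(0,\xi)=1$. Since $\phi(\cdot,\xi)$ is real, it must be of the form $\phi(R,\xi)=a(\xi)f_+(R,\xi)+\overline{a(\xi)}\,\overline{f_+(R,\xi)}$ for some coefficient $a(\xi)\in\C$, and $a(\xi)$ is determined by the Wronskian: expanding $\phi$ in the basis $\{f_+,\overline{f_+}\}$ and using $W(f_+(\cdot,\xi),\overline{f_+(\cdot,\xi)})=-2i\sqrt{\xi}$ (established already in the proof of Proposition \ref{prop:Fourier}), one gets
\[ a(\xi)=\frac{W(\phi(\cdot,\xi),\overline{f_+(\cdot,\xi)})}{W(f_+(\cdot,\xi),\overline{f_+(\cdot,\xi)})}=\frac{\overline{W(f_+(\cdot,\xi),\phi(\cdot,\xi))}}{2i\sqrt{\xi}}. \]
So the entire content reduces to reading off $W(f_+(\cdot,\xi),\phi(\cdot,\xi))$ from the asymptotics already computed.

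For $0<\xi\ll 1$, Lemma \ref{lem:smxi} (or rather its proof) gives $W(f_+(\cdot,\xi),\phi(\cdot,\xi))=\sqrt{3}\,i\,\xi^{1/2}[1+O_\C(\xi^{1/5})]$, so dividing by $2i\sqrt{\xi}$ and conjugating yields $a(\xi)=\tfrac{\sqrt{3}}{2}[1+O_\C(\xi^{1/5})]=\tfrac{\sqrt{3}}{2}+O_\C(\xi^{1/5})$, and since all the $O$-terms there were shown to behave like symbols, so does this one. For $\xi\gtrsim 1$, Lemma \ref{lem:lgxi} gives $W(f_+(\cdot,\xi),\phi(\cdot,\xi))=f_+(0,\xi)=1+O_\C(\xi^{-1/2})$, hence $a(\xi)=\overline{1+O_\C(\xi^{-1/2})}/(2i\sqrt{\xi})=\tfrac{1}{2i}\xi^{-1/2}[1+O_\C(\xi^{-1/2})]$, again with symbol behavior inherited from Lemma \ref{lem:lgxi}.

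I expect there to be essentially no serious obstacle here; this is a bookkeeping corollary that merely repackages the Wronskian computations of the two preceding lemmas. The only points requiring a modicum of care are (i) justifying that $\{f_+(\cdot,\xi),\overline{f_+(\cdot,\xi)}\}$ really is a fundamental system for $\xi>0$ — which is immediate from $W(f_+,\overline{f_+})=-2i\sqrt{\xi}\neq 0$ — and (ii) checking that the identity $\overline{a(\xi)}\,\overline{f_+}=\overline{a(\xi)f_+}$ matches the stated form of the corollary, which is trivial. The symbol-type bounds on $a(\xi)$ follow by the same symbol calculus (quotient and product rules for the $O$-notation) used throughout this section, applied to the explicit quotient $\overline{W(f_+(\cdot,\xi),\phi(\cdot,\xi))}/(2i\sqrt{\xi})$.
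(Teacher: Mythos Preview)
Your proof is correct and essentially identical to the paper's: both expand $\phi$ in the fundamental system $\{f_+,\overline{f_+}\}$, use reality of $\phi$ to force $b(\xi)=\overline{a(\xi)}$, and read off $a(\xi)$ from the Wronskian identity $W(f_+,\phi)=-2i\sqrt{\xi}\,\overline{a(\xi)}$ together with the asymptotics established in Lemmas~\ref{lem:smxi} and~\ref{lem:lgxi}. The only cosmetic difference is that the paper phrases the last step via $c_0(\xi)=1/W(f_+(\cdot,\xi),\phi(\cdot,\xi))$ rather than writing the Wronskian directly.
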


\begin{proof}
Since $W(f_+(\cdot,\xi),\overline{f_+(\cdot,\xi)})=-2i\sqrt{\xi}$ it is clear
that there exist coefficients $a(\xi)$, $b(\xi)$ such that $\phi(\cdot,\xi)=a(\xi)f_+(\cdot,\xi)
+b(\xi)\overline{f_+(\cdot,\xi)}$ provided that $\xi>0$.
From the fact that $\phi(\cdot,\xi)$ is real-valued it follows that $b(\xi)=\overline{a(\xi)}$.
Consequently, we obtain
$$ \tfrac{1}{c_0(\xi)}=W(f_+(\cdot,\xi),\phi(\cdot,\xi))=-2i\xi^\frac12 \overline{a(\xi)} $$
and Lemmas \ref{lem:smxi}, \ref{lem:lgxi} yield the claim.
\end{proof}

\subsection{The transference identity}
Unfortunately, 
it is not straightforward to apply the distorted Fourier transform to Eq.~\eqref{eq:main} 
due to the presence
of the derivative $R\partial_R$.
The idea is to substitute the term $R\partial_R$ by a suitable derivative on the Fourier
side. 
This is not possible without making an error.
For the following it is convenient to distinguish between the continuous and the discrete part 
of the spectrum of $\mc{L}$.
This is most effectively done by introducing vector notation.
Consequently, we interpret the distorted Fourier transform, now denoted by $\mc{F}$, as a 
vector-valued map
$\mc{F}: L^2(0,\infty) \to \mathbb{C} \times L^2((0,\infty),\rho(\xi)d\xi)$ given by
$$ \mc{F}f:=\left ( \begin{array}{c}\mc{U}f(\xi_d) \\ \mc{U}f|_{[0,\infty)} \end{array}
\right ) $$
with $\mc{U}$ from Proposition \ref{prop:Fourier}.
By Proposition \ref{prop:Fourier} the inverse map 
\[ \mc{F}^{-1}: \C\times L^2((0,\infty),\rho(\xi)d\xi) \to L^2(0,\infty) \] reads
\begin{equation}
\label{eq:defF-1}
\mc{F}^{-1}\left (\begin{array}{c} a \\ f \end{array} \right )=
a\frac{\phi(\cdot,\xi_d)}{\|\phi(\cdot,\xi_d)\|_{L^2(0,\infty)}^2}+\lim_{b\to\infty} \int_0^b
\phi(\cdot,\xi)f(\xi)\rho(\xi)d\xi. 
\end{equation}
We define the error operator $\mc{K}$ by 
\begin{equation}
\label{eq:K}
\mc{F}((\cdot)f'-f)=\mc{A}\mc{F}f +\mc{K}\mc{F}f 
\end{equation}
for $f\in C^\infty_c(0,\infty)$ where
$$ \mc{A}:=\left (\begin{array}{cc}
0 & 0 \\ 0 & \mc A_c\end{array} \right )
,\quad \mc A_c g(\xi):=-2 \xi g'(\xi)-(\tfrac52+\tfrac{\xi\rho'(\xi)}{\rho(\xi)})g(\xi) $$
and we write
$$ \mc{K}=\left (\begin{array}{cc}\mc{K}_{dd} & \mc{K}_{dc} \\
\mc{K}_{cd} & \mc{K}_{cc} \end{array} \right )$$
for the matrix components of $\mc{K}$.
We call Eq.~\eqref{eq:K}
the ``transference identity'' since it allows us to transfer derivatives with respect
to $R$ to derivatives with respect to the Fourier variable $\xi$.
In order to motivate the definitions of $\mc{A}$ and $\mc{K}$, let us take 
the free case as a model problem, i.e., assume for the moment that $V=0$ and
$\mc{L}=-\partial_R^2$.
Note, however, that the free case with a Dirichlet condition at zero is not a good model for 
our problem since it is not resonant.
Consequently, we assume a Neumann condition instead.
In the free case there is no discrete spectrum and the corresponding $\phi(\cdot,\xi)$ can be given explicitly
and reads $\phi(R,\xi)=-\cos(\xi^\frac12 R)$.
Furthermore, the spectral measure is $\rho(\xi)=\frac{1}{\pi}\xi^{-\frac12}$.
For the transference identity we obtain
\begin{align*}
\mc{U}((\cdot)f'-f)(\xi)&=\int_0^\infty \phi(R,\xi)Rf'(R)dR-\mc{U}f(\xi) \\
&=-\int_0^\infty R\xi^{\frac12}\sin(\xi^\frac12 R)f(R)dR-2\int_0^\infty \phi(R,\xi)f(R)dR \\
&=-2\xi\partial_\xi \int_0^\infty\phi(R,\xi)f(R)dR-2\mc{U}f(\xi)
=-2 \xi (\mc{U}f)'(\xi)-(\tfrac52+\tfrac{\xi\rho'(\xi)}{\rho(\xi)})\mc{U}f(\xi)
\end{align*}
for $f \in C_c^\infty(0,\infty)$ and we recover the operator $\mc{A}_c$ whereas the corresponding 
error operator is identically zero.
Due to the strong decay of $V(R)$ as $R \to \infty$ it is reasonable
to expect that the transference identity Eq.~\eqref{eq:K} is well approximated by the above model case, 
at least to leading order.
Therefore, $\mc{K}$ should be ``small'' in a suitable sense. 
We will make this idea rigorous in Section \ref{sec:K} where we prove appropriate mapping
properties of $\mc{K}$ which exhibit a certain smoothing effect that turns out to be crucial
for the whole construction.

\subsection{Application of the distorted Fourier transform}
Now we intend to apply the distorted Fourier transform to Eq.~\eqref{eq:main}.
In order to be able to do so, however, we have to deal with the fact that the functions on the
right-hand side of Eq.~\eqref{eq:main} are only defined in a forward lightcone and we have to
extend them smoothly to all $r\geq 0$.
To this end we use a smooth cut-off $\chi$ satisfying $\chi(x)=0$ for $x \leq \frac12$ and
$\chi(x)=1$ for $x\geq 1$.
Then $\chi(\frac{t-r}{c})$ is identically $1$ in the truncated cone $r\leq t-c$ and identically
$0$ if $r\geq t-\frac{c}{2}$. Of course, we assume here that $t \geq c$.
Furthermore, in terms of the new variables $\tau=\frac{1}{\nu}t^\nu=\frac{1}{\nu}\lambda(t)t$ and
$R=\lambda(t)r$, the cut-off reads
$$ \tilde{\chi}(\tau,R):=\chi\left (\tilde{\lambda}(\tau)^{-1}\tfrac{\nu \tau-R}{c}\right). $$
Consequently, the equation we really want to solve is given by
\begin{equation}
\label{eq:main2}
\mc{D}^2 v+\beta_\nu(\tau)\mc{D} v+\mc{L}v
=\tilde{\lambda}(\tau)^{-2}\tilde{\chi}\left [5(u_2^4-u_0^4)v+R N(u_2,R^{-1}v)+Re_2\right ],
\end{equation}
cf.~Eq.~\eqref{eq:main}, and we
recall that $\mc{D}=\partial_\tau+\beta_\nu(\tau)(R\partial_R-1)$.
Thus, we have
$$ \mc{F}\mc{D}=\partial_\tau \mc{F}+\beta_\nu(\mc{A}+\mc{K})\mc{F}=:\hat{\mc{D}}\mc{F} $$
and this yields
$
\mc{F}\mc{D}^2=\hat{\mc{D}}^2 \mc{F}
$
where
\begin{align*}
\hat{\mc{D}}^2&=\partial_\tau^2 +2\beta_\nu(\mc{A}+\mc{K})\partial_\tau+\beta_\nu^2 
(\mc{A}^2+\mc{K}\mc{A}+\mc{A}\mc{K}+\mc{K}^2)+\beta_\nu'(\mc{A}+\mc{K}) \\
&=(\partial_\tau+\beta_\nu \mc{A})^2+2\beta_\nu \mc{K}\partial_\tau
+\beta_\nu^2 (2\mc{KA}+[\mc{A,K}]+\mc{K}^2)+\beta_\nu' \mc{K}.
\end{align*}
We conclude that
$$ \hat{\mc{D}}^2+\beta_\nu \hat{\mc{D}}=(\partial_\tau+\beta_\nu \mc{A})^2+\beta_\nu(2\mc{K}+1)
(\partial_\tau+\beta_\nu\mc{A})+\beta_\nu^2(\mc{K}^2+[\mc{A},\mc{K}]+\mc{K}+\tfrac{\beta_\nu'}{\beta_\nu^2}\mc{K}). $$
In the following we write $(x_d(\tau),x(\tau,\xi))=\mc{F}v(\tau,\cdot)(\xi)$.
Consequently, by applying $\mc{F}$ to Eq.~\eqref{eq:main2}, we end up with the system
\begin{align}
\label{eq:sysFourier}
&\left ( \begin{array}{cc}\partial_\tau^2+\xi_d & 0 \\
0 & (\partial_\tau+\beta_\nu(\tau)\mc A_c)^2+\xi \end{array} \right )
\left (\begin{array}{c}x_d(\tau) \\ x(\tau,\xi) \end{array} \right )
=\sum_{j=1}^5 \mc{N}_j\left ( \begin{array}{c}x_d \\ x \end{array} \right)
(\tau,\xi) \\
&\quad \quad -\beta_\nu(\tau)(2\mc{K}+1)(\partial_\tau+\beta_\nu(\tau)\mc{A})\left (\begin{array}{c}x_d(\tau)\\
x(\tau,\cdot) \end{array} \right )(\xi) \nonumber \\
&\quad \quad -\beta_\nu(\tau)^2 \left ( \mc{K}^2+[\mc{A},\mc{K}]+\mc{K}+\tfrac{\beta_\nu'(\tau)}{\beta_\nu(\tau)^2}
\mc{K} \right )\left (\begin{array}{c}x_d(\tau) \\ x(\tau,\cdot) \end{array} \right )(\xi) \nonumber \\
&\quad \quad +\left ( \begin{array}{c}\hat{e}_2(\tau,\xi_d) \\ \hat{e}_2(\tau,\xi) \end{array}
\right )\nonumber
\end{align}
where the operators $\mc{N}_j$, $j \in \{1,2,3,4,5\}$, are given by
\begin{align}
\label{eq:defR}
\mc{N}_j \left (\begin{array}{c}x_d \\ x \end{array} \right )(\tau,\xi)&:=\mc{F}
\left (|\cdot|\varphi_j(\tau,\cdot) 
\left [|\cdot|^{-1}\mc{F}^{-1}
\left (\begin{array}{c}x_d(\tau) \\ x(\tau,\cdot) \end{array} \right ) \right]^j \right )(\xi)
\end{align}
with
\begin{align*}
\varphi_1(\tau,R)&=5\tilde{\lambda}(\tau)^{-2}\tilde{\chi}(\tau,R)[
u_2(\nu \tilde{\lambda}(\tau)^{-1}\tau, \tilde{\lambda}(\tau)^{-1}R)^4
-u_0(\nu \tilde{\lambda}(\tau)^{-1}\tau, \tilde{\lambda}(\tau)^{-1}R)^4] \\
\varphi_2(\tau,R)&=10 \tilde{\lambda}(\tau)^{-2}\tilde{\chi}(\tau,R) 
u_2(\nu \tilde{\lambda}(\tau)^{-1}\tau, \tilde{\lambda}(\tau)^{-1}R)^3 \\
\varphi_3(\tau,R)&=10 \tilde{\lambda}(\tau)^{-2}\tilde{\chi}(\tau,R) 
u_2(\nu \tilde{\lambda}(\tau)^{-1}\tau, \tilde{\lambda}(\tau)^{-1}R)^2 \\
\varphi_4(\tau,R)&=5\tilde{\lambda}(\tau)^{-2}\tilde{\chi}(\tau,R)
u_2(\nu \tilde{\lambda}(\tau)^{-1}\tau, \tilde{\lambda}(\tau)^{-1}R) \\ 
\varphi_5(\tau,R)&=\tilde{\lambda}(\tau)^{-2}\tilde{\chi}(\tau,R)
\end{align*}
and
\begin{equation}
\label{eq:defhate2}
\hat{e}_2(\tau,\xi)=\tilde{\lambda}(\tau)^{-2}\int_0^\infty \phi(R,\xi)
\chi\left (\tilde{\lambda}(\tau)^{-1}\tfrac{\nu \tau-R}{c}\right)
R e_2\left (\nu \tilde{\lambda}(\tau)^{-1}\tau,
\tilde{\lambda}(\tau)^{-1}R\right )dR.
\end{equation} 

\subsection{Solution of the transport equation}
Our goal is to treat the entire right-hand side of Eq.~\eqref{eq:sysFourier} perturbatively. 
To this end it is necessary to be able to solve the two decoupled equations
\begin{align}
\label{eq:xd}
x_d''(\tau)+\xi_d x_d(\tau)&=b_d(\tau) \\
\label{eq:x}
\left [\partial_\tau-\beta_\nu(\tau)\left (2\xi\partial_\xi+\tfrac52+\tfrac{\xi\rho'(\xi)}{\rho(\xi)}
\right )
\right ]^2 
x(\tau,\xi)+\xi x(\tau,\xi)&=b(\tau,\xi)
\end{align}
for some given functions $b_d$ and $b$.
Recall that we are interested in decaying solutions as $\tau \to \infty$ and by variation of constants
it is readily seen
that
$$ x_d(\tau)=\int_{\tau_0}^\infty H_d(\tau,\tau')b_d(\tau')d\tau',\quad H_d(\tau,\tau'):=-\tfrac12 
|\xi_d|^{-\frac12} e^{-|\xi_d|^{1/2}|\tau-\tau'|} $$
for some constant $\tau_0$ is a solution to Eq.~\eqref{eq:xd} which behaves well at infinity.
For future reference we denote by $\mc{H}_d$ the solution operator, i.e.,
\begin{equation}
\label{eq:defHd}
\mc{H}_d f(\tau):=\int_{\tau_0}^\infty H_d(\tau,\tau')f(\tau')d\tau'. 
\end{equation}

In order to solve Eq.~\eqref{eq:x} we first define a new variable $y(\tau,\xi)$ by
$$ x(\tau,\xi)=\tilde{\lambda}(\tau)^\frac52 \rho(\xi)^{-\frac12}y(\tau,\xi). $$
Then, by recalling that 
$\beta_\nu(\tau)=\tilde{\lambda}'(\tau)\tilde{\lambda}(\tau)^{-1}$, we observe that
\[ \left [\partial_\tau-\tfrac52\beta_\nu(\tau)-\beta_\nu(\tau) \left
(2\xi\partial_\xi+\tfrac{\xi\rho'(\xi)}{\rho(\xi)}
\right ) \right]x(\tau,\xi)=\tilde{\lambda}(\tau)^\frac52 \rho(\xi)^{-\frac12}
[\partial_\tau-2\beta_\nu(\tau)\xi\partial_\xi]y(\tau,\xi)
\]
and thus, Eq.~\eqref{eq:x} is equivalent to
\begin{equation}
\label{eq:y}
[\partial_\tau-2\beta_\nu(\tau)\xi\partial_\xi]^2 y(\tau,\xi)+\xi y(\tau,\xi)=\tilde{\lambda}(\tau)^{-\frac52}
\rho(\xi)^\frac12 b(\tau,\xi).
\end{equation}
Now we solve Eq.~\eqref{eq:y} by the method of characteristics, i.e., we compute
$$ \tfrac{d}{d\tau}y(\tau,\xi(\tau))=\partial_\tau y(\tau,\xi(\tau))+\xi'(\tau)\partial_\xi
y(\tau,\xi(\tau)) $$
and by comparison with the differential operator in Eq.~\eqref{eq:y} we 
obtain the characteristic equation 
$\xi'(\tau)=-2\beta_\nu(\tau)\xi(\tau)$
which, by recalling that $\beta_\nu(\tau)=-(\frac{1}{\nu}-1)\tau^{-1}$ from Eq.~\eqref{eq:defbeta}, 
is readily solved as $\xi(\tau)=\gamma \tau^{2(\frac{1}{\nu}-1)}$ for some constant $\gamma$.
Thus, along the characteristic $\tau \mapsto (\tau,\gamma\tau^{2(\frac{1}{\nu}-1)})$, Eq.~\eqref{eq:y}
takes the form
\begin{equation}
\label{eq:tildex}
\tilde{y}''(\tau;\gamma)+\gamma \tau^{2(\frac{1}{\nu}-1)}\tilde{y}(\tau;\gamma)=\tilde{b}(\tau;\gamma) 
\end{equation}
where $\tilde{y}(\tau;\gamma)=y(\tau,\gamma\tau^{2(\frac{1}{\nu}-1)})$ and $\tilde{b}(\tau;\gamma)
=\tilde{\lambda}(\tau)^{-\frac52}\rho(\gamma\tau^{2(\frac{1}{\nu}-1)})^\frac12 b(\tau,\gamma\tau^{2(\frac{1}{\nu}-1)})$.
By setting $\tilde{y}(\tau;\gamma)=\tau^{-\frac12(\frac{1}{\nu}-1)}w(\nu \gamma^\frac12
\tau^{\frac{1}{\nu}})$ we infer that the homogeneous
version of Eq.~\eqref{eq:tildex} is equivalent to
$$ w''(z)+\left (1-\frac{(\frac{\nu}{2})^2-\frac14}{z^2}\right )w(z)=0 $$
where $z=\nu \gamma^\frac12 \tau^{\frac{1}{\nu}}$ and this identifies Eq.~\eqref{eq:tildex} as a Bessel equation.
Consequently, a fundamental system $\{\phi_j(\cdot;\gamma): j=0,1\}$ for the homogeneous version of 
Eq.~\eqref{eq:tildex} is given
by 
\begin{equation}
\label{eq:phi01}
\begin{aligned} 
\phi_0(\tau;\gamma)&=a_{\nu}\tau^\frac12 J_{\nu/2}(\nu \gamma^\frac12 \tau^{\frac{1}{\nu}}) \\
\phi_1(\tau;\gamma)&=b_{\nu}\tau^\frac12 Y_{\nu/2}(\nu \gamma^\frac12 \tau^{\frac{1}{\nu}})
\end{aligned}
\end{equation}
 where $J_{\nu/2}$, $Y_{\nu/2}$ are
the standard Bessel functions,
see e.g.~\cite{Olv74}, \cite{DLMF}, and $a_\nu$, $b_\nu$ are chosen such that
\begin{equation}
\label{eq:Besselasym}
a_\nu J_{\nu/2}(z)=\nu^{-\frac{\nu}{2}}z^{\frac{\nu}{2}}[1+O(z^2)],\quad 
b_\nu Y_{\nu/2}(z)=\nu^{\frac{\nu}{2}}z^{-\frac{\nu}{2}}[1+O(z^\nu)] 
\end{equation}
as $z \to 0+$.
This yields the asymptotics
\begin{equation}
\label{eq:phiasymsm}
\begin{aligned}
\phi_0(\tau;\gamma)&=\gamma^{\frac{\nu}{4}}\tau[1+O(\gamma \tau^{\frac{2}{\nu}})] \\
\phi_1(\tau;\gamma)&=\gamma^{-\frac{\nu}{4}}[1+O(\gamma^\frac{\nu}{2}\tau)] 
\end{aligned}
\end{equation}
for, say, $0<\gamma^\frac12 \tau^\frac{1}{\nu}\leq 1$ and the $O$-terms behave like symbols.
By evaluation at $\tau=0$ we also obtain the Wronskian $W(\phi_0(\cdot;\gamma),\phi_1(\cdot;\gamma))=-1$.
Furthermore, from the Hankel asymptotics we have $|H_{\nu/2}^{(j)}(z)|\lesssim z^{-\frac12}$
for $z\geq 1$, $j=1,2$ (see \cite{Olv74}, \cite{DLMF}) and thus, the relations
$J_{\nu/2}=\frac12 (H_{\nu/2}^{(1)}+H_{\nu/2}^{(2)})$ as well as $Y_{\nu/2}=\frac{1}{2i}
(H_{\nu/2}^{(1)}-H_{\nu/2}^{(2)})$ immediately yield the bound
\begin{equation}
\label{eq:phiasymlg}
|\phi_j(\tau;\gamma)|\lesssim \gamma^{-\frac14}\tau^{-\frac12(\frac{1}{\nu}-1)},\quad j=0,1 
\end{equation}
for $\gamma^\frac12 \tau^\frac{1}{\nu}\geq 1$.
Consequently, assuming sufficient decay of $\tilde{b}(\cdot;\gamma)$, a decaying solution to Eq.~\eqref{eq:tildex} is given by
$$ \tilde{y}(\tau;\gamma)=\int_\tau^\infty [\phi_1(\tau;\gamma)\phi_0(\sigma;\gamma)
-\phi_0(\tau;\gamma)\phi_1(\sigma;\gamma)
]\tilde{b}(\sigma;\gamma)d\sigma. $$
In order to obtain an expression for $x(\tau,\xi)$, we set $\gamma=\xi \tau^{-2(\frac{1}{\nu}-1)}$ and
this yields
\begin{equation}
\label{eq:H}
x(\tau,\xi)=\int_\tau^\infty H_c(\tau,\sigma,\xi)b(\sigma,(\tfrac{\sigma}{\tau})^{2(\frac{1}{\nu}-1)}
\xi)d\sigma=:\mc{H}_c b(\tau,\xi)
\end{equation}
with 
\begin{align}
\label{eq:defH}
H_c(\tau,\sigma,\xi)=&\tilde{\lambda}(\tau)^\frac52 \rho(\xi)^{-\frac12}
\Big[\phi_1\left (\tau;\xi \tau^{-2(\frac{1}{\nu}-1)} \right)
\phi_0\left (\sigma;\xi \tau^{-2(\frac{1}{\nu}-1)} \right)\\
&-\phi_0\left (\tau;\xi \tau^{-2(\frac{1}{\nu}-1)}\right)
\phi_1\left (\sigma;\xi \tau^{-2(\frac{1}{\nu}-1)}\right) \Big]\tilde{\lambda}(\sigma)^{-\frac52}
\rho \left ((\tfrac{\sigma}{\tau})^{2(\frac{1}{\nu}-1)}\xi \right)^{\frac12}. \nonumber
\end{align}
Now we establish bounds for $H_c(\tau,\sigma,\xi)$.

\begin{lemma}
\label{lem:boundsH}
The function $H_c$ defined by Eq.~\eqref{eq:defH} satisfies the bounds
$$ |H_c(\tau,\sigma,\xi)|\lesssim (\tfrac{\sigma}{\tau})^{\frac72|\frac{1}{\nu}-1|} \left \{ \begin{array}{ll}
\xi^{-\frac12} & \tau \xi^\frac12\geq 1,\: \sigma \xi^\frac12 \geq 1 \\
\tau^{\frac12(1-\nu)}\xi^{-\frac14(1+\nu)} & 0<\tau \xi^\frac12 \leq 1,\: \sigma \xi^\frac12 \geq 1 \\
\sigma & 0<\tau \xi^\frac12 \leq 1,\: 0<\sigma \xi^\frac12 \leq 1
\end{array} \right.
$$
for all $1\leq \tau \leq \sigma$ and $\xi > 0$.
\end{lemma}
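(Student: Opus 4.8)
The plan is to strip the scalar factors off \eqref{eq:defH} one at a time and reduce to a pointwise bound for a ``Bessel bracket''. Since $\lambda(t)=t^{-(1-\nu)}$ and $\tau=\frac1\nu t^\nu$, we have $\tilde\lambda(\tau)=(\nu\tau)^{-(\frac1\nu-1)}$, whence $\tilde\lambda(\tau)^{5/2}\tilde\lambda(\sigma)^{-5/2}=(\sigma/\tau)^{\frac52(\frac1\nu-1)}\le(\sigma/\tau)^{\frac52|\frac1\nu-1|}$ because $\sigma\ge\tau$. From the asymptotics in Lemmas \ref{lem:smxi} and \ref{lem:lgxi}, together with the boundedness from above and below of the spectral density on compact subintervals of $(0,\infty)$, one reads off the two-sided bound $\rho(\xi)\simeq\xi^{-1/2}$ for $0<\xi\lesssim1$ and $\rho(\xi)\simeq\xi^{1/2}$ for $\xi\gtrsim1$. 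Writing $\zeta:=(\sigma/\tau)^{2(\frac1\nu-1)}\xi$ (the spectral parameter transported along the characteristic from $\tau$ to $\sigma$), a short case check according to whether $\xi$ and $\zeta$ are $\le1$ or $\ge1$ gives $\rho(\xi)^{-1/2}\rho(\zeta)^{1/2}\lesssim(\sigma/\tau)^{\frac12|\frac1\nu-1|}$. Since $\tfrac52+\tfrac12+\tfrac12=\tfrac72$, it therefore suffices to prove that
\[ |B(\tau,\sigma;\xi)|\lesssim(\sigma/\tau)^{\frac12|\frac1\nu-1|}\times
\begin{cases} \xi^{-\frac12} & \tau\xi^{1/2}\geq1,\ \sigma\xi^{1/2}\geq1, \\ \tau^{\frac12(1-\nu)}\xi^{-\frac14(1+\nu)} & \tau\xi^{1/2}\leq1,\ \sigma\xi^{1/2}\geq1, \\ \sigma & \tau\xi^{1/2}\leq1,\ \sigma\xi^{1/2}\leq1, \end{cases} \]
where $B(\tau,\sigma;\xi):=\phi_1(\tau;\gamma)\phi_0(\sigma;\gamma)-\phi_0(\tau;\gamma)\phi_1(\sigma;\gamma)$ with $\gamma=\xi\tau^{-2(\frac1\nu-1)}$; multiplying the three estimates then yields the lemma.

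Next I would pin down the Bessel arguments in $B$. Since $\gamma^{1/2}\tau^{1/\nu}=\xi^{1/2}\tau$ and $\gamma^{1/2}\sigma^{1/\nu}=\xi^{1/2}\sigma(\sigma/\tau)^{\frac1\nu-1}$, the argument entering $\phi_j(\tau;\gamma)$ is comparable (up to the fixed constant $\nu$) to $\tau\xi^{1/2}$, and the one entering $\phi_j(\sigma;\gamma)$ is comparable to $\sigma\xi^{1/2}(\sigma/\tau)^{\frac1\nu-1}$; in particular the former never exceeds the latter, and the latter differs from $\sigma\xi^{1/2}$ only by the slowly varying factor $(\sigma/\tau)^{\frac1\nu-1}$ (recall $|\frac1\nu-1|\ll1$). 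I would then split into the three regions in which each of these two arguments is $\lesssim1$ or $\gtrsim1$, inserting the small-argument expansions \eqref{eq:phiasymsm} whenever an argument is $\lesssim1$ and the Hankel bound \eqref{eq:phiasymlg} whenever it is $\gtrsim1$ (both sets of asymptotics being valid, up to constants, in the buffer zone where an argument is $\simeq1$).

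In the doubly-small region the leading terms of $\phi_0(\cdot;\gamma)$ and $\phi_1(\cdot;\gamma)$ carry reciprocal powers of $\gamma$, which cancel in $B=\phi_1(\tau)\phi_0(\sigma)-\phi_0(\tau)\phi_1(\sigma)$, leaving $B=\sigma-\tau+(\text{errors})$ with the errors controlled by $\gamma\sigma^{2/\nu}\lesssim1$; hence $|B|\lesssim\sigma$. In the doubly-large region $|\phi_j(\tau;\gamma)|\lesssim\gamma^{-1/4}\tau^{-\frac12(\frac1\nu-1)}$ and likewise at $\sigma$, so $|B|\lesssim\gamma^{-1/2}(\tau\sigma)^{-\frac12(\frac1\nu-1)}=\xi^{-1/2}(\tau/\sigma)^{\frac12(\frac1\nu-1)}\lesssim\xi^{-1/2}(\sigma/\tau)^{\frac12|\frac1\nu-1|}$. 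In the mixed region (small at $\tau$, large at $\sigma$) one bounds the two products in $B$ separately to get $|B|\lesssim\sigma^{-\frac12(\frac1\nu-1)}\bigl(\gamma^{-\frac{\nu+1}{4}}+\gamma^{\frac{\nu-1}{4}}\tau\bigr)$, and the key observation is that being in the large-argument regime at $\sigma$ forces $\gamma\gtrsim\sigma^{-2/\nu}$, which tames the otherwise divergent negative powers of $\gamma$; power counting with $\gamma=\xi\tau^{-2(\frac1\nu-1)}$ then reproduces $\tau^{\frac12(1-\nu)}\xi^{-\frac14(1+\nu)}$ up to an admissible factor $(\sigma/\tau)^{O(|\frac1\nu-1|)}$. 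Finally I would pass back from these argument-based cases to the $\tau\xi^{1/2},\sigma\xi^{1/2}$-based cases of the statement; the only discrepancy is between the true $\sigma$-argument $\sigma\xi^{1/2}(\sigma/\tau)^{\frac1\nu-1}$ and $\sigma\xi^{1/2}$, and one checks that when these lie on opposite sides of $1$ the constraints force $\sigma/\tau$ into a bounded range, so that the $\sigma$-argument is comparable to $1$ there and either set of asymptotics may be used without loss.

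The main obstacle is precisely the mixed and doubly-large regions: there the crude estimates contain negative powers of $\gamma$ (equivalently of $\xi$) which diverge as $\xi\to0$, and the argument only closes because the very hypothesis that one of the Bessel arguments is $\gtrsim1$ secretly bounds $\gamma$, and hence $\xi$, from below; one must then verify that the competing powers of $\tau$, $\sigma$ and $\xi$ cancel precisely, and — since the three reduction factors already multiply to the full $(\sigma/\tau)^{\frac72|\frac1\nu-1|}$ — that every stray factor $(\sigma/\tau)^{\pm(\frac1\nu-1)}$ produced along the way is correctly charged against the exponent $\tfrac72|\tfrac1\nu-1|$.
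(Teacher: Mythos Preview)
Your approach is essentially identical to the paper's: strip off the $\tilde\lambda$ and $\rho$ factors to obtain \eqref{eq:factor} (the paper records this as $(\sigma/\tau)^{3|\frac1\nu-1|}$, matching your $\tfrac52+\tfrac12$), then bound the Bessel bracket $B$ case by case using \eqref{eq:phiasymsm} and \eqref{eq:phiasymlg}. Your treatment of the three cases is correct and mirrors the paper's, including the reduction $\gamma^{\frac{\nu-1}{4}}\tau\le\gamma^{-\frac{\nu+1}{4}}$ in the mixed region (which follows from $\gamma^{1/2}\tau^{1/\nu}\le1$, not from any lower bound on $\gamma$).

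There is one small but genuine error in your final paragraph. You are right that the paper silently identifies the regime for $\phi_j(\sigma;\gamma)$ with the condition $\sigma\xi^{1/2}\lessgtr1$, whereas the true threshold is $\gamma^{1/2}\sigma^{1/\nu}=\sigma\xi^{1/2}(\sigma/\tau)^{\frac1\nu-1}\lessgtr1$; good catch. However, your proposed resolution---that when these lie on opposite sides of $1$ the ratio $\sigma/\tau$ is forced to be bounded---is false. For instance, with $\nu>1$, take $\xi^{1/2}=\sigma^{-1}$: then $\sigma\xi^{1/2}=1$ while $\gamma^{1/2}\sigma^{1/\nu}=(\sigma/\tau)^{\frac1\nu-1}$ can be arbitrarily small. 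The correct fix is a direct check: in the discrepancy region one uses the ``wrong'' asymptotics for $\phi_j(\sigma;\gamma)$ and then invokes the very constraint defining the region. For example, if $\sigma\xi^{1/2}\ge1$ but $\gamma^{1/2}\sigma^{1/\nu}\le1$ (so $\nu>1$), the doubly-small bound $|B|\lesssim\sigma$ applies, and the constraint rewrites as $\sigma\,\tau^{\nu-1}\xi^{\nu/2}\le1$, which is exactly what is needed to conclude $\sigma\lesssim(\sigma/\tau)^{\frac12|\frac1\nu-1|}\tau^{\frac12(1-\nu)}\xi^{-\frac14(1+\nu)}$. The symmetric discrepancy (with $\nu<1$) is handled by your own observation $\gamma\gtrsim\sigma^{-2/\nu}$, which gives $\gamma^{-\frac{1+\nu}{4}}\sigma^{-\frac12(\frac1\nu-1)}\lesssim\sigma$.
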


\begin{proof}
Recall that $\tilde{\lambda}(\tau)=(\nu\tau)^{-(\frac{1}{\nu}-1)}$ and thus,
$\tilde{\lambda}(\tau)^\frac52 \tilde{\lambda}(\sigma)^{-\frac52}=(\tfrac{\sigma}{\tau})^{\frac52(\frac{1}{\nu}-1)}$.
Furthermore, if $\xi\geq 1$ we have $|\rho(\xi)|^{-\frac12}\lesssim \xi^{-\frac14}$ by Lemma
\ref{lem:lgxi} and, if $(\tfrac{\sigma}{\tau})^{2(\frac{1}{\nu}-1)}\xi\geq 1$, we infer
\[ \left |\rho(\xi)^{-\frac12}\rho\left ((\tfrac{\sigma}{\tau})^{2(\frac{1}{\nu}-1)}\xi \right )^\frac12
\right |\lesssim \xi^{-\frac14}(\tfrac{\sigma}{\tau})^{\frac12(\frac{1}{\nu}-1)}\xi^\frac14 \lesssim 
 (\tfrac{\sigma}{\tau})^{\frac12|\frac{1}{\nu}-1|}\]
again by Lemma \ref{lem:lgxi}.
Note that we always have $(\tfrac{\sigma}{\tau})^{\frac12(\frac{1}{\nu}-1)}\lesssim 
(\tfrac{\sigma}{\tau})^{\frac12|\frac{1}{\nu}-1|}$ regardless of the sign of $\frac{1}{\nu}-1$
since $1\leq \tau\leq \sigma$ is assumed throughout this proof.
If, on the other hand, $0<(\tfrac{\sigma}{\tau})^{2(\frac{1}{\nu}-1)} \xi\leq 1$ we obtain
\[ \left |\rho(\xi)^{-\frac12}\rho\left ((\tfrac{\sigma}{\tau})^{2(\frac{1}{\nu}-1)}\xi \right )^\frac12
\right |\lesssim \xi^{-\frac14}(\tfrac{\sigma}{\tau})^{-\frac12(\frac{1}{\nu}-1)}\xi^{-\frac14}
\lesssim (\tfrac{\sigma}{\tau})^{\frac12|\frac{1}{\nu}-1|}\]
since $|\rho(\xi)|^\frac12 \lesssim \xi^{-\frac14}$ for $0<\xi\leq 1$ by Lemma \ref{lem:smxi}.

In the case $0<\xi\leq 1$ we have $|\rho(\xi)|^{-\frac12}\lesssim \xi^\frac14$ and thus,
either
\[  \left |\rho(\xi)^{-\frac12}\rho\left ((\tfrac{\sigma}{\tau})^{2(\frac{1}{\nu}-1)}\xi \right )^\frac12
\right |\lesssim\xi^\frac14
(\tfrac{\sigma}{\tau})^{-\frac12(\frac{1}{\nu}-1)}\xi^{-\frac14} \lesssim 
(\tfrac{\sigma}{\tau})^{\frac12|\frac{1}{\nu}-1|} \]
or
\[ \left |\rho(\xi)^{-\frac12}\rho\left ((\tfrac{\sigma}{\tau})^{2(\frac{1}{\nu}-1)}\xi \right )^\frac12
\right |\lesssim \xi^\frac14 
(\tfrac{\sigma}{\tau})^{\frac12(\frac{1}{\nu}-1)}\xi^\frac14 \lesssim 
(\tfrac{\sigma}{\tau})^{\frac12|\frac{1}{\nu}-1|} \]
by Lemma \ref{lem:lgxi} depending on whether $0<(\tfrac{\sigma}{\tau})^{\frac12(\frac{1}{\nu}-1)}\xi\leq 1$
or $(\tfrac{\sigma}{\tau})^{\frac12(\frac{1}{\nu}-1)}\xi \geq 1$.
We conclude that
\begin{equation}
\label{eq:factor}
\left |\tilde{\lambda}(\tau)^\frac52 \tilde{\lambda}(\sigma)^{-\frac52}
\rho(\xi)^{-\frac12}\rho\left ((\tfrac{\sigma}{\tau})^{2(\frac{1}{\nu}-1)}\xi \right )^\frac12
\right | \lesssim (\tfrac{\sigma}{\tau})^{3|\frac{1}{\nu}-1|} 
\end{equation}
for all $1\leq \tau\leq \sigma$ and $\xi>0$.

It remains to estimate the terms involving $\phi_j$.
We have different asymptotic descriptions of $\phi_j(\tau;\gamma)$ depending on whether 
$0<\gamma^\frac12 \tau^{\frac{1}{\nu}}\leq 1$ or $\gamma^\frac12 \tau^{\frac{1}{\nu}}\geq 1$.
For $\gamma=\xi \tau^{-2(\frac{1}{\nu}-1)}$ this distinction reads
$0<\tau\xi^\frac12\leq 1$ or $\tau \xi^\frac12 \geq 1$.
Thus, in principle we have to deal with the four cases
\begin{enumerate}
\item $\tau \xi^\frac12 \geq 1$ and $\sigma \xi^\frac12\geq 1$
\item $0<\tau \xi^\frac12 \leq 1$ and $\sigma \xi^\frac12 \geq 1$
\item $0<\tau \xi^\frac12 \leq 1$ and $0<\sigma \xi^\frac12 \leq 1$
\item $\tau \xi^\frac12 \geq 1$ and $0<\sigma \xi^\frac12 \leq 1$.
\end{enumerate}
However, since we are only interested in $\tau \leq \sigma$, case $(4)$ is void.
\begin{enumerate}
\item We use the bound from the Hankel asymptotics stated in Eq.~\eqref{eq:phiasymlg} to obtain
$$ |\phi_0(\tau;\gamma)\phi_1(\sigma;\gamma)|\lesssim \gamma^{-\frac12}\tau^{-\frac12(\frac{1}{\nu}-1)}
\sigma^{-\frac12(\frac{1}{\nu}-1)}\lesssim (\tfrac{\sigma}{\tau})^{\frac12|\frac{1}{\nu}-1|}\xi^{-\frac12} $$
by evaluation at $\gamma=\xi 
\tau^{-2(\frac{1}{\nu}-1)}$.
This bound is symmetric in $\tau$ and $\sigma$ and thus, by Eq.~\eqref{eq:factor}, we infer
$$ |H_c(\tau,\sigma,\xi)|\lesssim (\tfrac{\sigma}{\tau})^{\frac72|\frac{1}{\nu}-1|} \xi^{-\frac12}. $$
\item From Eqs.~\eqref{eq:phiasymsm} and \eqref{eq:phiasymlg} we have
\begin{align*} 
|\phi_1(\tau;\gamma)\phi_0(\sigma;\gamma)|&\lesssim \gamma^{-\frac14(1+\nu)}\sigma^{-\frac12(\frac{1}{\nu}-1)}\\
|\phi_0(\tau;\gamma)\phi_1(\sigma;\gamma)|&\lesssim \gamma^{-\frac14(1-\nu)}\tau \sigma^{-\frac12(\frac{1}{\nu}-1)} 
\lesssim \gamma^{-\frac14(1+\nu)}\sigma^{-\frac12(\frac{1}{\nu}-1)}
\end{align*}
and thus,
\begin{align*}|H_c(\tau,\sigma,\xi)|&\lesssim (\tfrac{\sigma}{\tau})^{3|\frac{1}{\nu}-1|} \tau^{\frac12(\frac{1}{\nu}-\nu)}
\sigma^{-\frac12(\frac{1}{\nu}-1)}\xi^{-\frac14(1+\nu)} \\
&\lesssim 
(\tfrac{\sigma}{\tau})^{\frac72|\frac{1}{\nu}-1|} \tau^{\frac12(1-\nu)}\xi^{-\frac14(1+\nu)} 
\end{align*}
by Eq.~\eqref{eq:factor}.

\item Eq.~\eqref{eq:phiasymsm} yields
$$ |\phi_0(\tau;\gamma)\phi_1(\sigma;\gamma)|\lesssim \tau $$
and this implies
$|H_c(\tau,\sigma,\xi)|\lesssim (\tfrac{\sigma}{\tau})^{3|\frac{1}{\nu}-1|} (\tau+\sigma) 
\lesssim (\tfrac{\sigma}{\tau})^{3|\frac{1}{\nu}-1|} \sigma$.
\end{enumerate}
\end{proof}

We shall also require bounds for the differentiated kernel 
\begin{equation}
\label{eq:defdiffH}
\hat{H}_c(\tau,\sigma,\xi):=
[\partial_\tau-\beta_\nu(\tau)(2\xi \partial_\xi+\tfrac52+\tfrac{\xi\rho'(\xi)}{\rho(\xi)})]
H_c(\tau,\sigma,\xi) 
\end{equation}
where, as always, $\beta_\nu(\tau)
=-(\frac{1}{\nu}-1)\tau^{-1}$.
These are established next.

\begin{lemma}
\label{lem:diffH}
The differentiated kernel satisfies the bounds
\[ |\hat{H}_c(\tau,\sigma,\xi)|\lesssim (\tfrac{\sigma}{\tau})^{\frac72|\frac{1}{\nu}-1|}
\left \{ \begin{array}{ll}
1 & \tau \xi^\frac12\geq 1,\: \sigma \xi^\frac12 \geq 1 \\
\tau^{-\frac12(1-\nu)}\xi^{-\frac14(1-\nu)} & 0<\tau \xi^\frac12 \leq 1,\: \sigma \xi^\frac12 \geq 1 \\
1 & 0<\tau \xi^\frac12 \leq 1,\: 0<\sigma \xi^\frac12 \leq 1
\end{array} \right.
\]
for all $1\leq \tau \leq \sigma$ and $\xi > 0$.
\end{lemma}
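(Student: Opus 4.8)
The strategy is to run the same case analysis as in the proof of Lemma~\ref{lem:boundsH}, after observing that $\hat H_c$ is structurally identical to $H_c$ with the two ``$\tau$-slot'' factors replaced by their $\tau$-derivatives. Recall from the computation preceding Eq.~\eqref{eq:y} that
\[ \big[\partial_\tau-\beta_\nu(\tau)\big(2\xi\partial_\xi+\tfrac52+\tfrac{\xi\rho'(\xi)}{\rho(\xi)}\big)\big]\big[\tilde\lambda(\tau)^{\frac52}\rho(\xi)^{-\frac12}y(\tau,\xi)\big]=\tilde\lambda(\tau)^{\frac52}\rho(\xi)^{-\frac12}[\partial_\tau-2\beta_\nu(\tau)\xi\partial_\xi]y(\tau,\xi), \]
and that in the characteristic coordinate $\gamma=\xi\tau^{-2(\frac1\nu-1)}$ the convective derivative $\partial_\tau-2\beta_\nu\xi\partial_\xi$ is simply $\tfrac{d}{d\tau}$ at fixed $\gamma$. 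Since both the frequency argument $(\tfrac\sigma\tau)^{2(\frac1\nu-1)}\xi=\sigma^{2(\frac1\nu-1)}\gamma$ inside the kernel and the shifted argument of $b$ in $\mc H_c$ are $\tau$-independent at fixed $\gamma$ (one checks directly that $[\partial_\tau-2\beta_\nu\xi\partial_\xi]b(\sigma,(\tfrac\sigma\tau)^{2(\frac1\nu-1)}\xi)=0$), and since the boundary term at $\sigma=\tau$ produced by differentiating the Duhamel integral vanishes (the bracket there is the identically zero expression $\phi_1(\tau;\gamma)\phi_0(\tau;\gamma)-\phi_0(\tau;\gamma)\phi_1(\tau;\gamma)$), I obtain that $\hat H_c(\tau,\sigma,\xi)$ equals the right-hand side of Eq.~\eqref{eq:defH} with $\phi_j(\tau;\xi\tau^{-2(\frac1\nu-1)})$ replaced by $\big[\partial_\tau\phi_j(\tau;\gamma)\big]\big|_{\gamma=\xi\tau^{-2(\frac1\nu-1)}}$, $j=0,1$, all other factors (the $\sigma$-slot factors $\phi_j(\sigma;\cdot)$ and the prefactor estimated in Eq.~\eqref{eq:factor}) being unchanged.

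It then remains to record the asymptotics of $\partial_\tau\phi_j(\tau;\gamma)$ in the two regimes. For $0<\gamma^{\frac12}\tau^{\frac1\nu}\le1$, differentiating Eq.~\eqref{eq:phiasymsm} in $\tau$ (using that the $O$-terms behave like symbols) gives $|\partial_\tau\phi_0(\tau;\gamma)|\lesssim\gamma^{\frac\nu4}$ and $|\partial_\tau\phi_1(\tau;\gamma)|\lesssim\gamma^{\frac\nu4}$. For $\gamma^{\frac12}\tau^{\frac1\nu}\ge1$, I will use that the Hankel asymptotics also yield $|H_{\nu/2}^{(k)\,\prime}(z)|\lesssim z^{-\frac12}$ for $z\ge1$, $k=1,2$ (for instance via $H_{\nu/2}^{(k)\,\prime}=H_{\nu/2-1}^{(k)}-\tfrac{\nu}{2z}H_{\nu/2}^{(k)}$ together with the bounds already used for Eq.~\eqref{eq:phiasymlg}); differentiating the $\tau^{\frac12}H_{\nu/2}^{(k)}(\nu\gamma^{\frac12}\tau^{\frac1\nu})$ building blocks of $\phi_j$ and keeping the dominant contribution then gives $|\partial_\tau\phi_j(\tau;\gamma)|\lesssim\gamma^{\frac14}\tau^{\frac12(\frac1\nu-1)}$, $j=0,1$.

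With these bounds I would carry out the same three cases as in the proof of Lemma~\ref{lem:boundsH} (the fourth being void since $\tau\le\sigma$), multiplying throughout by the prefactor bound $(\tfrac\sigma\tau)^{3|\frac1\nu-1|}$ from Eq.~\eqref{eq:factor}. In the large/large case one obtains products of the form $\gamma^{\frac14}\tau^{\frac12(\frac1\nu-1)}\cdot\gamma^{-\frac14}\sigma^{-\frac12(\frac1\nu-1)}=(\tfrac\tau\sigma)^{\frac12(\frac1\nu-1)}$, hence $|\hat H_c|\lesssim(\tfrac\sigma\tau)^{\frac72|\frac1\nu-1|}$; in the small/small case the products are $\lesssim1$ (using $\gamma^{\frac\nu2}\sigma=(\gamma^{\frac12}\sigma^{\frac1\nu})^\nu\le1$), giving the same bound; and in the mixed small/large case the products are $\lesssim\gamma^{-\frac14(1-\nu)}\sigma^{-\frac12(\frac1\nu-1)}$, which upon substituting $\gamma=\xi\tau^{-2(\frac1\nu-1)}$ becomes $\xi^{-\frac14(1-\nu)}\tau^{\frac{(1-\nu)^2}{2\nu}}\sigma^{-\frac12(\frac1\nu-1)}$. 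The only point requiring a little care is the power accounting in this mixed case: using the identity $\tfrac{(1-\nu)^2}{2\nu}+\tfrac12(1-\nu)=\tfrac12(\tfrac1\nu-1)$ one sees that, after multiplication by $(\tfrac\sigma\tau)^{3|\frac1\nu-1|}$, the residual $\tau$- and $\sigma$-powers collapse into $(\tfrac\tau\sigma)^{\frac12(\frac1\nu-1)}\lesssim(\tfrac\sigma\tau)^{\frac12|\frac1\nu-1|}$, so that all three cases land precisely on the stated bounds. I expect the main (though essentially routine) obstacle to be exactly this bookkeeping of $\xi$- and $\tau$-powers in the mixed regime, together with the verification of the derivative estimate $|H_{\nu/2}^{(k)\,\prime}(z)|\lesssim z^{-1/2}$; everything else is parallel to Lemma~\ref{lem:boundsH}.
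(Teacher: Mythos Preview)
Your proposal is correct and follows essentially the same approach as the paper's proof: both reduce $\hat H_c$ to the expression \eqref{eq:defH} with the $\tau$-slot factors $\phi_j(\tau;\gamma)$ replaced by $\partial_1\phi_j(\tau;\gamma)$, derive the small- and large-argument bounds for $\partial_1\phi_j$ (the paper uses $C'_{\nu/2}=\tfrac12(C_{\nu/2-1}-C_{\nu/2+1})$ instead of your recurrence $H'_{\nu/2}=H_{\nu/2-1}-\tfrac{\nu}{2z}H_{\nu/2}$, but both give $|C'_{\nu/2}(z)|\lesssim z^{-1/2}$), and then run the same three-case analysis against the prefactor bound \eqref{eq:factor}. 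Your explicit bookkeeping of the $\tau$-powers in the mixed case via $\tfrac{(1-\nu)^2}{2\nu}+\tfrac12(1-\nu)=\tfrac12(\tfrac1\nu-1)$ is exactly what the paper does implicitly.
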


\begin{proof}
Note that for any differentiable function $f$ of two variables we have
\begin{align*}
&[\partial_\tau-\beta_\nu(\tau)(2\xi \partial_\xi+\tfrac52+\tfrac{\xi\rho'(\xi)}{\rho(\xi)})]
\left (\tilde{\lambda}(\tau)^\frac52 \rho(\xi)^{-\frac12}f(\tau,\xi)
\right ) \\
&\quad =\tilde{\lambda}(\tau)^\frac52 \rho(\xi)^{-\frac12}[\partial_\tau-2\beta_\nu(\tau)\xi\partial_\xi]
f(\tau,\xi).
\end{align*}
By Eq.~\eqref{eq:defH} and
\begin{align*} [\partial_\tau-2\beta_\nu(\tau)\xi\partial_\xi]\phi_j\left (\tau; \xi\tau^{-2(\frac{1}{\nu}-1)}
\right )&=\partial_1 \phi_j\left (\tau; \xi\tau^{-2(\frac{1}{\nu}-1)}
\right ) \\
[\partial_\tau-2\beta_\nu(\tau)\xi\partial_\xi]\phi_j\left (\sigma; \xi\tau^{-2(\frac{1}{\nu}-1)}
\right )&=0
\end{align*}
for $j=0,1$ we therefore obtain
\begin{align*}
\hat{H}_c(\tau,\sigma,\xi)=&\tilde{\lambda}(\tau)^\frac52 \rho(\xi)^{-\frac12}\partial_1 \phi_1
\left (\tau;\xi \tau^{-2(\frac{1}{\nu}-1)} \right)
\phi_0\left (\sigma;\xi \tau^{-2(\frac{1}{\nu}-1)} \right)\\
&-\partial_1 \phi_0\left (\tau;\xi \tau^{-2(\frac{1}{\nu}-1)}\right)
\phi_1\left (\sigma;\xi \tau^{-2(\frac{1}{\nu}-1)}\right)
\tilde{\lambda}(\sigma)^{-\frac52}
\rho \left ((\tfrac{\sigma}{\tau})^{2(\frac{1}{\nu}-1)}\xi \right)^{\frac12}. \nonumber
\end{align*}
From Eq.~\eqref{eq:phiasymsm} we immediately infer
\begin{equation}
\label{eq:pphism}
\begin{aligned}
\partial_1 \phi_0 (\tau;\gamma)&=\gamma^{\frac{\nu}{4}}[1+O(\gamma \tau^{\frac{2}{\nu}})] \\
\partial_1 \phi_1(\tau;\gamma)&= O(\gamma^{\frac{\nu}{4}})
\end{aligned}
\end{equation}
for $0<\gamma^\frac12 \tau^{\frac{1}{\nu}} \leq 1$.
Furthermore, since
\begin{align*}
\partial_1 \phi_0(\tau;\gamma)&=\tfrac12 \tau^{-\frac12}a_\nu J_{\nu/2}(\nu \gamma^\frac12
\tau^{\frac{1}{\nu}})+\gamma^\frac12 \tau^{\frac{1}{\nu}-\frac12}a_\nu J'_{\nu/2}
(\nu \gamma^\frac12 \tau^{\frac{1}{\nu}}) \\
\partial_1 \phi_1(\tau;\gamma)&=\tfrac12 \tau^{-\frac12}b_\nu Y_{\nu/2}(\nu \gamma^\frac12
\tau^{\frac{1}{\nu}})+\gamma^\frac12 \tau^{\frac{1}{\nu}-\frac12}b_\nu Y'_{\nu/2}
(\nu \gamma^\frac12 \tau^{\frac{1}{\nu}}),
\end{align*}
see Eq.~\eqref{eq:phi01},
the identity $C'_{\nu/2}=\frac12 (C_{\nu/2-1}-C_{\nu/2+1})$, $C \in \{J,Y\}$ \cite{DLMF},
and the asymptotics of the Hankel functions yield $|C'_{\nu/2}(z)|\lesssim z^{-\frac12}$ for 
$z \gtrsim 1$ and thus, 
\begin{equation}
\label{eq:pphilg}
|\partial_1\phi_j(\tau;\gamma)|\lesssim \gamma^{-\frac14}\tau^{-\frac12(1+\frac{1}{\nu})}
+\gamma^{\frac14}\tau^{\frac12(\frac{1}{\nu}-1)}\lesssim \gamma^{\frac14}\tau^{\frac12(\frac{1}{\nu}-1)} 
\end{equation}
for $\gamma^\frac12 \tau^{\frac{1}{\nu}}\geq 1$ and $j=0,1$.

As in the proof of Lemma \ref{lem:boundsH} we now distinguish three cases and we always assume
$1\leq \tau\leq \sigma$.
\begin{enumerate}
\item If $\tau \xi^\frac12\geq 1$ and $\sigma \xi^\frac12 \geq 1$ we use 
Eqs.~\eqref{eq:phiasymlg} and \eqref{eq:pphilg} to conclude
\[ |\partial_1 \phi_1(\tau; \gamma)\phi_0(\sigma; \gamma)|+
|\partial_1 \phi_0(\tau; \gamma)\phi_1(\sigma; \gamma)|\lesssim (\tfrac{\sigma}{\tau})^{\frac12|\frac{1}{\nu}-1|} \]
which, by Eq.~\eqref{eq:factor}, implies
$|\hat{H}_c(\tau,\sigma,\xi)|\lesssim (\tfrac{\sigma}{\tau})^{\frac72 |\frac{1}{\nu}-1|}$.

\item If $0<\tau \xi^\frac12\leq 1$ and $\sigma \xi^\frac12\geq 1$ we obtain
\[ |\partial_1 \phi_1(\tau;\gamma)\phi_0(\sigma;\gamma)|+|\partial_1 \phi_0(\tau;\gamma)
\phi_1(\sigma;\gamma)|\lesssim \gamma^{-\frac14(1-\nu)}\sigma^{-\frac12(\frac{1}{\nu}-1)} \]
by Eqs.~\eqref{eq:pphism} and \eqref{eq:phiasymlg}. Hence, 
upon setting $\gamma=\xi \tau^{-2(\frac{1}{\nu}-1)}$, we conclude
\[ |\hat{H}_c(\tau,\sigma,\xi) |\lesssim (\tfrac{\sigma}{\tau})^{\frac72|\frac{1}{\nu}-1|}
\tau^{-\frac12(1-\nu)}\xi^{-\frac14(1-\nu)}. \] 

\item In the case $0<\tau \xi^\frac12\leq 1$ and $0<\sigma \xi^\frac12\leq 1$ we have, by
Eqs.~\eqref{eq:phiasymsm} and \eqref{eq:pphism}, 
\[ |\partial_1 \phi_1(\tau;\gamma)\phi_0(\sigma;\gamma)|+|\partial_1 \phi_0(\tau;\gamma)
\phi_1(\sigma;\gamma)|\lesssim 1+\gamma^\frac{\nu}{2} \sigma\lesssim 1 \]
which yields
$|\hat{H}_c(\tau,\sigma,\xi)|\lesssim (\tfrac{\sigma}{\tau})^{3|\frac{1}{\nu}-1|}$
by Eq.~\eqref{eq:factor}.
\end{enumerate}
\end{proof}

\subsection{Estimates for the solution operator}

In order to set up our main contraction argument for Eq.~\eqref{eq:sysFourier} we have to introduce
appropriate function spaces.
\begin{definition}
\label{def:spaces}
For $\delta,\alpha \in \R$ and $p \in [1,\infty)$ we define norms $\|\cdot\|_\X$ and
$\|\cdot\|_\Y$ by
\begin{align*} \|f\|_\X&:=\left (\int_0^\infty \left |f(\xi)\left (\xi \langle \xi\rangle^{-1}
\right)^{\frac12-\delta}
\right |^p
d\xi \right )^{1/p}+\left (\int_0^\infty |f(\xi)|^2 \xi \langle \xi \rangle^{2\alpha}
\rho(\xi)d\xi \right )^{1/2}, \\
\|f\|_\Y&:=\|f\|_{L^p(0,\infty)}+\left (\int_0^\infty |f(\xi)|^2 \langle \xi \rangle^{2\alpha}
\rho(\xi)d\xi \right )^{1/2}.
\end{align*}
Furthermore, for a function $b$ of two variables and a Banach space $X$ we write
$$ \|b\|_{L^{\infty,\beta}_{\tau_0}X}:=\sup_{\tau \geq \tau_0}\tau^\beta 
\|b(\tau,\cdot)\|_{X} $$
where $\beta\geq 0$ and $\tau_0>0$ (in what follows we always assume
$\tau_0$ to be sufficiently large).
\end{definition}

For the following it is convenient to introduce
the notation 
\[ \mc B_{c,\nu}b(\tau,\xi):=[\partial_\tau-\beta_\nu(\tau)(2\xi \partial_\xi
+\tfrac52+\tfrac{\xi\rho'(\xi)}{\rho(\xi)})]b(\tau,\xi). \]

\begin{proposition}
\label{prop:H}
Fix a $\delta$ with $2|\frac{1}{\nu}-1|<\delta<\frac12$ 
and let $p \in (1,\infty)$ be so large that 
\[ p'(1-\delta+2|\tfrac{1}{\nu}-1|)<1 \] 
where $p'$ is the H\"older
conjugate of $p$, i.e., $\frac{1}{p}+\frac{1}{p'}=1$.
Suppose further that $\tau_0\geq 1$, $\beta\geq \frac52$ and 
$\alpha \in [0,1]$ be fixed.
Then we have the bounds
\begin{align*}
\|\mc{H}_c b\|_{L^{\infty,\beta-1-2\delta}_{\tau_0}\X} &\lesssim \|b\|_{L^{\infty,\beta}_{\tau_0}\Y} \\
\|\mc B_{c,\nu}\mc{H}_c b\|_{L^{\infty,\beta-1}_{\tau_0}\Y} &\lesssim \|b\|_{L^{\infty,\beta}_{\tau_0}\Y}.
\end{align*}
\end{proposition}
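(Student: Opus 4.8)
The plan is to estimate the kernel representation $\mc H_c b(\tau,\xi)=\int_\tau^\infty H_c(\tau,\sigma,\xi)b(\sigma,(\tfrac\sigma\tau)^{2(\frac1\nu-1)}\xi)\,d\sigma$ directly, using the pointwise bounds for $H_c$ from Lemma \ref{lem:boundsH} and, for the differentiated bound, those for $\hat H_c$ from Lemma \ref{lem:diffH} together with the identity $\mc B_{c,\nu}\mc H_c b(\tau,\xi)=\int_\tau^\infty \hat H_c(\tau,\sigma,\xi)b(\sigma,(\tfrac\sigma\tau)^{2(\frac1\nu-1)}\xi)\,d\sigma$, which follows because $\mc B_{c,\nu}$ only hits the $\tau$-dependence of $H_c$ (the characteristic variable $\gamma=\xi\tau^{-2(\frac1\nu-1)}$ is annihilated by $\partial_\tau-2\beta_\nu\xi\partial_\xi$, exactly as in the proof of Lemma \ref{lem:diffH}). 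So both estimates have the same structure and I would treat them in parallel. First I would fix $\tau\geq\tau_0$, substitute $\eta:=(\tfrac\sigma\tau)^{2(\frac1\nu-1)}\xi$ so that the hypothesis $\|b\|_{L^{\infty,\beta}_{\tau_0}\Y}<\infty$ gives $\|b(\sigma,\cdot)\|_\Y\lesssim \sigma^{-\beta}\|b\|$, and split the $\xi$-integral (resp. the two summands of the $\X$- and $\Y$-norms) according to the three regimes of Lemmas \ref{lem:boundsH}, \ref{lem:diffH}: (i) $\tau\xi^{1/2}\geq 1$, (ii) $0<\tau\xi^{1/2}\leq 1\leq\sigma\xi^{1/2}$, (iii) $0<\sigma\xi^{1/2}\leq 1$.

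For the $L^p_\xi$-part of $\|\mc H_c b\|_\X$, one uses Minkowski's inequality in the form $\|\int_\tau^\infty\!\cdots d\sigma\|_{L^p_\xi}\leq\int_\tau^\infty\|H_c(\tau,\sigma,\cdot)b(\sigma,\cdot)\|_{L^p_\xi}\,d\sigma$, bounds $|H_c|$ regime by regime, changes variables $\xi\mapsto\eta$ inside the $L^p_\xi$-norm (this change of variables is where the weight $(\xi\langle\xi\rangle^{-1})^{\frac12-\delta}$ in $\X$ versus the bare $L^p$ in $\Y$ gets used, producing Jacobian powers of $\sigma/\tau$ that combine with the explicit $(\sigma/\tau)^{\frac72|\frac1\nu-1|}$ prefactor), and then carries out the $\sigma$-integral. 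The $\sigma$-integral converges at $\infty$ precisely because $\beta\geq\frac52>$ the total power of $\sigma$ accumulated from the kernel prefactor, the Jacobian, and the weights — this is where $\beta\geq\frac52$ enters — and the hypotheses $2|\frac1\nu-1|<\delta<\frac12$ and $p'(1-\delta+2|\frac1\nu-1|)<1$ are exactly what make the $\sigma$-integral in regime (iii) (small $\sigma\xi^{1/2}$, where $|H_c|\lesssim\sigma$) and the $\xi$-integral near $\xi=0$ (where $\rho(\xi)\simeq\xi^{-1/2}$ blows up, by Lemma \ref{lem:smxi}) both finite; I would check the borderline cases carefully. The $L^2(\rho\,d\xi)$-weighted parts of $\|\cdot\|_\X$ and $\|\cdot\|_\Y$ are handled the same way, again via Minkowski in $L^2_\xi(\rho\,d\xi)$, except that now $\rho$ and the $\langle\xi\rangle^{2\alpha}$-weight must be tracked through the $\xi\mapsto\eta$ substitution using the two-sided asymptotics $\rho(\xi)\simeq\xi^{-1/2}$ for small $\xi$ and $\rho(\xi)\simeq\xi^{1/2}$ for large $\xi$; the condition $\alpha\in[0,1]$ keeps these weight powers in the range where the $\sigma$-integral still converges given $\beta\geq\frac52$.

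The differentiated bound $\|\mc B_{c,\nu}\mc H_c b\|_{L^{\infty,\beta-1}_{\tau_0}\Y}\lesssim\|b\|_{L^{\infty,\beta}_{\tau_0}\Y}$ is proved identically but with $\hat H_c$ in place of $H_c$: comparing Lemma \ref{lem:diffH} to Lemma \ref{lem:boundsH}, the kernel is ``better by a factor $\xi^{1/2}$'' in each regime (e.g. $1$ vs $\xi^{-1/2}$, $\tau^{-\frac12(1-\nu)}\xi^{-\frac14(1-\nu)}$ vs $\tau^{\frac12(1-\nu)}\xi^{-\frac14(1+\nu)}$, $1$ vs $\sigma$), and the target norm is the un-weighted-at-zero $\Y$ rather than $\X$, so the same computation goes through and produces the decay $\tau^{-(\beta-1)}$ instead of $\tau^{-(\beta-1-2\delta)}$.

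\emph{Main obstacle.} The delicate point is the interplay, in regimes (ii) and (iii), between the resonance-induced singularity of $\rho(\xi)$ as $\xi\to0+$, the change of variables $\xi\mapsto(\tfrac\sigma\tau)^{2(\frac1\nu-1)}\xi$ which drags the small-$\xi$ and large-$\xi$ asymptotics of $\rho$ into each other, and the requirement that the resulting $\sigma$-integral still converge at $\sigma=\infty$. Getting the bookkeeping of powers right — so that the crude prefactor $(\sigma/\tau)^{\frac72|\frac1\nu-1|}$ together with $\beta\geq\frac52$ absorbs everything, and so that the twin conditions on $\delta$ and $p$ are precisely consumed by the $\xi$-integrability near $0$ and the $\sigma$-integrability near the diagonal in regime (iii) — is the real content of the proof; everything else is a routine application of Minkowski's inequality.
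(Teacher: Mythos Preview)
Your plan is correct and close to the paper's argument in substance: both rely entirely on the pointwise kernel bounds of Lemmas~\ref{lem:boundsH} and~\ref{lem:diffH}, split into the same three regimes, track the weights $\xi^{\frac12-\delta}\langle\xi\rangle^{-\frac12+\delta}$ and $\langle\xi\rangle^{2\alpha}\rho(\xi)$ through the substitution $\eta=(\sigma/\tau)^{2(\frac1\nu-1)}\xi$, and then verify that the resulting $\sigma$-integral converges for $\beta\geq\frac52$ and $\nu$ close to $1$. Your observation that $\mc B_{c,\nu}\mc H_c$ has kernel $\hat H_c$ is exactly how the paper handles the second estimate.

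The one organizational difference is worth noting. You propose Minkowski in $\sigma$ followed by an $L^\infty_\xi$ bound on $w(\xi)H_c$ in each regime; the paper instead applies H\"older in $\sigma$ first, writing
\[
|\mc H_c b(\tau,\xi)|\leq A_{\mu,q'}(\tau,\xi)\Big(\int_\tau^\infty|\sigma^\mu b(\sigma,(\tfrac\sigma\tau)^{2(\frac1\nu-1)}\xi)|^q\,d\sigma\Big)^{1/q},
\quad A_{\mu,q'}(\tau,\xi):=\Big(\int_\tau^\infty|\sigma^{-\mu}H_c|^{q'}\,d\sigma\Big)^{1/q'},
\]
and then shows $A_{\mu,q'}(\tau,\xi)\lesssim\tau^{-\mu+\frac1{q'}+2\delta}(\xi\langle\xi\rangle^{-1})^{-\frac12+\delta}$ uniformly in $\xi$, so that the weight in $\X$ is absorbed \emph{exactly}; the remaining $\xi$-integral is then exchanged with the $\sigma$-integral by Fubini. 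This factorization is where the hypothesis $p'(1-\delta+2|\tfrac1\nu-1|)<1$ enters in the paper --- it governs the convergence of the final $\int_\tau^\infty\sigma^{-p(\delta+2(\frac1\nu-1))}\,d\sigma$, not (as you wrote) the small-$\xi$ integrability of $\rho$. In your Minkowski route that particular condition is not needed in the same place; the cruder $\sup_\xi$ bounds plus $\beta\geq\frac52$ already suffice for the $\sigma$-integral. Either way the bookkeeping is routine once Lemmas~\ref{lem:boundsH} and~\ref{lem:diffH} are in hand.
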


\begin{proof}
Let $q\in (1,\infty)$.
By H\"older's inequality we have
\begin{align*} |\mc{H}_cb(\tau,\xi)|&\leq \int_\tau^\infty |H_c(\tau,\sigma,\xi)
b(\sigma,(\tfrac{\sigma}{\tau})^{2(\frac{1}{\nu}-1)}\xi)|d\sigma \\
&\leq A_{\mu,q'}(\tau,\xi)\left (\int_\tau^\infty |\sigma^\mu 
b(\sigma,(\tfrac{\sigma}{\tau})^{2(\frac{1}{\nu}-1)}\xi)|^q d\sigma \right )^{1/q}
\end{align*}
with $\mu \in \R$ and
\[ A_{\mu,q'}(\tau,\xi):=\left ( \int_\tau^\infty |\sigma^{-\mu}H_c(\tau,\sigma,\xi)|^{q'}d\sigma 
\right )^{1/q'}. \]
We claim that 
\begin{equation} 
\label{eq:A}
A_{\mu,q'}(\tau,\xi)\lesssim \left \{ \begin{array}{ll}
\tau^{-\mu+\frac{1}{q'}}\xi^{-\frac12} & \tau\xi^\frac12 \geq 1 \\
\tau^{-\mu+\frac{1}{q'}+1} & 0<\tau\xi^\frac12\leq 1 \end{array} \right . 
\end{equation}
provided that $\mu>1+\frac{1}{q'}+5|\frac{1}{\nu}-1|$.
Indeed, if $\tau\xi^\frac12\geq 1$ we have from Lemma \ref{lem:boundsH} the bound
$|H_c(\tau,\sigma,\xi)|\lesssim (\tfrac{\sigma}{\tau})^{\frac72|\frac{1}{\nu}-1|}\xi^{-\frac12}$ and
this implies the first estimate in Eq.~\eqref{eq:A}.
In order to prove the second bound in Eq.~\eqref{eq:A} we use $|H_c(\tau,\sigma,\xi)|\lesssim 
(\tfrac{\sigma}{\tau})^{5|\frac{1}{\nu}-1|}\sigma$
from Lemma \ref{lem:boundsH} which yields
$A_{\mu,q'}(\tau,\xi)\lesssim \tau^{-\mu+\frac{1}{q'}+1}$ in the case $0<\tau \xi^\frac12\leq 1$
as claimed.

Now note that Eq.~\eqref{eq:A} implies 
\[ A_{\mu,q'}(\tau,\xi)\lesssim \tau^{-\mu+\frac{1}{q'}+2\delta} \xi^{-\frac12+\delta} \lesssim
\tau^{-\mu+\frac{1}{q'}+2\delta}(\xi \langle \xi \rangle^{-1})^{-\frac12+\delta} \]
for all $\xi>0$ and thus, 
by interchanging the order of integration, we obtain
\begin{align*} \int_0^\infty \left |\mc H_c b(\tau,\xi) (\xi \langle \xi \rangle^{-1})^{\frac12-\delta}
\right |^p d\xi&\lesssim \int_\tau^\infty \int_0^\infty 
\left |A_{\mu,p'}(\tau,\xi)(\xi\langle \xi \rangle^{-1})^{\frac12-\delta}\right |^p 
\left |\sigma^\mu 
b(\sigma,(\tfrac{\sigma}{\tau})^{2(\frac{1}{\nu}-1)}\xi)\right |^pd\xi d\sigma \\
&\lesssim \tau^{p(-\mu+\frac{1}{p'}+2\delta+2(\frac{1}{\nu}-1))}\int_\tau^\infty \int_0^\infty
|\sigma^{\mu-2(\frac{1}{\nu}-1)} b(\sigma,\eta)|^p d\eta d\sigma.
\end{align*}
Now we set $\mu=\beta-\delta$ which is admissible since $\beta-\delta>1+\frac{1}{p'}+5|\frac{1}{\nu}-1|$
provided $\nu$ is sufficiently close to $1$ which we may safely assume.
Hence, we infer
\begin{align*} \int_0^\infty \left |\mc H_c b(\tau,\xi) (\xi \langle \xi \rangle^{-1})^{\frac12-\delta}
\right |^p d\xi &\lesssim \tau^{p(-\beta+\frac{1}{p'}+3\delta+2(\frac{1}{\nu}-1))}\|b\|_{L^{\infty,\beta}_{\tau_0}L^p(0,\infty)}^p
\int_\tau^\infty \sigma^{-p(\delta+2(\frac{1}{\nu}-1))}d\sigma \\
&\lesssim \tau^{p(-\beta+1+2\delta)}\|b\|_{L^{\infty,\beta}_{\tau_0}\Y}^p
\end{align*}
where the last step is justified since $1-\delta-2(\frac{1}{\nu}-1)<\frac{1}{p'}=1-\frac{1}{p}$ and this implies 
$-p(\delta+2(\frac{1}{\nu}-1))<-1$.

For the $L^2$ based part in $\|\cdot\|_\X$ we proceed similarly and obtain from Eq.~\eqref{eq:A}
the bound
$A_{\mu,2}(\tau,\xi)\lesssim \tau^{-\mu+\frac12}\xi^{-\frac12}$
for all $\xi>0$.
This shows
\begin{align*}
\int_0^\infty |\mc{H}_cb(\tau,\xi)|^2 \xi \langle \xi \rangle^{2\alpha}\rho(\xi)d\xi
&\lesssim \tau^{2(-\mu+\frac12)}\int_\tau^\infty \int_0^\infty 
\left |\sigma^\mu b(\sigma,\omega(\tau,\sigma)^{-1}\xi) \right |^2
\langle \xi \rangle^{2\alpha}\rho(\xi)d\xi d\sigma \\
&=\tau^{2(-\mu+\frac12)}\int_\tau^\infty \int_0^\infty 
\left |\sigma^\mu b(\sigma,\eta) \right |^2
\langle \omega(\tau,\sigma)\eta \rangle^{2\alpha}\omega(\tau,\sigma)\rho(\omega(\tau,\sigma)\eta)
d\eta d\sigma \\
\end{align*}
where we write $\omega(\tau,\sigma)=(\tfrac{\sigma}{\tau})^{-2(\frac{1}{\nu}-1)}$.
We clearly have
$\langle \omega(\tau,\sigma)\eta \rangle^{2\alpha}\lesssim (\frac{\sigma}{\tau})^{4|\frac{1}{\nu}-1|}
\langle \eta \rangle^{2\alpha}$ for all
$\eta>0$ and also, $\omega(\tau,\rho)\rho(\omega(\tau,\sigma)\eta)\lesssim (\frac{\sigma}{\tau})^{3|\frac{1}{\nu}-1|}
\rho(\eta)$ provided that 
$\omega(\tau,\sigma)\eta\geq 1$, cf.~Lemma \ref{lem:lgxi}.
In the case $0<\omega(\tau,\sigma)\eta\leq 1$, Lemma \ref{lem:smxi} implies
\[ \omega(\tau,\sigma)\rho(\omega(\tau,\sigma)\eta)\lesssim \omega(\tau,\sigma)^{\frac12}
\eta^{-\frac12}\lesssim (\tfrac{\sigma}{\tau})^{|\frac{1}{\nu}-1|}\rho(\eta). \]
Consequently, by choosing $\mu=\beta-\frac58$ we infer
\begin{align*}\int_0^\infty |\mc{H}_cb(\tau,\xi)|^2 \xi \langle \xi \rangle^{2\alpha}\rho(\xi)d\xi
&\lesssim \tau^{2(-\beta+\frac98-\frac72|\frac{1}{\nu}-1|)}\|b\|_{L^{\infty,\beta}_{\tau_0}\Y}
\int_\tau^\infty \sigma^{-\frac54+7|\frac{1}{\nu}-1|}d\sigma \\
&\lesssim \tau^{2(-\beta+1)}\|b\|_{L^{\infty,\beta}_{\tau_0}\Y}
\end{align*}
and this finishes the proof of the first estimate.

For the second bound note that the operator $\mc B_{c,\nu}\mc{H}_c$ has the kernel 
$\hat{H}(\tau,\sigma,\xi)$ from Lemma
\ref{lem:diffH} and based on the bounds given there it is straightforward to prove the claimed
estimate by repeating the above arguments.
\end{proof}

It is also an easy exercise to prove an appropriate bound for the discrete part $\mc{H}_d$.

\begin{lemma}
\label{lem:Hd}
Let $\beta>0$ and suppose $b_d \in L^{\infty,\beta}_{\tau_0}$. Then
$$ \|\mc{H}_db_d\|_{L^{\infty,\beta}_{\tau_0}}\lesssim \|b_d\|_{L^{\infty,\beta}_{\tau_0}},\quad
\|(\mc{H}_db_d)'\|_{L^{\infty,\beta}_{\tau_0}}\lesssim \|b_d\|_{L^{\infty,\beta}_{\tau_0}}. $$
\end{lemma}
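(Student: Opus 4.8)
The plan is to reduce both inequalities to a single elementary convolution estimate. Writing $\kappa:=|\xi_d|^{1/2}>0$, we have $H_d(\tau,\tau')=-\tfrac{1}{2\kappa}e^{-\kappa|\tau-\tau'|}$, so that $|H_d(\tau,\tau')|\le\tfrac{1}{2\kappa}e^{-\kappa|\tau-\tau'|}$; and since $\tau\mapsto|\tau-\tau'|$ is Lipschitz with $\partial_\tau|\tau-\tau'|=\operatorname{sgn}(\tau-\tau')$ for $\tau\ne\tau'$, differentiation under the integral sign is legitimate and gives $\partial_\tau H_d(\tau,\tau')=\tfrac12\operatorname{sgn}(\tau-\tau')e^{-\kappa|\tau-\tau'|}$ off the diagonal, hence $|\partial_\tau H_d(\tau,\tau')|\le\tfrac12 e^{-\kappa|\tau-\tau'|}$. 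Therefore both $|\mc H_d b_d(\tau)|$ and $|(\mc H_d b_d)'(\tau)|$ are dominated by a constant multiple of
\[ \int_{\tau_0}^\infty e^{-\kappa|\tau-\tau'|}|b_d(\tau')|\,d\tau'\le\|b_d\|_{L^{\infty,\beta}_{\tau_0}}\int_{\tau_0}^\infty e^{-\kappa|\tau-\tau'|}(\tau')^{-\beta}\,d\tau', \]
and it remains to prove the convolution bound $\int_{\tau_0}^\infty e^{-\kappa|\tau-\tau'|}(\tau')^{-\beta}\,d\tau'\lesssim\tau^{-\beta}$ for all $\tau\ge\tau_0$ (with implied constant depending only on $\beta$ and $\kappa$, using $\tau_0\ge 1$).

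For this I would split the integral at $\tau'=\tfrac12\tau$. On the near piece $\tau'\ge\tfrac12\tau$ we use $(\tau')^{-\beta}\le 2^\beta\tau^{-\beta}$ together with $\int_\R e^{-\kappa|\tau-\tau'|}\,d\tau'=2/\kappa$, which bounds this contribution by $\lesssim\tau^{-\beta}$. On the far piece $\tau_0\le\tau'\le\tfrac12\tau$ (nonempty only when $\tau\ge2\tau_0$) we have $|\tau-\tau'|\ge\tfrac12\tau$ and $(\tau')^{-\beta}\le\tau_0^{-\beta}\le1$, so, estimating the length of the interval of integration by $\tfrac12\tau$, this contribution is at most $\tfrac12\tau\,e^{-\kappa\tau/2}$; since $\tau\mapsto\tau^{\beta+1}e^{-\kappa\tau/2}$ is bounded on $(0,\infty)$ (exponential decay beats any polynomial), this is also $\lesssim\tau^{-\beta}$. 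Summing the two pieces yields the convolution bound, whence both claimed estimates follow.

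Since every step is explicit and elementary, there is no real obstacle here; the only two points that deserve a word of care are the harmless corner of $H_d$ on the diagonal $\{\tau=\tau'\}$ — so one records the pointwise bound for $\partial_\tau H_d$ only off the diagonal, which is enough after integrating in $\tau'$ — and the requirement that the convolution bound hold uniformly down to $\tau=\tau_0$, which is exactly what the exponential decay on the far piece provides.
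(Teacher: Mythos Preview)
Your proof is correct and follows essentially the same approach as the paper: both exploit the exponential decay of the kernel $H_d$ and split the $\tau'$-integral to handle the near and far regions separately. The paper splits at $\tau'=\tau$ and finishes the inner piece $I_1$ with an integration by parts, whereas you split at $\tau'=\tfrac12\tau$ and estimate directly; your version has the minor advantage of unifying the bounds for $\mc H_d b_d$ and $(\mc H_d b_d)'$ via a single convolution estimate, but the underlying idea is the same.
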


\begin{proof}
By definition (Eq.~\eqref{eq:defHd}) we have
\begin{align*} 
\mc{H}_d b_d(\tau)&=-\tfrac12 |\xi_d|^{-\frac12} e^{-|\xi_d|^{1/2}\tau}\int_{\tau_0}^\tau 
e^{|\xi_d|^{1/2}\sigma}
b_d(\sigma)d\sigma \\
&\quad -\tfrac12 |\xi_d|^{-\frac12} e^{|\xi_d|^{1/2}\tau}\int_\tau^\infty e^{-|\xi_d|^{1/2}\sigma}
b_d(\sigma)d\sigma \\ 
&=:I_1(\tau)+I_2(\tau).
\end{align*}
It is evident that $|I_2(\tau)|\lesssim \langle \tau \rangle^{-\beta}$ and in order to estimate
$I_1(\tau)$ 
we note that
\[ |I_1(\tau)|\lesssim \sup_{\sigma>\tau_0}\sigma^\beta |b_d(\sigma)|
e^{-|\xi_d|^{1/2}\tau}\int_{\tau_0}^\tau 
e^{|\xi_d|^{1/2}\sigma}
\sigma^{-\beta}d\sigma  \]
and the first assertion follows by performing one integration by parts.
The proof of the second bound is identical.
\end{proof}

\section{Estimates for the nonlinear and inhomogeneous terms}
\label{sec:nonlinhom}
We provide estimates (in terms of the spaces in Definition \ref{def:spaces}) for the various
contributions on the right-hand side of our main equation \eqref{eq:sysFourier} that do not
involve the operator $\mc{K}$ from the transference identity.
Thus, this section is mainly concerned with the nonlinear contributions.
In order to treat the nonlinearity, we first discuss mapping properties of the distorted Fourier transform $\mc F$.
These allow us to transfer the problem to the physical side where the nonlinearity can be estimated
using standard tools.
The main ingredients are basic inequalities and interpolation theory of Sobolev
spaces as well as the fractional Leibniz rule.
As a consequence, we infer the crucial contraction property of the nonlinearity on our
spaces.

\subsection{The inhomogeneous term}
\label{sec:inhom}
We start with the inhomogeneous term $\hat{e}_2$ as defined in Eq.~\eqref{eq:defhate2}.

\begin{lemma}
\label{lem:Fe2}
For any fixed $\epsilon>0$ we have 
$$ \hat{e}_2 \in L^{\infty,3-\epsilon-3|\frac{1}{\nu}-1|}\Y,\quad 
\hat{e}_2(\cdot,\xi_d)\in L^{\infty,3-\epsilon-3|\frac{1}{\nu}-1|} $$
for all $p > 1$ and $\alpha \in [0,\frac14)$.
\end{lemma}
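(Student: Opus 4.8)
The plan is to estimate $\hat e_2(\tau,\xi)$ directly from its definition \eqref{eq:defhate2}, using the pointwise bound on $t^2 e_2(t,r)$ from Lemma \ref{lem:v1} together with the bounds on $\phi(\cdot,\xi)$ from Lemmas \ref{lem:Phi}, \ref{lem:Jost}, Corollary \ref{cor:phi} and the asymptotics of $\rho$ from Lemmas \ref{lem:smxi}, \ref{lem:lgxi}. First I would rewrite \eqref{eq:defhate2} in terms of the $(t,r)$ variables or, more conveniently, keep the $(\tau,R)$ variables and insert the estimate
\[ |e_2(\nu\tilde\lambda(\tau)^{-1}\tau,\tilde\lambda(\tau)^{-1}R)| \lesssim \tilde\lambda(\tau)^{\frac12}\tau^{-2}[\tilde\lambda(\tau)\tau]^{-2+\epsilon}\tau^{\frac52|1-\nu|}\langle R\rangle^{-1}, \]
which follows from Lemma \ref{lem:v1} after translating $t^2 e_2$ into the $\tau$-variable via $t=(\nu\tau)^{1/\nu}$, $\lambda(t)t=\nu\tau$, so that $t^{-2}=(\nu\tau)^{-2/\nu}$ and $[\lambda(t)t]^{-2+\epsilon}=(\nu\tau)^{-2+\epsilon}$. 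The cutoff $\chi$ restricts the $R$-integration to $0\le R\lesssim \nu\tau$, and on that range $\langle R\rangle^{-1}$ provides an extra decaying factor. Collecting the powers of $\tilde\lambda(\tau)$ and $\tau$ (recalling $\tilde\lambda(\tau)=(\nu\tau)^{-(\frac1\nu-1)}$ and the overall $\tilde\lambda(\tau)^{-2}$ prefactor in \eqref{eq:defhate2}) should produce the claimed weight $\tau^{3-\epsilon-3|\frac1\nu-1|}$ up to adjusting $\epsilon$.

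The core of the argument is to bound the two norms in $\|\cdot\|_\Y$, i.e.\ $\|\hat e_2(\tau,\cdot)\|_{L^p(0,\infty)}$ and $(\int_0^\infty |\hat e_2(\tau,\xi)|^2\langle\xi\rangle^{2\alpha}\rho(\xi)\,d\xi)^{1/2}$. For this one needs pointwise control of the oscillatory integral $\int_0^\infty \phi(R,\xi)\,\tilde\chi(\tau,R)\,R\,e_2(\dots)\,dR$ in $\xi$. The plan is to split into $0<\xi\lesssim 1$ and $\xi\gtrsim 1$. For small $\xi$: use $|\phi(R,\xi)|\lesssim 1$ from Lemma \ref{lem:Phi} (since $\phi_0,\theta_0$ are bounded and $a$ is small for $R\lesssim \xi^{-1/2}$, while for $R\gtrsim\xi^{-1/2}$ one uses Corollary \ref{cor:phi} with $|a(\xi)f_+(R,\xi)|\lesssim 1$), together with the fast decay $\langle R\rangle^{-1}$ coming from $e_2$ and the compact support of $\tilde\chi$; this gives a bound on $|\hat e_2(\tau,\xi)|$ that is essentially $\xi$-independent (up to a logarithm from $\int^{\sim\tau}\langle R\rangle^{-1}R\cdot\langle R\rangle^{-1}dR$), hence absorbed into $\epsilon$. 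For large $\xi$: one exploits oscillation of $\phi(R,\xi)=a(\xi)f_+(R,\xi)+\overline{a(\xi)f_+(R,\xi)}$ with $f_+(R,\xi)=e^{i\sqrt\xi R}[1+b(R,\xi)]$ and $|a(\xi)|\simeq\xi^{-1/2}$ (Corollary \ref{cor:phi}, Lemma \ref{lem:Jostlgxi}); integrating by parts in $R$ (the amplitude $R\,\tilde\chi\,e_2$ and its $R$-derivatives are controlled via Lemma \ref{lem:v1}, with the cutoff-derivative terms also acceptable) yields decay $|\hat e_2(\tau,\xi)|\lesssim_N \xi^{-N}$ for any $N$, which makes both the $L^p$ and the weighted $L^2$ integrals in $\xi$ converge with room to spare (note $\rho(\xi)\simeq\xi^{1/2}$ for large $\xi$ and $\alpha<\frac14$ keeps $\langle\xi\rangle^{2\alpha}\rho(\xi)$ integrable against $\xi^{-2N}$). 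The $\xi=\xi_d$ bound is easiest: $\phi(\cdot,\xi_d)$ is a fixed exponentially decaying smooth function, so $|\hat e_2(\tau,\xi_d)|$ is bounded by $\tilde\lambda(\tau)^{-2}\sup_R|R\,e_2|\,\|\phi(\cdot,\xi_d)\|_{L^1}$, giving the same $\tau$-weight.

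The main obstacle I anticipate is the behavior near $\xi\to 0+$, where the resonance makes $\rho(\xi)\simeq\xi^{-1/2}$ blow up and $\phi(R,\xi)$ does not decay in $R$. One must check that the weighted $L^2$ norm $(\int_0^{1}|\hat e_2(\tau,\xi)|^2\langle\xi\rangle^{2\alpha}\rho(\xi)\,d\xi)^{1/2}$ is finite: since $|\hat e_2(\tau,\xi)|$ is bounded (not decaying) as $\xi\to0$, the integrand is $\lesssim \xi^{-1/2}$ near $0$, which is integrable, so this is fine — but one has to be careful that the bound on $|\hat e_2(\tau,\xi)|$ is genuinely uniform down to $\xi=0$, which requires the transition between the ``$R\lesssim\xi^{-1/2}$'' regime (Lemma \ref{lem:Phi}) and the ``$R\gtrsim\xi^{-1/2}$'' regime (Corollary \ref{cor:phi}) to be handled cleanly; the worst logarithmic factor $\langle\log\langle R\rangle\rangle$-type losses from the $R$-integration against $\langle R\rangle^{-2}$ up to $R\sim\tau$ get absorbed into the arbitrary $\epsilon$. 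A secondary technical point is tracking the $\tilde\lambda$ and $\tau$ powers through the change of variables so that the final weight is exactly $3-\epsilon-3|\frac1\nu-1|$; I expect the $3|\frac1\nu-1|$ loss to arise precisely from the factor $t^{\frac52|1-\nu|}$ in Lemma \ref{lem:v1} combined with the $\tilde\lambda(\tau)^{-2}=(\nu\tau)^{2(\frac1\nu-1)}$ prefactor, and the exponent should be checked against the requirements $\beta\ge\frac52$ in Proposition \ref{prop:H} for the later contraction argument.
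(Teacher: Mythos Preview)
Your approach is essentially that of the paper: split into $\xi\lesssim 1$ and $\xi\gtrsim 1$, use $|\phi(R,\xi)|\lesssim 1$ for small $\xi$, and integrate by parts against the oscillatory $e^{i\sqrt\xi R}$ for large $\xi$. The bookkeeping of $\tau$-powers you outline is also correct.

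There is, however, one overstatement that matters. You claim that repeated integration by parts yields $|\hat e_2(\tau,\xi)|\lesssim_N \xi^{-N}$ for any $N$, with the stated $\tau$-weight. This is not true. Each $\partial_R$ that lands on the cutoff $\tilde\chi(\tau,R)=\chi(\tilde\lambda(\tau)^{-1}(\nu\tau-R)/c)$ produces a factor $\tilde\lambda(\tau)^{-1}$, while the support of $\tilde\chi'$ has fixed width $\simeq\tilde\lambda(\tau)$; after $k$ hits one loses $\tilde\lambda(\tau)^{1-k}$. Near the cutoff boundary $r\approx t-c$ the $v_1^b$ contribution to $e_2$ likewise fails to behave like a symbol in $R$ (the $(1-r/t)$-power is effectively singular there), so derivatives of $R e_2$ cost further factors of $\tilde\lambda(\tau)^{-1}$. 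Since $\tilde\lambda(\tau)^{-1}=(\nu\tau)^{\frac1\nu-1}$, these losses are genuine $\tau$-powers not absorbed by the fixed weight $3-\epsilon-3|\tfrac1\nu-1|$.

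The paper performs exactly \emph{one} integration by parts, obtaining $|\hat e_2(\tau,\xi)|\lesssim \tau^{-3+\epsilon+3|\frac1\nu-1|}\xi^{-1}$ for $\xi\gtrsim 1$ (the $\xi^{-1}$ comes from $|a(\xi)|\simeq\xi^{-1/2}$ in Corollary~\ref{cor:phi} together with one $\xi^{-1/2}$ from the integration by parts). This is precisely sufficient: the weighted $L^2$ part requires $\int_1^\infty \xi^{-2}\langle\xi\rangle^{2\alpha}\rho(\xi)\,d\xi<\infty$, and with $\rho(\xi)\simeq\xi^{1/2}$ this converges if and only if $\alpha<\tfrac14$. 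So the restriction $\alpha\in[0,\tfrac14)$ in the statement is sharp for this argument, not ``room to spare'' as you suggest.
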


\begin{proof}
We distinguish between $\xi\lesssim 1$ (including $\xi=\xi_d$) and $\xi \gtrsim 1$.
If $\xi\lesssim 1$ we have from Corollary \ref{cor:phi} the bound $|\phi(R,\xi)|\lesssim 1$ and
from Lemma \ref{lem:v1} we recall 
$$ \left |\tilde{\lambda}(\tau)^{-2}e_2\left (\nu \tilde{\lambda}(\tau)^{-1}\tau,
\tilde{\lambda}(\tau)^{-1}R\right ) \right |\lesssim \tilde{\lambda}(\tau)^{\frac12}\tau^{-4+\epsilon+\frac52|\frac{1}{\nu}-1|}\langle 
R \rangle^{-1} $$
in the truncated cone $r\leq t-c$ which corresponds to $R\leq \nu\tau-\tilde{\lambda}(\tau)c$.
Thus, by Eq.~\eqref{eq:defhate2} we obtain
\begin{align*} |\hat{e}_2(\tau,\xi)|&\lesssim \tilde{\lambda}(\tau)^{\frac12}\tau^{-4+\epsilon+\frac52|\frac{1}{\nu}-1|}
\int_0^\infty \chi\left (\tilde{\lambda}(\tau)^{-1}\tfrac{\nu \tau-R}{c}\right)dR \\ 
&\lesssim \tilde{\lambda}(\tau)^{\frac12}\tau^{-4+\epsilon+\frac52|\frac{1}{\nu}-1|} \int_0^{\tau}dR\lesssim 
\tau^{-3+\epsilon+3 |\frac{1}{\nu}-1|}
\end{align*} 
by recalling that $\tilde{\lambda}(\tau)\simeq \tau^{-(\frac{1}{\nu}-1)}$
since the cut-off localizes to the lightcone $R\leq \nu \tau\lesssim \tau$.

If $\xi\gtrsim 1$ we use 
$$ \phi(R,\xi)=O_\mathbb{C}(\xi^{-\frac12})e^{i\sqrt{\xi}R}[1+O_\mathbb{C}
(\langle R\rangle^{-3}\xi^{-\frac12})]
+O_\mathbb{C}(\xi^{-\frac12})e^{-i\sqrt{\xi}R}[1+O_\mathbb{C}
(\langle R\rangle^{-3}\xi^{-\frac12})]$$
with symbol behavior of all $O$-terms (Lemma \ref{lem:Jostlgxi} and Corollary \ref{cor:phi})
and perform one integration by parts to obtain
\begin{align*} 
|\hat{e}_2(\tau,\xi)|&\lesssim \tilde{\lambda}(\tau)^{\frac12}\tau^{-4+\epsilon+\frac52|\frac{1}{\nu}-1|}
\xi^{-1}\int_0^{\tau}\langle R \rangle^{-1}dR \\
&\quad +\tilde{\lambda}(\tau)^{-\frac12}\tau^{-4+\epsilon+\frac52|\frac{1}{\nu}-1|}\xi^{-1}\int_0^\infty \left |\chi' \left (
\tilde{\lambda}(\tau)^{-1}\tfrac{\nu \tau-R}{c}\right)\right |dR \\
&\lesssim \tilde{\lambda}(\tau)^{\frac12}\tau^{-3+\epsilon+\frac52|\frac{1}{\nu}-1|}\xi^{-1}+
\tilde{\lambda}(\tau)^{-\frac12}\tau^{-4+\epsilon+\frac52|\frac{1}{\nu}-1|}\xi^{-1}
\int_{\nu \tau-\tilde{\lambda}(\tau)c}^{\nu \tau-\frac12 \tilde{\lambda}(\tau)c}dR \\
&\lesssim \tau^{-3+\epsilon+3 |\frac{1}{\nu}-1|}\xi^{-1}.
\end{align*}
\end{proof}

\subsection{Mapping properties of $\mc{F}$}
It is of fundamental importance to understand the action of the distorted Fourier transform 
on the spaces $\X$.
In the following we use the standard Sobolev
spaces $W^{s,p}(\mathbb{R}^3)$ and in order to make sense of $\|f\|_{W^{s,p}(\mathbb{R}^3)}$ for a function 
$f: [0,\infty)\to \mathbb{C}$, we identify $f$ with the radial function $x \mapsto f(|x|)$ on $\mathbb{R}^3$.
In this sense we have, for instance, $\|f\|_{L^2(0,\infty)}\simeq \||\cdot|^{-1}f\|_{L^2(\mathbb{R}^3)}$.
Note that if $f: [0,\infty) \to \mathbb{C}$ is smooth and $f^{(2k-1)}(0)=0$ for
all $k \in \mathbb{N}$ then $x \mapsto f(|x|)$ is a smooth function on $\mathbb{R}^3$.
To begin with, we write
$$ \|f\|_{L^{2,\alpha}_\rho}^2:=\int_0^\infty |f(\xi)|^2 \langle \xi\rangle^{2\alpha}\rho(\xi)d\xi $$ 
and recall a fundamental result from \cite{KST09}.

\begin{lemma}
\label{lem:H2alpha}
Let $\alpha\geq 0$. Then
$$ \|(a,f)\|_{\mathbb{C}\times L^{2,\alpha}_\rho}\simeq \||\cdot|^{-1}\mc{F}^{-1}(a,f)\|_{H^{2\alpha}(\mathbb{R}^3)} $$
for all $(a,f) \in \mathbb{C} \times L^{2,\alpha}_\rho$ or, equivalently,
$$ \|\mc{F}(|\cdot|\,g)\|_{\mathbb{C}\times L^{2,\alpha}_\rho}\simeq \|g\|_{H^{2\alpha}(\mathbb{R}^3)} $$
for all radial $g \in H^{2\alpha}(\mathbb{R}^3)$.
\end{lemma}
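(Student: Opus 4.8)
The plan is to use the fact, established in Proposition~\ref{prop:Fourier}, that $\mc F$ is a unitary spectral representation of $\mc L$, and to reduce the claimed norm equivalence to a comparison between powers of $\mc L$ and powers of the free operator $-\partial_R^2$ with a Dirichlet condition at the origin. Throughout I would use the elementary unitary equivalence
\[ L^2_{\mathrm{rad}}(\R^3)\ni g\;\longmapsto\;\bigl[R\mapsto \sqrt{4\pi}\,R\,g(R)\bigr]\in L^2(0,\infty), \]
which intertwines the radial Laplacian $-\Delta$ on $\R^3$ with $-\partial_R^2$ on $(0,\infty)$ acting on functions vanishing at $R=0$; note that $g$ being a smooth radial function on $\R^3$ corresponds to $h:=Rg$ being smooth and odd in $R$, i.e.\ $h^{(2k)}(0)=0$ for all $k\in\N_0$. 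Under this identification $\|g\|_{H^{2\alpha}(\R^3)}$ becomes a norm on functions $h$ on $(0,\infty)$ which, for $\alpha=k\in\N_0$, is equivalent to $\bigl(\sum_{j=0}^k\|(-\partial_R^2)^j h\|_{L^2(0,\infty)}^2\bigr)^{1/2}$.

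First I would dispose of the case $\alpha=0$: the assertion is then exactly the Plancherel identity for $\mc F$ contained in Proposition~\ref{prop:Fourier}, combined with $\|f\|_{L^2(0,\infty)}=\|h\|_{L^2(0,\infty)}$ and the displayed unitary equivalence. Next I would treat the integer case $\alpha=k$. Since $\mc L$ has exactly one negative eigenvalue $\xi_d$ with an exponentially decaying eigenfunction $\phi(\cdot,\xi_d)$ (which, after the above identification, is a Schwartz-class radial function), I split $h=\mc F^{-1}(a,f)$ into its projection onto $\phi(\cdot,\xi_d)$ and its continuous part $P_c h=\mc F^{-1}(0,f)$; on the one-dimensional discrete subspace all norms are comparable and account for the term $|a|$, so only $P_c h$ needs work. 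On $\mathrm{ran}\,P_c$ the operator $\mc L$ is nonnegative and $\mc F$ intertwines it with multiplication by $\xi>0$, so using $\langle\xi\rangle^{2k}\simeq\sum_{j\le k}\xi^{2j}$ for $\xi\ge 0$ we get
\[ \|f\|_{L^{2,k}_\rho}^2\simeq\sum_{j=0}^k\int_0^\infty|\xi^j f(\xi)|^2\rho(\xi)\,d\xi=\sum_{j=0}^k\|\mc L^j P_c h\|_{L^2(0,\infty)}^2. \]
It then remains to compare $\sum_{j\le k}\|\mc L^j h\|_{L^2}^2$ with $\sum_{j\le k}\|(-\partial_R^2)^j h\|_{L^2}^2$. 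Since $V=-5W^4$ is smooth with all derivatives bounded and decaying like $R^{-4}$, expanding $\mc L^j=(-\partial_R^2)^j+(\text{terms of differential order}\le 2j-1\text{ with bounded smooth coefficients})$ and absorbing the lower-order contributions by interpolation gives the two-sided bound; one also checks that the boundary conditions at $R=0$ defining $\mathrm{dom}(\mc L^k)$ coincide, on the class $h^{(2\ell)}(0)=0$ (which $\mc L$ preserves since $V$ is even in $R$), with those built into $H^{2k}_{\mathrm{rad}}(\R^3)$. This yields the equivalence for $\alpha\in\N_0$.

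Finally, for general $\alpha\ge0$ I would interpolate. Fixing an integer $k>\alpha$, complex interpolation gives $[\,\C\times L^{2,0}_\rho,\ \C\times L^{2,k}_\rho\,]_{[\theta]}=\C\times L^{2,k\theta}_\rho$ (interpolation of weighted $L^2$-spaces) and $[\,L^2(\R^3),\,H^{2k}(\R^3)\,]_{[\theta]}=H^{2k\theta}(\R^3)$, with the radial subspace complemented and compatible with both scales. Since the maps $(a,f)\mapsto|\cdot|^{-1}\mc F^{-1}(a,f)$ and $g\mapsto\mc F(|\cdot|\,g)$ are bounded between the corresponding endpoint spaces by the two cases already handled, they remain bounded on the interpolation spaces at $\theta=\alpha/k$, which is precisely the asserted equivalence (and, $k$ being arbitrary, this covers all $\alpha\ge0$).

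I expect the genuinely technical step to be the integer case — specifically, the perturbative comparison $\|\mc L^j h\|_{L^2}\simeq\|(-\partial_R^2)^j h\|_{L^2}$ together with the correct bookkeeping at the origin (the boundary conditions hidden in $\mathrm{dom}(\mc L^k)$ versus smoothness of the associated radial function on $\R^3$) and the separation of the negative eigenvalue; the cases $\alpha=0$ and fractional $\alpha$ are then Plancherel plus soft interpolation. Since this lemma, and essentially the argument above, already appear in \cite{KST09}, one may alternatively just invoke that reference once the reduction via the unitary equivalence $L^2_{\mathrm{rad}}(\R^3)\cong L^2(0,\infty)$ is in place.
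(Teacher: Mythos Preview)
Your proposal is correct and in fact more detailed than the paper's own proof, which simply reads ``This is a consequence of the unitarity of $\mc U$, see \cite{KST09}.'' The integer-case comparison of $\mc L^j$ with $(-\partial_R^2)^j$ followed by complex interpolation is exactly the standard route (and is what \cite{KST09} does), so your sketch and the paper's citation point to the same argument; your final remark that one may just invoke \cite{KST09} is precisely what the paper opts for.
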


\begin{proof}
This is a consequence of the unitarity of $\mc{U}$, see \cite{KST09}.
\end{proof}

\begin{lemma}
\label{lem:F-1}
Fix a small $\delta >0$ and let $p\geq 1$ be so large that $p'(1-\delta)<1$. Furthermore, denote by $\chi$ a smooth 
cut-off function which satisfies $\chi(\xi)=0$ for $\xi \in [0,1]$ and $\chi(\xi)=1$ for $\xi\geq 2$.
Then, for any $\alpha\geq 0$, the following estimates hold:
\begin{enumerate}
\item $\||\cdot|^{-1}\mc{F}^{-1}(a,\chi f)\|_{H^{2\alpha+1}(\mathbb{R}^3)}\lesssim \|(a,f)\|_{\mathbb{C}\times \X}$,
\item $\|\mc{F}^{-1}(a,(1-\chi)f)\|_{L^\infty(0,\infty)}\lesssim \|(a,f)\|_{\mathbb{C}\times \X}$,
\item $\||\cdot|^{-1}\mc{F}^{-1}(a,(1-\chi)f)\|_{L^q(\mathbb{R}^3)}\lesssim \|(a,f)\|_{\mathbb{C}\times \X}$ 
for any $q\in (3,\infty]$,
\item $\||\cdot|^{-1}\mc{F}^{-1}(a,(1-\chi)f)\|_{\dot{W}^{2\theta,\frac{2}{\theta+\frac{2}{q}(1-\theta)}}(\mathbb{R}^3)}\lesssim
\|(a,f)\|_{\mathbb{C}\times \X}$ for any $q \in (3,\infty)$ and $\theta \in [0,1]$.
\end{enumerate}
\end{lemma}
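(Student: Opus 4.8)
The plan is to prove the four estimates in turn, exploiting the fact that on the support of $(1-\chi)$ we only see small frequencies $\xi\in[0,2]$, while on the support of $\chi$ we only see $\xi\gtrsim 1$; hence the two regimes can be treated by quite different tools. For item (1), the cutoff $\chi$ removes the singular behaviour of $\rho$ near $\xi=0$ (recall $\rho(\xi)\simeq \xi^{-1/2}$ as $\xi\to 0+$ by Lemma~\ref{lem:smxi}), so on $\supp\chi$ we have $\rho(\xi)\simeq \langle\xi\rangle^{1/2}\simeq \xi^{1/2}\langle\xi\rangle^{-1/2}\cdot\langle\xi\rangle$, and the weight $(\xi\langle\xi\rangle^{-1})^{1/2-\delta}$ appearing in $\|\cdot\|_\X$ is comparable to $1$ there. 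Thus on $\supp\chi$ the $L^2$-part of $\|\cdot\|_\X$ already controls $\int|f(\xi)|^2\xi\langle\xi\rangle^{2\alpha}\rho(\xi)\,d\xi = \|(\mathrm{id})^{1/2}f\|_{L^{2,\alpha}_\rho}^2$, and multiplication by $\xi^{1/2}$ on the Fourier side corresponds (via $\mc L = -\partial_R^2+V$, so $\xi^{1/2}$ is a half-power of the operator) to one extra derivative; applying Lemma~\ref{lem:H2alpha} with $\alpha$ replaced by $\alpha+\tfrac12$ gives exactly $\||\cdot|^{-1}\mc F^{-1}(a,\chi f)\|_{H^{2\alpha+1}}$. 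I would make this rigorous by writing $\xi\langle\xi\rangle^{2\alpha} = \langle\xi\rangle^{2(\alpha+1/2)}\cdot\xi\langle\xi\rangle^{-1}$ and noting $\xi\langle\xi\rangle^{-1}\simeq 1$ on $\supp\chi$, so that $\|(a,\chi f)\|_{\mathbb C\times L^{2,\alpha+1/2}_\rho}\lesssim \|(a,f)\|_{\mathbb C\times\X}$, and then invoking Lemma~\ref{lem:H2alpha}.

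For items (2)--(4) the frequency is confined to $[0,2]$, where Corollary~\ref{cor:phi} and Lemma~\ref{lem:Phi} give $|\phi(R,\xi)|\lesssim 1$ uniformly (and more precise oscillation/decay information if needed), while $\rho(\xi)\simeq\xi^{-1/2}$. For (2), I would write the inverse transform explicitly using Eq.~\eqref{eq:defF-1}: the discrete part contributes $a\,\phi(\cdot,\xi_d)/\|\phi(\cdot,\xi_d)\|^2$, which is bounded in $L^\infty$ by $|a|$; the continuous part is $\int_0^\infty \phi(R,\xi)(1-\chi(\xi))f(\xi)\rho(\xi)\,d\xi$, which by $|\phi(R,\xi)|\lesssim 1$ is bounded by $\int_0^2 |f(\xi)|\rho(\xi)\,d\xi$, and this in turn is estimated by H\"older in the form $\int_0^2|f(\xi)|\xi^{-1/2}\,d\xi \le \big(\int_0^2|f(\xi)|^p\,d\xi\big)^{1/p}\big(\int_0^2\xi^{-p'/2}\,d\xi\big)^{1/p'}$ — wait, that needs $p'/2<1$; instead split off the weight $(\xi\langle\xi\rangle^{-1})^{1/2-\delta}$ and pair the remaining $\xi^{-1/2+(1/2-\delta)} = \xi^{-\delta}$ against $\xi^{-1/2}$ from $\rho$, giving exponent $\xi^{-1/2-\delta}$, whose $p'$-th power is integrable near $0$ precisely because $p'(1-\delta)<1$ forces... more carefully: $\int_0^2 |f(\xi)|(\xi\langle\xi\rangle^{-1})^{1/2-\delta}\cdot (\xi\langle\xi\rangle^{-1})^{-1/2+\delta}\rho(\xi)\,d\xi$ and apply H\"older with the $L^p$ norm on the first factor and the requirement $p'\cdot(1/2-\delta)<1$ — equivalently $p'(1-2\delta)<2$, which is implied by the hypothesis $p'(1-\delta)<1$ — on the weight, so the $L^p$-part of $\|\cdot\|_\X$ suffices. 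This establishes (2).

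For (3), I would combine (2) with an $L^2$ bound: by Lemma~\ref{lem:H2alpha} the $L^2$-part of $\|\cdot\|_\X$ controls $\||\cdot|^{-1}\mc F^{-1}(a,(1-\chi)f)\|_{H^{2\alpha}(\R^3)}\supseteq L^2(\R^3)$ (since $\alpha\ge 0$, and also $\|(1-\chi)f\|_{L^{2,\alpha}_\rho}\lesssim$ the $L^2$-part of $\|f\|_\X$ because $\langle\xi\rangle\simeq 1$ and $\xi\langle\xi\rangle^{2\alpha}\rho(\xi)\ge c\,\langle\xi\rangle^{2\alpha}\rho(\xi)$ fails near $0$ — rather I note that the $\X$-norm's $L^2$-piece has the extra factor $\xi$, so I only directly get $\|\,(\mathrm{id})^{1/2}(1-\chi)f\|_{L^{2,\alpha}_\rho}$, i.e.\ control of $\||\cdot|^{-1}\mc F^{-1}$ in $\dot H^1$; but combined with the $L^\infty$ bound from (2), interpolation/Sobolev embedding $\dot H^1\cap L^\infty\hookrightarrow L^q$ for all $q\in(3,\infty]$ gives the claim). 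More cleanly: $g:=|\cdot|^{-1}\mc F^{-1}(a,(1-\chi)f)$ satisfies $g\in L^\infty$ by (2) and $\nabla g\in L^2$ by the Fourier-side bound $\||\mc L|^{1/2}(\cdots)\|_{L^2}\lesssim \|(\mathrm{id})^{1/2}(1-\chi)f\|_{L^2_\rho}\lesssim\|f\|_\X$; then Gagliardo--Nirenberg on $\R^3$ yields $\|g\|_{L^q}\lesssim\|g\|_{L^\infty}^{1-3/q}\|\nabla g\|_{L^2}^{3/q}$ for $q\in(3,\infty]$. Finally for (4), I would interpolate between the endpoint $\theta=0$ case (which is item (3), the $L^q$ bound with $\frac{2}{\theta+\frac2q(1-\theta)}\big|_{\theta=0}=q$) and the $\theta=1$ case (where the exponent is $\frac{2}{\theta+\frac2q(1-\theta)}\big|_{\theta=1}=2$ and $\dot W^{2,2}=\dot H^2$, controlled by Lemma~\ref{lem:H2alpha} together with the extra derivative coming from the $\xi$-weight, using $\alpha\ge 0$ — more precisely $\dot W^{2,2}$ requires $\alpha\ge \tfrac12$, so at $\theta=1$ I only get $\dot H^{2\alpha+1}$ with $2\alpha+1\ge 1$; the correct endpoint is $\theta$ such that $2\theta = 1$, i.e.\ interpolate the $L^q$ bound with the $\dot H^1$ bound), by complex interpolation of the Sobolev spaces $[\,\dot W^{0,q},\dot W^{1,2}\,]_\theta = \dot W^{2\theta,\,p_\theta}$ with $\tfrac1{p_\theta}=\tfrac{1-\theta}q+\tfrac\theta2$, which is exactly $\tfrac{\theta+\frac2q(1-\theta)}{2}$.

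The main obstacle I anticipate is the bookkeeping of weights in the small-frequency regime: one must verify at each step that the integrability exponents at $\xi=0$ close under the precise hypothesis $p'(1-\delta)<1$, and that the interpolation in (4) genuinely interpolates between estimates one has actually proved (the $L^q$ bound of (3) on one side, and a derivative bound — coming from the $\xi$-factor in the $L^2$-part of $\|\cdot\|_\X$ via Lemma~\ref{lem:H2alpha} — on the other), with the stated exponent $\frac{2}{\theta+\frac2q(1-\theta)}$ matching the standard Sobolev interpolation identity. The actual harmonic analysis (Sobolev embedding, Gagliardo--Nirenberg, complex interpolation of $W^{s,p}$ spaces) is classical and I would cite it rather than reprove it; the spectral input ($|\phi(R,\xi)|\lesssim 1$ and $\rho(\xi)\simeq\xi^{-1/2}$ for small $\xi$, and Lemma~\ref{lem:H2alpha}) is already available from earlier in the paper.
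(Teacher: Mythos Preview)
Your approach to items (1) and (2) is essentially the paper's, modulo a bookkeeping slip in (2): the weight you have to place in $L^{p'}$ is $(\xi\langle\xi\rangle^{-1})^{-\frac12+\delta}\rho(\xi)\simeq \xi^{-1+\delta}$ on $[0,2]$, so the correct integrability condition is $p'(1-\delta)<1$, exactly the hypothesis, not the weaker $p'(1-2\delta)<2$ you wrote.

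The real gaps are in (3) and (4). For (3) you claim that $g:=|\cdot|^{-1}\mc F^{-1}(a,(1-\chi)f)$ lies in $L^\infty$ ``by (2)''. But (2) only says $|\cdot|\,g\in L^\infty(0,\infty)$, i.e.\ $Rg(R)$ is bounded; it says nothing about $g$ near $R=0$. More seriously, even granting $g\in L^\infty$ and $\nabla g\in L^2$, Gagliardo--Nirenberg only interpolates between $L^\infty$ and the Sobolev endpoint $\dot H^1\hookrightarrow L^6$, so you get $L^q$ for $q\in[6,\infty]$, not the full range $q\in(3,\infty]$. The paper instead exploits a pointwise spectral bound that you are missing: from Lemma~\ref{lem:Phi} and Corollary~\ref{cor:phi} one has $\sup_{\xi\in(0,2)}|\phi(R,\xi)/R|\lesssim\langle R\rangle^{-1}$, and repeating the H\"older argument of (2) then yields the pointwise decay $|g(R)|\lesssim\langle R\rangle^{-1}\|(a,f)\|_{\C\times\X}$, which immediately gives $L^q(\R^3)$ for every $q>3$.

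For (4) your interpolation is mis-indexed: $[\dot W^{0,q},\dot W^{1,2}]_\theta=\dot W^{\theta,p_\theta}$, not $\dot W^{2\theta,p_\theta}$, so pairing (3) with a $\dot H^1$ endpoint cannot produce the target $\dot W^{2\theta,\cdot}$. The paper interpolates (3) with a $\dot H^2$ bound. That bound does hold and follows from the $L^2$-piece of $\|f\|_\X$ together with (2): writing $v=\mc F^{-1}(a,(1-\chi)f)$ one has $\|g\|_{\dot H^2(\R^3)}\simeq\|v''\|_{L^2(0,\infty)}\le\|\mc L v\|_{L^2}+\|Vv\|_{L^2}$; the first term is $\big(\int_0^2\xi^2|f|^2\rho\,d\xi\big)^{1/2}\lesssim\big(\int_0^2\xi|f|^2\rho\,d\xi\big)^{1/2}$, controlled by $\|f\|_\X$, and the second is $\lesssim\|V\|_{L^2}\|v\|_{L^\infty}$, controlled via (2). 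Then $[L^q,\dot H^2]_\theta=\dot W^{2\theta,\frac{2}{\theta+\frac{2}{q}(1-\theta)}}$ gives exactly the stated estimate.
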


\begin{proof}
Recall that $\mc{F}^{-1}(a,f)$ is given by
$$ \mc{F}^{-1}(a,f)=a\frac{\phi(\cdot,\xi_d)}{\|\phi(\cdot,\xi_d)\|^2_{L^2(0,\infty)}}
+\int_0^\infty \phi(\cdot,\xi)f(\xi)\rho(\xi)d\xi. $$
Since $|\cdot|^{-1}\phi(|\cdot|,\xi_d) \in C^\infty(\R^3)$ with exponential decay towards infinity, 
the estimates for the discrete part follow immediately.
Thus, we focus on the integral term.
\begin{enumerate}
\item By Lemma \ref{lem:H2alpha} it suffices to note that
$$ \|\chi f\|_{L^{2,\alpha+1/2}_\rho}\lesssim \|\chi f\|_\X. $$

\item \label{item:Linf}
By Corollary \ref{cor:phi} we have $|\phi(R,\xi)|\lesssim 1$ for all $R,\xi \geq 0$ and thus, 
H\"older's inequality yields
$$ \left |\int_0^\infty \phi(R,\xi)[1-\chi(\xi)]f(\xi)\rho(\xi)d\xi \right |
\lesssim \left (\int_0^2 |f(\xi)\xi^{\frac12-\delta}|^p d\xi\right )^{1/p}
\left ( \int_0^2 |\xi^{-\frac12+\delta} \rho(\xi)|^{p'}d\xi \right )^{1/p'}$$
and the right-hand side of this inequality is finite and controlled by $\|f\|_\X$ since 
$|\rho(\xi)\xi^{-\frac12+\delta}|\lesssim \xi^{-1+\delta}$ for $\xi \in (0,2)$
(Lemma \ref{lem:smxi}) and $p'(-1+\delta)>-1$ by assumption on $p$.

\item \label{item:Lq}
From Lemma \ref{lem:Phi} and Corollary \ref{cor:phi} we obtain
$$ \sup_{\xi \in (0,2)}\left |\frac{\phi(R,\xi)}{R} \right| \lesssim \langle R \rangle^{-1} $$
for all $R \geq 0$ and by repeating the argument in (\ref{item:Linf}) this implies
$$ |R^{-1}\mc{F}^{-1}(a,(1-\chi)f)(R)|\lesssim \langle R \rangle^{-1}\|(a,(1-\chi)f)\|_{\mathbb{C}\times
\X} $$
which yields
$$ \||\cdot|^{-1}\mc{F}^{-1}(a,(1-\chi)f)\|_{L^q(\mathbb{R}^3)}\lesssim \|(a,(1-\chi)f)\|_{\mathbb{C}\times
\X} $$
for any $q \in (3,\infty]$.

\item
From Lemma \ref{lem:H2alpha} we have
$$ \||\cdot|^{-1}\mc{F}^{-1}(a,(1-\chi)f)\|_{\dot{H}^2(\mathbb{R}^3)}\lesssim \|(a,(1-\chi)f)\|_{\mathbb{C}\times
\X} $$
and the claim follows from this and (\ref{item:Lq}) by complex interpolation since 
$[L^q,\dot{H}^2]_\theta=
\dot{W}^{2\theta,\frac{2}{\theta+\frac{2}{q}(1-\theta)}}$, see \cite{BL1976}.
\end{enumerate}
\end{proof}

\subsection{The operator $\mc{N}_1$}
In order to estimate the operator $\mc{N}_1$, defined in Eq.~\eqref{eq:defR}, we 
first prove the following auxiliary result.

\begin{lemma}
\label{lem:FR}
Let $p,\delta$ be as in Lemma \ref{lem:F-1} and let
$\varphi: [0,\infty) \to \mathbb{R}$ be a function satisfying
$$ |\varphi^{(k)}(R)|\leq C_k \langle R \rangle^{-\gamma-k} $$
for some $\gamma > \frac32$ and all $R\geq 0$, $k\in \mathbb{N}_0$. 
Then $\varphi \in H^2(\mathbb{R}^3)$ and there exists a small $\epsilon>0$ such that
\begin{align*} 
\|\mc{F}(|\cdot|\varphi g)\|_{\mathbb{C}\times Y^{p,\frac18}}
\lesssim &\left [\|\varphi\|_{H^2(\mathbb{R}^3)}+\|\varphi\|_{L^1(0,\infty)}\right ]
\Big [\||\cdot|g_-\|_{L^\infty(0,\infty)}+\|g_-\|_{L^\infty(0,\infty)} \\
&+\|g_-\|_{\dot{W}^{\frac14,16-\epsilon}
(\mathbb{R}^3)}+\|g_+\|_{H^{\frac54}(\mathbb{R}^3)} \Big ]
\end{align*}
for all $g=g_-+g_+$ where $g_-$ and $g_+$ belong to the respective spaces on the right-hand side.
\end{lemma}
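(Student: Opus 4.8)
The plan is to estimate the two constituents of the $\mathbb{C}\times Y^{p,\frac18}$-norm separately: the weighted $L^2$-part (together with the $\mathbb{C}$-component) via Lemma~\ref{lem:H2alpha}, and the $L^p(0,\infty)$-part via the explicit representation $\mc{U}(|\cdot|\varphi g)(\xi)=\int_0^\infty\phi(R,\xi)\,R\varphi(R)g(R)\,dR$ from Proposition~\ref{prop:Fourier}. First I would record that $\varphi\in H^2(\mathbb{R}^3)$ (which follows from $\gamma>\frac32$, since then $\langle R\rangle^{-\gamma}$ together with its derivatives are square integrable against $R^2\,dR$) and that $\|\varphi\|_{L^2(0,\infty)}^2\leq\|\varphi\|_{L^\infty(0,\infty)}\|\varphi\|_{L^1(0,\infty)}\lesssim\|\varphi\|_{H^2(\mathbb{R}^3)}\|\varphi\|_{L^1(0,\infty)}$, using $H^2(\mathbb{R}^3)\hookrightarrow L^\infty$; these are the only quantities involving $\varphi$ that will appear. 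Throughout I split $g=g_-+g_+$ and note that the pointwise bound $|g_-(R)|\leq\min\{\|g_-\|_{L^\infty(0,\infty)},\,R^{-1}\||\cdot|g_-\|_{L^\infty(0,\infty)}\}$ places $g_-$ in every $L^q(\mathbb{R}^3)$ with $q>3$, with norm $\lesssim\|g_-\|_{L^\infty(0,\infty)}+\||\cdot|g_-\|_{L^\infty(0,\infty)}$, while $\|\varphi g_-\|_{L^2(\mathbb{R}^3)}\leq\|\varphi\|_{L^2(0,\infty)}\||\cdot|g_-\|_{L^\infty(0,\infty)}$ and $\|\varphi g_+\|_{L^2(\mathbb{R}^3)}\leq\|\varphi\|_{L^\infty(0,\infty)}\|g_+\|_{L^2(\mathbb{R}^3)}$.

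For the weighted $L^2$-part, Lemma~\ref{lem:H2alpha} with $\alpha=\frac18$ reduces matters to bounding $\|\varphi g_\pm\|_{H^{\frac14}(\mathbb{R}^3)}$. For $\varphi g_+$ I would combine the fractional Leibniz rule with the Sobolev embeddings $H^{\frac54}(\mathbb{R}^3)\hookrightarrow L^{12}(\mathbb{R}^3)\cap\dot{W}^{\frac14,6}(\mathbb{R}^3)$ and $H^2(\mathbb{R}^3)\hookrightarrow\dot{W}^{\frac14,p}(\mathbb{R}^3)\cap L^p(\mathbb{R}^3)$ for $p\in[2,\infty]$, and treat the $L^2$-part of the $H^{\frac14}$-norm by $\|\varphi g_+\|_{L^2}\leq\|\varphi\|_{L^\infty}\|g_+\|_{L^2}$, obtaining $\|\varphi g_+\|_{H^{\frac14}(\mathbb{R}^3)}\lesssim\|\varphi\|_{H^2(\mathbb{R}^3)}\|g_+\|_{H^{\frac54}(\mathbb{R}^3)}$. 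For $\varphi g_-$ I would again invoke the fractional Leibniz rule, letting the $\dot{W}^{\frac14}$-derivative fall either on $\varphi$ (paired with $g_-\in L^{q_1}(\mathbb{R}^3)$ for some fixed $q_1\in(3,\infty)$ and using $H^2(\mathbb{R}^3)\hookrightarrow\dot{W}^{\frac14,p_1}(\mathbb{R}^3)$ with $p_1=\tfrac{2q_1}{q_1-2}\in(2,6)$) or on $g_-$ (paired with $\varphi\in L^{p_2}(\mathbb{R}^3)$ where $\tfrac1{p_2}=\tfrac12-\tfrac1{16-\epsilon}$, i.e.\ $p_2=\tfrac{2(16-\epsilon)}{14-\epsilon}\in[2,\infty]$), together with the above bound for $\|\varphi g_-\|_{L^2(\mathbb{R}^3)}$; this yields $\|\varphi g_-\|_{H^{\frac14}(\mathbb{R}^3)}\lesssim\|\varphi\|_{H^2(\mathbb{R}^3)}\bigl(\|g_-\|_{L^\infty(0,\infty)}+\||\cdot|g_-\|_{L^\infty(0,\infty)}+\|g_-\|_{\dot{W}^{\frac14,16-\epsilon}(\mathbb{R}^3)}\bigr)$, with $\epsilon>0$ small (the implied constant degenerating as $\epsilon\to0$).

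For the $L^p(0,\infty)$-part I would use that $|\phi(R,\xi)|\lesssim1$ for all $R,\xi\geq0$ by Corollary~\ref{cor:phi} (the $\mathbb{C}$-component, i.e.\ the coefficient at $\xi_d$, being handled the same way since $\phi(\cdot,\xi_d)$ is bounded and decays), which gives the uniform bound
\[ |\mc{U}(|\cdot|\varphi g)(\xi)|\lesssim\int_0^\infty R|\varphi(R)|\,|g(R)|\,dR\leq\||\cdot|g_-\|_{L^\infty(0,\infty)}\|\varphi\|_{L^1(0,\infty)}+\|\varphi\|_{L^2(0,\infty)}\|g_+\|_{L^2(\mathbb{R}^3)} \]
(Cauchy--Schwarz on the $g_+$-term). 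On the bounded range $0<\xi\lesssim1$ this immediately controls the $L^p$-contribution, whereas on $\{\xi\gtrsim1\}$, since $p$ is large (in particular $p>2$) and $\rho(\xi)\gtrsim1$ there by Lemma~\ref{lem:lgxi}, I would interpolate to get $\|\mc{U}(|\cdot|\varphi g)\|_{L^p(\{\xi\gtrsim1\})}\lesssim\|\mc{U}(|\cdot|\varphi g)\|_{L^\infty(\{\xi\gtrsim1\})}^{1-\frac2p}\,\|\mc{U}(|\cdot|\varphi g)\|_{L^2((0,\infty),\rho\,d\xi)}^{\frac2p}$; the last factor is $\leq\|\varphi g\|_{L^2(\mathbb{R}^3)}\leq\|\varphi\|_{L^2(0,\infty)}\||\cdot|g_-\|_{L^\infty(0,\infty)}+\|\varphi\|_{L^\infty(0,\infty)}\|g_+\|_{L^2(\mathbb{R}^3)}$ by the unitarity of $\mc{U}$ (Proposition~\ref{prop:Fourier}), and the $L^\infty$-factor is controlled by the display above. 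Collecting all contributions and using $H^{\frac54}(\mathbb{R}^3)\hookrightarrow L^2(\mathbb{R}^3)$ finishes the proof.

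The hard part is the $H^{\frac14}$-estimate for the rough factor $\varphi g_-$: because $g_-$ only carries the borderline regularity $\dot{W}^{\frac14,16-\epsilon}(\mathbb{R}^3)$, the H\"older exponents in the fractional Leibniz rule must be matched tightly, and it is essential that the two weighted sup-bounds on $g_-$ supply exactly the missing $L^q(\mathbb{R}^3)$-integrability ($q>3$) and the $L^2(\mathbb{R}^3)$-control of $\varphi g_-$. By comparison, the $L^p$-part at high frequency is soft once the interpolation inequality and the unitarity of $\mc{U}$ are available.
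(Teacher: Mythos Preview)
Your proof is correct and follows essentially the same route as the paper for the weighted $L^2$-component: reduce via Lemma~\ref{lem:H2alpha} to $\|\varphi g_\pm\|_{H^{1/4}(\R^3)}$ and then apply the fractional Leibniz rule, with only cosmetic differences in the Sobolev exponents chosen.

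For the $L^p(0,\infty)$-component, however, you proceed differently. The paper uses the sharper pointwise bound $|\phi(R,\xi)|\lesssim\langle\xi\rangle^{-1/2}$, valid uniformly in $R\geq 0$ (this requires combining Lemmas~\ref{lem:Phi}, \ref{lem:Jost}, \ref{lem:Jostlgxi} and Corollary~\ref{cor:phi}; Corollary~\ref{cor:phi} alone, which you cite for the weaker bound $|\phi|\lesssim 1$, does not cover the regime $R\lesssim \xi^{-1/6}$ for small $\xi$, where Lemma~\ref{lem:Phi} is needed). From this the paper gets directly $|[\mc F(|\cdot|\varphi g)]_2(\xi)|\lesssim\langle\xi\rangle^{-1/2}(\cdots)$ and hence $L^p$-membership for any $p>2$. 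You instead use only $|\phi|\lesssim 1$ to obtain an $L^\infty$ bound and then interpolate on $\{\xi\gtrsim 1\}$ with the $L^2(\rho\,d\xi)$-norm furnished by the unitarity of $\mc U$; this is legitimate since $\rho\gtrsim 1$ there, so $\|\cdot\|_{L^2(d\xi)}\lesssim\|\cdot\|_{L^2(\rho\,d\xi)}$ on that set. Your variant is slightly softer in that it does not rely on the high-frequency decay of $\phi$ in $\xi$, while the paper's argument is more direct once that decay is available. Both are short and equally effective here.
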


\begin{proof}
Note first that $\varphi \in L^2(\mathbb{R}^3)$.
Furthermore, 
we have 
$$\partial_k \partial_j \varphi(|x|)=\varphi''(|x|)\tfrac{x_j x_k}{|x|^2}
+\varphi'(|x|)\left [\tfrac{\delta_{jk}}{|x|}-\tfrac{x_jx_k}{|x|^3}\right ] $$
and thus, $|\partial_k \partial_j \varphi(|x|)|\lesssim |x|^{-1}\langle x\rangle^{-\gamma-1}$
which implies $\varphi \in H^2(\mathbb{R}^3)$.
Now recall that 
$$ \mc{F}(\varphi g)=\left (\begin{array}{c}
\int_0^\infty \phi(R,\xi_d)\varphi(R)g(R)dR \\
\int_0^\infty \phi(R,\cdot)\varphi(R)g(R)dR
\end{array} \right )$$
and $\|\cdot\|_\Y\simeq \|\cdot\|_{L^p(0,\infty)}+\|\cdot\|_{L^{2,\alpha}_\rho}$.
We proceed in four steps, estimating each component separately.

\begin{enumerate}
\item 
We note the estimate
$$ \|uv\|_{H^\frac14(\mathbb{R}^3)}\lesssim \|u\|_{H^{\frac78}(\mathbb{R}^3)}
\|v\|_{H^{\frac78}(\mathbb{R}^3)} $$
which is a consequence of the fractional Leibniz rule (\cite{T00}, 
p.~105, Proposition 1.1)
$$ \|uv\|_{H^\frac14(\mathbb{R}^3)}\lesssim \|u\|_{L^\frac{24}{5}(\mathbb{R}^3)}
\|v\|_{W^{\frac14,\frac{24}{7}}(\mathbb{R}^3)}+
\|v\|_{L^\frac{24}{5}(\mathbb{R}^3)}
\|u\|_{W^{\frac14,\frac{24}{7}}(\mathbb{R}^3)}$$
and the Sobolev embeddings $H^\frac78(\mathbb{R}^3)\hookrightarrow L^\frac{24}{5}(\mathbb{R}^3)$,
$H^\frac78(\mathbb{R}^3) \hookrightarrow W^{\frac14,\frac{24}{7}}(\mathbb{R}^3)$.
With these preparations at hand we immediately conclude from
Lemma \ref{lem:H2alpha} that
\begin{align*}\|\mc{F}(|\cdot|\varphi g_+)\|_{\mathbb{C}\times L^{2,\frac18}_\rho}&\simeq
\|\varphi g_+\|_{H^{\frac14}(\mathbb{R}^3)}\lesssim
\|\varphi\|_{H^\frac78(\mathbb{R}^3)}\|g_+\|_{H^{\frac78}(\mathbb{R}^3)} \\
&\lesssim \|\varphi\|_{H^2(\mathbb{R}^3)}\|g_+\|_{H^\frac54(\mathbb{R}^3)}.
\end{align*}

\item \label{item:Lp}
In order to estimate the $L^p$-part of $g_+$ note first that 
$|\phi(R,\xi)|\lesssim \langle \xi \rangle^{-\frac12}$ for all $R\geq 0$ by Lemmas \ref{lem:Phi},
\ref{lem:Jost}, \ref{lem:Jostlgxi} and Corollary \ref{cor:phi}.
This yields
\begin{align*} |[\mc{F}(|\cdot|\varphi g_+)]_2(\xi)|&\lesssim \langle \xi \rangle^{-\frac12}
 \|\varphi\|_{L^2(0,\infty)}\||\cdot|g_+\|_{L^2(0,\infty)} \\
 &\simeq \langle \xi \rangle^{-\frac12}
 \|\varphi\|_{L^2(0,\infty)}\|g_+\|_{L^2(\mathbb{R}^3)} \\
 &\lesssim \langle \xi \rangle^{-\frac12}\|\varphi\|_{H^2(\mathbb{R}^3)}
 \|g_+\|_{H^{\frac54}(\mathbb{R}^3)}
 \end{align*}
 by noting that $\|\varphi\|_{L^2(0,\infty)}\lesssim \|\varphi\|_{L^\infty(\mathbb{R}^3)}
 +\|\varphi\|_{L^2(\mathbb{R}^3)}\lesssim \|\varphi\|_{H^2(\mathbb{R}^3)}$.
By raising the above inequality to the power $p>2$ we conclude
 $$\|\mc{F}(|\cdot|\varphi g_+)\|_{\mathbb{C}\times L^p(0,\infty)}\lesssim 
 \|\varphi\|_{H^2(\mathbb{R}^3)}\|g_+\|_{H^{\frac54}(\mathbb{R}^3)}. $$
 
 \item 
In order to estimate the $L^p$-norm of the second component we note, as in (\ref{item:Lp}),
$$ |[\mc{F}(|\cdot|\varphi g_-)]_2(\xi)|\lesssim \langle \xi \rangle^{-\frac12}
\|\varphi\|_{L^1(0,\infty)}\||\cdot|g_-\|_{L^\infty(0,\infty)}
 $$
and therefore, 
$$ \|[\mc{F}(|\cdot|\varphi g_-)]_2\|_{L^p(0,\infty)}\lesssim
\|\varphi\|_{L^1(0,\infty)}\||\cdot|g_-\|_{L^\infty(0,\infty)} $$
provided that $p>2$.

\item For the $L^2$-part of $g_-$ we use Lemma \ref{lem:H2alpha} again and obtain
\begin{align*} \|\mc{F}(|\cdot|\varphi g_-)\|_{\mathbb{C} \times L^{2,\frac18}_\rho}
&\simeq \|\varphi g_-\|_{H^{\frac14}(\mathbb{R}^3)} \\
&\simeq \|\varphi g_-\|_{L^2(\mathbb{R}^3)}
+\|\varphi g_-\|_{\dot{H}^{\frac14}(\mathbb{R}^3)}.
\end{align*}
Note that
$$ \|\varphi g_-\|_{L^2(\mathbb{R}^3)}
\lesssim \|\varphi\|_{L^2(\mathbb{R}^3)}\|g_-\|_{L^\infty(0,\infty)}
\lesssim \|\varphi\|_{H^2(\mathbb{R}^3)}\|g_-\|_{L^\infty(0,\infty)}$$
and it remains to bound the $\dot{H}^{\frac14}$-norm.
For this we use the fractional Leibniz rule and obtain
\begin{align*} \|\varphi g_-\|_{\dot{H}^{\frac14}(\mathbb{R}^3)}
\lesssim &\|\varphi\|_{\dot{H}^\frac14(\mathbb{R}^3)}
\|g_-\|_{L^\infty(0,\infty)} \\ 
&+\|\varphi\|_{L^{\frac{16}{7}+}(\mathbb{R}^3)}\|g_-\|_{
\dot{W}^{\frac14,16-}(\mathbb{R}^3)}
\end{align*}
and this yields the claim since
$H^2(\mathbb{R}^3) \hookrightarrow L^{\frac{16}{7}+}(\mathbb{R}^3)$ by Sobolev embedding.
\end{enumerate}
\end{proof}

Now we are ready to establish the crucial estimate for the operator $\mc{N}_1$, 
cf.~Eq.~\eqref{eq:defR}.
\begin{lemma}
Let $p,\delta$ be as in Lemma \ref{lem:F-1} and $\beta_d, \beta_c \geq 0$.
Then we have
$$ \|\mc{N}_1 (x_d,x)\|_{L^{\infty,\beta_d+2}_{\tau_0}\times L^{\infty,\beta_c+2}_{\tau_0}Y^{p,\frac18}}
\lesssim \|(x_d,x)\|_{L^{\infty,\beta_d}_{\tau_0} \times L^{\infty,\beta_c}_{\tau_0}X^{p,\frac18}_\delta}. $$
\end{lemma}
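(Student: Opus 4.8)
The plan is to recognize that $\mc N_1$ is linear (it is the $j=1$ instance of Eq.~\eqref{eq:defR}) and, upon unwinding the definitions,
\[
\mc N_1\!\left(\begin{array}{c}x_d\\ x\end{array}\right)(\tau,\cdot)=\mc F\big(|\cdot|\,\varphi_1(\tau,\cdot)\,g(\tau,\cdot)\big),\qquad g(\tau,R):=R^{-1}\Big[\mc F^{-1}\!\left(\begin{array}{c}x_d(\tau)\\ x(\tau,\cdot)\end{array}\right)\Big](R),
\]
so that the statement follows once we feed this into Lemma~\ref{lem:FR} with $\varphi=\varphi_1(\tau,\cdot)$ and $g=g(\tau,\cdot)$. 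Two inputs are needed: a \emph{multiplier bound} $\|\varphi_1(\tau,\cdot)\|_{H^2(\R^3)}+\|\varphi_1(\tau,\cdot)\|_{L^1(0,\infty)}\lesssim\tau^{-2}$ uniformly for $\tau\ge\tau_0$, and a \emph{decomposition} $g(\tau,\cdot)=g_-+g_+$ whose norms appearing on the right-hand side of Lemma~\ref{lem:FR} are $\lesssim\|(x_d(\tau),x(\tau,\cdot))\|_{\C\times X^{p,\frac18}_\delta}$. Granting both, Lemma~\ref{lem:FR} gives $\|\mc N_1(x_d,x)(\tau,\cdot)\|_{\C\times Y^{p,\frac18}}\lesssim\tau^{-2}\|(x_d(\tau),x(\tau,\cdot))\|_{\C\times X^{p,\frac18}_\delta}$; since $\|(x_d(\tau),x(\tau,\cdot))\|_{\C\times X^{p,\frac18}_\delta}\lesssim\tau^{-\beta_d}\|x_d\|_{L^{\infty,\beta_d}_{\tau_0}}+\tau^{-\beta_c}\|x\|_{L^{\infty,\beta_c}_{\tau_0}X^{p,\frac18}_\delta}$, multiplying the discrete, resp.\ continuous, component by $\tau^{\beta_d+2}$, resp.\ $\tau^{\beta_c+2}$, and taking the supremum over $\tau\ge\tau_0$ yields the assertion.

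For the decomposition of $g$, recall from Eq.~\eqref{eq:defF-1} that $\mc F^{-1}(a,f)=a\,\phi(\cdot,\xi_d)\|\phi(\cdot,\xi_d)\|_{L^2(0,\infty)}^{-2}+\int_0^\infty\phi(\cdot,\xi)f(\xi)\rho(\xi)\,d\xi$, and split the continuous part at frequency $\simeq1$ via the cut-off $\chi$ of Lemma~\ref{lem:F-1}: set $g_+:=|\cdot|^{-1}\mc F^{-1}(x_d(\tau),\chi\,x(\tau,\cdot))$ (the high-frequency piece, carrying also the discrete eigenmode, which is smooth and exponentially decaying) and $g_-:=|\cdot|^{-1}\mc F^{-1}(0,(1-\chi)\,x(\tau,\cdot))$. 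Then Lemma~\ref{lem:F-1}(1) with $\alpha=\tfrac18$ gives $\|g_+\|_{H^{5/4}(\R^3)}\lesssim\|(x_d(\tau),x(\tau,\cdot))\|_{\C\times X^{p,\frac18}_\delta}$, while parts (2), (3) with $q=\infty$, and (4) with $\theta=\tfrac18$ and $q$ so large that $\tfrac{2}{\theta+\frac2q(1-\theta)}=16-\epsilon$, give $\||\cdot|g_-\|_{L^\infty(0,\infty)}+\|g_-\|_{L^\infty(0,\infty)}+\|g_-\|_{\dot W^{\frac14,16-\epsilon}(\R^3)}\lesssim\|(x_d(\tau),x(\tau,\cdot))\|_{\C\times X^{p,\frac18}_\delta}$ — precisely the quantities on the right of Lemma~\ref{lem:FR}.

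For the multiplier bound, I first note that although Lemma~\ref{lem:FR} is stated with a pointwise symbol hypothesis on $\varphi$, its proof uses $\varphi$ only through $\|\varphi\|_{H^2(\R^3)}+\|\varphi\|_{L^1(0,\infty)}$; moreover, by the remark following Lemma~\ref{lem:v1} we may assume $\varphi_1(\tau,\cdot)$ is smooth at the center as a radial function on $\R^3$, so it genuinely lies in $H^2(\R^3)$. Now $\varphi_1=5\tilde\lambda(\tau)^{-2}\tilde\chi\,(u_2^4-u_0^4)$ with $u_2-u_0=v_0+v_1$; expanding $u_2^4-u_0^4=4u_0^3(v_0+v_1)+6u_0^2(v_0+v_1)^2+4u_0(v_0+v_1)^3+(v_0+v_1)^4$ and passing to the self-similar variables (in which $u_0=\tilde\lambda(\tau)^{1/2}W(R)$ with $W(R)\simeq\langle R\rangle^{-1}$, $\lambda(t)t=\nu\tau$, $\lambda(t)r=R$, and each $v_j$ carries a factor $\tilde\lambda(\tau)^{1/2}$), one sees that every term has enough powers of $\tilde\lambda(\tau)$ to absorb the $\tilde\lambda(\tau)^{-2}$ prefactor. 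The residual $\tau$-decay comes from Lemmas~\ref{lem:e1} and \ref{lem:v1}: the $v_0$-contribution is, with symbol behaviour in $R$, $\lesssim\tau^{-2}\langle R\rangle^{-2}\langle\log\langle R\rangle\rangle$ (the logarithm and the explicit linear term in $v_0$ being harmless once multiplied by $u_0^3\simeq\langle R\rangle^{-3}$), while the $v_1$-contribution is $\lesssim\tau^{-4+c|\frac1\nu-1|}$ on $\{R\lesssim\tau\}$: it need not decay in $R$, but one uses $|v_1|\lesssim\tilde\lambda(\tau)^{1/2}\tau^{-4}(R^3+R^2)$ for $R\le\tfrac12\nu\tau$ together with the $g/b$-decomposition and $\langle R\rangle\simeq\tau$ for $\tfrac12\nu\tau\le R\le\nu\tau$. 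Integrating these bounds (the $\langle R\rangle^{-2}$-decay gives $\tau^{-2}$ in both $L^1(0,\infty)$ and $L^2(\R^3)$, and the non-decaying $\tau^{-4+\cdots}$ piece supported in $\{R\lesssim\tau\}$ gives $\tau^{-3+\cdots}$, resp.\ $\tau^{-5/2+\cdots}$, after an $R^2\,dR$ integration), and handling the two $R$-derivatives by the Leibniz rule — the one slightly delicate term being $\tilde\lambda(\tau)^{-2}(\partial_R^k\tilde\chi)(u_2^4-u_0^4)$, where $\partial_R^k\tilde\chi$ is $O(\tilde\lambda(\tau)^{-k})$ on a window of width $\simeq\tilde\lambda(\tau)$ near the lightcone and is compensated by the $\tau^{-4+\cdots}$ decay of $u_2^4-u_0^4$ there — yields $\|\varphi_1(\tau,\cdot)\|_{H^2(\R^3)}+\|\varphi_1(\tau,\cdot)\|_{L^1(0,\infty)}\lesssim\tau^{-2}$, the parasitic $\tau^{c|\frac1\nu-1|}$ being negligible since $\nu$ is close to $1$.

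The main obstacle is this last step: one must carry out the full expansion of $u_2^4-u_0^4$ and its $R$-derivatives while always cancelling the $\tilde\lambda(\tau)^{-2}$, and — crucially — confine the pieces of $v_1$ that fail to decay in $R$ (unavoidable, since $v_1$ has only limited differentiability and spatial decay near the lightcone) to the region $R\simeq\tau$, where spatial decay can be traded for the large power of $\tau^{-1}$, so that the net rate is exactly $\tau^{-2}$. Everything else is a routine bookkeeping application of the preceding lemmas.
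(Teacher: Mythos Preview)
Your proposal is correct and follows essentially the same approach as the paper: decompose $g$ via Lemma~\ref{lem:F-1}, establish the $\tau^{-2}$ multiplier bound for $\varphi_1$, and apply Lemma~\ref{lem:FR}. The only notable difference is in the multiplier step: the paper obtains the clean pointwise symbol bound $|\partial_R^k\varphi_1(\tau,R)|\leq C_k\,\tau^{-2}\langle R\rangle^{-2-k}$ directly from the single estimate $|v|\lesssim\tilde\lambda(\tau)^{1/2}\tau^{-2}\langle R\rangle$ (valid on the whole support of $\tilde\chi$) combined with $|u_0|\lesssim\tilde\lambda(\tau)^{1/2}\langle R\rangle^{-1}$, which makes the $H^2\cap L^1$ bound immediate, whereas you argue more elaborately by splitting $v_0$ and $v_1$ and treating the near-lightcone region separately; both routes work, but the paper's is shorter.
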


\begin{proof}
Recall from Eq.~\eqref{eq:defR} that
$$ \mc{N}_1(x_d,x)(\tau,\xi)=\mc{F}\left (|\cdot|\varphi_1(\tau,\cdot) |\cdot|^{-1}\mc{F}^{-1}(x_d(\tau),
x(\tau,\cdot))\right )(\xi) $$
with
$$ \varphi_1(\tau,R)=5\tilde{\lambda}(\tau)^{-2}\tilde{\chi}(\tau,R)[
u_2(\nu \tilde{\lambda}(\tau)^{-1}\tau, \tilde{\lambda}(\tau)^{-1}R)^4
-u_0(\nu \tilde{\lambda}(\tau)^{-1}\tau, \tilde{\lambda}(\tau)^{-1}R)^4]. $$
Furthermore, we have $u_2=u_0+v_0+v_1$ with $v_0$ and $v_1$ from Lemmas \ref{lem:e1}
and \ref{lem:v1}, respectively.
Now note that
$$ u_2^4-u_0^4=4u_0^3 v+6u_0^2v^2+4u_0v^3+v^4 $$
where $v:=v_0+v_1$ and from Lemmas \ref{lem:e1} and \ref{lem:v1} as well as the definition of
$u_0$ we have the bounds 
\begin{align*} 
|v(\nu \tilde{\lambda}(\tau)^{-1}\tau,\tilde{\lambda}(\tau)^{-1}R)|&\lesssim 
\tilde{\lambda}(\tau)^\frac12 \tau^{-2}\langle R \rangle \\
|u_0(\nu \tilde{\lambda}(\tau)^{-1}\tau,\tilde{\lambda}(\tau)^{-1}R)|&\lesssim
\tilde{\lambda}(\tau)^{\frac12}\langle R \rangle^{-1}
\end{align*}
for $R\leq \nu\tau-\frac12 \tilde{\lambda}(\tau)c$
which imply 
$$ |\varphi_1(\tau,R)|\lesssim \tau^{-2}\langle R \rangle^{-2}. $$
We also have 
$|\partial_R^k \varphi_1(\tau,R)|\leq C_k \tau^{-2}\langle R \rangle^{-2-k}$ 
for all $k \in \mathbb{N}_0$ and this implies 
$$ \|\varphi_1(\tau,\cdot)\|_{H^2(\mathbb{R}^3)}+\|\varphi_1(\tau,\cdot)\|_{L^1(0,\infty)}
\lesssim \tau^{-2},$$ cf.~Lemma \ref{lem:FR}.

For given $(x_d,x)\in L^{\infty,\beta_d}_{\tau_0}\times L^{\infty,\beta_c}_{\tau_0}X^{p,\frac18}_\delta$ we now set
$$ y(\tau,R):=R^{-1}\mc{F}^{-1}(x_d(\tau),x(\tau,\cdot))(R), $$
i.e., we have $\mc{N}_1(x_d,x)(\tau,\xi)=\mc{F}(|\cdot|\varphi_1(\tau,\cdot) y(\tau,\cdot))(\xi)$
and obviously, our goal is to apply Lemma \ref{lem:FR}.
According to Lemma \ref{lem:F-1} we have a decomposition $y=y_-+y_+$ such that
\begin{align*} 
\|y_+(\tau,\cdot)\|_{H^\frac54(\mathbb{R}^3)}&\lesssim \|(x_d(\tau),x(\tau,\cdot))\|_{\mathbb{C}
\times X^{p,\frac18}_\delta} \\
\||\cdot|y_-(\tau,\cdot)\|_{L^\infty(0,\infty)}&\lesssim \|(x_d(\tau),x(\tau,\cdot))\|_{\mathbb{C}
\times X^{p,\frac18}_\delta} \\
\|y_-(\tau,\cdot)\|_{L^\infty(0,\infty)}&\lesssim \|(x_d(\tau),x(\tau,\cdot))\|_{\mathbb{C}
\times X^{p,\frac18}_\delta} \\
\|y_-(\tau,\cdot)\|_{\dot{W}^{\frac14,16-\epsilon}(\mathbb{R}^3)}
&\lesssim \|(x_d(\tau),x(\tau,\cdot))\|_{\mathbb{C}
\times X^{p,\frac18}_\delta}
\end{align*}
with the $\epsilon$ from Lemma \ref{lem:FR}.
Consequently, Lemma \ref{lem:FR} indeed applies and yields 
$$ \|\mc{N}_1(x_d,x)(\tau,\cdot)\|_{\C\times Y^{p,\frac18}}\lesssim \tau^{-2}
\|(x_d(\tau),x(\tau,\cdot))\|_{\mathbb{C}\times X^{p,\frac18}_\delta} $$
which implies the claim.
\end{proof}

\subsection{The operator $\mc{N}_5$}
Next, we study the operator $\mc{N}_5$. 
As before, we start with an auxiliary estimate that does not take into account the time dependence.

\begin{lemma}
\label{lem:FR5}
Let $p,\delta$ be as in Lemma \ref{lem:F-1}.
Then there exists a small $\epsilon>0$ such that
\begin{align*} \|\mc{F}\left (|\cdot|g^5\right )\|_{\mathbb{C}\times Y^{p,\frac18}}
\lesssim &\|g_-\|_{L^5(\mathbb{R}^3)}^5
+\|g_-\|_{L^\infty(\mathbb{R}^3)}^5 \\
&+\|g_-\|_{\dot{W}^{\frac14,16-\epsilon}(\mathbb{R}^3)}^5
+\|g_+\|_{H^\frac54(\mathbb{R}^3)}^5
\end{align*}
for all $g=g_-+g_+$ such that the right-hand side is finite.
\end{lemma}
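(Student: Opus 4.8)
The plan is to decouple the $L^p(0,\infty)$ part of $\mc{F}(|\cdot|g^5)$ from its $L^{2,\frac18}_\rho$ part (together with the discrete $\mathbb{C}$-component), since the former is controlled by the single scalar quantity $\int_0^\infty R|g(R)|^5\,dR$, while the latter, by Lemma~\ref{lem:H2alpha} with $\alpha=\frac18$, is equivalent to $\|g^5\|_{H^{1/4}(\mathbb{R}^3)}$. Concretely,
\[ \|\mc{F}(|\cdot|g^5)\|_{\mathbb{C}\times Y^{p,\frac18}}\lesssim \|g^5\|_{H^{1/4}(\mathbb{R}^3)}+\big\|\xi\mapsto[\mc{F}(|\cdot|g^5)]_2(\xi)\big\|_{L^p(0,\infty)}, \]
so it suffices to bound the two terms on the right by the stated combination of norms of $g_\pm$.

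For the $H^{1/4}$-term I would write $g=g_-+g_+$, expand $g^5=\sum_{k=0}^5\binom{5}{k}g_-^{5-k}g_+^k$, and treat each summand via the fractional Leibniz rule (\cite{T00}) together with the Sobolev embeddings $H^{5/4}(\mathbb{R}^3)\hookrightarrow L^q(\mathbb{R}^3)$ for $q\in[2,12]$ and $\dot{H}^{5/4}(\mathbb{R}^3)\hookrightarrow\dot{W}^{1/4,6}(\mathbb{R}^3)$. The $L^2$-norm of $g_-^{5-k}g_+^k$ is bounded by $\|g_-\|_{L^\infty}^{5-k}\|g_+\|_{L^{2k}}^k$ for $1\leq k\leq 4$, by $\|g_+\|_{L^{10}}^5$ for $k=5$, and by $\|g_-\|_{L^{10}}^5\lesssim\|g_-\|_{L^5}^{5/2}\|g_-\|_{L^\infty}^{5/2}$ for $k=0$; Young's inequality then produces the required right-hand side. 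For the $\dot{H}^{1/4}$-norm the essential bookkeeping is to place the $\tfrac14$-derivative on a factor belonging to $g_-$ whenever $k\leq4$ (so that the admissible exponent $16-\epsilon$ is spent on $\dot{W}^{1/4,16-\epsilon}$ of $g_-$) and on a $g_+$-factor when $k=5$, using $\dot{H}^{5/4}\hookrightarrow\dot{W}^{1/4,6}$, which gives $\|g_+^5\|_{\dot{H}^{1/4}}\lesssim\|g_+\|_{\dot{W}^{1/4,6}}\|g_+\|_{L^{12}}^4\lesssim\|g_+\|_{H^{5/4}}^5$. In the mixed cases $1\leq k\leq4$ one distributes the remaining exponents by solving $\tfrac1{16-\epsilon}+\tfrac{4-k}{c}+\tfrac{k}{d}=\tfrac12$ with $c\in[5,\infty)$ and $d\in[2,12]$, which is always possible for $\epsilon>0$ small (e.g.\ $d=12$ works for $k\leq 3$ and $d=\tfrac{64}{7}$ for $k=4$), then estimates $\|g_-\|_{L^c}$ by interpolation between $L^5$ and $L^\infty$, and finishes with Young's inequality. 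This yields $\|g^5\|_{H^{1/4}(\mathbb{R}^3)}\lesssim\|g_-\|_{L^5(\mathbb{R}^3)}^5+\|g_-\|_{L^\infty(\mathbb{R}^3)}^5+\|g_-\|_{\dot{W}^{1/4,16-\epsilon}(\mathbb{R}^3)}^5+\|g_+\|_{H^{5/4}(\mathbb{R}^3)}^5$.

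For the $L^p(0,\infty)$-term I would use the uniform bound $|\phi(R,\xi)|\lesssim\langle\xi\rangle^{-1/2}$ (Lemmas~\ref{lem:Phi},~\ref{lem:Jost},~\ref{lem:Jostlgxi} and Corollary~\ref{cor:phi}), giving
\[ |[\mc{F}(|\cdot|g^5)]_2(\xi)|\lesssim \langle\xi\rangle^{-1/2}\int_0^\infty R|g(R)|^5\,dR\simeq \langle\xi\rangle^{-1/2}\big\||\cdot|^{-1}g^5\big\|_{L^1(\mathbb{R}^3)}, \]
and since $p$ is large (in particular $p>2$) the function $\xi\mapsto\langle\xi\rangle^{-1/2}$ lies in $L^p(0,\infty)$. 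It then remains to bound $\||\cdot|^{-1}g^5\|_{L^1(\mathbb{R}^3)}\lesssim\||\cdot|^{-1}g_-^5\|_{L^1(\mathbb{R}^3)}+\||\cdot|^{-1}g_+^5\|_{L^1(\mathbb{R}^3)}$. For each term I split $\mathbb{R}^3$ into $\{|x|<1\}$ and $\{|x|>1\}$ and apply H\"older's inequality using $|x|^{-1}\mathbf{1}_{\{|x|<1\}}\in L^2(\mathbb{R}^3)$ and $|x|^{-1}\mathbf{1}_{\{|x|>1\}}\in L^{7/2}(\mathbb{R}^3)$; this controls $\||\cdot|^{-1}g_+^5\|_{L^1}$ by $\|g_+\|_{L^{10}}^5+\|g_+\|_{L^7}^5\lesssim\|g_+\|_{H^{5/4}}^5$ and $\||\cdot|^{-1}g_-^5\|_{L^1}$ by $\|g_-\|_{L^{10}}^5+\|g_-\|_{L^7}^5$, which is $\lesssim\|g_-\|_{L^5}^5+\|g_-\|_{L^\infty}^5$ after interpolation between $L^5$ and $L^\infty$ and Young's inequality.

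The main obstacle is purely one of exponent bookkeeping rather than of analytic substance: one must arrange the H\"older splittings so that the weight $|x|^{-1}$ — equivalently, the mismatch between the measures $R\,dR$ on $(0,\infty)$ and $dx$ on $\mathbb{R}^3$ — is absorbed without ever asking $g_+$ for more than its $H^{5/4}$-norm, and in the fractional Leibniz step one must keep the $\tfrac14$-derivative on a factor of $g_-$ whenever one is available, because $H^{5/4}(\mathbb{R}^3)$ embeds only into $\dot{W}^{1/4,6}(\mathbb{R}^3)$ and not into $\dot{W}^{1/4,16-\epsilon}(\mathbb{R}^3)$. Both constraints are met by the elementary choices indicated above, and $\epsilon>0$ may be taken to depend only on the admissible ranges of the Sobolev exponents.
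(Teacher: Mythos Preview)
Your proof is correct and follows essentially the same approach as the paper: split the $Y^{p,\frac18}$ norm into its $L^p$ and $L^{2,1/8}_\rho$ components, identify the latter with $\|g^5\|_{H^{1/4}(\mathbb{R}^3)}$ via Lemma~\ref{lem:H2alpha}, expand $(g_-+g_+)^5$, and handle each piece by fractional Leibniz plus Sobolev embedding, while the $L^p$ part is reduced to a scalar integral using $|\phi(R,\xi)|\lesssim\langle\xi\rangle^{-1/2}$. Your treatment of the $L^p$ part via $\||\cdot|^{-1}g^5\|_{L^1(\mathbb{R}^3)}$ and H\"older against $|x|^{-1}$ is a minor variant of the paper's splitting at $R=1$; one small caution is that the fractional Leibniz rule distributes the $\tfrac14$-derivative to \emph{all} factors rather than letting you choose, so in the mixed terms you must also dispose of the contribution where the derivative lands on a $g_+$ factor (this is the easier of the two, handled by $H^{5/4}\hookrightarrow\dot W^{1/4,6}$ with the $g_-$ factors placed in $L^\infty$, exactly as the paper does).
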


\begin{proof}
We have $g^5=(g_-+g_+)^5=g_-^5+\dots+g_+^5$
and study the extreme cases $g_-^5$ and $g_+^5$ first. The intermediate terms are then estimated
by interpolation.
Let us note the estimate 
$$ \left \|\prod_{j=1}^5 f_j \right \|_{H^\frac14(\mathbb{R}^3)}
\lesssim \sum_{j=1}^5 \|f_j\|_{W^{\frac14,6}(\mathbb{R}^3)}\prod_{\ell\not= j}
\|f_\ell\|_{L^{12}(\mathbb{R}^3)} $$
which is a consequence of the fractional Leibniz rule (\cite{T00}, p.~105, Proposition 1.1).
By the Sobolev embeddings
$H^\frac54(\mathbb{R}^3)\hookrightarrow L^{12}(\mathbb{R}^3)$ and
$H^\frac54(\mathbb{R}^3)\hookrightarrow W^{\frac14,6}(\mathbb{R}^3)$ we infer
$$ \left \|\prod_{j=1}^5 f_j \right \|_{H^\frac14(\mathbb{R}^3)}
\lesssim \prod_{j=1}^5 \|f_j\|_{H^{\frac54}(\mathbb{R}^3)} $$
which will be useful in the following.

\begin{enumerate}
\item According to Lemma \ref{lem:H2alpha} we have
$$
\|\mc{F}(|\cdot|g_+^5)\|_{\mathbb{C}\times L^{2,\frac18}_\rho}\simeq 
\|g_+^5\|_{H^\frac14(\mathbb{R}^3)}\lesssim 
\|g_+\|_{H^\frac54(\mathbb{R}^3)}^5. $$

\item In order to estimate the $L^p$-part of $g_+$ recall that $|\phi(R,\xi)|\lesssim 
\langle \xi\rangle^{-\frac12}$ for all $R\geq 0$ and thus,
\begin{align*}
|[\mc{F}(|\cdot|g_+^5)]_2(\xi)|&\lesssim \|\phi(\cdot, \xi)\|_{L^2(0,1)}
\||\cdot|g_+^5\|_{L^2(0,1)} \\
&\quad +\||\cdot|^{-1}\phi(\cdot,\xi)\|_{L^\infty(1,\infty)}
\||\cdot|^2 g_+^5\|_{L^1(1,\infty)} \\
&\lesssim \langle \xi \rangle^{-\frac12}\left (\|g_+\|_{L^{10}(\mathbb{R}^3)}^5
+\|g_+\|_{L^5(\mathbb{R}^3)}^5 \right ).
\end{align*}
Consequently, the Sobolev embeddings $H^\frac54(\mathbb{R}^3)\hookrightarrow L^{10}(\mathbb{R}^3)$
and
$H^\frac54(\mathbb{R}^3)\hookrightarrow L^5(\mathbb{R}^3)$ yield
$$ \|[\mc{F}(|\cdot|g_+^5)]_2\|_{L^p(0,\infty)}\lesssim 
\|g_+\|_{H^\frac54(\mathbb{R}^3)}^5 $$
for $p>2$ as desired.

\item 
For the $L^p$-part of $g_-$ note that
\begin{align*} |[\mc{F}(|\cdot|g_-^5)]_2(\xi)|&\lesssim \langle \xi\rangle^{-\frac12}
\left [\|g_-^5\|_{L^\infty(0,1)}+
\||\cdot|^2g_-^5\|_{L^1(1,\infty)} \right ] \\
&\lesssim \langle \xi \rangle^{-\frac12}\left [ \|g_-\|_{L^\infty(\mathbb{R}^3)}^5
+\|g_-\|_{L^{5}(\mathbb{R}^3)}^5 \right ]
\end{align*}
which yields the desired bound
$$ \|[\mc{F}(|\cdot|g_-^5)]_2\|_{L^p(0,\infty)}\lesssim 
 \|g_-\|_{L^\infty(\mathbb{R}^3)}^5
+\|g_-\|_{L^5(\mathbb{R}^3)}^5 $$
provided that $p>2$.

\item
According to Lemma \ref{lem:F-1} we have
$$ \|\mc{F}(|\cdot|g_-^5)\|_{\mathbb{C}\times L^{2,\frac18}_\rho}
\simeq \|g_-^5\|_{H^\frac14(\mathbb{R}^3)} $$
and since
$\|g_-^5\|_{L^2(\mathbb{R}^3)}=\|g_-\|_{L^{10}(\mathbb{R}^3)}^5 \lesssim
\|g_-\|_{L^5(\mathbb{R}^3)}^5+\|g_-\|_{L^\infty(\mathbb{R}^3)}^5$
it suffices to control the homogeneous Sobolev norm 
$\|g_-^5\|_{\dot{H}^\frac14(\mathbb{R}^3)}$.
For this we use the fractional Leibniz rule to conclude
\begin{align*} \|g_-^5\|_{\dot{H}^\frac14(\mathbb{R}^3)}&\lesssim \|g_-\|_{L^{\frac{64}{7}+}(\mathbb{R}^3)}^4
\|g_-\|_{\dot{W}^{\frac14,16-}(\mathbb{R}^3)} \\
&\lesssim \left [ \|g_-\|_{L^5(\mathbb{R}^3)}^4+\|g_-\|_{L^\infty(\mathbb{R}^3)}^4 \right ]
\|g_-\|_{\dot{W}^{\frac14,16-}(\mathbb{R}^3)}.
\end{align*}
\end{enumerate}
We briefly comment on how to estimate the mixed terms.
First of all, it is trivial to bound the $L^p$ norms since
$$ |[\mc{F}(|\cdot|g_-^{5-k}g_+^k)]_2(\xi)|\lesssim \int_0^\infty |\phi(R,\xi)|R \left [|g_-(R)|^5
+|g_+(R)|^5 \right ]dR $$
for all $k \in \{1,2,3,4\}$ by the pointwise inequality $|g_-^{5-k}g_+^k|\lesssim |g_-|^5+|g_+|^5$
which brings us back to the two extreme cases considered above.
For the $L^2$-parts the only nontrivial contributions come from the $\dot{H}^{\frac14}(\mathbb{R}^3)$
homogeneous Sobolev norms.
In order to control these, one proceeds as before by applying 
the fractional Leibniz rule followed by Sobolev embedding, i.e.,
\begin{align*}
\|g_-^{5-k} g_+^k\|_{\dot{H}^\frac14(\mathbb{R}^3)}&\lesssim 
\|g_-\|_{L^\infty(\mathbb{R}^3)}^{5-k}\|g_+\|_{L^{3(k-1)}(\mathbb{R}^3)}^{k-1} 
\|g_+\|_{\dot{W}^{\frac14,6}(\mathbb{R}^3)} \\
&\quad +\|g_+\|_{L^{k\frac{16}{7}+}(\mathbb{R}^3)}^k
\|g_-\|_{L^\infty(\mathbb{R}^3)}^{4-k}\|g_-\|_{\dot{W}^{\frac14,16-}(\mathbb{R}^3)} \\
&\lesssim \|g_-\|_{L^\infty(\mathbb{R}^3)}^{5-k}\|g_+\|_{H^\frac54(\mathbb{R}^3)}^k
+\|g_-\|_{\dot{W}^{\frac14,16-}(\mathbb{R}^3)}^{5-k}\|g_+\|_{H^\frac54(\mathbb{R}^3)}^k.
\end{align*}
\end{proof}

\begin{lemma}
\label{lem:R5}
Let $p,\delta$ be as in Lemma \ref{lem:F-1} and $\beta_d, \beta_c \geq 0$.
Then we have the estimate
$$ \|\mc{N}_5(x_d,x)\|_{L^{\infty,5\beta_d-\frac14}_{\tau_0}\times L^{\infty,5\beta_c-\frac14}_{\tau_0}Y^{p,\frac18}}
\lesssim \|(x_d,x)\|_{L^{\infty,\beta_d}_{\tau_0}\times L^{\infty,\beta_c}_{\tau_0}X^{p,\frac18}_\delta}^5. $$
\end{lemma}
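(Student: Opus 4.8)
The plan is to unwind the definition of $\mc N_5$ and reduce everything to the fixed-time, physical-space estimate of Lemma~\ref{lem:FR5}. Setting $y(\tau,R):=R^{-1}\mc F^{-1}(x_d(\tau),x(\tau,\cdot))(R)$ and recalling from \eqref{eq:defR} that $\varphi_5=\tilde\lambda^{-2}\tilde\chi$, one has
\[ \mc N_5(x_d,x)(\tau,\cdot)=\tilde\lambda(\tau)^{-2}\,\mc F\!\left(|\cdot|\,g(\tau,\cdot)^5\right),\qquad g(\tau,R):=\tilde\chi(\tau,R)^{1/5}\,y(\tau,R), \]
where we used $\tilde\chi\ge0$ to write $\tilde\chi=(\tilde\chi^{1/5})^5$; since the profile cut-off $\chi$ vanishes to infinite order, $\tilde\chi^{1/5}$ is again a legitimate smooth cut-off, identically $1$ near $R=0$ and supported in $\{R\le\nu\tau\}$. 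Thus it suffices to produce a splitting $g=g_-+g_+$ such that $\|g_-(\tau,\cdot)\|_{L^5(\R^3)}$, $\|g_-(\tau,\cdot)\|_{L^\infty(\R^3)}$, $\|g_-(\tau,\cdot)\|_{\dot{W}^{\frac14,16-\epsilon}(\R^3)}$ and $\|g_+(\tau,\cdot)\|_{H^{\frac54}(\R^3)}$ are all bounded by $\tau^{C|\frac1\nu-1|}\,\|(x_d(\tau),x(\tau,\cdot))\|_{\C\times X^{p,\frac18}_\delta}$, and then to invoke Lemma~\ref{lem:FR5} for $g$ and multiply by $\tilde\lambda(\tau)^{-2}\simeq\tau^{2(\frac1\nu-1)}$.

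For the splitting I would apply Lemma~\ref{lem:F-1} with $\alpha=\tfrac18$: writing $\chi_0$ for its frequency cut-off, set $y_+:=R^{-1}\mc F^{-1}(x_d(\tau),\chi_0x(\tau,\cdot))$ and $y_-:=R^{-1}\mc F^{-1}(0,(1-\chi_0)x(\tau,\cdot))$, so that item~(1) controls $\|y_+(\tau,\cdot)\|_{H^{\frac54}(\R^3)}$, item~(3) controls $\|y_-(\tau,\cdot)\|_{L^q(\R^3)}$ for all $q\in(3,\infty]$, and item~(4) controls $\|y_-(\tau,\cdot)\|_{\dot{W}^{\frac14,16-\epsilon}(\R^3)}$, each by $\|(x_d(\tau),x(\tau,\cdot))\|_{\C\times X^{p,\frac18}_\delta}$. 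With $g_\pm:=\tilde\chi^{1/5}y_\pm$, the $L^5$ and $L^\infty$ bounds for $g_-$ are immediate from $0\le\tilde\chi^{1/5}\le1$. For $\|g_-\|_{\dot{W}^{\frac14,16-\epsilon}}$ and $\|g_+\|_{H^{\frac54}}$ I would use the fractional Leibniz rule, and here lies the delicate point: one must always land the derivatives on the $y_\pm$ factor and estimate the cut-off only in $L^\infty$ and H\"older norms, using that a smooth step of unit height varying on the $R$-scale $\tilde\lambda(\tau)c$ obeys $\|\tilde\chi^{1/5}(\tau,\cdot)\|_{\dot{C}^s(\R^3)}\lesssim(\tilde\lambda(\tau)c)^{-s}\simeq\tau^{s(\frac1\nu-1)}$, costing only an $O(|\frac1\nu-1|)$ power of $\tau$. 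A naive product estimate placing a quarter-derivative on the cut-off would instead cost $\|\tilde\chi^{1/5}\|_{\dot{W}^{\frac14,16-\epsilon}(\R^3)}\sim\tau^{2/(16-\epsilon)}\approx\tau^{\frac18}$ per factor of $g$, i.e.\ $\tau^{\frac58}$ overall, which would swallow the $\tau^{-\frac14}$ we are after; steering clear of this is the main obstacle of the proof.

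Feeding these bounds into Lemma~\ref{lem:FR5} and restoring the prefactor $\tilde\lambda(\tau)^{-2}$ gives $\|\mc N_5(x_d,x)(\tau,\cdot)\|_{\C\times Y^{p,\frac18}}\lesssim\tau^{C|\frac1\nu-1|}\,\|(x_d(\tau),x(\tau,\cdot))\|_{\C\times X^{p,\frac18}_\delta}^5$. Since $y_-$ depends only on $x$ and $y_+$ on the whole pair, and $\mc N_5$ is built (up to the $\tilde\chi^{1/5}$ bookkeeping) additively from the quintic monomials in $g_-,g_+$, the rates $\beta_d$ and $\beta_c$ pass through to the corresponding components; using $\|(x_d(\tau),x(\tau,\cdot))\|_{\C\times X^{p,\frac18}_\delta}\lesssim\tau^{-\beta_d}\|x_d\|_{L^{\infty,\beta_d}_{\tau_0}}+\tau^{-\beta_c}\|x\|_{L^{\infty,\beta_c}_{\tau_0}X^{p,\frac18}_\delta}$ (valid since $\tau\ge\tau_0\ge1$) and choosing $\nu$ so close to $1$ that $C|\frac1\nu-1|\le\tfrac14$, one collects the powers of $\tau$ to conclude that the continuous component lies in $L^{\infty,5\beta_c-\frac14}_{\tau_0}Y^{p,\frac18}$ and the discrete one in $L^{\infty,5\beta_d-\frac14}_{\tau_0}$, with the asserted bound. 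For the discrete component it is also convenient to use the exponential decay of $\phi(\cdot,\xi_d)$ to reduce to $R\lesssim1$, where $\tilde\chi^{1/5}\equiv1$ and only $\|y_-\|_{L^\infty(\R^3)}$ and $\|y_+\|_{L^q(\R^3)}$ with $q<12$ — supplied by $H^{\frac54}(\R^3)\hookrightarrow L^q(\R^3)$ — enter.
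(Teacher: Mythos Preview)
Your proposal is correct and follows essentially the same approach as the paper: split $y=y_-+y_+$ via Lemma~\ref{lem:F-1}, control the relevant norms of the pieces, feed into Lemma~\ref{lem:FR5}, and collect the $\tau$-powers. The only difference is cosmetic: the paper does not introduce $\tilde\chi^{1/5}$ but simply records $|\partial_R^k\varphi_5(\tau,R)|\le C_k\tilde\lambda(\tau)^{-2-k}\lesssim\tau^{1/4}$ for bounded $k$ and asserts that this multiplier causes at most a $\tau^{1/4}$ loss when one re-runs the argument of Lemma~\ref{lem:FR5} with the extra factor $\varphi_5$ present. Your device of writing $\varphi_5 y^5=\tilde\lambda^{-2}(\tilde\chi^{1/5}y)^5$ is a clean way to make that step explicit, and your observation that one must place the fractional derivative on $y_\pm$ and estimate $\tilde\chi^{1/5}$ only in $L^\infty$/H\"older (rather than in $\dot W^{1/4,16-\epsilon}$, which would cost a genuine $\tau^{1/8}$ per factor) is exactly the content hidden in the paper's terse ``for bounded $k$''. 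The paper's Remark following the lemma confirms that the true loss is $\tau^{C|\frac1\nu-1|}$, matching your analysis.
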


\begin{proof}
Recall that
$$ \mc{N}_5(x_d,x)(\tau,\xi)=\mc{F}\left (|\cdot|\varphi_5(\tau,\cdot)\left [|\cdot|^{-1}\mc{F}^{-1}
(x_d(\tau),x(\tau,\cdot))\right ]^5\right )
(\xi) $$
and by setting $y(\tau,R):=R^{-1}\mc{F}(x_d(\tau),x(\tau,\cdot))(\xi)$ we obtain from
Lemma \ref{lem:F-1} the existence of a decomposition $y=y_-+y_+$ with the bound
$$ \|y_-(\tau,\cdot)\|_{L^5(\mathbb{R}^3)}+\|y_-(\tau,\cdot)\|_{L^\infty(\mathbb{R}^3)}
+\|y_-(\tau,\cdot)\|_{\dot{W}^{\frac14,16-\epsilon}(\mathbb{R}^3)}\lesssim 
\|(x_d(\tau),x(\tau,\cdot))\|_{\mathbb{C}\times X^{p,\frac18}_\delta} $$
as well as
$$ \|y_+(\tau,\cdot)\|_{H^\frac54(\mathbb{R}^3)}\lesssim 
\|(x_d(\tau),x(\tau,\cdot))\|_{\mathbb{C}\times X^{p,\frac18}_\delta} $$
for any (small) $\epsilon>0$.
Thus, since $|\partial_R^k \varphi_5(\tau,R)|\leq C_k \tilde{\lambda}(\tau)^{-2-k}\lesssim \tau^\frac14$
for bounded $k$,
Lemma \ref{lem:FR5} yields
$$ \|\mc{N}_5(x_d(\tau),x(\tau,\cdot))\|_{\mathbb{C}\times Y^{p,\frac18}}
\lesssim \tau^\frac14\|(x_d(\tau),x(\tau,\cdot))\|_{\mathbb{C}\times X^{p,\frac18}_\delta}^5 $$
which implies the claim.
\end{proof}

\begin{remark}
The loss of $\tau^\frac14$ in Lemma \ref{lem:R5} can be improved to $\tau^\epsilon$ with 
$\epsilon\to 0+$
as $\nu \to 1$ but the stated result suffices for our purposes and it avoids the introduction
of an additional $\epsilon$.
\end{remark}

The remaining operators $\mc{N}_2$, $\mc{N}_3$, $\mc{N}_4$ are in a certain sense 
interpolates between $\mc{N}_1$ and $\mc{N}_5$ and can be treated in the exact same fashion.
Note that we do not gain additional decay in $\tau$ from the $\varphi_j$ factors because they involve
the (rescaled) soliton $u_0$ which does not decay.
In fact, from $\varphi_j$ we may even have a loss of $\tau^\epsilon$ (with $\epsilon \to 0+$ as $\nu \to 1$) depending
on the sign of $1-\nu$.
Consequently, the gain comes exclusively from the nonlinearity, as is the case for
the operator $\mc{N}_5$.
We only state the corresponding result and leave the verification to the reader.

\begin{lemma}
\label{lem:R234}
Let $p,\delta$ be as in Lemma \ref{lem:F-1} and $\beta_d, \beta_c \geq 0$.
Then we have the estimate
$$ \|\mc{N}_j (x_d,x)\|_{L^{\infty,j\beta_d-\frac14}_{\tau_0}\times L^{\infty,j\beta_c-\frac14}_{\tau_0}Y^{p,\frac18}}
\lesssim \|(x_d,x)\|_{L^{\infty,\beta_d}_{\tau_0}\times L^{\infty,\beta_c}_{\tau_0}X^{p,\frac18}_\delta}^j $$
for $j \in \{2,3,4\}$.
\end{lemma}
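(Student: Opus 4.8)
The plan is to run, essentially verbatim, the arguments used above for $\mc{N}_1$ and $\mc{N}_5$, viewing $\mc{N}_j$, $j\in\{2,3,4\}$, as an interpolate between these two. First I would record pointwise bounds for the profiles $\varphi_j$. On the support of $\tilde\chi(\tau,\cdot)$ (that is, for $R\leq\nu\tau-\tfrac12\tilde\lambda(\tau)c$) Lemmas \ref{lem:e1} and \ref{lem:v1} together with the explicit form of $u_0$ yield $|\partial_R^k[u_2(\nu\tilde\lambda(\tau)^{-1}\tau,\tilde\lambda(\tau)^{-1}R)]|\lesssim\tilde\lambda(\tau)^{1/2}\langle R\rangle^{-1-k}$ for all $k\in\N_0$, exactly as in the proof of the estimate for $\mc{N}_1$. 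Consequently, arguing as for $\varphi_1$ and $\varphi_5$,
\[ |\partial_R^k\varphi_j(\tau,R)|\leq C_k\,\tilde\lambda(\tau)^{\frac{1-j}{2}}\langle R\rangle^{-(5-j)-k}\lesssim\tau^{1/4}\langle R\rangle^{-(5-j)-k},\qquad k\in\N_0, \]
where in the last step we used $\tilde\lambda(\tau)=(\nu\tau)^{-(\frac1\nu-1)}$ and the smallness of $|\tfrac1\nu-1|$; for $\nu$ genuinely close to $1$ the loss is in fact only $\tau^\epsilon$ with $\epsilon\to0+$ as $\nu\to1$, but $\tau^{1/4}$ is all that is needed (cf.\ the remark following Lemma \ref{lem:R5}). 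In particular, for $j\in\{2,3\}$ one has $\varphi_j(\tau,\cdot)\in H^2(\R^3)\cap L^1(0,\infty)$ with both norms $\lesssim\tau^{1/4}$, whereas for $j=4$ one only gets $\varphi_4(\tau,\cdot)\in L^q(\R^3)\cap\dot{W}^{\frac14,q}(\R^3)$ for all sufficiently large $q$ (again with norms $\lesssim\tau^{1/4}$), since $\langle R\rangle^{-1}$ just fails to be square integrable on $\R^3$.

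Next, given $(x_d,x)$ in the space on the right-hand side, I would set $y(\tau,R):=R^{-1}\mc{F}^{-1}(x_d(\tau),x(\tau,\cdot))(R)$, so that $\mc{N}_j(x_d,x)(\tau,\xi)=\mc{F}\big(|\cdot|\varphi_j(\tau,\cdot)\,y(\tau,\cdot)^j\big)(\xi)$, and invoke Lemma \ref{lem:F-1} to decompose $y=y_-+y_+$ with
\[ \||\cdot|y_-(\tau,\cdot)\|_{L^\infty(0,\infty)}+\|y_-(\tau,\cdot)\|_{L^\infty(0,\infty)}+\|y_-(\tau,\cdot)\|_{\dot{W}^{\frac14,16-\epsilon}(\R^3)}+\|y_+(\tau,\cdot)\|_{H^{\frac54}(\R^3)}\lesssim\|(x_d(\tau),x(\tau,\cdot))\|_{\C\times X^{p,\frac18}_\delta}. \]
The only new ingredient is a common generalisation of Lemmas \ref{lem:FR} and \ref{lem:FR5}: for $\varphi$ as above and $g=g_-+g_+$ one proves
\[ \|\mc{F}(|\cdot|\varphi\,g^j)\|_{\C\times Y^{p,\frac18}}\lesssim\big(\|\varphi\|_{H^2(\R^3)}+\|\varphi\|_{L^1(0,\infty)}+\|\varphi\|_{L^q(\R^3)}+\|\varphi\|_{\dot{W}^{\frac14,q}(\R^3)}\big)\,M(g_-,g_+)^j, \]
where $M(g_-,g_+)$ abbreviates the sum of the four $g_\pm$-norms appearing in Lemma \ref{lem:FR5}. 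For the $L^{2,1/8}_\rho$-component this reduces, after the $\chi$-cutoff splitting (Lemma \ref{lem:H2alpha} on the $\chi$-piece, Lemma \ref{lem:F-1} on the $(1-\chi)$-piece), to bounding $\|\varphi g^j\|_{H^{1/4}(\R^3)}$; one expands $g^j=\sum_k\binom{j}{k}g_-^{j-k}g_+^k$, handles the extreme cases $g_-^j$ and $g_+^j$ by the multilinear fractional Leibniz rule (\cite{T00}, p.~105) followed by the Sobolev embeddings $H^{5/4}\hookrightarrow L^{12}$, $H^{5/4}\hookrightarrow W^{\frac14,6}$ (and $H^2\hookrightarrow L^q$, respectively the above bounds on $\varphi_4$, for the $\varphi$-factor), and interpolates for the mixed terms precisely as in the proof of Lemma \ref{lem:FR5}, balancing the Hölder exponents so that each $g_\pm$ occurs only with its admissible norms. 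The $L^p$-component is controlled exactly as before from the pointwise bound $|\phi(R,\xi)|\lesssim\langle\xi\rangle^{-1/2}$ (Lemmas \ref{lem:Phi}, \ref{lem:Jost}, \ref{lem:Jostlgxi}, Corollary \ref{cor:phi}) and Hölder's inequality, splitting the $R$-integral into $(0,1)$ and $(1,\infty)$.

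The step I expect to be the main obstacle is the case $j=4$: there $\varphi_4\notin L^2(\R^3)$, so it cannot be inserted directly into a Lemma \ref{lem:FR}-type estimate, and naive distributions would force $\varphi_4$ into an $L^b(\R^3)$-slot with $b\leq 3$, which is not available. The way around this is to exploit the extra integrability carried by the four $y$-factors, exactly as in the proof of Lemma \ref{lem:R5} for $\mc{N}_5$: since $R\langle R\rangle^{-1}\leq1$, the weight $|\cdot|\varphi_4$ is bounded, and in every multilinear estimate the single fractional-derivative slot can (and, for the pure $g_-^4$-terms, must) be assigned to $\varphi_4$, whose $\dot W^{\frac14,q}$-norm is finite for $q$ slightly above $3$, while the remaining fourth power $y^4$ leaves enough room to distribute all other Hölder exponents among $L^\infty\cap L^5$-type norms of $y_-$ and $H^{5/4}$-type norms of $y_+$. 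Once the resulting fixed-$\tau$ bound $\|\mc{N}_j(x_d,x)(\tau,\cdot)\|_{\C\times Y^{p,\frac18}}\lesssim\tau^{1/4}\|(x_d(\tau),x(\tau,\cdot))\|_{\C\times X^{p,\frac18}_\delta}^j$ is established, the asserted estimate follows by the same elementary bookkeeping of $\tau$-weights as in the proofs of the estimates for $\mc{N}_1$ and $\mc{N}_5$, the $j$ factors of $y$ supplying the weights $\tau^{j\beta_d}$ and $\tau^{j\beta_c}$ and $\varphi_j$ costing only the harmless $\tau^{1/4}$.
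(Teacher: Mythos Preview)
Your proposal is correct and follows precisely the approach the paper indicates: the paper itself does not give a proof of this lemma, stating only that $\mc{N}_2,\mc{N}_3,\mc{N}_4$ ``are in a certain sense interpolates between $\mc{N}_1$ and $\mc{N}_5$ and can be treated in the exact same fashion'' and leaving the verification to the reader. Your plan supplies exactly this verification, correctly identifying the only genuine subtlety (that $\varphi_4\sim\langle R\rangle^{-1}$ just misses $H^2(\R^3)$, so Lemma~\ref{lem:FR} does not apply directly) and a valid workaround via the extra integrability coming from the four $y$-factors. One minor remark: your assertion that for the pure $g_-^4$ term the fractional derivative \emph{must} land on $\varphi_4$ is slightly stronger than necessary---one can in fact also place it on $y_-$ by choosing H\"older exponents so that $\|y_-^4\|_{\dot W^{1/4,b}}$ with $b<6$ is controlled via $\|y_-\|_{\dot W^{1/4,16-\epsilon}}$ and $\|y_-\|_{L^q}$, $q>3$---but either route closes and your overall argument is sound.
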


Furthermore, by basically repeating the above computations and using elementary identities such as
$$ a^n-b^n=(a-b)\sum_{j=0}^{n-1}a^jb^{n-1-j},$$
it is straightforward to see that we have the following estimate
for the operator $\mc{N}:=\sum_{j=1}^5 \mc{N}_j$.

\begin{lemma}
\label{lem:R}
Let $p,\delta$ be as in Lemma \ref{lem:F-1} and $\beta_d, \beta_c \geq \frac32$. Then there exists a continuous
function $\gamma: [0,\infty)\times [0,\infty) \to \mathbb{R}$ such that 
\begin{align*} 
\|\mc{N}(x_d,x)-\mc{N}(y_d,y)\|_{L^{\infty,\beta_d+\frac54}_{\tau_0}\times L^{\infty,\beta_c+\frac54}_{\tau_0}Y^{p,\frac18}}
\leq \gamma(X,Y)\|(x_d,x)-(y_d,y)\|_{L^{\infty,\beta_d}_{\tau_0}\times L^{\infty,\beta_c}_{\tau_0}X^{p,\frac18}_\delta}
\end{align*}
where
\begin{align*} 
X:=\|(x_d,x)\|_{L^{\infty,\beta_d}_{\tau_0}\times L^{\infty,\beta_c}_{\tau_0}X^{p,\frac18}_\delta},\quad 
Y:=\|(y_d,y)\|_{L^{\infty,\beta_d}_{\tau_0}\times L^{\infty,\beta_c}_{\tau_0}X^{p,\frac18}_\delta}.
\end{align*}
\end{lemma}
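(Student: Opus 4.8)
The plan is to combine the algebraic identity $a^j-b^j=(a-b)\sum_{\ell=0}^{j-1}a^\ell b^{j-1-\ell}$ already displayed in the text with a $j$-linear version of the auxiliary estimates in Lemmas~\ref{lem:FR} and \ref{lem:FR5}. First I would introduce the physical-side functions $g(\tau,\cdot):=|\cdot|^{-1}\mc{F}^{-1}(x_d(\tau),x(\tau,\cdot))$ and $h(\tau,\cdot):=|\cdot|^{-1}\mc{F}^{-1}(y_d(\tau),y(\tau,\cdot))$, so that by Eq.~\eqref{eq:defR}
\[ \bigl(\mc{N}_j(x_d,x)-\mc{N}_j(y_d,y)\bigr)(\tau,\xi)=\mc{F}\bigl(|\cdot|\,\varphi_j(\tau,\cdot)\,[g(\tau,\cdot)^j-h(\tau,\cdot)^j]\bigr)(\xi). \]
Expanding $g^j-h^j=(g-h)\sum_{\ell=0}^{j-1}g^\ell h^{j-1-\ell}$ turns this into a finite sum of terms $\mc{F}(|\cdot|\,\varphi_j\,f_1\cdots f_j)$ in which exactly one factor equals $g-h$ and the remaining $j-1$ factors are copies of $g$ or of $h$. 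By Lemma~\ref{lem:F-1}, applied to $g-h=|\cdot|^{-1}\mc{F}^{-1}((x_d,x)-(y_d,y))$ as well as to $g$ and to $h$, each of these functions splits as a ``$-$'' part plus a ``$+$'' part, with the ``$+$'' part bounded in $H^{\frac54}(\R^3)$ and the ``$-$'' part bounded simultaneously in $L^\infty(0,\infty)$, $|\cdot|L^\infty(0,\infty)$, $L^5(\R^3)$ and $\dot{W}^{\frac14,16-\epsilon}(\R^3)$; the relevant norms of the pieces of $g-h$ are $\lesssim\|(x_d,x)-(y_d,y)\|_{\C\times X^{p,\frac18}_\delta}$, while those of $g$ (resp.\ $h$) are $\lesssim X$ (resp.\ $Y$), all evaluated at the given time $\tau$.

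Next I would establish the $j$-linear analogue ($1\le j\le5$; for $j=1$ it is Lemma~\ref{lem:FR} itself) of Lemmas~\ref{lem:FR}, \ref{lem:FR5}: for $\varphi$ with $|\varphi^{(k)}(R)|\le C_k\langle R\rangle^{-\gamma-k}$, $\gamma>\tfrac32$, and for $f_1,\dots,f_j$ with decompositions $f_i=f_{i,-}+f_{i,+}$ as above,
\[ \|\mc{F}(|\cdot|\,\varphi\,f_1\cdots f_j)\|_{\C\times Y^{p,\frac18}}\lesssim\bigl[\|\varphi\|_{H^2(\R^3)}+\|\varphi\|_{L^1(0,\infty)}\bigr]\prod_{i=1}^j\|f_i\|_{Z}, \]
where $\|f_i\|_{Z}:=\|f_{i,-}\|_{L^\infty(0,\infty)}+\||\cdot|f_{i,-}\|_{L^\infty(0,\infty)}+\|f_{i,-}\|_{L^5(\R^3)}+\|f_{i,-}\|_{\dot{W}^{\frac14,16-\epsilon}(\R^3)}+\|f_{i,+}\|_{H^{\frac54}(\R^3)}$. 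This is proved exactly as Lemma~\ref{lem:FR5}: the discrete component is controlled via the (exponential) decay of $\phi(\cdot,\xi_d)$; the $L^p$ component via $|\phi(R,\xi)|\lesssim\langle\xi\rangle^{-\frac12}$ together with the pointwise bound $|f_{1,\pm}\cdots f_{j,\pm}|\lesssim\sum_i(|f_{i,-}|^j+|f_{i,+}|^j)$ reducing mixed products to the two extreme cases; and the $L^{2,\frac18}_\rho$ component via Lemma~\ref{lem:H2alpha} and the fractional Leibniz rule (\cite{T00}, p.~105) followed by the Sobolev embeddings $H^{\frac54}(\R^3)\hookrightarrow L^{12}(\R^3),W^{\frac14,6}(\R^3)$, the remaining mixed terms being handled by placing the $\tfrac14$-derivative on a single factor and absorbing the rest with $L^\infty$/$L^q$ norms.

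It then remains to insert the $\tau$-weights, mirroring the proof of the $\mc{N}_1$-estimate and of Lemmas~\ref{lem:R5}, \ref{lem:R234}. For $j=1$ the cancellation in $\varphi_1=5\tilde{\lambda}(\tau)^{-2}\tilde{\chi}\,(u_2^4-u_0^4)$ gives $\|\varphi_1(\tau,\cdot)\|_{H^2(\R^3)}+\|\varphi_1(\tau,\cdot)\|_{L^1(0,\infty)}\lesssim\tau^{-2}$, which with the single difference factor produces a gain $\tau^{-\beta_\bullet-2}\le\tau^{-\beta_\bullet-\frac54}$ ($\bullet\in\{d,c\}$). For $2\le j\le5$ one only has $|\partial_R^k\varphi_j(\tau,R)|\lesssim\tilde{\lambda}(\tau)^{-2-k}\lesssim\tau^{\frac14}$ (for $\nu$ close to $1$), so together with the $j$ factors of $g-h$, $g$, $h$ the total gain is $\tau^{\frac14-j\beta_\bullet}$, and $j\beta_\bullet-\tfrac14\ge\beta_\bullet+\tfrac54$ whenever $(j-1)\beta_\bullet\ge\tfrac32$, which for $j=2$ is exactly the hypothesis $\beta_\bullet\ge\tfrac32$ (this is the origin both of that hypothesis and of the shift $+\tfrac54$). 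Summing over $j\in\{1,\dots,5\}$ and $\ell\in\{0,\dots,j-1\}$ and collecting the $X,Y$-dependence yields the assertion with $\gamma(X,Y):=C\sum_{j=1}^5\sum_{\ell=0}^{j-1}X^\ell Y^{\,j-1-\ell}$, a polynomial, hence continuous. The main obstacle will be the bookkeeping in the $j$-linear fractional Leibniz estimates: one must verify that, in every product arising from expanding $f_1\cdots f_j$ into $\pm$-pieces, the $\tfrac14$-derivative appearing in the $\dot{H}^{\frac14}$ norm can always be placed on one factor whose $\dot{W}^{\frac14,\cdot}$ (or $H^{\frac54}$) norm is available while the remaining factors are absorbed by $L^\infty$ or $L^q$ norms via Sobolev embedding, and that the resulting H\"older exponents actually close; everything else (the algebraic reduction, the use of Lemma~\ref{lem:F-1}, and the $\tau$-weight counting) then follows the pattern of the preceding proofs.
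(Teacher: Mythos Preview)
Your approach is correct and is precisely what the paper intends: expand via the identity $a^j-b^j=(a-b)\sum_\ell a^\ell b^{j-1-\ell}$, prove multilinear versions of Lemmas~\ref{lem:FR} and \ref{lem:FR5}, and insert the $\tau$-weights as in Lemmas~\ref{lem:R5}, \ref{lem:R234}. One small point: your unified $j$-linear lemma imposes $|\varphi^{(k)}|\le C_k\langle R\rangle^{-\gamma-k}$ with $\gamma>\tfrac32$, but $\varphi_4\sim\langle R\rangle^{-1}$ and $\varphi_5$ is a mere cutoff, so neither lies in $H^2(\R^3)\cap L^1(0,\infty)$; for $j\ge2$ the correct formulation (which your $\tau$-counting already reflects) follows Lemma~\ref{lem:FR5} and requires only $\|\varphi_j\|_{W^{1,\infty}}\lesssim\tau^{\frac14}$, with the integrability supplied by the product $f_1\cdots f_j$ itself.
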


\section{Estimates for the terms involving $\mc{K}$ and $[\mc{A},\mc{K}]$}

\label{sec:K}
Finally, we consider the terms on the right-hand side of Eq.~\eqref{eq:sysFourier} 
which come from the transference identity. 
The main results in this respect are summarized in the following proposition.

\begin{proposition}[Estimates for the $\mc K$-operators]
\label{prop:mainK}
Let $\delta>0$ be small and assume $p \in (1,\infty)$ to be so large that $p'(1-\delta)<1$.
Then we have the estimates
\begin{align*}
\|\mc K (a,f)\|_{\C \times X^{p,\frac18}_\delta}&\lesssim \|(a,f)\|_{\C \times X^{p,\frac18}_\delta} &
\|[\mc A,\mc K] (a,f)\|_{\C \times X^{p,\frac18}_\delta}&\lesssim \|(a,f)\|_{\C \times X^{p,\frac18}_\delta} \\
\|\mc K (a,f)\|_{\C \times Y^{p,\frac18}}&\lesssim \|(a,f)\|_{\C \times X^{p,\frac18}_\delta} &
\|[\mc A, \mc K] (a,f)\|_{\C \times Y^{p,\frac18}}&\lesssim \|(a,f)\|_{\C \times X^{p,\frac18}_\delta} \\
\|\mc K (a,g)\|_{\C \times Y^{p,\frac18}}&\lesssim \|(a,g)\|_{\C \times Y^{p,\frac18}} &
\|[\mc A, \mc K] (a,g)\|_{\C \times Y^{p,\frac18}}&\lesssim \|(a,g)\|_{\C \times Y^{p,\frac18}}
\end{align*}
for all $a \in \C$, $f \in X^{p,\frac18}_\delta$, and $g\in Y^{p,\frac18}$.
\end{proposition}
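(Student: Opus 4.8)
The plan is to obtain all six estimates from a single structural fact: the error operator $\mc K$ gains (at least) one power of decay relative to the model operator $\mc A_c$, and this gain is exactly what is needed to pass from an $\X$-norm on the input to a $\Y$-norm on the output, and to absorb the commutator with $\mc A$. Concretely, the first step is to derive a kernel representation for the continuous--continuous block $\mc K_{cc}$ from the transference identity \eqref{eq:K}: writing $\mc U((\cdot)f'-f)(\xi)=\int_0^\infty \phi(R,\xi)(Rf'(R)-f(R))\,dR$ and integrating by parts, one expresses $\mc K_{cc}f(\xi)=\int_0^\infty \mc K_{cc}(\xi,\eta)f(\eta)\,d\eta$ where the kernel is built from $\int_0^\infty \phi(R,\xi)\,R\,\partial_R[\phi(R,\eta)\rho(\eta)]\,dR$ minus the corresponding free (Neumann-Laplacian) quantity, which by the computation in the ``transference identity'' subsection equals $\mc A_c$. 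Using the large-$R$ expansion $\phi(R,\xi)=a(\xi)f_+(R,\xi)+\overline{a(\xi)f_+(R,\xi)}$ from Corollary \ref{cor:phi} together with the Jost bounds in Lemmas \ref{lem:Jost}, \ref{lem:Jostlgxi} and the small/large $\xi$ asymptotics of $\rho$ in Lemmas \ref{lem:smxi}, \ref{lem:lgxi}, one shows that $\mc K_{cc}(\xi,\eta)$ (and its $\xi$-derivative, relevant for the commutator) satisfies off-diagonal bounds of the schematic form
\begin{align*}
|\mc K_{cc}(\xi,\eta)|&\lesssim \frac{1}{\max(\xi,\eta)}\,\rho(\eta)\cdot(\text{symbol factors}),
\end{align*}
i.e.\ it behaves like a Calder\'on--Zygmund-type kernel with an \emph{extra} decay $1/\max(\xi,\eta)$ compared to what pure $L^2(\rho\,d\xi)\to L^2(\rho\,d\xi)$ boundedness would require. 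The discrete blocks $\mc K_{dd},\mc K_{dc},\mc K_{cd}$ are handled the same way but are easier, since $\phi(\cdot,\xi_d)$ decays exponentially, so these blocks are smoothing in a trivial (rank/exponential) sense; this is where the $\C$-components on both sides are controlled.

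The second step is to feed these kernel bounds into the $\X$ and $\Y$ norms of Definition \ref{def:spaces}. The $L^2_\rho$-type pieces of both norms are controlled by a Schur test: $\int |\mc K_{cc}(\xi,\eta)|\,\langle\xi\rangle^{2\alpha}\rho(\xi)\,d\xi \lesssim \langle\eta\rangle^{2\alpha}\rho(\eta)$ and symmetrically, with the extra $1/\max(\xi,\eta)$ ensuring convergence near both $0$ and $\infty$ (near $0$ one uses $\rho(\xi)\simeq\xi^{-1/2}$, near $\infty$ one uses $\rho(\xi)\simeq\xi^{1/2}$, and the $\xi\langle\xi\rangle^{-1}$ weight in $\X$ versus the absence of it in $\Y$ is precisely compensated by the gained power). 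For the $L^p(0,\infty)$ piece one argues by interpolation/duality: $\mc K_{cc}$ maps the weighted $L^p$ space appearing in $\|\cdot\|_\X$ (weight $(\xi\langle\xi\rangle^{-1})^{1/2-\delta}$) into plain $L^p$, again via a Schur-type estimate adapted to $L^p$, using the hypothesis $p'(1-\delta)<1$ exactly as in the proof of Lemma \ref{lem:F-1}(2) to handle the small-$\xi$ singularity of $\rho$. The mixed mapping $\X\to\Y$ (the middle row of the Proposition) is then immediate: $\|\cdot\|_\Y$ is weaker than $\|\cdot\|_\X$ in the $L^p$ component (no weight), and the gained power of $\xi$ upgrades the $\xi\langle\xi\rangle^{-1}$-weighted $L^2$ norm in $\X$ to the unweighted-in-$\xi$ $L^2_\rho$ norm in $\Y$; so $\mc K:\X\to\Y$ and $\mc K:\X\to\X$ and $\mc K:\Y\to\Y$ all follow from the same kernel bound, the difference being only which weight one reads off. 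For $[\mc A,\mc K]$ one notes $\mc A_c$ is (up to the bounded multiplier $\tfrac52+\xi\rho'/\rho$) the operator $-2\xi\partial_\xi$; commuting it through the integral operator $\mc K_{cc}$ produces the operator with kernel $(-2\xi\partial_\xi-2\eta\partial_\eta-c)\mc K_{cc}(\xi,\eta)$ plus boundary terms that vanish by the decay of the kernel, and the symbol-type differentiated bounds on $\mc K_{cc}$ established in step one show this new kernel obeys the \emph{same} estimates, whence $[\mc A,\mc K]$ satisfies the identical mapping properties.

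The main obstacle is step one: establishing that $\mc K_{cc}$ genuinely gains a power, i.e.\ that the leading-order term really cancels against $\mc A_c$ and the remainder has the claimed $1/\max(\xi,\eta)$ off-diagonal decay uniformly down to $\xi,\eta\to 0+$, where the zero-energy resonance makes $\rho$ and $\phi$ degenerate. This requires splitting the $R$-integral at $R\simeq\min(\xi,\eta)^{-1/2}$ (or $\xi^{-1/2}$), using the resonance-adapted expansion of Lemma \ref{lem:Phi} on the inner region and the Jost expansion of Lemmas \ref{lem:Jost}, \ref{lem:Jostlgxi} on the outer region, and carefully tracking the oscillatory cancellation in $\int \phi(R,\xi)\partial_R[\phi(R,\eta)]\,dR$ via (possibly repeated) integration by parts in $R$ — the non-oscillatory diagonal contribution is what reproduces $\mc A_c$, and everything else must be shown to be lower order. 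This is the part that differs most from \cite{KST09} because of the different functional framework, and it is where essentially all the analytic work resides; once the kernel bounds are in hand, steps two and three are routine Schur tests and bookkeeping against the norms in Definition \ref{def:spaces}.
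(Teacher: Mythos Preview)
Your overall architecture is right --- derive a kernel for $\mc K_{cc}$, prove mapping properties on the weighted spaces, treat the discrete blocks separately as easy, and observe that $[\mc A,\mc K]$ has a kernel of the same type --- but there is a genuine gap in step two. The kernel of $\mc K_{cc}$ is \emph{not} of the form you write. The output of the integration-by-parts computation you outline (carried out in \cite{KST09} and recorded here as Theorem~\ref{thm:K0}) is
\[
K_0(\xi,\eta)=\frac{\rho(\eta)}{\xi-\eta}\,F(\xi,\eta),
\]
with $F$ symmetric and $C^2$. This is a \emph{genuine} principal-value singularity on the diagonal, not merely a $1/\max(\xi,\eta)$ off-diagonal decay; the integral $\int |K_0(\xi,\eta)|\,d\eta$ diverges near $\eta=\xi$, so a Schur test cannot give $L^p$ or weighted $L^2$ boundedness. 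Your bound is correct only in the off-diagonal region $|\xi-\eta|\gtrsim \xi+\eta$, and there the paper's argument is indeed close to what you describe.

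What the paper does on the diagonal is a dyadic covering $\{\xi=\eta\}\subset\bigcup_j I_j\times I_j$, $I_j=[2^{j-1},2^{j+1}]$, and on each square the Taylor splitting $F(\xi,\eta)=F(\xi,\xi)+(\xi-\eta)\tilde F(\xi,\eta)$. The first piece yields a kernel $\psi_{0,j}(\xi)\psi_{1,j}(\eta)/(\xi-\eta)$, i.e.\ a truncated Hilbert transform sandwiched between bounded multipliers, so $L^p$ boundedness for $1<p<\infty$ comes from that of $H$; the second piece has a bounded kernel and is handled by the size estimate you had in mind. The weights defining $\X$ and $\Y$ are absorbed into $\psi_{0,j},\psi_{1,j}$ (see Eq.~\eqref{eq:defKab}), and the crucial point is that $\|\psi_{0,j}\|_{L^\infty}\|\psi_{1,j}\|_{L^\infty}\lesssim 1$ \emph{uniformly in $j$}; this is exactly where the asymptotics of $\rho$ and $F$ from Lemmas~\ref{lem:smxi}, \ref{lem:lgxi} and Theorem~\ref{thm:K0} enter, and where the smoothing factor $\xi^{-1/2}\langle\xi\rangle^{1/2}$ you correctly anticipated shows up. Summability over $j$ is then trivial by disjointness of supports (Corollary~\ref{cor:Kjabdiag}). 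The commutator $[\mc A_c,\mc K_{cc}]$ has a kernel of the same singular type (again cited from \cite{KST09}), so the identical machinery applies; it does not become any more regular, and in particular a Schur test alone would still fail for it.
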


Based on these bounds and the nonlinear estimates from Section \ref{sec:nonlinhom} we then prove 
the existence of a solution to Eq.~\eqref{eq:sysFourier} by a fixed point argument.

\subsection{A convenient expression for $\mc K$}
First, we need to obtain an explicit expression for the operator $\mc{K}$.
Definition \eqref{eq:K} can be rewritten as
$$ (\mc{K}+1)\left ( \begin{array}{c}a \\ f\end{array} \right )=
\mc{F}\left (|\cdot|\left [ \mc{F}^{-1}\left ( \begin{array}{c}a \\ f\end{array} \right )\right]'\right )
-\mc{A}\left ( \begin{array}{c} a \\ f \end{array} \right )$$
or, equivalently,
$$ \mc{F}^{-1}(\mc{K}+1)\left ( \begin{array}{c}a \\ f\end{array} \right )
=|\cdot|\left [\mc{F}^{-1}\left ( \begin{array}{c}a \\ f\end{array} \right )\right]'
-\mc{F}^{-1}\mc{A}\left ( \begin{array}{c}a \\ f\end{array} \right ). $$ 
By definition of $\mc{A}$ and Eq.~\eqref{eq:defF-1} we have
\begin{align*}
\mc{F}^{-1}\mc{A}\left ( \begin{array}{c}a \\ f\end{array} \right )&=
\mc{F}^{-1}\left ( \begin{array}{c}0 \\ -2|\cdot|f'-\tfrac52f-\tfrac{|\cdot|\rho'}{\rho}f \end{array}\right ) \\
&=-2\int_0^\infty \phi(\cdot,\eta)\eta f'(\eta)\rho(\eta)d\eta- \int_0^\infty
\phi(\cdot,\eta)\left [\tfrac52 +\tfrac{\eta \rho'(\eta)}{\rho(\eta)}\right ]f(\eta)\rho(\eta)d\eta
\end{align*}
and, assuming that $f \in C_c^\infty(0,\infty)$, an integration by parts yields
\begin{align*}
\mc{F}^{-1}\mc{A}\left ( \begin{array}{c}a \\ f\end{array} \right )&=
2\int_0^\infty \eta \partial_\eta \phi(\cdot,\eta)f(\eta)\rho(\eta)d\eta
+\int_0^\infty \phi(\cdot,\eta)\left [-\tfrac12+\tfrac{\eta \rho'(\eta)}{\rho(\eta)} \right ]f(\eta)
\rho(\eta)d\eta \\
&=2\int_0^\infty \eta \partial_\eta \phi(\cdot,\eta)f(\eta)\rho(\eta)d\eta+\mc{F}^{-1}
\left ( \begin{array}{c}0 \\ (-\tfrac12+\tfrac{|\cdot|\rho'}{\rho})f\end{array} \right ).
\end{align*}
Consequently, we obtain
\begin{align*}
\mc{F}^{-1}\mc{K}\left ( \begin{array}{c}a \\ f\end{array} \right )(R)
=&a\frac{(R\partial_R-1) \phi(R,\xi_d)}{\|\phi(\cdot,\xi_d)\|_{L^2(0,\infty)}^2} 
+\int_0^\infty [R\partial_R \phi(R,\eta)-2\eta \partial_\eta\phi(R,\eta)]f(\eta)\rho(\eta)d\eta  \\
&-\mc{F}^{-1}
\left ( \begin{array}{c}0 \\ (\tfrac12+\tfrac{|\cdot|\rho'}{\rho})f\end{array} \right )(R)
\end{align*}
and thus,
\begin{align*}
\mc{K}_{dd} a&=\frac{a}{\|\phi(\cdot,\xi_d)\|_{L^2(0,\infty)}^2}\int_0^\infty\phi(R,\xi_d)
[R\partial_R-1] \phi(R,\xi_d)dR=-\tfrac32 a \\
\mc{K}_{cd}a(\xi)&=\frac{a}{\|\phi(\cdot,\xi_d)\|_{L^2(0,\infty)}^2} \int_0^\infty
\phi(R,\xi)[R\partial_R-1] \phi(R,\xi_d)dR \\
\mc{K}_{dc}f&=\int_0^\infty \int_0^\infty \phi(R,\xi_d)[R\partial_R\phi(R,\eta)-
2\eta \partial_\eta \phi(R,\eta)]f(\eta)\rho(\eta)d\eta dR \\
\mc{K}_{cc}f(\xi)&=\int_0^\infty \int_0^\infty \phi(R,\xi)[R\partial_R\phi(R,\eta)-2\eta 
\partial_\eta \phi(R,\eta)]f(\eta)\rho(\eta)d\eta dR \\
&\quad -\left (\tfrac12+\tfrac{\xi \rho'(\xi)}{\rho(\xi)}\right )f(\xi)
\end{align*} 
where, as before,
$$ \mc{K}=\left (\begin{array}{cc}\mc{K}_{dd} & \mc{K}_{dc} \\
\mc{K}_{cd} & \mc{K}_{cc} \end{array} \right ). $$

\subsection{$\mc{K}_{cc}$ as a Calder\'on-Zygmund operator}
First, we focus on $\mc{K}_{cc}$ which is the most complicated of the above operators.
The respective estimates for $\mc{K}_{cd}$ and $\mc{K}_{dc}$ will follow easily after we have 
understood $\mc{K}_{cc}$ and $\mc{K}_{dd}$ is trivial anyway.
In order to proceed, we need a more manageable representation of $\mc{K}_{cc}$, more precisely
of the integral part of $\mc{K}_{cc}$ which we denote by $\tilde{\mc{K}}_0$ for the moment, i.e.,
$$ \tilde{\mc{K}}_0 f(\xi):=\int_0^\infty \int_0^\infty \phi(R,\xi)[R\partial_R\phi(R,\eta)-2\eta 
\partial_\eta \phi(R,\eta)]f(\eta)\rho(\eta)d\eta dR. $$
Note first that, for $f\in C_c^\infty(0,\infty)$, the function
$$ R\mapsto \int_0^\infty [R\partial_R \phi(R,\eta)-2\eta \partial_\eta \phi(R,\eta)]f(\eta)
\rho(\eta)d\eta $$
is rapidly decreasing as $R \to \infty$. This can be immediately concluded from the 
representation of $\phi$ given in Corollary \ref{cor:phi} and integration by parts. 
It follows that the operator $\tilde{\mc{K}}_0$ is well-defined as a linear mapping from $C^\infty_c(0,\infty)$
to $C[0,\infty)$. Furthermore, by
dominated convergence, $\tilde{\mc{K}}_0$ is continuous when viewed as a map into the space of distributions. 
Consequently, by the Schwartz kernel theorem there exists a (distributional) kernel 
$\tilde{K}_0$ such that
\begin{equation}
\label{eq:schwartzkernel}
\tilde{\mc{K}}_0f(\xi)=\int_0^\infty \tilde{K}_0(\xi,\eta)f(\eta)d\eta. 
\end{equation}
In fact, the operator $\mc{K}_{cc}$ has already been studied in \cite{KST09} and from there 
we have the following result (note carefully that our $\mc{K}_{cc}$ is called $\mc{K}_0$ in \cite{KST09}).

\begin{theorem}
\label{thm:K0}
For $f \in C^\infty_c(0,\infty)$ the operator $\mc{K}_{cc}$ is given by
$$ \mc{K}_{cc}f(\xi)=
\int_0^\infty K_0(\xi,\eta)f(\eta)d\eta $$
where the kernel $K_0$ is of the form
$$ K_0(\xi,\eta)=\frac{\rho(\eta)}{\xi-\eta}F(\xi,\eta) $$
with a symmetric function $F \in C^2((0,\infty)\times (0,\infty))$.
Furthermore, for any $N\in \mathbb{N}$, $F$ satisfies the bounds
  \[\begin{split}
   | F(\xi,\eta)| &\leq C_N \left\{ \begin{array}{cc} \xi+\eta &
       \xi+\eta \leq 1 \cr (\xi+\eta)^{-1} (1+|\xi^\frac12
       -\eta^\frac12|)^{-N} & \xi+\eta \geq 1
     \end{array} \right.\\
   | \partial_{\xi} F(\xi,\eta)|+| \partial_{\eta} F(\xi,\eta)| &\leq C_N \left\{
     \begin{array}{cc} 1 & \xi+\eta \leq 1 \cr (\xi+\eta)^{-\frac32}
       (1+|\xi^\frac12 -\eta^\frac12|)^{-N} & \xi+\eta \geq 1
     \end{array} \right.\\
  \max_{j+k=2} | \partial^j_{\xi}\partial^k_{\eta} F(\xi,\eta)| &\leq C_N \left\{
     \begin{array}{cc} (\xi+\eta)^{-\frac12} & \xi+\eta \leq 1 \cr
       (\xi+\eta)^{-2} (1+|\xi^\frac12 -\eta^\frac12|)^{-N} &
       \xi+\eta \geq 1
     \end{array} \right. .
     \end{split}
   \]
\end{theorem}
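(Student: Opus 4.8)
The plan is to deduce this from \cite{KST09}. The operator $\mc L=-\partial_R^2-5W(R)^4$ appearing here is \emph{exactly} the one obtained by linearizing the critical wave equation around the ground state in \cite{KST09}, so the fundamental system $\{\phi(\cdot,\xi),\theta(\cdot,\xi)\}$, the Jost solution $f_+(\cdot,\xi)$, the spectral density $\rho$, and consequently the operator $\mc K_{cc}$ — which is $\mc K_0$ in \cite{KST09} — are literally the same objects analysed there, so it suffices to reproduce the mechanism. The starting point is a differential identity: from $\mc L\phi(\cdot,\eta)=\eta\phi(\cdot,\eta)$, equivalently $\partial_R^2\phi(\cdot,\eta)=(V-\eta)\phi(\cdot,\eta)$ with $V=-5W^4$, a short computation gives
\[ (\mc L-\eta)\bigl[(R\partial_R-2\eta\partial_\eta)\phi(R,\eta)\bigr]=-\frac{(R^2V(R))'}{R}\,\phi(R,\eta). \]
The decisive feature is that $R^2V(R)=-5R^2W(R)^4$ behaves like a symbol of order $-2$, so the \emph{effective potential} $\frac{(R^2V)'}{R}$ is $O(\langle R\rangle^{-4})$ — it decays two powers faster than $V$ itself. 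Together with $\phi(0,\cdot)=0$, which removes the boundary contributions at the origin, this extra decay is what makes the $R$-integral in the integral part of $\mc K_{cc}$ absolutely convergent.

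The next step is to pair this identity against $\phi(\cdot,\xi)$ and apply Green's formula, using $(\mc L-\eta)\phi(\cdot,\xi)=(\xi-\eta)\phi(\cdot,\xi)$. Up to a boundary term at $R=\infty$ this yields
\[ (\xi-\eta)\int_0^\infty\phi(R,\xi)\,(R\partial_R-2\eta\partial_\eta)\phi(R,\eta)\,dR
=-\int_0^\infty\phi(R,\xi)\,\frac{(R^2V(R))'}{R}\,\phi(R,\eta)\,dR=:F(\xi,\eta), \]
a function manifestly symmetric in $\xi$ and $\eta$. Dividing by $\xi-\eta$ and reinstating the weight $\rho(\eta)$ then produces the asserted kernel $K_0(\xi,\eta)=\frac{\rho(\eta)}{\xi-\eta}F(\xi,\eta)$; the explicit diagonal term $-(\tfrac12+\tfrac{\xi\rho'(\xi)}{\rho(\xi)})f(\xi)$ in $\mc K_{cc}$ is precisely what cancels the distributional boundary contribution generated by the integration by parts, so that the representation holds for the full operator.

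The bounds on $F$ are obtained by inserting into the integral the asymptotic descriptions of $\phi(\cdot,\xi)$ furnished by Lemma \ref{lem:Phi} (for $0<\xi\lesssim1$, $R\lesssim\xi^{-1/2}$), by Lemmas \ref{lem:Jost} and \ref{lem:Jostlgxi} together with Corollary \ref{cor:phi} (for $R\gtrsim\xi^{-1/2}$ and for $\xi\gtrsim1$), and then splitting according to $\xi+\eta\le1$ versus $\xi+\eta\ge1$. For $\xi+\eta\le1$ one uses that $\phi(R,\xi)$ is bounded, the $O(\langle R\rangle^{-4})$ decay of the effective potential, and the symbol bounds for the corrections $a(R,\xi)$; the vanishing $F(0,0)=0$ — which follows from the $\eta=0$ case of the identity, since $\phi_0$ solves $\mc L\phi_0=0$ and the associated boundary term at infinity vanishes — then upgrades the uniform bound to $|F(\xi,\eta)|\lesssim\xi+\eta$ and yields the derivative estimates. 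For $\xi+\eta\ge1$ one writes $\phi(R,\xi)\phi(R,\eta)$ as a sum of oscillatory pieces $e^{\pm i\sqrt\xi R}e^{\pm i\sqrt\eta R}$ times slowly varying amplitudes and integrates by parts repeatedly against the rapidly decaying effective potential to extract the factor $(1+|\sqrt\xi-\sqrt\eta|)^{-N}$ (the non-resonant phases are harmless and the resonant one is controlled away from the diagonal, while near $\xi=\eta$ one keeps the factor $\xi-\eta$). These are delicate but essentially routine oscillatory-integral estimates, performed in full in \cite{KST09}.

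I expect the main obstacle to be making the manipulations at $R=\infty$ rigorous: since the $\phi(\cdot,\xi)$ are merely bounded rather than decaying, Green's formula must be applied in a regularized sense — via a cutoff, or via the limiting-absorption resolvent — and the resulting boundary contribution must be tracked precisely in order to recognize it as the explicit diagonal term. A related subtlety is the behaviour near the diagonal $\xi=\eta$, where the $\frac{1}{\xi-\eta}$ singularity interacts with the slowly oscillating phase and one must still produce a genuinely symmetric $C^2$ amplitude $F$ with the stated bounds. Both points are handled in \cite{KST09}.
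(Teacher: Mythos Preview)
Your proposal is correct and follows essentially the same approach as the paper: both defer to Theorem~5.1 of \cite{KST09}, identifying the function $F$ via the commutator identity $(\mc L-\eta)[(R\partial_R-2\eta\partial_\eta)\phi]=-\tfrac{(R^2V)'}{R}\phi$ and then extracting the bounds from the asymptotic descriptions of $\phi$ and $\rho$. Your sketch is in fact considerably more detailed than the paper's own proof, which simply states the mechanism in two sentences and refers the reader to \cite{KST09}.
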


\begin{proof}
This is Theorem 5.1 in \cite{KST09}.
One starts with an integration by parts and 
thereby identifies the function $F$ which can be expressed as an integral over
$\phi$, $\rho$ and some explicitly known function resulting from the potential $V$.
The stated estimates are then obtained by a careful analysis of this expression based on the
asymptotic descriptions of $\phi$ and $\rho$ from Section \ref{sec:exact}.
We refer the reader to \cite{KST09} for the details.
\end{proof}

\subsection{Bounds for $\mc{K}_{cc}$}
In order to obtain estimates in the $\X$ and $\Y$ spaces, we require boundedness of $\mc K_{cc}$ in 
\emph{weighted} $L^p$ spaces.
However, this problem reduces to boundedness on ordinary $L^p$ by simply
attaching the weights to the kernel.
The (weighted) $L^2$ boundedness of $\mc{K}_0$ is already established in \cite{KST09}, Proposition 5.2.
Unfortunately, the result there does not exactly apply to our situation (at least not for small
frequencies) since it only considers
weights of the form $\langle \cdot \rangle^{2\alpha}$ but we have
to deal with more general expressions, cf.~
Definition \ref{def:spaces}.
Thus, we have to make sure that despite this slight modification the reasoning in \cite{KST09} still goes through.
Furthermore, we have to take care of the $L^p$ component in $\X$ which is not present
in \cite{KST09}.
Finally, we need to exploit a certain additional smoothing property of $\mc K_{cc}$ 
at small frequencies
which was irrelevant
for the construction in \cite{KST09}.

In order to prove boundedness in weighted spaces of the type occurring in the definition of
$X^{p,\alpha}_\delta$,
we define kernels $K^{(a,b)}_0$ for $a,b \in \mathbb{R}$ by
\begin{equation}
\label{eq:defKab}
K^{(a,b)}_0(\xi, \eta):=\xi^{-\frac12}\langle \xi \rangle^\frac12
\xi^{-a}\langle \xi \rangle^{-b}K_0(\xi,\eta)\eta^a \langle \eta\rangle^b
\end{equation}
and denote by $\mc{K}_0^{(a,b)}$ the corresponding operators.
Observe carefully the additional weight $\xi^{-\frac12}\langle \xi\rangle^\frac12$ on
the ``output'' variable which encodes the aforementioned smoothing effect.
Our aim is to prove $L^p$ boundedness of $\mc{K}_0^{(a,b)}$ for any $a,b$ and $1<p<\infty$ which then implies
the desired boundedness properties of $\mc{K}_{cc}$ in $X^{p,\alpha}_\delta$ and $\Y$.

Due to the singular behavior of the spectral measure at zero it is advantageous to 
separate the diagonal from the off-diagonal behavior.
This is most effectively done by introducing a dyadic covering of the diagonal $\Delta=\{(\xi,\eta)\in\R^2: \xi=\eta\}$,
\[ \Delta \subset \bigcup_{j \in \mathbb{Z}} I_j \times I_j, \]
where $I_j:=[2^{j-1},2^{j+1}]$.
Furthermore, let $\chi: \mathbb{R} \to [0,1]$ be a smooth bump function satisfying
\[ \chi(\xi):=\left \{ \begin{array}{l}
1 \mbox{ if } \xi \in [\frac34, \frac74] \\
0 \mbox{ if } \xi \leq \frac12 \mbox{ or } \xi \geq 2 \end{array} \right .
\]
and set $\chi_j(\xi):=\chi(2^{-j}\xi)$.
Then we have $\mathrm{supp}(\chi_j)\subset I_j$ and the sets 
$\{(\xi,\eta) \in \mathbb{R}^2: \chi_j(\xi)\chi_j(\eta)=1\}$ still cover the diagonal.
We smoothly restrict the kernel $K_0^{(a,b)}$ to 
$I_j \times I_j$ by using $\chi_j$ and write
$$ \mc{K}_{0,j}^{(a,b)}f:=\chi_j\mc{K}_0^{(a,b)}(\chi_j f) $$
for the corresponding truncated operator.

\begin{lemma}
\label{lem:Kjabsm}
Fix $(a,b) \in \mathbb{R}^2$ and let $1<p<\infty$. Then $\mc{K}_{0,j}^{(a,b)}$ extends to a bounded operator
on $L^p(\R)$ and 
$$ \|\mc{K}_{0,j}^{(a,b)}f\|_{L^p(\R)}\lesssim \|f\|_{L^p(\R)}
$$
for all $f \in L^p(\R)$ and $j \in \mathbb{Z}$, $j\leq 2$.
\end{lemma}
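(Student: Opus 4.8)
The plan is to use that the constraint $j\le 2$ confines everything to the bounded frequency window $(0,8)$, where the bracket weights $\langle\cdot\rangle$ are inert and, once the weights are attached, $K_0^{(a,b)}$ is a Calder\'on--Zygmund kernel on the dyadic interval $I_j$ with seminorms \emph{uniform} in $j$. Since $\mathrm{supp}\,\chi_j\subseteq I_j=[2^{j-1},2^{j+1}]\subseteq(0,8)$ for $j\le 2$, on $I_j\times I_j$ we have $\xi\simeq\eta\simeq 2^j$ and $\langle\xi\rangle\simeq\langle\eta\rangle\simeq 1$. By Theorem \ref{thm:K0} we may write $K_0^{(a,b)}(\xi,\eta)=\tfrac{1}{\xi-\eta}G(\xi,\eta)$ with
\[ G(\xi,\eta):=\xi^{-\frac12}\langle\xi\rangle^{\frac12}\xi^{-a}\langle\xi\rangle^{-b}\,\rho(\eta)F(\xi,\eta)\,\eta^{a}\langle\eta\rangle^{b}. \]
For $\xi+\eta\le 16$ the estimates of Theorem \ref{thm:K0} give $|F(\xi,\eta)|\lesssim\xi+\eta$ and $|\partial_\xi F|+|\partial_\eta F|\lesssim 1$, while Lemma \ref{lem:smxi} (together with the smoothness of $\rho$ on $(0,\infty)$) yields $|\rho(\eta)|+|\eta\rho'(\eta)|\lesssim\eta^{-1/2}$ for $0<\eta\le 16$. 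Inserting these and using $\xi\simeq\eta\simeq 2^j$, all the powers of $2^{j}$ cancel and we obtain, uniformly in $j\le 2$,
\[ |G(\xi,\eta)|\lesssim 1,\qquad |\partial_\xi G(\xi,\eta)|+|\partial_\eta G(\xi,\eta)|\lesssim 2^{-j},\qquad (\xi,\eta)\in I_j\times I_j. \]
This is the crux of the proof: the extra weight $\xi^{-\frac12}\langle\xi\rangle^{\frac12}$ built into $K_0^{(a,b)}$ is exactly what makes this cancellation work, encoding the additional low-frequency smoothing of $\mc K_{cc}$.

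Next I peel off the diagonal. Put $m_j(\eta):=G(\eta,\eta)\chi_j(\eta)$, so that $\|m_j\|_{L^\infty(\R)}\lesssim 1$ uniformly in $j$, and, by the fundamental theorem of calculus in the first variable, write for $\xi,\eta\in I_j$
\[ G(\xi,\eta)=G(\eta,\eta)+(\xi-\eta)\widetilde G_j(\xi,\eta),\qquad \widetilde G_j(\xi,\eta):=\int_0^1(\partial_\xi G)\big(\eta+s(\xi-\eta),\eta\big)\,ds; \]
since the segment joining $(\eta,\eta)$ to $(\xi,\eta)$ stays in $I_j\times I_j$, we have $|\widetilde G_j|\lesssim 2^{-j}$ there. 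Working on $C_c^\infty(0,\infty)$, where the kernel representation of Theorem \ref{thm:K0} is valid (the singular integral being understood as a principal value), this gives the decomposition $\mc K_{0,j}^{(a,b)}=T_1+T_2$ with
\[ T_1f:=M_{\chi_j}\big(\pi H(M_{m_j}f)\big),\qquad T_2f(\xi):=\chi_j(\xi)\int_\R\widetilde G_j(\xi,\eta)\chi_j(\eta)f(\eta)\,d\eta, \]
where $H$ is the Hilbert transform on $\R$ and $M_g$ denotes multiplication by $g$; note that the $(\xi-\eta)^{-1}$ singularity has been cancelled in the kernel of $T_2$.

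For $T_1$ one invokes the $L^p(\R)$-boundedness of $H$ for $1<p<\infty$: since $\|M_{\chi_j}\|_{L^p\to L^p}\le 1$ and $\|M_{m_j}\|_{L^p\to L^p}=\|m_j\|_{L^\infty}\lesssim 1$ uniformly in $j$, we get $\|T_1f\|_{L^p(\R)}\lesssim\|f\|_{L^p(\R)}$ with constant independent of $j$. For $T_2$, its kernel is supported in $I_j\times I_j$ and bounded by $C2^{-j}$, so $\sup_\xi\int_\R|\widetilde G_j(\xi,\eta)\chi_j(\eta)|\,d\eta\lesssim 2^{-j}|I_j|\lesssim 1$, and similarly with $\xi$ and $\eta$ exchanged; Schur's test (equivalently, interpolation between the resulting $L^1$ and $L^\infty$ bounds) gives $\|T_2f\|_{L^p(\R)}\lesssim\|f\|_{L^p(\R)}$ uniformly in $j$. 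Adding the two estimates proves the lemma. The only genuinely delicate step is the first paragraph --- the uniform Calder\'on--Zygmund bound for $G$ on the dyadic blocks, i.e.\ the exact matching of the weight powers against the $\xi\to0+$ asymptotics of $\rho$ and $F$; the remaining steps are standard singular-integral machinery.
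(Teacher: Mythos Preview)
Your proof is correct and follows essentially the same strategy as the paper: peel off the diagonal value to obtain a Hilbert transform conjugated by bounded multipliers, plus a remainder with kernel bounded by $C2^{-j}$ on $I_j\times I_j$, then invoke $L^p$-boundedness of $H$ and a Schur test. The only cosmetic difference is that the paper expands $F$ alone (in the second variable about $\eta=\xi$), producing a separated leading kernel $\psi_{0,j}(\xi)\psi_{1,j}(\eta)/(\xi-\eta)$ with two explicit multipliers, whereas you expand the fully weighted $G$ (in the first variable about $\xi=\eta$), producing a single multiplier $m_j(\eta)=G(\eta,\eta)\chi_j(\eta)$; both lead to the same uniform cancellation of the $2^j$-powers.
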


\begin{proof}
The point of the dyadic decomposition is of course that $\xi,\eta \in I_j$ implies 
$\eta \leq 4\xi$. In other words, for any two elements $\xi,\eta \in I_j$
we have $\xi \simeq \eta \simeq 2^j$ \emph{with implicit constants independent of $j$}!
This allows us to control the singular behavior of $\rho(\eta)$ uniformly in $j$.
We write $F(\xi,\eta)=F(\xi,\xi)+(\xi-\eta)\tilde{F}(\xi,\eta)$
where
$$ \tilde{F}(\xi,\eta)=-\int_0^1 \partial_2 F(\xi,\xi+s(\eta-\xi))ds $$
and by Theorem \ref{thm:K0} we have the bound $|\tilde{F}(\xi,\eta)|\lesssim 1$ for all $\xi,\eta$.
Thus, the kernel decomposes as $K^{(a,b)}_{j,0}(\xi,\eta)=A_j(\xi,\eta)+B_j(\xi,\eta)$
where
\begin{align*}
A_j(\xi,\eta)&=\chi_j(\xi)\xi^{-\frac12}\langle \xi \rangle^\frac12 \xi^{-a}\langle \xi \rangle^{-b}
\frac{F(\xi,\xi)}{\xi-\eta}\chi_j(\eta)\rho(\eta)\eta^a \langle \eta \rangle^b
=:\frac{\psi_{0,j}(\xi)\psi_{1,j}(\eta)}{\xi-\eta} \\
B_j(\xi,\eta)&=\chi_j(\xi)\xi^{-\frac12}\langle \xi \rangle^\frac12 \xi^{-a}\langle \xi \rangle^{-b}
\tilde{F}(\xi,\eta)\chi_j(\eta)\rho(\eta)\eta^a \langle \eta \rangle^b
\end{align*}
and we call the respective operators $\mc{A}_j$ and $\mc{B}_j$.
In other words, we have $\mc A_j f=\pi \psi_{0,j}H(\psi_{1,j}f)$ where $H$ is the Hilbert transform. 
By the $L^p$ boundedness of $H$ for $p \in (1,\infty)$ we immediately obtain 
\[ \|\mc A_j f\|_{L^p(\R)}\lesssim \|\psi_{0,j}\|_{L^\infty(\R)}\|\psi_{1,j}f\|_{L^p(\R)}\leq 
\|\psi_{0,j}\|_{L^\infty(\R)}\|\psi_{1,j}\|_{L^\infty(\R)}\|f\|_{L^p(\R)} \]
and from Theorem \ref{thm:K0} and Lemma \ref{lem:smxi} we infer the bounds
\[ |\psi_{0,j}(\xi)|\lesssim 2^{(-\frac12-a+1)j}, \quad |\psi_{1,j}(\eta)|\lesssim 2^{(-\frac12+a)j} \]
for all $\xi,\eta \in \R$ which yield $\|\mc A_j f\|_{L^p(\R)}\lesssim \|f\|_{L^p(\R)}$. 
Furthermore, the kernel $B_j$ satisfies
\[ |B_j(\xi,\eta)|\lesssim 2^{-j}\chi_j(\xi)\chi_j(\eta) \]
and thus, 
\[ \|\mc B_j f\|_{L^p(\R)}\lesssim 2^{-j}\|\chi_j\|_{L^p(\R)}\|\chi_j\|_{L^{p'}(\R)}\|f\|_{L^p(\R)}
\lesssim 2^{(-1+\frac{1}{p}+\frac{1}{p'})j}\|f\|_{L^p(\R)}=\|f\|_{L^p(\R)}.\]
\end{proof}

An analogous result holds for $j\geq 1$ as well.

\begin{lemma}
\label{lem:Kjablg}
Fix $(a,b) \in \mathbb{R}^2$ and let $1<p<\infty$. Then $\mc{K}_{0,j}^{(a,b)}$ extends to a bounded operator
on $L^p(\R)$ and 
$$ \|\mc{K}_{0,j}^{(a,b)}f\|_{L^p(\R)}\lesssim \|f\|_{L^p(\R)}
$$
for all $f \in L^p(\R)$ and $j \in \mathbb{Z}$, $j\geq 1$.
\end{lemma}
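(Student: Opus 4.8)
The plan is to follow the proof of Lemma \ref{lem:Kjabsm} almost verbatim, the only change being that one now invokes the large-frequency asymptotics of Lemma \ref{lem:lgxi} (for $\rho$) and the $\xi+\eta\geq 1$ branch of the bounds in Theorem \ref{thm:K0} (for $F$) in place of the small-frequency ones. The key structural observation is identical: for $j\geq 1$ we have $I_j=[2^{j-1},2^{j+1}]\subset[1,\infty)$, so any $\xi,\eta\in I_j$ satisfy $\xi\simeq\eta\simeq 2^j\gtrsim 1$ with implicit constants independent of $j$. In particular $\langle\xi\rangle\simeq\xi$, so the extra smoothing weight obeys $\xi^{-\frac12}\langle\xi\rangle^{\frac12}\simeq 1$ on $\mathrm{supp}(\chi_j)$, and $\rho(\eta)\simeq\eta^{\frac12}\simeq 2^{j/2}$ by Lemma \ref{lem:lgxi}.

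First I would write $F(\xi,\eta)=F(\xi,\xi)+(\xi-\eta)\tilde F(\xi,\eta)$ with $\tilde F(\xi,\eta)=-\int_0^1\partial_2 F(\xi,\xi+s(\eta-\xi))\,ds$, which splits the truncated kernel as $K^{(a,b)}_{0,j}=A_j+B_j$ exactly as before, with
\[ A_j(\xi,\eta)=\frac{\psi_{0,j}(\xi)\psi_{1,j}(\eta)}{\xi-\eta},\quad
\psi_{0,j}(\xi)=\chi_j(\xi)\xi^{-\frac12}\langle\xi\rangle^{\frac12}\xi^{-a}\langle\xi\rangle^{-b}F(\xi,\xi),\quad
\psi_{1,j}(\eta)=\chi_j(\eta)\rho(\eta)\eta^a\langle\eta\rangle^b, \]
and $B_j(\xi,\eta)=\chi_j(\xi)\xi^{-\frac12}\langle\xi\rangle^{\frac12}\xi^{-a}\langle\xi\rangle^{-b}\tilde F(\xi,\eta)\chi_j(\eta)\rho(\eta)\eta^a\langle\eta\rangle^b$. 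As in Lemma \ref{lem:Kjabsm}, $\mc A_jf=\pi\psi_{0,j}H(\psi_{1,j}f)$ with $H$ the Hilbert transform, so the $L^p$-boundedness of $H$ for $1<p<\infty$ gives $\|\mc A_jf\|_{L^p(\R)}\lesssim\|\psi_{0,j}\|_{L^\infty(\R)}\|\psi_{1,j}\|_{L^\infty(\R)}\|f\|_{L^p(\R)}$. Now Theorem \ref{thm:K0} in the regime $\xi+\eta\geq 1$ yields $|F(\xi,\xi)|\lesssim(2\xi)^{-1}\simeq 2^{-j}$ on $I_j$, hence $|\psi_{0,j}(\xi)|\lesssim 2^{-(a+b+1)j}$, while $|\psi_{1,j}(\eta)|\lesssim 2^{(a+b+\frac12)j}$; therefore $\|\psi_{0,j}\|_\infty\|\psi_{1,j}\|_\infty\lesssim 2^{-j/2}\lesssim 1$ for $j\geq 1$ and $\|\mc A_jf\|_{L^p(\R)}\lesssim\|f\|_{L^p(\R)}$.

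For $B_j$, the derivative estimate of Theorem \ref{thm:K0} gives $|\partial_2 F(\xi,\zeta)|\lesssim(\xi+\zeta)^{-3/2}\simeq 2^{-3j/2}$ whenever $\xi,\zeta\in I_j$ with $j\geq 1$, so $|\tilde F(\xi,\eta)|\lesssim 2^{-3j/2}$ on $I_j\times I_j$. Collecting the powers of $2^j$ from the four weight factors, from $\rho$, and from $\tilde F$ yields $|B_j(\xi,\eta)|\lesssim 2^{(-a-b)j}\cdot 2^{-3j/2}\cdot 2^{j/2}\cdot 2^{(a+b)j}\,\chi_j(\xi)\chi_j(\eta)=2^{-j}\chi_j(\xi)\chi_j(\eta)$, whence, by Hölder's inequality applied to the rank-one structure of the majorant,
\[ \|\mc B_jf\|_{L^p(\R)}\lesssim 2^{-j}\|\chi_j\|_{L^p(\R)}\|\chi_j\|_{L^{p'}(\R)}\|f\|_{L^p(\R)}
\lesssim 2^{(-1+\frac1p+\frac1{p'})j}\|f\|_{L^p(\R)}=\|f\|_{L^p(\R)}. \]
Adding the two bounds gives the claim. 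There is no genuine obstacle here beyond careful bookkeeping: the only point worth double-checking is that the weights $\xi^{-a}\langle\xi\rangle^{-b}$ and $\eta^a\langle\eta\rangle^b$, together with the smoothing factor $\xi^{-\frac12}\langle\xi\rangle^{\frac12}$, exactly balance the growth of $\rho$ and the decay of $F$, so that the resulting exponent of $2^j$ is nonpositive (in fact $-\tfrac12$ for the $A_j$ piece and $0$ for the $B_j$ piece). Since the lemma only asserts boundedness for each fixed $j$ with a constant uniform in $j$, this is already enough; the residual decay in $j$ is a bonus to be used when the pieces $\mc K_{0,j}^{(a,b)}$ are eventually summed.
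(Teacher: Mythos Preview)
Your proof is correct and follows essentially the same approach as the paper: the same $A_j+B_j$ decomposition via the Taylor expansion of $F$, the same Hilbert-transform argument for $\mc A_j$, and the same kernel bound $|B_j(\xi,\eta)|\lesssim 2^{-j}\chi_j(\xi)\chi_j(\eta)$ for $\mc B_j$. Your bookkeeping of the exponents matches the paper's bounds $|\psi_{0,j}|\lesssim 2^{-(a+b+1)j}$, $|\psi_{1,j}|\lesssim 2^{(\frac12+a+b)j}$ exactly, and you correctly invoke Lemma~\ref{lem:lgxi} for the large-frequency behavior of $\rho$ (the paper's citation of Lemma~\ref{lem:smxi} at this point appears to be a typo).
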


\begin{proof}
We perform the same decomposition $\mc K_{0,j}^{(a,b)}=\mc A_j+\mc B_j$ as in the proof of Lemma 
\ref{lem:Kjabsm} and this time we have
\[ |\psi_{0,j}(\xi)|\lesssim 2^{-(a+b+1)j},\quad |\psi_{1,j}(\eta)|\lesssim 2^{(\frac12+a+b)j} \]
for all $\xi,\eta \in \R$ by Theorem \ref{thm:K0} and Lemma \ref{lem:smxi}.
Consequently, as before, the $L^p$ boundedness of the Hilbert transform yields 
$\|\mc A_j f\|_{L^p(\R)}\lesssim \|f\|_{L^p(\R)}$ for $p\in (1,\infty)$.
For the operator $\mc B_j$ we note that $|\tilde{F}(\xi,\eta)|\lesssim |\xi+\eta|^{-\frac32}$ 
by Theorem \ref{thm:K0} and thus, 
$|B_j(\xi,\eta)|\lesssim 2^{-j}\chi_j(\xi)\chi_j(\eta)$ which yields
$\|\mc B_j f\|_{L^p(\R)}\lesssim \|f\|_{L^p(\R)}$. 
\end{proof}

The bounds obtained in Lemmas \ref{lem:Kjabsm} and \ref{lem:Kjablg} can be summed.

\begin{corollary}
\label{cor:Kjabdiag}
Fix $(a,b) \in \R^2$ and $1<p<\infty$. Then the operator 
$$ \mc{K}_{0,\Delta}^{(a,b)}:=\sum_{j\in\mathbb{Z}}\mc{K}_{0,j}^{(a,b)} $$
is bounded on $L^p(\R)$.
\end{corollary}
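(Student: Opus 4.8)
The plan is to assemble the corollary from the \emph{uniform} single-block bounds of Lemmas \ref{lem:Kjabsm} and \ref{lem:Kjablg} by exploiting the almost-disjointness of the dyadic pieces. The key structural fact is that the intervals $I_j=[2^{j-1},2^{j+1}]$ have bounded overlap: a point $\xi>0$ lies in $I_j$ precisely when $|\log_2\xi-j|\le 1$, so $\sum_{j\in\mathbb{Z}}\mathbf 1_{I_j}(\xi)\le 3$ for every $\xi>0$. Since $\supp(\chi_j)\subset I_j$, this finite-overlap property enters twice. On the ``input'' side, $\chi_j\,\mathbf 1_{I_j}=\chi_j$ gives $\mc K_{0,j}^{(a,b)}f=\chi_j\,\mc K_0^{(a,b)}(\chi_j f)=\mc K_{0,j}^{(a,b)}(\mathbf 1_{I_j}f)$, so $\mc K_{0,j}^{(a,b)}f$ depends on $f$ only through its restriction to $I_j$; combined with Lemma \ref{lem:Kjabsm} (for $j\le 2$) and Lemma \ref{lem:Kjablg} (for $j\ge 1$) this yields
\[ \|\mc K_{0,j}^{(a,b)}f\|_{L^p(\R)}=\|\mc K_{0,j}^{(a,b)}(\mathbf 1_{I_j}f)\|_{L^p(\R)}\lesssim\|\mathbf 1_{I_j}f\|_{L^p(\R)} \]
with an implied constant independent of $j$. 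On the ``output'' side, $\mc K_{0,j}^{(a,b)}f$ is supported in $I_j$, so at each point of $\R$ the series $\sum_{j}\mc K_{0,j}^{(a,b)}f$ has at most three nonzero terms.

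To carry this out I would fix $p\in(1,\infty)$, $(a,b)\in\R^2$ and $f\in L^p(\R)$. The bounded overlap of the $I_j$ gives
\[ \sum_{j\in\mathbb{Z}}\|\mathbf 1_{I_j}f\|_{L^p(\R)}^p=\int_{\R}\Big(\sum_{j\in\mathbb{Z}}\mathbf 1_{I_j}(\xi)\Big)|f(\xi)|^p\,d\xi\le 3\,\|f\|_{L^p(\R)}^p. \]
Since at most three of the functions $\mc K_{0,j}^{(a,b)}f$ are nonzero at any given point, the elementary bound $\big|\sum_{j\in S}s_j\big|^p\le |S|^{p-1}\sum_{j\in S}|s_j|^p$ (with $|S|\le 3$) and integration yield
\[ \Big\|\sum_{j\in\mathbb{Z}}\mc K_{0,j}^{(a,b)}f\Big\|_{L^p(\R)}^p\lesssim\sum_{j\in\mathbb{Z}}\|\mc K_{0,j}^{(a,b)}f\|_{L^p(\R)}^p\lesssim\sum_{j\in\mathbb{Z}}\|\mathbf 1_{I_j}f\|_{L^p(\R)}^p\lesssim\|f\|_{L^p(\R)}^p, \]
which is precisely the asserted estimate. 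Applying the same chain of inequalities to the tail $\sum_{|j|>N}\mc K_{0,j}^{(a,b)}f$, together with $\sum_{j}\|\mathbf 1_{I_j}f\|_{L^p(\R)}^p<\infty$, shows the partial sums are Cauchy in $L^p(\R)$; hence $\mc K_{0,\Delta}^{(a,b)}$ is a well-defined bounded operator on $L^p(\R)$ and the above bound passes to the limit.

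I do not expect a genuine obstacle: the argument is pure bookkeeping on top of Lemmas \ref{lem:Kjabsm} and \ref{lem:Kjablg}. The one point worth a moment's care is the first reduction --- recognizing that $\mc K_{0,j}^{(a,b)}f$ sees $f$ only through $\mathbf 1_{I_j}f$ --- since this is exactly what makes $\sum_{j}\|\mc K_{0,j}^{(a,b)}f\|_{L^p(\R)}^p$ summable, rather than the divergent estimate one would get by applying $\|\mc K_{0,j}^{(a,b)}f\|_{L^p(\R)}\lesssim\|f\|_{L^p(\R)}$ term by term.
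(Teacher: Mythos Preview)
Your proof is correct and follows essentially the same approach as the paper: the paper's proof is the single chain of inequalities $\|\mc{K}_{0,\Delta}^{(a,b)}f\|_{L^p}^p \lesssim \sum_j\|\mc{K}_{0,j}^{(a,b)}(1_{I_j}f)\|_{L^p}^p \lesssim \sum_j\|1_{I_j}f\|_{L^p}^p \lesssim \|f\|_{L^p}^p$, and you have spelled out the bounded-overlap justification for the first and last steps that the paper leaves implicit, together with the convergence of the series.
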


\begin{proof}
It suffices to note that
\begin{align*}
\|\mc{K}_{0,\Delta}^{(a,b)}f\|_{L^p(\R)}^p \lesssim 
\sum_{j\in\mathbb{Z}}\|\mc{K}_{0,j}^{(a,b)}(1_{I_j}f)\|_{L^p(\R)}^p
\lesssim \sum_{j\in \mathbb{Z}}\|1_{I_j}f\|_{L^p(\R)}^p\lesssim \|f\|_{L^p(\R)}^p
\end{align*}
by Lemmas \ref{lem:Kjabsm} and \ref{lem:Kjablg} where $1_{I_j}$ denotes the characteristic 
function of the set $I_j$.
\end{proof}

Now we can conclude the desired boundedness properties of $\mc{K}_{cc}$.

\begin{proposition}
\label{prop:KccX}
Fix $\alpha\geq 0$ and $\delta> 0$ small. Furthermore, 
let $1<p<\infty$ be so large that $p'(1-\delta)<1$.
Then we have the bounds
\begin{align*} 
\|\mc{K}_{cc}f\|_{\X}&\lesssim \|f\|_\X \\
\|\mc{K}_{cc}f\|_{\Y}&\lesssim \|f\|_\X \\
\|\mc{K}_{cc}f\|_{\Y}&\lesssim \|f\|_\Y.
\end{align*}
\end{proposition}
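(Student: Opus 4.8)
The plan is to reduce each of the three displayed bounds to ordinary $L^p(\R)$-boundedness (for the $L^p$-components of the $\X$- and $\Y$-norms) and $L^2(\R)$-boundedness (for the $L^2_\rho$-components) of suitable weighted operators built from the kernel $K_0$ of Theorem \ref{thm:K0}, and then to prove these boundedness properties by splitting the kernel into a near-diagonal and an off-diagonal part. The main new work beyond Section \ref{sec:K} is the off-diagonal part at small frequencies.

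\textbf{Reduction via weights.} Fix $f$. Each scalar estimate to be proved is of the shape $\|w_1\,\mc{K}_{cc}(w_2^{-1}h)\|_{L^q}\lesssim\|h\|_{L^q}$ with $h=w_2f$, where $w_1,w_2$ are the relevant weights and $q\in\{2,p\}$; equivalently, it asks for $L^q$-boundedness of the operator with kernel $w_1(\xi)K_0(\xi,\eta)w_2(\eta)^{-1}$. The weights are products of powers $\xi^c\langle\xi\rangle^d$ and, in the $L^2_\rho$-parts, of $\rho(\xi)^{1/2}$; by Lemmas \ref{lem:smxi} and \ref{lem:lgxi} one has $\rho(\xi)\simeq\xi^{-1/2}\langle\xi\rangle$ with the ratio behaving like a symbol, so attaching $\rho^{1/2}$-factors amounts, up to a symbol-bounded multiplicative modification of the function $F$ in Theorem \ref{thm:K0} (which preserves all the bounds there), to attaching $\xi^{-1/4}\langle\xi\rangle^{1/2}$-factors. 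After this bookkeeping, and after peeling off harmless bounded multipliers of the form $(\xi\langle\xi\rangle^{-1})^{1/2}$, each estimate becomes $L^p$- or $L^2$-boundedness of one of the operators $\mc{K}_0^{(a,b)}$ from \eqref{eq:defKab} for appropriate real $a,b$. The extra weight $\xi^{-1/2}\langle\xi\rangle^{1/2}$ built into $K_0^{(a,b)}$ is precisely the smoothing that the two $\X$-valued estimates can afford, while the $\Y\to\Y$ estimate does not use it.

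\textbf{Near-diagonal and easy off-diagonal parts.} Write $(0,\infty)^2=\mathcal D\cup\mathcal O$ with $\mathcal D=\{\tfrac14\le\xi/\eta\le4\}$ and $\mathcal O$ its complement. On $\mathcal D$ we have $\xi\simeq\eta$, and here the restriction of $\mc{K}_0^{(a,b)}$ is bounded on every $L^p(\R)$, $1<p<\infty$, by the same Hilbert-transform decomposition used in Lemmas \ref{lem:Kjabsm} and \ref{lem:Kjablg} and summed in Corollary \ref{cor:Kjabdiag} (locally $\mc{K}_{cc}$ is a Hilbert transform with bounded coefficients). On $\mathcal O$ we have $|\xi-\eta|\simeq\xi+\eta$, so $\tfrac{1}{\xi-\eta}$ is no longer singular; and on the part of $\mathcal O$ with $\xi+\eta\gtrsim1$ the bound $|F(\xi,\eta)|\lesssim(\xi+\eta)^{-1}(1+|\xi^{1/2}-\eta^{1/2}|)^{-N}$ from Theorem \ref{thm:K0}, with $N$ chosen large compared to all the weight exponents (and to $\alpha$), makes the corresponding piece Schur-bounded, hence bounded on every $L^q$, $1\le q\le\infty$.

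\textbf{The main obstacle: off-diagonal, small frequencies.} What remains is $\mathcal O\cap\{\xi+\eta\lesssim1\}$, where $\rho$ is singular and $F$ only obeys $|F(\xi,\eta)|\lesssim\xi+\eta$, so the bare kernel there is merely $\lesssim\min(\xi,\eta)^{-1/2}$; the associated operators have kernels homogeneous of degree $-1$ before the smoothing weight is inserted, so no Schur test works on the nose. The point is that the smoothing weight (for the $\X$-valued estimates) or the hypothesis $p>2$ (for $\Y\to\Y$) lowers the homogeneity to $-\tfrac12$, and then a Hardy/Schur estimate closes \emph{precisely because} $p'(1-\delta)<1$, i.e.\ $\delta>\tfrac1p$. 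Concretely, writing $f(\eta)=\eta^{-1/2+\delta}g(\eta)$ at small frequency, this part produces a kernel $\lesssim\xi^{c}\eta^{-1+\delta}1_{\{\eta<\xi/4\}}$ or $\lesssim\xi^{c}\eta^{-1+\delta}1_{\{\xi<\eta/4\}}$ with $c\ge0$ coming from the output weight, and $\int_0\eta^{(-1+\delta)p'}\,d\eta$ is locally integrable exactly under the hypothesis, after which only an integrable power of $\xi$ is left; the resulting powers of $\xi$ are nonnegative in the $\X$-valued cases thanks to the smoothing. For the $L^2_\rho$-component of the output norm in the first two estimates the output weight is a negative power of $\xi$, and one first applies Hölder in $\eta$ to pass to the $L^p$-component of $\|f\|_\X$ — again using $p'(1-\delta)<1$ — and is then reduced to an integrable power of $\xi$ on $(0,1)$. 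Collecting the $\mathcal D$-, large-$\mathcal O$- and small-$\mathcal O$-contributions yields the three bounds.
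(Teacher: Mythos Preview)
Your proposal is correct and follows essentially the same decomposition as the paper: the diagonal part via Corollary \ref{cor:Kjabdiag}, the off-diagonal at large frequencies via the rapid decay of $F$, and the off-diagonal at small frequencies via H\"older using $p'(1-\delta)<1$. The paper's handling of the small off-diagonal piece is slightly cleaner than your sketch: it uses the pointwise bound $|K_-(\xi,\eta)|\lesssim\eta^{-1/2}$ (your $\min(\xi,\eta)^{-1/2}$ is a valid but loose upper bound, and in fact your concrete kernels $\xi^c\eta^{-1+\delta}$ already use $\eta^{-1/2}$) to obtain directly the uniform estimate $|\mc{K}_-f(\xi)|\lesssim 1_{[0,1]}(\xi)\,\||\cdot|^{1/2-\delta}f\|_{L^p(0,1)}$, after which all three output norms follow at once from the local integrability of $\rho$ (for $\Y$) or of $\xi\rho$ (for $\X$), bypassing the homogeneity bookkeeping.
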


\begin{proof}
We write $\mc{K}_\Delta f:=\sum_{j\in \mathbb{Z}}\chi_j \mc{K}_{cc}(\chi_j f)$ for the diagonal part of $\mc{K}_{cc}$.
By Corollary \ref{cor:Kjabdiag} it is evident that
$\|\mc K_\Delta f\|_\X \lesssim \|f\|_\X$ and $\|\mc K_\Delta f\|_\Y \lesssim \|f\|_\Y$.
In order to obtain the mixed estimate we exploit the smoothing property, i.e., we note that
\[ \|\mc K_\Delta f\|_{L^p(0,\infty)}\lesssim \||\cdot|^\frac12 \langle \cdot \rangle^{-\frac12}f
\|_{L^p(0,\infty)}\lesssim \||\cdot|^{\frac12-\delta}\langle \cdot \rangle^{-\frac12+\delta}f\|_{L^p(0,\infty)}
\lesssim \|f\|_\X \]
as well as
\[ \|\mc K_\Delta (f)\langle \cdot \rangle^\alpha \rho^\frac12 \|_{L^2(0,\infty)}
\lesssim \|f|\cdot|^\frac12 \langle \cdot \rangle^{\alpha-\frac12} \rho^\frac12 \|_{L^2(0,\infty)}
\lesssim \|f\|_\X. \]

It remains to study the off-diagonal contributions and to this end we set
\[ \varphi_A(\xi,\eta):=1_{A}(\xi,\eta)\left [1-\sum_{j\in\mathbb{Z}}\chi_j(\xi)
\chi_j(\eta) \right ] \]
for $A \subset [0,\infty)^2$.
We distinguish between large $\xi,\eta$ and small $\xi,\eta$ and consider the two truncated kernels 
\[ K_-:=\varphi_{[0,1]^2}K_0,\quad K_+:=\varphi_{[0,\infty)^2\backslash [0,1]^2}K_0 \]
and denote by $\mc{K}_-$, $\mc{K}_+$ the respective operators.
Note that $(\xi,\eta)\in \mathrm{supp}(\varphi_{[0,1]^2})$ implies that $\eta \leq c \xi$ or
$\eta \geq \frac{1}{c}\xi$ for a suitable $c \in (0,1)$.
This implies $|\xi-\eta|\gtrsim \xi+\eta$
and from Theorem \ref{thm:K0} and Lemma \ref{lem:smxi} we obtain the estimate 
$|K_-(\xi,\eta)|\lesssim \varphi_{[0,1]^2}(\xi,\eta)\eta^{-\frac12}$.
We infer
\begin{align*}
|\mc{K}_-f(\xi)|&\lesssim 1_{[0,1]}(\xi)\int_0^1 \eta^{-\frac12}|f(\eta)|d\eta
=1_{[0,1]}(\xi)\int_0^1 \eta^{-1+\delta}|\eta^{\frac12-\delta}f(\eta)|d\eta \\
&\lesssim 1_{[0,1]}(\xi)\||\cdot|^{\frac12-\delta}f\|_{L^p(0,1)}
\end{align*}
by H\"older's inequality and the condition $p'(1-\delta)<1$.
Note that this estimate implies $|\mc K_- f(\xi)|\lesssim 1_{[0,1]}(\xi)\|f\|_\X$ and also
$|\mc K_- f(\xi)|\lesssim 1_{[0,1]}(\xi)\|f\|_\Y$ for all $\xi\geq 0$.
Thus, we immediately obtain the bound
$\|\mc K_- f\|_\X \lesssim \|\mc K_- f\|_{L^\infty(0,1)}\lesssim \|f\|_\X$.
For the remaining two estimates it is crucial that the spectral measure $\rho$ be integrable
near $0$, i.e., we have
\[ \|\mc K_- f\|_\Y \lesssim \|\mc K_- f\|_{L^\infty(0,1)}\|\rho\|_{L^1(0,1)}\lesssim \|f\|_\X \]
and analogously, $\|\mc K_- f\|_\Y \lesssim \|f\|_\Y$.

In order to bound the operator $\mc{K}_+$ we note that, as before, we have 
$|\xi-\eta|\gtrsim \xi+\eta$ on $\mathrm{supp}(\varphi_{[0,\infty)^2
\backslash [0,1]^2})$ and thus, by Theorem \ref{thm:K0} and
Lemma \ref{lem:lgxi} we infer $|K_+(\xi,\eta)|\leq C_N \langle \xi\rangle^{-N}\langle \eta \rangle^{-N}$
for any $N$.
This yields the three stated estimates for $\mc K_+$ as well.
\end{proof}

\subsection{Estimates for the operators $\mc{K}_{dc}$ and $\mc{K}_{cd}$}

It is now an easy exercise to conclude the respective boundedness for the remaining operators
$\mc{K}_{dc}$ and $\mc{K}_{cd}$.

\begin{lemma}
\label{lem:Kcd}
Let $\alpha\geq 0$, $\delta>0$ and $1<p<\infty$ be as in Proposition \ref{prop:KccX}.
Then we have the bounds
\[ \|\mc{K}_{cd}a\|_{X^{p,\frac18}_\delta}\lesssim |a|,\quad |\mc{K}_{dc}f|\lesssim \|f\|_\X \]
as well as
\[ \|\mc{K}_{cd}a\|_{Y^{p,\frac18}}\lesssim |a|,\quad |\mc{K}_{dc}g|\lesssim \|g\|_\Y \]
for all $a \in \C$ and $f \in \X$, $g \in \Y$.
\end{lemma}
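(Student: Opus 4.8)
The plan is to recognize both $\mc{K}_{cd}$ and $\mc{K}_{dc}$ as (pairings against) the distorted Fourier transform of explicit, exponentially decaying auxiliary functions which, viewed as radial functions on $\R^3$, are smooth; the estimates then follow by quoting the mapping properties already established, namely Lemma \ref{lem:H2alpha} for the $L^2$-weighted parts of the norms, Corollary \ref{cor:phi} and Lemma \ref{lem:Jostlgxi} for pointwise control of $\phi(\cdot,\xi)$, and Lemma \ref{lem:smxi} for the blow-up of $\rho$ at the origin.

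For $\mc{K}_{cd}$ I would start from the explicit formula, which reads $\mc{K}_{cd}a(\xi)=c_d\,a\,\mc{U}h(\xi)$ with $c_d=\|\phi(\cdot,\xi_d)\|_{L^2(0,\infty)}^{-2}$ and $h:=(R\partial_R-1)\phi(\cdot,\xi_d)$. Since the potential in $\mc{L}f=\xi_df$ is an even function of $R$ and $\phi(0,\xi_d)=0$, the eigenfunction $\phi(\cdot,\xi_d)$ is odd with $\phi(R,\xi_d)=R+O(R^3)$, so $h(R)=O(R^3)$ vanishes to third order at $0$, decays exponentially, and $R^{-1}h$ defines a smooth radial function on $\R^3$ lying in $H^s(\R^3)$ for every $s\ge 0$. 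Consequently the $L^2$-weighted component of $\|\mc{K}_{cd}a\|_{X^{p,\frac18}_\delta}$ is bounded by $|c_d|\,|a|\,\|\mc{F}h\|_{\C\times L^{2,5/8}_\rho}$, which Lemma \ref{lem:H2alpha} (applied to $h=|\cdot|\,(R^{-1}h)$) estimates by $|a|\,\|R^{-1}h\|_{H^{5/4}(\R^3)}\lesssim|a|$; similarly the $L^2$-weighted part of $\|\mc{K}_{cd}a\|_{Y^{p,\frac18}}$ is $\lesssim|a|\,\|R^{-1}h\|_{H^{1/4}(\R^3)}\lesssim|a|$. For the $L^p$-components I would use $|\phi(R,\xi)|\lesssim 1$ (Corollary \ref{cor:phi}) to get $|\mc{U}h(\xi)|\lesssim\|h\|_{L^1(0,\infty)}\lesssim 1$ when $\xi\lesssim 1$, and for $\xi\gtrsim 1$ the oscillatory representation of $\phi(\cdot,\xi)$ from Corollary \ref{cor:phi} (leading coefficient of size $\xi^{-1/2}$) combined with $f_+(R,\xi)=e^{i\sqrt\xi R}(1+b(R,\xi))$ and the symbol bounds on $b$ from Lemma \ref{lem:Jostlgxi}; integrating by parts in $R$ — the low-order boundary terms at $R=0$ vanishing because $h=O(R^3)$ there — gives $|\mc{U}h(\xi)|\lesssim\xi^{-2}$, say. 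Thus $|\mc{K}_{cd}a(\xi)|\lesssim|a|$ for $\xi\lesssim 1$ and $\lesssim|a|\,\xi^{-2}$ for $\xi\gtrsim 1$, so $\int_0^1|\mc{K}_{cd}a(\xi)(\xi\langle\xi\rangle^{-1})^{\frac12-\delta}|^p\,d\xi\lesssim|a|^p$ and $\int_1^\infty|\mc{K}_{cd}a(\xi)|^p\,d\xi\lesssim|a|^p$, which provides the $L^p$-part of both the $X^{p,\frac18}_\delta$- and the $Y^{p,\frac18}$-bound.

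For $\mc{K}_{dc}$ I would write $\mc{K}_{dc}f=\int_0^\infty G(\eta)f(\eta)\rho(\eta)\,d\eta$ with $G(\eta):=\int_0^\infty\phi(R,\xi_d)\bigl(R\partial_R\phi(R,\eta)-2\eta\,\partial_\eta\phi(R,\eta)\bigr)dR$, and then simplify $G$. Integrating the $R\partial_R\phi$-term by parts converts it into $-\int_0^\infty k(R)\phi(R,\eta)\,dR$ with $k:=\partial_R\bigl(R\phi(\cdot,\xi_d)\bigr)$, the boundary terms vanishing since the integrand is $O(R^3)$ near $0$ and $\phi(\cdot,\xi_d)$ decays exponentially at infinity. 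For the remaining term, the Green/Wronskian identity together with the boundary behaviour of $\phi(\cdot,\xi_d)$ and $\phi(\cdot,\eta)$ shows $\int_0^\infty\phi(R,\xi_d)\phi(R,\eta)\,dR=0$ for all $\eta>0$ (orthogonality of the $L^2$-eigenfunction to the generalized eigenfunctions); differentiating this identity in $\eta$, which is legitimate by dominated convergence thanks to the exponential decay of $\phi(\cdot,\xi_d)$, makes the $\partial_\eta\phi$-term drop out, so $G(\eta)=-\mc{U}k(\eta)$. Since $k$ is smooth, exponentially decaying and $R^{-1}k\in H^s(\R^3)$ for all $s$, the same two ingredients as before yield $|\mc{U}k(\eta)|\lesssim 1$ for $\eta\lesssim 1$ and $|\mc{U}k(\eta)|\lesssim\eta^{-1}$ for $\eta\gtrsim 1$ (only one integration by parts being available, as $k$ vanishes only to first order at $0$, but this suffices). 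Splitting $|\mc{K}_{dc}f|\le\int_0^1|\mc{U}k||f|\rho+\int_1^\infty|\mc{U}k||f|\rho$, the first integral is $\lesssim\int_0^1|f(\eta)|\eta^{-1/2}\,d\eta$ by Lemma \ref{lem:smxi}, and H\"older gives $\lesssim\||\cdot|^{\frac12-\delta}f\|_{L^p(0,1)}\,\|\eta^{-1+\delta}\|_{L^{p'}(0,1)}\lesssim\|f\|_{X^{p,\frac18}_\delta}$ — the last $L^{p'}$-norm being finite exactly because $p'(1-\delta)<1$ — while Cauchy--Schwarz together with $\rho\in L^1(0,1)$ gives the corresponding $\|f\|_{Y^{p,\frac18}}$-bound; the second integral is controlled by Cauchy--Schwarz against the $L^2$-weighted parts of the $X^{p,\frac18}_\delta$- and $Y^{p,\frac18}$-norms, the remaining $\eta$-integral being convergent. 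Combining the four estimates proves the lemma.

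The computations above are routine once the reductions $\mc{K}_{cd}a=c_d\,a\,\mc{U}h$ and $G=-\mc{U}k$ are in place; the genuinely delicate points are the parity and vanishing-order bookkeeping for $h$ and $k$ (needed both so that $R^{-1}h$ and $R^{-1}k$ define smooth radial functions on $\R^3$ for the application of Lemma \ref{lem:H2alpha}, and so that the integrations by parts have vanishing boundary terms) and, for $\mc{K}_{dc}$, the pairing at small $\eta$, which is precisely where the hypothesis $p'(1-\delta)<1$ enters — exactly as in the treatment of the operator $\mc{K}_-$ in the proof of Proposition \ref{prop:KccX}.
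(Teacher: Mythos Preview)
Your argument is correct. For $\mc{K}_{cd}$ your approach coincides with the paper's: both obtain pointwise decay of $\mc{U}h(\xi)$ by integration by parts against the Jost oscillation (the paper stops at $\langle\xi\rangle^{-3/2}$ from two integrations, you push to $\xi^{-2}$ using the extra vanishing of $h$ at $0$, but either suffices). Your additional use of Lemma~\ref{lem:H2alpha} for the $L^2$-weighted pieces is fine but not needed---the pointwise bound alone already handles all parts of both norms.

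For $\mc{K}_{dc}$ you take a genuinely different route. The paper simply remarks that $\mc{K}_{dc}$ has a kernel of the same type as (in fact better than) $\mc{K}_{cc}$, implicitly appealing to the derivation in Theorem~\ref{thm:K0} with $\phi(\cdot,\xi)$ replaced by the exponentially decaying $\phi(\cdot,\xi_d)$, and then reuses the machinery of Proposition~\ref{prop:KccX}. You instead eliminate the $\eta\partial_\eta\phi(\cdot,\eta)$ term entirely via the orthogonality $\int_0^\infty\phi(R,\xi_d)\phi(R,\eta)\,dR=0$ (which holds since $\phi(\cdot,\xi_d)\in L^2$ and $\xi_d\neq\eta$, by the Wronskian identity), reducing the kernel to $G(\eta)=-\mc{U}k(\eta)$ for the explicit $k=\partial_R(R\phi(\cdot,\xi_d))$. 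This is a nice simplification: it bypasses the kernel analysis of \cite{KST09} altogether and makes the small-$\eta$ estimate transparent---the H\"older step with $\eta^{-1+\delta}\in L^{p'}(0,1)$ is exactly the mechanism from the treatment of $\mc{K}_-$ in Proposition~\ref{prop:KccX}, as you note. The price is the differentiation-under-the-integral step, but the exponential decay of $\phi(\cdot,\xi_d)$ makes this routine.
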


\begin{proof}
Recall that
\begin{align*}\mc{K}_{cd}a(\xi)&=\frac{a}{\|\phi(\cdot,\xi_d)\|_{L^2(0,\infty)}^2} \int_0^\infty
\phi(R,\xi)[R\partial_R-1] \phi(R,\xi_d)dR \\
\mc{K}_{dc}f&=\int_0^\infty \int_0^\infty \phi(R,\xi_d)[R\partial_R\phi(R,\eta)-
2\eta \partial_\eta \phi(R,\eta)]f(\eta)\rho(\eta)d\eta dR. \\
\end{align*}
According to Lemma \ref{lem:Jostlgxi} and Corollary \ref{cor:phi} we obtain $|\mc{K}_{cd}a(\xi)|\lesssim
|a|\langle \xi \rangle^{-\frac32}$ by performing two integrations by parts.
This already shows $\|\mc{K}_{cd}a\|_{X^{p,\frac18}_\delta}\lesssim |a|$
and $\|\mc{K}_{cd}a\|_{Y^{p,\frac18}}\lesssim |a|$ as claimed.
Furthermore, the operator $\mc{K}_{dc}$ has a similar (in fact, better behaved) kernel as $\mc{K}_{cc}$ and
we conclude $|\mc{K}_{dc}f|\lesssim \|f\|_\X$ as well as $|\mc K_{dc}g|\lesssim \|g\|_\Y$.
\end{proof}

We summarize our results.

\begin{corollary}
\label{cor:K}
Let $\delta>0$ be small and assume $p \in (1,\infty)$ be so large that $p'(1-\delta)<1$.
Then the operator $\mc K$ satisfies the estimates
\begin{align*}
\|\mc K (a,f)\|_{\C \times X^{p,\frac18}_\delta}&\lesssim \|(a,f)\|_{\C \times X^{p,\frac18}_\delta} \\
\|\mc K (a,f)\|_{\C \times Y^{p,\frac18}}&\lesssim \|(a,f)\|_{\C \times X^{p,\frac18}_\delta} \\
\|\mc K (a,g)\|_{\C \times Y^{p,\frac18}}&\lesssim \|(a,g)\|_{\C \times Y^{p,\frac18}}
\end{align*}
for all $a \in \C$, $f \in X^{p,\frac18}_\delta$ and $g\in Y^{p,\frac18}$.
\end{corollary}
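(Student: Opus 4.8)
The plan is to reduce the assertion to the componentwise bounds already established, exploiting the $2\times 2$ matrix structure of $\mc{K}$. Recall that for $(a,f)\in\C\times X^{p,\frac18}_\delta$ (or $\C\times Y^{p,\frac18}$) one has
\[ \mc{K}(a,f)=\left(\mc{K}_{dd}a+\mc{K}_{dc}f,\ \mc{K}_{cd}a+\mc{K}_{cc}f\right), \]
so that, by the definition of the product norms on $\C\times X^{p,\frac18}_\delta$ and $\C\times Y^{p,\frac18}$, each of the three asserted estimates decouples into a bound for the $\C$-valued first component and a bound for the function-valued second component. Throughout, the weight parameter is $\alpha=\frac18\in[0,1]$ and the pair $(p,\delta)$ is precisely the one required in Proposition \ref{prop:KccX} and Lemma \ref{lem:Kcd}, so those results apply verbatim.

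For the first component I would use that $\mc{K}_{dd}$ is multiplication by the constant $-\frac32$ (as computed right before Theorem \ref{thm:K0}), whence $|\mc{K}_{dd}a|=\tfrac32|a|$, together with the two bounds $|\mc{K}_{dc}f|\lesssim\|f\|_{X^{p,\frac18}_\delta}$ and $|\mc{K}_{dc}g|\lesssim\|g\|_{Y^{p,\frac18}}$ from Lemma \ref{lem:Kcd}. The triangle inequality then yields $|\mc{K}_{dd}a+\mc{K}_{dc}f|\lesssim|a|+\|f\|_{X^{p,\frac18}_\delta}$ and, in the $\Y$-setting, $|\mc{K}_{dd}a+\mc{K}_{dc}g|\lesssim|a|+\|g\|_{Y^{p,\frac18}}$, which covers the discrete slot in all three displayed inequalities.

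For the second component I would apply the triangle inequality in $X^{p,\frac18}_\delta$, respectively in $Y^{p,\frac18}$, and invoke the $\mc{K}_{cd}$-bounds $\|\mc{K}_{cd}a\|_{X^{p,\frac18}_\delta}\lesssim|a|$ and $\|\mc{K}_{cd}a\|_{Y^{p,\frac18}}\lesssim|a|$ from Lemma \ref{lem:Kcd}, together with the three $\mc{K}_{cc}$-bounds $\|\mc{K}_{cc}f\|_{X^{p,\frac18}_\delta}\lesssim\|f\|_{X^{p,\frac18}_\delta}$, $\|\mc{K}_{cc}f\|_{Y^{p,\frac18}}\lesssim\|f\|_{X^{p,\frac18}_\delta}$ and $\|\mc{K}_{cc}g\|_{Y^{p,\frac18}}\lesssim\|g\|_{Y^{p,\frac18}}$ from Proposition \ref{prop:KccX}. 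Pairing the $X\to X$ bound with the first-component bound gives the first displayed estimate of the corollary; pairing the $X\to Y$ bound with the first-component bound gives the second; and pairing the $Y\to Y$ bound with the $\Y$-version of the first-component bound gives the third. I do not expect a genuine obstacle here: the entire analytic content resides in Proposition \ref{prop:KccX} (the Calder\'on--Zygmund / Hilbert-transform analysis of $\mc{K}_{cc}$, including the $\xi^{-\frac12}\langle\xi\rangle^{\frac12}$ smoothing weight and the separation of diagonal from off-diagonal contributions) and in Lemma \ref{lem:Kcd}; assembling the matrix blocks is pure bookkeeping, the only point deserving a moment's attention being the compatibility of the exponents $p$, $\delta$ and the weight $\alpha=\frac18$ with the hypotheses of both cited results, which indeed holds.
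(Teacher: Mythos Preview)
Your proposal is correct and follows exactly the paper's approach: the paper's proof is the single sentence ``This is the content of Proposition \ref{prop:KccX} and Lemma \ref{lem:Kcd},'' and your write-up is simply the explicit matrix-block bookkeeping that this sentence encodes, together with the trivial observation that $\mc{K}_{dd}=-\tfrac32$.
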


\begin{proof}
This is the content of Proposition \ref{prop:KccX} and Lemma \ref{lem:Kcd}.
\end{proof}

\subsection{Estimates for $[\mc{A},\mc{K}]$}
It remains to estimate the commutator $[\mc A,\mc K]$.
To this end recall that
\[ \mc A=\left (\begin{array}{cc} 0 & 0 \\ 0 & \mc A_c \end{array} \right ) \]
with $\mc A_cf(\xi)=-[2\xi\partial_\xi +\frac52 +\tfrac{\xi \rho'(\xi)}{\rho(\xi)}]f(\xi)$.
Thus, the commutator reads
\[ [\mc A,\mc K]=\left ( \begin{array}{cc}
0 & \mc{K}_{dc}\mc A_c \\
-\mc A_c\mc K_{cd} & [\mc A_c, \mc K_{cc}] \end{array} \right ). \]
Obviously, the most complicated contribution comes from $[\mc A_c, \mc K_{cc}]$.
Recall that $\mc K_{cc}$ is a continuous map from $C^\infty_c(0,\infty)$ to the space of distributions
$\mc D'(0,\infty)$.
Consequently, $[\mc A_c, \mc K_{cc}]: C^\infty_c(0,\infty)\to \mc{D}'(0,\infty)$ is well-defined 
and continuous.

\begin{proposition}
\label{prop:KccAcc}
Let $\alpha,\delta,p$ be as in Proposition \ref{prop:KccX}. 
Then $[\mc A_c,\mc K_{cc}]$ satisfies the bounds
\begin{align*} 
\|[\mc A_c, \mc K_{cc}]f\|_\X &\lesssim \|f\|_\X \\
\|[\mc A_c, \mc K_{cc}]f\|_\Y &\lesssim \|f\|_\X \\
\|[\mc A_c, \mc K_{cc}]f\|_\Y &\lesssim \|f\|_\Y. 
\end{align*}
\end{proposition}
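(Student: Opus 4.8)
Since the full commutator decomposes, as recorded just above, into the three blocks $\mc K_{dc}\mc A_c$, $\mc A_c\mc K_{cd}$ and $[\mc A_c,\mc K_{cc}]$, and since the first two are handled directly from Lemma \ref{lem:Kcd} (for $\mc A_c\mc K_{cd}$ one notes that $\mc K_{cd}a$ is $|a|$ times a function which is smooth in $\xi$ with $\langle\xi\rangle^{-3/2}$ symbol decay and that $\mc A_c=-2\xi\partial_\xi-\tfrac52-\tfrac{\xi\rho'(\xi)}{\rho(\xi)}$ preserves this; for $\mc K_{dc}\mc A_c$ one writes $\mc A_cf=-2(\xi f)'+(\text{bounded symbol})f$ and integrates by parts in $\eta$, moving the derivative onto the smooth, exponentially decaying function $R\mapsto\phi(R,\xi_d)$), the whole difficulty is concentrated in $[\mc A_c,\mc K_{cc}]$, which is the content of this proposition. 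To treat it, I would start from the representation $\mc K_{cc}f(\xi)=\int_0^\infty K_0(\xi,\eta)f(\eta)\,d\eta$ of Theorem \ref{thm:K0}, valid for $f\in C_c^\infty(0,\infty)$, and carry out one integration by parts in $\eta$ (legitimate because $[\mc A_c,\mc K_{cc}]$ is a well-defined continuous map from $C_c^\infty(0,\infty)$ into the space of distributions); writing $\mc A_c=-2\xi\partial_\xi+m(\xi)$ with $m(\xi):=-\tfrac52-\tfrac{\xi\rho'(\xi)}{\rho(\xi)}$, this shows that $[\mc A_c,\mc K_{cc}]$ is the integral operator with kernel
\[ \widetilde K_0(\xi,\eta)=-2\bigl(\xi\partial_\xi+\eta\partial_\eta+1\bigr)K_0(\xi,\eta)+\bigl(m(\xi)-m(\eta)\bigr)K_0(\xi,\eta). \]
Using $(\xi\partial_\xi+\eta\partial_\eta)\tfrac{1}{\xi-\eta}=-\tfrac{1}{\xi-\eta}$ and $\eta\rho'(\eta)=\rho(\eta)\tfrac{\eta\rho'(\eta)}{\rho(\eta)}$, a short computation puts $\widetilde K_0$ back into the form of Theorem \ref{thm:K0},
\[ \widetilde K_0(\xi,\eta)=\frac{\rho(\eta)}{\xi-\eta}\,\widetilde F(\xi,\eta),\qquad \widetilde F:=-2\bigl(\xi\partial_\xi+\eta\partial_\eta\bigr)F-\Bigl(\tfrac{\xi\rho'(\xi)}{\rho(\xi)}+\tfrac{\eta\rho'(\eta)}{\rho(\eta)}\Bigr)F. \]

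The plan is then to show that $\widetilde F\in C^1((0,\infty)^2)$ satisfies size and first-derivative estimates of the same structural form as the ones for $F$ in Theorem \ref{thm:K0}, the power of $\xi+\eta$ on and near the diagonal being allowed to be weaker by $(\xi+\eta)^{1/2}$ at large frequencies. Granting this, the machinery of Section \ref{sec:K} applies to $\widetilde K_0$ with only cosmetic changes (the dyadic covering of the diagonal, the splitting $\widetilde F(\xi,\eta)=\widetilde F(\xi,\xi)+(\xi-\eta)\widehat F(\xi,\eta)$ into a Hilbert-transform piece plus a Schur-type remainder as in Lemmas \ref{lem:Kjabsm} and \ref{lem:Kjablg}, the summation of Corollary \ref{cor:Kjabdiag}, and the small- and large-frequency off-diagonal estimates of Proposition \ref{prop:KccX}), and it produces exactly the three asserted inequalities. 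The bounds for $\widetilde F$ themselves reduce to: (i) the order-one and order-two derivative estimates for $F$ in Theorem \ref{thm:K0}, which control $\xi\partial_\xi F+\eta\partial_\eta F$ and its first derivatives; (ii) the symbol behaviour of $\tfrac{\xi\rho'(\xi)}{\rho(\xi)}$ near $0$ and near $\infty$ furnished by Lemmas \ref{lem:smxi} and \ref{lem:lgxi}; and (iii) the elementary estimate $|m(\xi)-m(\eta)|\lesssim|\xi-\eta|\,\sup|m'|$ over the segment joining $\xi$ and $\eta$, which shows that the contribution of the last term in $\widetilde K_0$ is in fact nonsingular on the diagonal and hence harmless.

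The step I expect to be the main obstacle is verifying admissibility of $\widetilde F$ at large frequencies near the diagonal. There the scaling derivative $\xi\partial_\xi F+\eta\partial_\eta F$, estimated naively by $(\xi+\eta)|\partial F|\lesssim(\xi+\eta)^{-1/2}$, is worse by half a power of $\xi+\eta$ than $F$ itself, and likewise one $\eta$-derivative of $\widetilde F$ is only $(\xi+\eta)^{-1}$ instead of the $(\xi+\eta)^{-3/2}$ available for $F$. Unlike in \cite{KST09}, where this half-power could simply be discarded, here it has to be recovered: one must retain the rapid off-diagonal factor $(1+|\xi^{1/2}-\eta^{1/2}|)^{-N}$ throughout the large-frequency Schur estimates for the remainder piece $\widehat F$, and the change of variables $\eta\mapsto\eta^{1/2}$ then makes the resulting integrals converge with precisely the right power to spare. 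Near $\xi=0$ one must similarly absorb the mild loss $\partial_\xi\tfrac{\xi\rho'(\xi)}{\rho(\xi)}=O(\xi^{-4/5})$ against the vanishing factor $F=O(\xi+\eta)$. Once these two points are dealt with, the remaining arguments are routine repetitions of those carried out for $\mc K_{cc}$ in Section \ref{sec:K}.
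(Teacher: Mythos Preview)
Your overall plan coincides with the paper's: compute the kernel of $[\mc A_c,\mc K_{cc}]$, put it in the form $\frac{\rho(\eta)}{\xi-\eta}\widetilde F(\xi,\eta)$, and then rerun the dyadic machinery of Lemmas \ref{lem:Kjabsm}--\ref{lem:Kjablg}, Corollary \ref{cor:Kjabdiag}, and Proposition \ref{prop:KccX}. Your formula
\[
\widetilde F=-2(\xi\partial_\xi+\eta\partial_\eta)F-\Bigl(\tfrac{\xi\rho'(\xi)}{\rho(\xi)}+\tfrac{\eta\rho'(\eta)}{\rho(\eta)}\Bigr)F
\]
is correct, and the small-frequency and off-diagonal discussions are fine.

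The gap is in the large-frequency diagonal piece. You accept that, from the bounds of Theorem \ref{thm:K0} alone, $\widetilde F$ and $\partial\widetilde F$ are half a power of $\xi+\eta$ worse than $F$ and $\partial F$, and you propose to compensate in the Schur estimate for the $B_j$ remainder by keeping the factor $(1+|\xi^{1/2}-\eta^{1/2}|)^{-N}$. But after the mean-value representation
\[
\widehat F(\xi,\eta)=-\int_0^1 \partial_2\widetilde F\bigl(\xi,\xi+s(\eta-\xi)\bigr)\,ds,
\]
the off-diagonal factor at parameter $s$ is $(1+s|\xi^{1/2}-\eta^{1/2}|)^{-N}$, and integration in $s$ returns only one inverse power of $|\xi^{1/2}-\eta^{1/2}|$, independently of $N$. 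This gives $|B_j(\xi,\eta)|\lesssim 2^{-j/2}(1+|\xi^{1/2}-\eta^{1/2}|)^{-1}\chi_j(\xi)\chi_j(\eta)$; after your change of variable $\eta\mapsto\eta^{1/2}$ the Schur integral becomes $\int(1+|u-v|)^{-1}\,dv$ over an interval of length $\simeq 2^{j/2}$, which is $\simeq j$. The $A_j$ Hilbert piece is already tight (the product of the two $L^\infty$ bounds is $\simeq 1$), so there is no slack to absorb this, and the sum over $j$ in Corollary \ref{cor:Kjabdiag} fails.

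The paper avoids this by invoking \cite{KST09}, Proposition 5.2, which does not merely bound $\widetilde F$ by $(\xi+\eta)\,|\partial F|$ but goes back to the explicit oscillatory-integral formula for $F$ and shows that the scaling field $\xi\partial_\xi+\eta\partial_\eta$ acts on it like multiplication by a bounded symbol (reflecting the approximate homogeneity of $F$ of degree $-1$ at large frequencies). Thus $\widetilde F$ satisfies the \emph{same} bounds as $F$ in Theorem \ref{thm:K0}, not half a power weaker, and then Lemmas \ref{lem:Kjabsm}--\ref{lem:Kjablg} and Proposition \ref{prop:KccX} apply verbatim, exactly as the paper's one-line proof asserts.
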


\begin{proof}
In \cite{KST09}, Proposition 5.2 it is shown that $[\mc K_{cc},\mc A_c]$ has a similar
kernel as $\mc K_{cc}$. Thus, the verification of the stated bound consists of a repetition of 
the above arguments that led to Proposition \ref{prop:KccX}.
\end{proof}

Finally, we bound the operators $\mc A_c\mc K_{cd}$ and $\mc K_{dc} \mc A_c$.

\begin{lemma}
\label{lem:AccKcd}
Let $\alpha,\delta,p$ be as in Proposition \ref{prop:KccX}.
Then we have the bounds
\[ \|\mc A_c \mc K_{cd}a\|_{X^{p,\frac18}_\delta}\lesssim |a|,\quad
|\mc K_{dc} \mc A_cf|\lesssim \|f\|_\X \]
as well as
\[ \|\mc A_c \mc K_{cd}a\|_{Y^{p,\frac18}}\lesssim |a|,\quad
|\mc K_{dc} \mc A_cg|\lesssim \|g\|_\Y \]
for all $a\in \C$ and $f\in \X$, $g\in \Y$.
\end{lemma}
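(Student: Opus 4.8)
The plan is to follow the same mechanism used in the proof of Lemma~\ref{lem:Kcd}, exploiting once more the exponential decay (and the vanishing at the origin) of the eigenfunction $\phi(\cdot,\xi_d)$, which leaves a great deal of room for integration by parts; both operators will be reduced to the estimate of a scalar against a kernel that decays like $\langle\cdot\rangle^{-\frac32}$, which is then handled by H\"older's inequality and the integrability of $\rho$ near $0$.

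For $\mc A_c\mc K_{cd}$, I would start from the identity
\[ \mc A_c\mc K_{cd}a(\xi)=-\left[2\xi\partial_\xi+\tfrac52+\tfrac{\xi\rho'(\xi)}{\rho(\xi)}\right]\mc K_{cd}a(\xi) \]
and recall from Lemma~\ref{lem:Kcd} that $|\mc K_{cd}a(\xi)|\lesssim|a|\langle\xi\rangle^{-\frac32}$. Since $[R\partial_R-1]\phi(R,\xi_d)$ decays exponentially as $R\to\infty$ and vanishes to second order at $R=0$ (indeed $\phi(0,\xi_d)=0$ and hence, by the eigenvalue equation $\mc L\phi(\cdot,\xi_d)=\xi_d\phi(\cdot,\xi_d)$, also $\phi''(0,\xi_d)=0$), one may differentiate under the integral sign in the definition of $\mc K_{cd}$ and integrate by parts in $R$ as many times as needed, using the oscillatory factor $e^{\pm i\sqrt\xi R}$ in the representation of $\phi(R,\xi)$ from Corollary~\ref{cor:phi}; each integration by parts produces a factor $\xi^{-\frac12}$ for large $\xi$, while the symbol-type bounds for $\partial_\xi\phi(R,\xi)$ supplied by Lemmas~\ref{lem:Phi}, \ref{lem:Jost} and \ref{lem:Jostlgxi} keep the remaining integral bounded (the polynomial-in-$R$ growth produced by $\partial_\xi$ being absorbed by the exponential decay of the $\xi_d$-eigenfunction). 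This yields $|\xi\partial_\xi\mc K_{cd}a(\xi)|\lesssim|a|\langle\xi\rangle^{-\frac32}$, and since $\frac{\xi\rho'(\xi)}{\rho(\xi)}$ is bounded on $(0,\infty)$ --- it tends to $-\tfrac12$ as $\xi\to0+$ and to $\tfrac12$ as $\xi\to\infty$ by Lemmas~\ref{lem:smxi} and \ref{lem:lgxi} --- we obtain $|\mc A_c\mc K_{cd}a(\xi)|\lesssim|a|\langle\xi\rangle^{-\frac32}$. Feeding this pointwise bound into the $X^{p,\frac18}_\delta$- and $Y^{p,\frac18}$-norms exactly as at the end of the proof of Lemma~\ref{lem:Kcd} gives $\|\mc A_c\mc K_{cd}a\|_{X^{p,\frac18}_\delta}\lesssim|a|$ and $\|\mc A_c\mc K_{cd}a\|_{Y^{p,\frac18}}\lesssim|a|$.

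For $\mc K_{dc}\mc A_c$, I would write $\mc K_{dc}f=\int_0^\infty\kappa(\eta)f(\eta)\rho(\eta)\,d\eta$ with
\[ \kappa(\eta):=\int_0^\infty\phi(R,\xi_d)\left[R\partial_R\phi(R,\eta)-2\eta\partial_\eta\phi(R,\eta)\right]dR. \]
The exponential decay of $\phi(\cdot,\xi_d)$ again makes the $R$-integral and all its $\eta$-derivatives absolutely convergent, and the same integration-by-parts argument shows that $\kappa$ is smooth with $|\eta^k\partial_\eta^k\kappa(\eta)|\lesssim\langle\eta\rangle^{-\frac32}$ for $k=0,1$; this is in fact the bound already invoked for the ``better behaved'' kernel of $\mc K_{dc}$ in Lemma~\ref{lem:Kcd}. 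Transposing $\mc A_c$ onto $\kappa$ and $\rho$ by integration by parts in $\eta$ (the boundary terms vanish: near $\eta=0$ one has $\eta\kappa(\eta)\rho(\eta)=O(\eta^{\frac12})$, and at infinity the decay of $f$ suffices) turns $\mc K_{dc}\mc A_cf$ into $\int_0^\infty\tilde\kappa(\eta)f(\eta)\rho(\eta)\,d\eta$ with $\tilde\kappa$ a linear combination of $\kappa$, $\eta\kappa'$ and $\frac{\eta\rho'}{\rho}\kappa$, hence still satisfying $|\tilde\kappa(\eta)|\lesssim\langle\eta\rangle^{-\frac32}$. Splitting the integral at $\eta=1$, on $(0,1)$ one uses $\rho(\eta)\lesssim\eta^{-\frac12}$ together with H\"older's inequality and the condition $p'(1-\delta)<1$ (respectively $p'<2$) to bound the contribution by $\||\cdot|^{\frac12-\delta}f\|_{L^p(0,1)}\lesssim\|f\|_\X$ (respectively by $\|f\|_{L^p(0,1)}\lesssim\|f\|_\Y$), while on $(1,\infty)$ one has $\langle\eta\rangle^{-\frac32}\rho(\eta)\lesssim\eta^{-1}$ and Cauchy--Schwarz against the weighted $L^2$ component of $\X$ (respectively $\Y$) bounds the contribution by $\|f\|_\X$ (respectively $\|g\|_\Y$). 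This gives $|\mc K_{dc}\mc A_cf|\lesssim\|f\|_\X$ and $|\mc K_{dc}\mc A_cg|\lesssim\|g\|_\Y$, which together with Proposition~\ref{prop:KccAcc} completes the proof of Proposition~\ref{prop:mainK}.

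The only point requiring care --- though not a genuine obstacle, as it reproduces the mechanism of Lemma~\ref{lem:Kcd} almost verbatim --- is to verify that the extra $\xi$-derivative for $\mc A_c\mc K_{cd}$ and the transposition of $\mc A_c$ for $\mc K_{dc}\mc A_c$ do not spoil the $\langle\cdot\rangle^{-\frac32}$ decay of the relevant kernels; here the exponential localization of $\phi(\cdot,\xi_d)$ provides all the room one needs for the repeated integration by parts, and the multiplier $\frac{\xi\rho'}{\rho}$ is harmless by the spectral asymptotics of Lemmas~\ref{lem:smxi} and \ref{lem:lgxi}.
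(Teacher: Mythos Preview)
Your argument is correct and follows essentially the same route as the paper. For $\mc A_c\mc K_{cd}$ the paper does exactly what you describe: bound $|\mc A_c\mc K_{cd}a(\xi)|\lesssim 1$ for small $\xi$ using $|\xi\partial_\xi\phi(R,\xi)|\lesssim\langle R\rangle$ and the exponential decay of $\phi(\cdot,\xi_d)$, and $|\mc A_c\mc K_{cd}a(\xi)|\lesssim\langle\xi\rangle^{-\frac32}$ for large $\xi$ by integration by parts, just as in Lemma~\ref{lem:Kcd}.

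For $\mc K_{dc}\mc A_c$ there is a minor difference in execution. The paper simply invokes \cite{KST09}, Theorem~5.1, to obtain a representation $\mc K_{dc}\mc A_cf=\int_0^\infty\tilde K_d(\eta)f(\eta)\,d\eta$ with $\tilde K_d$ bounded and \emph{rapidly decreasing}, and then applies a single H\"older estimate (splitting $|\tilde K_d|=|\tilde K_d|^{1/2}\cdot|\tilde K_d|^{1/2}$) that handles small and large $\eta$ simultaneously. Your approach instead transposes $\mc A_c$ by integrating by parts in $\eta$, producing a kernel $\tilde\kappa$ with only $\langle\eta\rangle^{-\frac32}$ decay, and then splits the integral at $\eta=1$. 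Both work; the paper's route is shorter because it borrows the kernel bound from \cite{KST09}, while yours is more self-contained but requires verifying the symbol bound $|\eta\kappa'(\eta)|\lesssim\langle\eta\rangle^{-\frac32}$ by hand.
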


\begin{proof}
We start with $\mc A_c \mc K_{cd}$.
If $0<\xi\ll 1$ we have $|\xi\partial_\xi \phi(R,\xi)|\lesssim \langle R \rangle$
for all $R>0$ by Lemmas \ref{lem:Phi}, \ref{lem:Jost} and Corollary \ref{cor:phi}.
This implies $|\mc A_c\mc K_{cd}a(\xi)|\lesssim 1$ since $\phi(R,\xi_d)$ decays
exponentially as $R \to \infty$.
On the other hand, if $\xi\gtrsim 1$, we obtain by integration by parts the estimate 
$|\mc A_c \mc K_{cd}a(\xi)|\lesssim \langle \xi \rangle^{-\frac32}$ as in the proof of Lemma
\ref{lem:Kcd}. This shows $\|\mc A_c \mc K_{cd}a\|_{X^{p,\frac18}_\delta}\lesssim |a|$
and $\|\mc A_c \mc K_{cd}a\|_{Y^{p,\frac18}}\lesssim |a|$.

Furthermore, from \cite{KST09}, Theorem 5.1 we have the representation
\[ \mc K_{dc} \mc A_cf=\int_0^\infty \tilde{K}_d(\eta)f(\eta)d\eta \]
with $\tilde{K}_d$ bounded and rapidly decreasing.
This yields
\begin{align*} |\mc K_{dc}\mc A_cf|&\lesssim 
\||\cdot|^{-\frac12+\delta}|\tilde{K}_d|^\frac12\|_{L^{p'}(0,\infty)}\||\cdot|^{\frac12-\delta}
|\tilde{K}_d|^\frac12 f\|_{L^p(0,\infty)}
\lesssim \||\cdot|^{\frac12-\delta}
|\tilde{K}_d|^\frac12 f\|_{L^p(0,\infty)}
\end{align*}
since $p'(-\frac12+\delta)>p'(-1+\delta)>-1$ by assumption and this yields
$ |\mc K_{dc}\mc A_cf|\lesssim \|f\|_\X$ as well as $|\mc K_{dc}\mc A_cg|\lesssim \|g\|_\Y$.
\end{proof}

By putting together Proposition \ref{prop:KccAcc} and Lemma \ref{lem:AccKcd} we arrive at
the desired result.

\begin{corollary}
\label{cor:AK}
Let $\delta>0$ be small and assume $p \in (1,\infty)$ be so large that $p'(1-\delta)<1$.
Then the operator $[\mc A, \mc K]$ satisfies the estimates
\begin{align*}
\|[\mc A,\mc K] (a,f)\|_{\C \times X^{p,\frac18}_\delta}&\lesssim \|(a,f)\|_{\C \times X^{p,\frac18}_\delta} \\
\|[\mc A, \mc K] (a,f)\|_{\C \times Y^{p,\frac18}}&\lesssim \|(a,f)\|_{\C \times X^{p,\frac18}_\delta} \\
\|[\mc A, \mc K] (a,g)\|_{\C \times Y^{p,\frac18}}&\lesssim \|(a,g)\|_{\C \times Y^{p,\frac18}}
\end{align*}
for all $a \in \C$, $f \in X^{p,\frac18}_\delta$ and $g\in Y^{p,\frac18}$.
\end{corollary}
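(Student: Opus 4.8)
The plan is to reduce everything to per-block estimates that are already available and then reassemble. Since $\mc A$ is block-diagonal with vanishing discrete entry, a direct computation with the four blocks of $\mc K$ produces the representation
\[ [\mc A,\mc K]=\left(\begin{array}{cc} 0 & \mc K_{dc}\mc A_c \\ -\mc A_c\mc K_{cd} & [\mc A_c,\mc K_{cc}]\end{array}\right), \]
as recorded just before Proposition~\ref{prop:KccAcc}: the $(1,1)$-entry of the commutator is $\mc A_{dd}\mc K_{dd}-\mc K_{dd}\mc A_{dd}=0$ because $\mc A_{dd}=0$, and similarly the only surviving contributions are the three displayed ones. Thus it suffices to control $\mc K_{dc}\mc A_c$, $\mc A_c\mc K_{cd}$ and $[\mc A_c,\mc K_{cc}]$ in the relevant norms, with $\alpha=\tfrac18$, and the discrete component carries no analytic content.

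The only genuinely nontrivial piece is the continuous-to-continuous entry $[\mc A_c,\mc K_{cc}]$, and for this I would simply invoke Proposition~\ref{prop:KccAcc}, which supplies exactly the three bounds $\|[\mc A_c,\mc K_{cc}]f\|_\X\lesssim\|f\|_\X$, $\|[\mc A_c,\mc K_{cc}]f\|_\Y\lesssim\|f\|_\X$ and $\|[\mc A_c,\mc K_{cc}]f\|_\Y\lesssim\|f\|_\Y$ under the present hypotheses on $\delta$ and $p$. For the two off-diagonal entries I would call on Lemma~\ref{lem:AccKcd}, which gives $\|\mc A_c\mc K_{cd}a\|_{X^{p,\frac18}_\delta}\lesssim|a|$ and $|\mc K_{dc}\mc A_cf|\lesssim\|f\|_\X$ together with the matching $Y^{p,\frac18}$-statements; here the point is that $\mc K_{cd}$ carries the exponentially decaying factor $\phi(\cdot,\xi_d)$, so applying $\mc A_c$ and integrating by parts twice costs only a $\langle\xi\rangle^{-3/2}$-type decay, while $\mc K_{dc}\mc A_c$ reduces, by \cite{KST09}, Theorem~5.1, to integration against a bounded rapidly decaying kernel which is handled by H\"older's inequality using $p'(1-\delta)<1$.

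It then remains to assemble: for $(a,f)\in\C\times X^{p,\frac18}_\delta$ the $\C$-component of $[\mc A,\mc K](a,f)$ is $\mc K_{dc}\mc A_cf$ and its $X^{p,\frac18}_\delta$-component is $-\mc A_c\mc K_{cd}a+[\mc A_c,\mc K_{cc}]f$, so the three asserted inequalities follow immediately from the triangle inequality and the per-entry bounds collected above; the argument for inputs $(a,g)\in\C\times Y^{p,\frac18}$ is identical, now using the $\Y$-versions. No new difficulty appears at the assembly stage, and so the entire weight of the argument sits in Proposition~\ref{prop:KccAcc} — that is, in the fact (borrowed from \cite{KST09}, Proposition~5.2) that the commutator kernel retains the Calder\'on--Zygmund structure of Theorem~\ref{thm:K0} and can therefore be treated by a verbatim repetition of the proof of Proposition~\ref{prop:KccX}. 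This is the step I would expect to be the real obstacle, were it not already granted in the excerpt.
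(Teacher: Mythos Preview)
Your proof is correct and follows exactly the paper's approach: the paper's own argument for this corollary is simply the sentence ``By putting together Proposition~\ref{prop:KccAcc} and Lemma~\ref{lem:AccKcd} we arrive at the desired result,'' and your write-up is a faithful (and more detailed) unpacking of precisely that assembly via the block structure of $[\mc A,\mc K]$.
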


\subsection{Construction of an exact solution in the forward lightcone}

We solve the main equation \eqref{eq:sysFourier} by a contraction argument.
To this end, we write $\mc{H}:=\mathrm{diag}(\mc{H}_d,\mc{H}_c)$ for the solution operator of the 
transport equation \eqref{eq:xd}, \eqref{eq:x}.
Furthermore, we set
\begin{equation}
\label{eq:Contract}
\Phi(x_d,x):=\mc{H}\left [\mc{N}(x_d,x)+\mc{R}_\nu\mc B_\nu(x_d,x)+\mc{T}_\nu(x_d,x)+\hat{E}_2\right ]
\end{equation}
where
\begin{align*}
\mc{R}_\nu \mc B_\nu \left (\begin{array}{c}x_d \\ x \end{array} \right )(\tau,\xi)&:=-\beta_\nu(\tau)(2\mc K
+1)(\partial_\tau+\beta_\nu(\tau)\mc A)\left (\begin{array}{c}x_d(\tau) \\ x(\tau,\cdot) \end{array} \right )(\xi) \\
\mc T_\nu \left (\begin{array}{c}x_d \\ x \end{array} \right )(\tau,\xi)&:=-\beta_\nu(\tau)^2
\left (\mc K^2+[\mc A,\mc K]+\mc K+\tfrac{\beta_\nu'(\tau)}{\beta_\nu(\tau)^2}\mc K\right )
\left (\begin{array}{c}x_d(\tau) \\ x(\tau,\cdot) \end{array} \right )(\xi) \\
\hat{E}_2(\tau,\xi)&:=\left (\begin{array}{c}\hat{e}_2(\tau,\xi_d)\\ \hat{e}_2(\tau,\xi) \end{array} \right )
\end{align*}
and 
\[ \mc B_\nu \left (\begin{array}{c}x_d \\ x \end{array} \right )(\tau,\xi):=
(\partial_\tau+\beta_\nu(\tau)\mc A)\left (\begin{array}{c}x_d(\tau) \\ x(\tau,\cdot) \end{array} \right )(\xi) \]
Thus, solutions of Eq.~\eqref{eq:sysFourier} correspond to fixed points of $\Phi$.
In order to simplify notation, we define the following Banach space where we run our fixed 
point argument.

\begin{definition}
\label{eq:defX}
Fix a $\delta$ with $2|\frac{1}{\nu}-1|<\delta<\frac18$ and a (large) 
$p \in(2,\infty)$ such that $p'(1-\delta+2|\frac{1}{\nu}-1|)<1$. Then, for 
$x_d: [\tau_0,\infty)\to\R$ and $x: [\tau_0,\infty)\times [0,\infty)\to \R$, where $\tau_0>0$, 
we set
\[ \|(x_d,x)\|_{\mc X^{\tau_0,\nu}}:=\|(x_d,x)\|_{L^{\infty,3-2\delta}_{\tau_0}\times L^{\infty,2-4\delta}_{\tau_0}
X^{p,\frac18}_\delta}+\|\mc B_\nu (x_d,x)\|_{L^{\infty,3-2\delta}_{\tau_0}\times L^{\infty,2-2\delta}_{\tau_0}
Y^{p,\frac18}}. \]
The Banach space of the respective functions is denoted by $\mc X^{\tau_0,\nu}$.
\end{definition}

\begin{theorem}
\label{thm:Contract}
Let $\tau_0 \geq 1$ be sufficiently large and $\nu$ be sufficiently close to $1$.
Then $\Phi$ (as defined in Eq.~\eqref{eq:Contract}) maps the unit ball in $\mc X^{\tau_0,\nu}$ to itself 
and is contractive, i.e.,
\[ \|\Phi(x_d,x)-\Phi(y_d,y)\|_{\mc X^{\tau_0,\nu}}\leq \tfrac12 \|(x_d,x)-(y_d,y)\|_{\mc X^{\tau_0,\nu}} \]
for all $(x_d,x), (y_d,y) \in \mc X^{\tau_0,\nu}$ with $\|(x_d,x)\|_{\mc X^{\tau_0,\nu}}\leq 1,
\|(y_d,y)\|_{\mc X^{\tau_0,\nu}}\leq 1$.
As a consequence, there exists a unique fixed point of $\Phi$ (and hence, a solution of 
Eq.~\eqref{eq:sysFourier}) which belongs to $\mc{X}^{\tau_0,\nu}$ (in fact, to the unit ball
in $\mc X^{\tau_0,\nu}$).
\end{theorem}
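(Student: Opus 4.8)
The plan is to verify the hypotheses of the Banach fixed point theorem for the map $\Phi$ on the closed unit ball of the Banach space $\mc X^{\tau_0,\nu}$. First I would record the ingredients assembled in the preceding sections. The solution operator $\mc H=\mathrm{diag}(\mc H_d,\mc H_c)$ gains regularity and decay by Proposition \ref{prop:H} and Lemma \ref{lem:Hd}: in the continuous part, $\mc H_c$ maps $L^{\infty,\beta}_{\tau_0}\Y$ into $L^{\infty,\beta-1-2\delta}_{\tau_0}\X$, and $\mc B_{c,\nu}\mc H_c$ maps $L^{\infty,\beta}_{\tau_0}\Y$ into $L^{\infty,\beta-1}_{\tau_0}\Y$; the discrete part behaves analogously (with a harmless shift in the exponents, which is why the $x_d$-slot of $\mc X^{\tau_0,\nu}$ carries exponent $3-2\delta$ rather than something larger). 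The nonlinear term is controlled by Lemma \ref{lem:R}: $\mc N$ is locally Lipschitz from the $X^{p,\frac18}_\delta$-spaces into the $Y^{p,\frac18}$-spaces with a gain of $\tau^{5/4}$ in decay. The transference terms $\mc R_\nu\mc B_\nu$ and $\mc T_\nu$ are handled by Corollaries \ref{cor:K}, \ref{cor:AK}: $\mc K$ and $[\mc A,\mc K]$ are bounded on $\C\times X^{p,\frac18}_\delta$, map $\C\times X^{p,\frac18}_\delta$ into $\C\times Y^{p,\frac18}$, and are bounded on $\C\times Y^{p,\frac18}$; crucially these operators come multiplied by $\beta_\nu(\tau)=O(\tau^{-1})$ (and $\mc T_\nu$ by $\beta_\nu(\tau)^2=O(\tau^{-2})$, noting $\beta_\nu'/\beta_\nu^2=O(1)$), which supplies the extra decay needed to close the estimates. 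Finally the inhomogeneity is controlled by Lemma \ref{lem:Fe2}: $\hat E_2\in L^{\infty,3-\epsilon-3|\frac1\nu-1|}(\Y\times\R)$, which for $\nu$ close to $1$ sits comfortably above the threshold $\beta=\frac52$ required by Proposition \ref{prop:H}.

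The main step is the self-mapping property. I would estimate each of the four pieces of $\Phi(x_d,x)$ separately in the two norms defining $\|\cdot\|_{\mc X^{\tau_0,\nu}}$, namely the $L^{\infty,3-2\delta}_{\tau_0}\times L^{\infty,2-4\delta}_{\tau_0}X^{p,\frac18}_\delta$ norm of the function itself and the $L^{\infty,3-2\delta}_{\tau_0}\times L^{\infty,2-2\delta}_{\tau_0}Y^{p,\frac18}$ norm of its $\mc B_\nu$-derivative. For the nonlinear term: if $\|(x_d,x)\|_{\mc X^{\tau_0,\nu}}\le 1$ then $\|(x_d,x)\|_{L^{\infty,3-2\delta}\times L^{\infty,2-4\delta}X^{p,\frac18}_\delta}\le 1$, so Lemma \ref{lem:R} gives $\mc N(x_d,x)\in L^{\infty,\beta_d'}\times L^{\infty,\beta_c'}Y^{p,\frac18}$ with $\beta_c'\ge (2-4\delta)+\frac54=3+\frac14-4\delta$ (using the crudest count where only one factor is taken in the low-decay slot; in fact the product structure gives more, but this already suffices), and applying $\mc H_c$ drops this by $1+2\delta$ to land at exponent $\ge 2+\frac14-6\delta>2-4\delta$ for $\delta$ small, while $\mc B_{c,\nu}\mc H_c$ drops it only by $1$, landing at $\ge 2+\frac14-4\delta>2-2\delta$. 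For the $\mc R_\nu\mc B_\nu$ term: by definition $\mc B_\nu(x_d,x)$ has finite $L^{\infty,3-2\delta}\times L^{\infty,2-2\delta}Y^{p,\frac18}$ norm, multiplying by $\beta_\nu$ costs a factor $\tau^{-1}$ and $(2\mc K+1)$ is bounded on $\C\times Y^{p,\frac18}$ by Corollary \ref{cor:K}, so $\mc R_\nu\mc B_\nu(x_d,x)\in L^{\infty,3-2\delta}\times L^{\infty,3-2\delta}Y^{p,\frac18}$; applying $\mc H_c$ lands at exponent $3-2\delta-1-2\delta=2-4\delta$ in the $X$-norm (exactly the target), and $\mc B_{c,\nu}\mc H_c$ at $2-2\delta$ (again exactly the target — this is the tightest estimate and explains the precise exponents in Definition \ref{eq:defX}). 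The $\mc T_\nu$ term is better by one power of $\tau$, and the $\hat E_2$ term is handled directly by Lemma \ref{lem:Fe2} followed by Proposition \ref{prop:H}. Collecting, one obtains $\|\Phi(x_d,x)\|_{\mc X^{\tau_0,\nu}}\le C\tau_0^{-\kappa}+C\tau_0^{-\kappa}\|(x_d,x)\|_{\mc X^{\tau_0,\nu}}^2+\tfrac12\|(x_d,x)\|_{\mc X^{\tau_0,\nu}}$ for some $\kappa>0$, where the "$\tfrac12$" is absorbed into the constant in front of the $\mc R_\nu\mc B_\nu$ and $\mc T_\nu$ contributions by taking $\tau_0$ large so that $C\beta_\nu(\tau_0)=C\tau_0^{-1}\le\tfrac12$; hence $\Phi$ maps the unit ball to itself once $\tau_0$ is large enough.

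For contractivity I would subtract: $\Phi(x_d,x)-\Phi(y_d,y)=\mc H[\mc N(x_d,x)-\mc N(y_d,y)+\mc R_\nu\mc B_\nu((x_d,x)-(y_d,y))+\mc T_\nu((x_d,x)-(y_d,y))]$ since $\hat E_2$ cancels. The nonlinear difference is estimated by the Lipschitz bound of Lemma \ref{lem:R} (with $\gamma(X,Y)$ bounded since both arguments lie in the unit ball), then by $\mc H$; the $\mc R_\nu\mc B_\nu$ and $\mc T_\nu$ differences are linear in their argument so the same mapping estimates apply verbatim. All three contributions to $\|\Phi(x_d,x)-\Phi(y_d,y)\|_{\mc X^{\tau_0,\nu}}$ carry a small prefactor — either a power $\tau_0^{-\kappa}$ from the decay gains, or a factor $\beta_\nu(\tau_0)=O(\tau_0^{-1})$ from the transference terms — so choosing $\tau_0$ sufficiently large (and $\nu$ sufficiently close to $1$ so that the exponent constraints $2|\frac1\nu-1|<\delta<\frac18$ and $p'(1-\delta+2|\frac1\nu-1|)<1$ are met and the $|\frac1\nu-1|$-losses in Lemma \ref{lem:Fe2} and elsewhere are negligible) yields the factor $\tfrac12$. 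The Banach fixed point theorem then furnishes the unique fixed point in the unit ball, which by construction solves Eq.~\eqref{eq:sysFourier}. The main obstacle is bookkeeping: one must check that in \emph{every} one of the roughly eight estimates (four pieces, two norms) the exponent arithmetic actually produces a number at least as large as the target exponent in Definition \ref{eq:defX}, and the $\mc R_\nu\mc B_\nu$ term in the $\mc B_\nu$-derivative norm is the one where equality holds, so there is no slack there and one relies entirely on the $\beta_\nu(\tau_0)$ smallness rather than on a decay gain — getting this single estimate to close (and simultaneously arranging that the constants do not blow up as $\nu\to1$) is the crux.
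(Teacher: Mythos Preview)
Your overall architecture is correct and matches the paper's proof: split $\Phi$ into the four pieces $\mc H\hat E_2$, $\mc H\mc T_\nu$, $\mc H\mc R_\nu\mc B_\nu$, $\mc H\mc N$, and estimate each in the two norms defining $\mc X^{\tau_0,\nu}$ using Proposition~\ref{prop:H}, Lemma~\ref{lem:Hd}, Lemma~\ref{lem:R}, Lemma~\ref{lem:Fe2}, and Corollaries~\ref{cor:K}, \ref{cor:AK}. You have also correctly identified that the $\mc R_\nu\mc B_\nu$ term in the $\mc B_\nu$-derivative norm is the tight one with no slack in the decay exponents.

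There is, however, a genuine gap in exactly that tight estimate. You write that the smallness for the $\mc R_\nu\mc B_\nu$ contribution comes from ``$C\beta_\nu(\tau_0)=C\tau_0^{-1}\le\tfrac12$'' by taking $\tau_0$ large. This double-counts the $\tau^{-1}$ in $\beta_\nu(\tau)$. You have already spent that $\tau^{-1}$ to raise the decay exponent of $\mc R_\nu\mc B_\nu(x_d,x)$ from $2-2\delta$ to $3-2\delta$ in the continuous $Y$-slot; after applying $\mc H_c$ (loss $1+2\delta$) and $\mc B_{c,\nu}\mc H_c$ (loss $1$) you land \emph{exactly} on the target exponents $2-4\delta$ and $2-2\delta$, with an implicit constant that does \emph{not} carry any residual power of $\tau_0$. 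What does remain is the prefactor $|\tfrac{1}{\nu}-1|$ hidden in $\beta_\nu(\tau)=-(\tfrac{1}{\nu}-1)\tau^{-1}$: one obtains
\[
\|\mc H\mc R_\nu\mc B_\nu(x_d,x)\|_{\mc X^{\tau_0,\nu}}\lesssim |\tfrac{1}{\nu}-1|\,\|(x_d,x)\|_{\mc X^{\tau_0,\nu}},
\]
and it is the hypothesis ``$\nu$ sufficiently close to $1$'' (not ``$\tau_0$ large'') that makes this contribution $\le\tfrac18$. This is precisely how the paper closes the estimate, and it is the real reason the theorem requires $\nu$ near $1$ rather than merely the exponent constraints $2|\tfrac{1}{\nu}-1|<\delta$ and $p'(1-\delta+2|\tfrac{1}{\nu}-1|)<1$. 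Your closing worry about ``constants blowing up as $\nu\to 1$'' has the dependence backwards: the constant in the critical term \emph{vanishes} as $\nu\to 1$, which is what saves the argument.
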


\begin{proof}
We consider each term on the right-hand side of Eq.~\eqref{eq:Contract} separately.
\begin{enumerate}
\item According to Lemma \ref{lem:Fe2} we have $\hat{E}_2 \in L^{\infty,3-\delta}_{\tau_0}
\times L^{\infty,3-\delta}_{\tau_0}Y^{p,\frac18}$ and thus, we find 
\[ 
\|\mc H \hat{E}_2\|_{L^{\infty,3-2\delta}_{\tau_0}\times
L^{\infty,2-4\delta}_{\tau_0}X^{p,\frac18}_\delta}\lesssim 
\tau_0^{-\delta}\|\hat{E}_2\|_{L^{\infty,3-\delta}_{\tau_0}
\times L^{\infty,3-\delta}_{\tau_0}Y^{p,\frac18}} \]
and also
\[ \|\mc B_\nu \mc H \hat{E}_2\|_{L^{\infty,3-2\delta}_{\tau_0}\times
L^{\infty,2-2\delta}_{\tau_0}Y^{p,\frac18}}\lesssim 
\tau_0^{-\delta}\|\hat{E}_2\|_{L^{\infty,3-\delta}_{\tau_0}
\times L^{\infty,3-\delta}_{\tau_0}Y^{p,\frac18}} \]
by Proposition \ref{prop:H} and Lemma \ref{lem:Hd}.
Hence, since $\tau_0\geq 1$ is assumed large, we infer
$\|\mc H \hat E_2\|_{\mc X^{\tau_0,\nu}}\leq \frac18$.

\item By Corollaries \ref{cor:K} and \ref{cor:AK} we infer 
\[ \|\mc T_\nu (x_d,x)\|_{L^{\infty,4-4\delta}_{\tau_0}\times L^{\infty,4-4\delta}_{\tau_0}Y^{p,\frac18}}
\lesssim \|(x_d,x)\|_{L^{\infty,3-2\delta}_{\tau_0}\times L^{\infty,2-4\delta}_{\tau_0}X^{p,\frac18}_\delta} \]
and by Proposition \ref{prop:H} and Lemma \ref{lem:Hd} this implies
\[ \|\mc H \mc T_\nu (x_d,x)\|_{L^{\infty,3-2\delta}_{\tau_0}\times L^{\infty,2-4\delta}_{\tau_0}X^{p,\frac18}}
\lesssim \tau_0^{-1+2\delta}\|(x_d,x)\|_{L^{\infty,3-2\delta}_{\tau_0}\times L^{\infty,2-4\delta}_{\tau_0}X^{p,\frac18}_\delta} \]
as well as
\[\|\mc B_\nu \mc H \mc T_\nu (x_d,x)\|_{L^{\infty,3-2\delta}_{\tau_0}\times L^{\infty,2-2\delta}_{\tau_0}Y^{p,\frac18}}
\lesssim \tau_0^{-1+2\delta}\|(x_d,x)\|_{L^{\infty,3-2\delta}_{\tau_0}\times L^{\infty,2-4\delta}_{\tau_0}X^{p,\frac18}_\delta}. \]
We obtain $\|\mc{H}\mc T_\nu (x_d,x)\|_{\mc{X}^{\tau_0,\nu}}\leq \frac18 \|(x_d,x)\|_{\mc{X}^{\tau_0,\nu}}$
provided that $\tau_0\geq 1$ is sufficiently large.

\item From Corollary \ref{cor:K} we obtain
\[ \|\mc R_\nu \mc B_\nu (x_d,x)\|_{L^{\infty,3-2\delta}_{\tau_0}\times L^{\infty,3-2\delta}_{\tau_0}Y^{p,\frac18}}
\lesssim |\tfrac{1}{\nu}-1| \|\mc B_\nu (x_d,x)\|_{L^{\infty,3-2\delta}_{\tau_0}\times L^{\infty,2-2\delta}_{\tau_0}Y^{p,\frac18}} \]
by recalling that $\beta_\nu(\tau)=-(\frac{1}{\nu}-1)\tau^{-1}$.
By Proposition \ref{prop:H} and Lemma \ref{lem:Hd} this yields
\[ \|\mc H \mc R_\nu \mc B_\nu (x_d,x)\|_{L^{\infty,3-2\delta}_{\tau_0}\times L^{\infty,2-4\delta}_{\tau_0}X^{p,\frac18}_\delta}
\lesssim |\tfrac{1}{\nu}-1| \|\mc B_\nu (x_d,x)\|_{L^{\infty,3-2\delta}_{\tau_0}\times L^{\infty,2-2\delta}_{\tau_0}Y^{p,\frac18}}
\] 
as well as
\[ \|\mc B_\nu \mc H \mc R_\nu \mc B_\nu (x_d,x)\|_{L^{\infty,3-2\delta}_{\tau_0}\times L^{\infty,2-2\delta}_{\tau_0}Y^{p,\frac18}}
\lesssim |\tfrac{1}{\nu}-1| \|\mc B_\nu (x_d,x)\|_{L^{\infty,3-2\delta}_{\tau_0}\times L^{\infty,2-2\delta}_{\tau_0}Y^{p,\frac18}}.
\]
Consequently, if $\nu$ is sufficiently close to $1$, we infer 
$\|\mc H \mc R_\nu \mc B_\nu (x_d,x)\|_{\mc X^{\tau_0,\nu}}
\leq \frac18 \|(x_d,x)\|_{\mc X^{\tau_0,\nu}}$.

\item Finally, for the nonlinearity we infer from Lemma \ref{lem:R} that
\begin{align*} 
\| \mc N(x_d,x)&-\mc N(y_d,y)\|_{L^{\infty,4}_{\tau_0}\times L^{\infty,3-2\delta}_{\tau_0}Y^{p,\frac18}} \\
&\lesssim \tau_0^{-\frac14+2\delta}\|(x_d,x)-(y_d,y)\|_{
L^{\infty,3-2\delta}_{\tau_0}\times L^{\infty,2-4\delta}_{\tau_0}X^{p,\frac18}_\delta}. 
\end{align*}
for all $(x_d,x)$, $(y_d,y)$ in the unit ball in $\mc X^{\tau_0,\nu}$.
As before, since $\tau_0$ is large, Proposition \ref{prop:H} and Lemma \ref{lem:Hd}
yield $\| \mc H \mc N(x_d,x)-\mc H\mc N(y_d,y)\|_{\mc X^{\tau_0,\nu}}
\leq \frac18 \|(x_d,x)-(y_d,y)\|_{\mc{X}^{\tau_0,\nu}}$ and by recalling that $\mc N(0)=0$ this also
implies
$\|\mc H \mc N(x_d,x)\|_{\mc X^{\tau_0,\nu}}\leq \frac18 \|(x_d,x)\|_{\mc X^{\tau_0,\nu}}$.
\end{enumerate}
The claim now follows by the contraction mapping principle.
\end{proof}

\subsection{Transformation to the physical space}

Recall that our original intention was to solve Eq.~\eqref{eq:eps}, given by
\begin{equation}
\label{eq:eps2}
\Box \varepsilon+5u_0^4 \varepsilon=5(u_0^4-u_2^4)\varepsilon-N(u_2,\varepsilon)-e_2,
\end{equation}
where $u_0(t,r)=\lambda(t)^\frac12 W(\lambda(t)r)$ and the functions $u_2$ and $e_2$ are the
approximate solution and the error 
constructed in Section \ref{sec:approx}. However, $u_2$ and $e_2$ are only defined in the
forward lightcone 
\[ K_{t_0,c}^\infty=\{(t,x) \in \R \times \R^3: t\geq t_0, 0\leq |x| \leq t-c \},\quad t_0>c>0 \]
and as a consequence, we had to introduce the smooth cut-off
$\chi_c(t,r):=\chi(\frac{t-r}{c})$ where $\chi(s)=0$ for $s\leq \frac12$ and $\chi(s)=1$ for
$s\geq 1$. Then we considered the truncated equation
\begin{equation}
\label{eq:eps3}
\Box \varepsilon+5u_0^4 \varepsilon=\chi_c \left [5(u_0^4-u_2^4)\varepsilon-N(u_2,\varepsilon)-e_2\right ]
\end{equation}
instead of Eq.~\eqref{eq:eps2}.
As a consequence of Theorem \ref{thm:Contract} we now have the following result which concludes
the construction of the solution inside the forward lightcone.

\begin{lemma}
\label{lem:eps}
Let $t_0>0$ be sufficiently large and $\nu$ be sufficiently close to $1$.
Then there exists a solution $\varepsilon$ of Eq.~\eqref{eq:eps3}
with
\[ \|(\varepsilon(t,\cdot),\varepsilon_t(t,\cdot))\|_{H^1(B_{2t})\times L^2(B_{2t})}\lesssim t^{-1} \]
for all $t\geq t_0$ where $B_{2t}:=\{x \in \R^3: |x|< 2t\}$.
\end{lemma}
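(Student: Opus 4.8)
The plan is to take the fixed point supplied by Theorem~\ref{thm:Contract}, undo the reductions of Section~\ref{sec:exact} to obtain $\varepsilon$ in the original variables, and then read off the decay from the weights encoded in the space $\mc X^{\tau_0,\nu}$.

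First I would let $(x_d,x)\in\mc X^{\tau_0,\nu}$ be the fixed point of $\Phi$ from Theorem~\ref{thm:Contract} and set $v(\tau,\cdot):=\mc F^{-1}(x_d(\tau),x(\tau,\cdot))$, $\tilde v(\tau,R):=R^{-1}v(\tau,R)$ and $\varepsilon(t,r):=\tilde v(\tau(t),\lambda(t)r)$, where $\tau(t)=\tfrac1\nu t^\nu$ and $\lambda(t)=t^{-(1-\nu)}$. Running the derivation of Section~\ref{sec:exact} backwards --- the substitution $\tilde v=R^{-1}v$, the change of coordinates $R=\lambda(t)r$, $\tau=\tfrac1\nu t^\nu$, and the application of $\mc F$ together with the transference identity --- shows that since $(x_d,x)$ solves Eq.~\eqref{eq:sysFourier}, the function $v$ solves Eq.~\eqref{eq:main2} and hence $\varepsilon$ solves Eq.~\eqref{eq:eps3} on $[t_0,\infty)\times\R^3$. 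The one point requiring care is that all of these reductions are reversible for the particular $v$ at hand; this follows because the membership $(x_d,x)\in\mc X^{\tau_0,\nu}$ forces, via Lemmas~\ref{lem:H2alpha} and~\ref{lem:F-1}, enough Sobolev regularity and decay of $v(\tau,\cdot)$ as $R\to\infty$ to justify the integrations by parts defining $\mc A$ and $\mc K$ and the commutation $\mc F\mc D=\hat{\mc D}\mc F$, after a routine density argument.

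Next I would convert the $\mc X^{\tau_0,\nu}$-norm bounds into physical-space estimates. By Definition~\ref{eq:defX} we have $|x_d(\tau)|\lesssim\tau^{-(3-2\delta)}$ and $\|x(\tau,\cdot)\|_{X^{p,\frac18}_\delta}\lesssim\tau^{-(2-4\delta)}$, and Lemmas~\ref{lem:H2alpha} and~\ref{lem:F-1} then yield a splitting $\tilde v(\tau,\cdot)=\tilde v_h(\tau,\cdot)+\tilde v_\ell(\tau,\cdot)$ with $\|\tilde v_h(\tau,\cdot)\|_{H^{5/4}(\R^3)}\lesssim\tau^{-(2-4\delta)}$ and, for any fixed $q\in(3,\infty)$, $\|\tilde v_\ell(\tau,\cdot)\|_{L^q(\R^3)}+\|\nabla\tilde v_\ell(\tau,\cdot)\|_{L^{4q/(q+2)}(\R^3)}\lesssim\tau^{-(2-4\delta)}$ (the discrete contribution from $x_d$ being even smaller). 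Localising $\tilde v_\ell$ to the ball $\{R<2\nu\tau\}$ and using H\"older's inequality, with $q$ close to $3$, gives $\|\tilde v(\tau,\cdot)\|_{L^2(B_{2\nu\tau})}\lesssim\tau^{-\frac32+C\delta}$ and $\|\nabla\tilde v(\tau,\cdot)\|_{L^2(B_{2\nu\tau})}\lesssim\tau^{-\frac74+C\delta}$. For the time derivative I would use $\partial_t=\lambda(t)[\partial_\tau+\beta_\nu(\tau)R\partial_R]$ and $[\partial_\tau+\beta_\nu R\partial_R]R^{-1}v=R^{-1}\mc Dv$, together with $\mc F\mc Dv(\tau,\cdot)=\hat{\mc D}(x_d,x)(\tau,\cdot)=\mc B_\nu(x_d,x)(\tau,\cdot)+\beta_\nu(\tau)\mc K(x_d(\tau),x(\tau,\cdot))$; Definition~\ref{eq:defX} bounds $\mc B_\nu(x_d,x)(\tau,\cdot)$ in $Y^{p,\frac18}$ by $\tau^{-(2-2\delta)}$, and Corollary~\ref{cor:K} bounds $\beta_\nu(\tau)\mc K(x_d(\tau),x(\tau,\cdot))$ in $Y^{p,\frac18}$ by $\tau^{-(3-4\delta)}$, so Lemma~\ref{lem:H2alpha} gives $\|R^{-1}\mc Dv(\tau,\cdot)\|_{L^2(\R^3)}\lesssim\|R^{-1}\mc Dv(\tau,\cdot)\|_{H^{1/4}(\R^3)}\lesssim\tau^{-(2-2\delta)}$.

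Finally, changing variables back --- the map $x\mapsto\lambda(t)x$ contributes a factor $\lambda(t)^{-3}$ to $L^2$-integrals in $x$, and a spatial gradient in $x$ equals $\lambda(t)$ times one in $R$, while $|x|<2t$ corresponds to $\lambda(t)|x|<2\nu\tau$ --- gives $\|\varepsilon(t,\cdot)\|_{L^2(B_{2t})}=\lambda(t)^{-\frac32}\|\tilde v(\tau,\cdot)\|_{L^2(B_{2\nu\tau})}$, $\|\nabla\varepsilon(t,\cdot)\|_{L^2(B_{2t})}=\lambda(t)^{-\frac12}\|\nabla\tilde v(\tau,\cdot)\|_{L^2(B_{2\nu\tau})}$ and $\|\varepsilon_t(t,\cdot)\|_{L^2(\R^3)}=\lambda(t)^{-\frac12}\|R^{-1}\mc Dv(\tau,\cdot)\|_{L^2(\R^3)}$. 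Since $\lambda(t)=t^{-(1-\nu)}$ and $\tau\simeq t^\nu$ with $\nu$ close to $1$ and $\delta$ small, each right-hand side is bounded by $t^{-\kappa}$ for some $\kappa>1$ (in fact $\kappa$ close to $\tfrac32$, $\tfrac74$, $2$ respectively), so for $t\geq t_0$ with $t_0$ large we obtain $\|(\varepsilon(t,\cdot),\varepsilon_t(t,\cdot))\|_{H^1(B_{2t})\times L^2(B_{2t})}\lesssim t^{-1}$, as claimed. The main obstacle is the reversibility asserted in the second paragraph --- i.e.\ checking that the Fourier-side fixed point genuinely yields a solution of Eq.~\eqref{eq:eps3} in the physical variables; everything after that is bookkeeping of the weights.
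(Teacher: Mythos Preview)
Your proof is correct and follows essentially the same route as the paper: take the fixed point from Theorem~\ref{thm:Contract}, split $\tilde v$ into high- and low-frequency pieces via Lemma~\ref{lem:F-1}, localise the low-frequency part to $B_{2\nu\tau}$ to recover an $H^1$ bound with decay $\tau^{-3/2+C\delta}$, and handle $\partial_t\varepsilon$ through the transference identity and the $Y^{p,\frac18}$-control of $\mc B_\nu(x_d,x)$ built into $\mc X^{\tau_0,\nu}$. The only cosmetic difference is that for the low-frequency piece the paper uses parts (1) and (2) of Lemma~\ref{lem:F-1} (the $\dot H^2$ bound together with $\||\cdot|\tilde v_-\|_{L^\infty}$), whereas you use parts (3) and (4) (the $L^q$ and interpolated $\dot W^{1,4q/(q+2)}$ bounds); both choices yield the same localised $H^1$ estimate after H\"older.
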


\begin{proof}
Recall the transformations that led from Eq.~\eqref{eq:eps3} to the main system 
Eq.~\eqref{eq:sysFourier} which
we have finally solved in Theorem \ref{thm:Contract}. Schematically, we had
\[ \varepsilon \xrightarrow{(\tau,R)} \tilde{v} \xrightarrow{\cdot R} v \xrightarrow{\mc{F}}
\left (\begin{array}{c}x_d \\ x \end{array} \right ) \]
and in order to prove the claim we simply have to undo these transformations.
The coordinates $(t,r)$ and $(\tau,R)$ are related by $\tau=\tfrac{1}{\nu}\lambda(t)t$ and 
$R=\lambda(t)r$.
Thus, let $(x_d,x)\in \mc X^{\tau_0,\nu}$ be the solution of Eq.~\eqref{eq:sysFourier} constructed
in Theorem \ref{thm:Contract} (which exists since we assume $t_0$ and hence 
$\tau_0=\tfrac{1}{\nu}\lambda(t_0)t_0$ to be
large) and set 
\begin{align*} 
\tilde{v}_-(\tau,R)&:=R^{-1}\mc{F}^{-1}\left (\begin{array}{c}x_d(\tau) \\(1-\chi) x(\tau,\cdot) \end{array}
\right)(R) \\
\tilde{v}_+(\tau,R)&:=R^{-1}\mc{F}^{-1}\left (\begin{array}{c}0 \\ \chi x(\tau,\cdot) \end{array}
\right)(R)
\end{align*}
where $\chi$ is again a smooth cut-off with $\chi(\xi)=0$ for, say, $\xi \leq \frac12$ and $\chi(\xi)=1$
for $\xi \geq 1$.
According to Lemma \ref{lem:F-1} we have $\tilde{v}_+(\tau,\cdot)\in H^\frac54(\R^3)$ (recall
that we have set $\alpha=\frac18$) and therefore, $\|\tilde{v}_+(\tau,\cdot)\|_{H^1(\R^3)}
\lesssim \|\tilde{v}_+(\tau,\cdot)\|_{H^\frac54(\R^3)}\lesssim \tau^{-2+4\delta}$ 
for some small $\delta>0$ (see Definition \ref{eq:defX}).
Furthermore, Lemma \ref{lem:F-1} yields $\|\tilde{v}_-(\tau,\cdot)\|_{\dot{H}^2(\R^3)}\lesssim
\tau^{-2+4\delta}$ and
also, $\||\cdot|\tilde{v}_-(\tau,\cdot)\|_{L^\infty(\R^3)}\lesssim \tau^{-2+4\delta}$.
Thus, we obtain
\[ \|\tilde{v}_-(\tau,\cdot)\|_{L^2(B_{2\nu\tau})}\lesssim \||\cdot|\tilde{v}_-(\tau,\cdot)\|_{L^\infty(\R^3)}
\|1\|_{L^2(B_{2\nu\tau})}\lesssim \tau^{-\frac32+4\delta} \]
which implies $\|\tilde{v}_-(\tau,\cdot)\|_{H^2(B_{2\nu\tau})}\lesssim \tau^{-\frac32+4\delta}$.
By setting $\tilde{v}=\tilde{v}_-+\tilde{v}_+$ we infer 
$\|\tilde{v}(\tau,\cdot)\|_{H^1(B_{2\nu\tau})}\lesssim \tau^{-\frac32+4\delta}$ and thus,
with $\varepsilon(t,r)=\tilde{v}(\tfrac{1}{\nu}\lambda(t)t,\lambda(t)r)$, we obtain
$\|\varepsilon(t,\cdot)\|_{H^1(B_{2t})}\lesssim t^{-1}$ since $\nu$ is assumed to be close to $1$.

For the time derivative recall that $\partial_t=\tilde{\lambda}(\tau)[\partial_\tau+\beta_\nu(\tau)R\partial_R]$ 
and thus,
\[ \partial_t \varepsilon(t,r)=\tilde{\lambda}(\tau)R^{-1}[\partial_\tau+\beta_\nu(\tau)
(R\partial_R-1)] v(\tau,R). \]
By the transference identity Eq.~\eqref{eq:K} we have
\[ R^{-1}[\partial_\tau+\beta_\nu(\tau)
(R\partial_R-1)] v(\tau,R)=R^{-1}\mc F^{-1}\left (
[\partial_\tau+\beta_\nu(\tau)(\mc A+\mc K)]\left (\begin{array}{c} x_d(\tau) \\
x(\tau,\cdot) \end{array} \right) \right ). \]
By Theorem \ref{thm:Contract}, Lemma \ref{lem:H2alpha}, Corollary \ref{cor:K} and the definition
of the space $Y^{p,\frac18}$ we therefore obtain the desired 
$\|\partial_t \varepsilon(t,\cdot)\|_{L^2(B_{2t})}\lesssim t^{-1}$.
\end{proof}

\section{Extraction of initial data}

Let $\chi_c(t,r)$ be the smooth localizer to the truncated cone which
is defined by $\chi_c(t,r)=\chi(\frac{t-r}{c})$ where $\chi$ is a fixed smooth cut-off with
$\chi(s)=0$ for $s\leq \frac12$ and $\chi(s)=1$ for $s\geq 1$.
Furthermore, we set $\theta(t,r):=1-\chi(\frac{r}{2t})$, i.e., $\theta(t,r)=1$ if $r\leq t$ and
$\theta(t,r)=0$ if $r\geq 2t$.
As a consequence of Lemma \ref{lem:eps} there exists a function $u_c$ of the form
\begin{equation}
\label{eq:solu}
u_c(t,r)=\lambda(t)^\frac12 W(\lambda(t)r)+\theta(t,r)[v_0(t,r)+v_1^g(t,r)+\varepsilon(t,r)]
+\chi_c(t,r)v_1^b(t,r),
\end{equation}
where $v_0$ and $v_1^g:=v_{11}^g+v_{12}^g$, $v_1^b:=v_{11}^b+v_{12}^b$ are from Lemmas \ref{lem:e1} and \ref{lem:v1}, and
$\Box u_c(t,r)+u_c(t,r)^5=0$ provided that $(t,r) \in K_{t_0,c}^\infty$ with $t_0>0$ sufficiently
large.

\subsection{Energy estimates}
As a first step we establish energy bounds for the solution $u_c$.

\begin{lemma}
\label{lem:boundE}
Let $t_0\geq 1$ be sufficiently large and suppose $\nu$ is sufficiently close to $1$. 
Then we have the bounds
\begin{align*} 
\|\chi_c(t,\cdot) \partial_t u_c(t,\cdot)\|_{L^2(\R^3)}&\lesssim 
[\lambda(t)t]^{-\frac12}+c^{-\frac{\nu}{2}} \\
\|\chi_c(t,\cdot) \partial_t W_{\lambda(t)}\|_{L^2(\R^3)}&\lesssim [\lambda(t)t]^{-\frac12} \\
\|\nabla [u_c(t,\cdot)-W_{\lambda(t)}]\|_{L^2(\R^3)}
&\lesssim [\lambda(t)t]^{-\frac12} +c^{-\frac{\nu}{2}}
\end{align*}
for all $t\geq t_0> 2c\geq 1$
where, as before, $W_{\lambda(t)}(r)=\lambda(t)^\frac12 W(\lambda(t)r)$.
\end{lemma}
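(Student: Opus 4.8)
The plan is to estimate the contribution of each of the three building blocks in the decomposition \eqref{eq:solu} of $u_c$ separately. One expects the rescaled soliton $W_{\lambda(t)}$ and the ``good'' perturbation $\theta[v_0+v_1^g+\varepsilon]$ (supported in $B_{2t}$, with no singularity at the cone) each to contribute $O([\lambda(t)t]^{-\frac12})$ to all three quantities, while the $c^{-\frac{\nu}{2}}$ term will come entirely from the tail $\chi_c v_1^b$, whose mild singularity at $r=t$ is felt by the cut-off $\chi_c$ at distance $\simeq c$ from the light cone.

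For the soliton one computes by scaling that $\partial_t W_{\lambda(t)}(r)=\lambda'(t)\lambda(t)^{-\frac12}h(\lambda(t)r)$ with $h(R):=\tfrac12 W(R)+RW'(R)=\tfrac{1}{2R}\phi_0(R)$, where $\phi_0$ is the resonance function from \eqref{eq:res}; in particular $|h(R)|\lesssim\langle R\rangle^{-1}$. Since $\lambda'(t)/\lambda(t)=-(1-\nu)t^{-1}$ this gives $|\partial_t W_{\lambda(t)}(r)|\lesssim|1-\nu|\,t^{-1}\lambda(t)^{\frac12}\langle\lambda(t)r\rangle^{-1}$, and because $\chi_c$ restricts to $r<t$ the substitution $R=\lambda(t)r$ leads to $\int_0^{\lambda(t)t}\langle R\rangle^{-2}R^2\,dR\lesssim\lambda(t)t$ — here the cut-off is essential, the full integral diverging — so $\|\chi_c(t,\cdot)\partial_t W_{\lambda(t)}\|_{L^2(\R^3)}^2\lesssim(1-\nu)^2[\lambda(t)t]^{-1}$. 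This is the second displayed bound and the soliton part of the first.

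For the good perturbation I would feed in the pointwise estimates of Lemma~\ref{lem:e1} for $v_0$ and its derivatives (the logarithm being harmless), the bounds of Lemma~\ref{lem:v1} for $v_1^g=v_{11}^g+v_{12}^g$ and their derivatives (obtained by formal differentiation), and the energy bound $\|(\varepsilon,\varepsilon_t)\|_{H^1(B_{2t})\times L^2(B_{2t})}\lesssim t^{-1}$ of Lemma~\ref{lem:eps}. Writing $\nabla(\theta w)=(\nabla\theta)w+\theta\nabla w$ with $|\nabla\theta|\lesssim t^{-1}$ supported in $t<r<2t$ and using $\int_0^{2t}r^2\,dr\lesssim t^3$, each resulting squared $L^2(B_{2t})$ norm is $O([\lambda(t)t]^{-1})$ for the $v_0$ and $v_1^g$ terms and $O(t^{-2})$ for the $\varepsilon$ term; as $\nu$ is close to $1$ we have $t^{-1}\leq t^{-\nu/2}=[\lambda(t)t]^{-\frac12}$, so all of these are $\lesssim[\lambda(t)t]^{-\frac12}$. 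For the first quantity one uses the $\partial_t$-bounds and observes that $\theta\equiv1$ and $\partial_t\theta\equiv0$ on $\mathrm{supp}\,\chi_c\subset\{r<t-\tfrac{c}{2}\}$, so $\chi_c\partial_t(\theta[\cdots])=\chi_c\partial_t[\cdots]$.

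The real work, and the step I expect to be the main obstacle, is the tail $\chi_c v_1^b=\chi_c(v_{11}^b+v_{12}^b)$, which carries the factor $(1-\tfrac{r}{t})^{\frac12(1-\nu)}$, respectively $(1-\tfrac{r}{t})^{\frac12(1-3\nu)}$, near $r=t$. A derivative falling on $v_{1j}^b$ lowers this exponent by one and costs $t^{-1}$, while a derivative on $\chi_c$ costs $c^{-1}$ on the shell $t-c<r<t-\tfrac{c}{2}$. In each case one changes variables to $s=1-\tfrac{r}{t}$ (so $dr=-t\,ds$ and $r\simeq t$ near the cone), turning the relevant integral into $\int_{c/(2t)}^{1/2}s^{-1-\nu}\,ds\simeq(t/c)^{\nu}$, respectively $\int s^{-1-3\nu}\,ds\simeq(t/c)^{3\nu}$. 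The crucial point is that the prefactors $\lambda(t)^{-1}t^{-1}$ combine with $(t/c)^{\nu}$ through the identity $\lambda(t)^{-1}=t^{1-\nu}$ so that every power of $t$ cancels, leaving $\|\nabla(\chi_c v_{11}^b)\|_{L^2(\R^3)}^2+\|\chi_c\partial_t(\chi_c v_{11}^b)\|_{L^2(\R^3)}^2\lesssim c^{-\nu}+[\lambda(t)t]^{-1}$ (the last term coming from the region away from the cone, where $v_1^b$ behaves like a good term) and similarly $\lesssim c^{-3\nu}+[\lambda(t)t]^{-1}\lesssim c^{-\nu}+[\lambda(t)t]^{-1}$ for $v_{12}^b$, using $c\gtrsim1$. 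Summing the three blocks yields the two remaining bounds. The delicate bookkeeping is matching each power of $(1-\tfrac{r}{t})$ with the correct powers of $t$ and $c$ so that the $t$-powers cancel exactly and one is left with precisely $c^{-\nu/2}$, rather than a worse power of $c$ or an uncancelled growing power of $t$.
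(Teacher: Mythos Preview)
Your proposal is correct and follows essentially the same approach as the paper: decompose $u_c$ into the soliton, the good perturbation $\theta[v_0+v_1^g+\varepsilon]$, and the tail $\chi_c v_1^b$, with the decisive computation being the change of variables $s=1-r/t$ for the tail, where the identity $\lambda(t)^{-1}t^{-1}=t^{-\nu}$ makes the $t$-powers cancel against $(t/c)^\nu$ to leave $c^{-\nu}$. The only organizational difference is that the paper combines $v_1^g+v_1^b=v_1$ on $r\le\tfrac12 t$ and uses the $v_1$-bound there, whereas you keep $v_1^g$ with the good block throughout and bound $v_1^b$ separately on $r\le\tfrac12 t$; both routes give the same $[\lambda(t)t]^{-1/2}$.
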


\begin{proof}
We consider each constituent of $u_c$ separately.
\begin{enumerate}
\item 
For the time derivative of $W_{\lambda(t)}$ we have
\[ \partial_t [\lambda(t)^\frac12 W(\lambda(t)r)]
=\tfrac12 \lambda(t)^{-\frac12}\lambda'(t)W(\lambda(t)r)+\lambda(t)^\frac12 \lambda'(t)rW'(\lambda(t)r)\]
and, since
\[ \int_0^{t}|W(\lambda(t)r)|^2 r^2 dr=\lambda(t)^{-3}\int_0^{\lambda(t)t}|W(r)|^2 r^2 dr
\lesssim \lambda(t)^{-2}t, \]
we infer 
\[ \|\chi_c(t,\cdot)\partial_t W_{\lambda(t)}\|_{L^2(\R^3)}
\lesssim \lambda(t)^{-\frac32}\lambda'(t)t^\frac12
\simeq  t^{-\frac12 \nu}=[\lambda(t)t]^{-\frac12}. \]

\item According to Lemma \ref{lem:e1} we have the estimates 
\begin{align*}
|v_0(t,r)|&\lesssim \lambda(t)^{-\frac12}t^{-2}r \\
|\partial_t v_0(t,r)|+|\partial_r v_0(t,r)|&\lesssim \lambda(t)^{-\frac12}[t^{-3}r+t^{-2}]\lesssim
\lambda(t)^{-\frac12}t^{-2}
\end{align*}
for all $t\geq t_0$ and
$0\leq r\leq 2t$. Furthermore, note that $|\partial_t \theta(t,r)|\simeq rt^{-2}|\chi'(\frac{r}{2t})|$ 
and $|\nabla \theta(t,r)|\simeq t^{-1}|\chi'(\frac{r}{2t})|$. 
Thus, we obtain
\[ \|\chi_c(t,\cdot) \partial_t \theta(t,\cdot) v_0(t,\cdot)\|_{L^2(\R^3)}^2\lesssim
\lambda(t)^{-1}t^{-8}\int_{t}^{2t}r^6 dr\lesssim [\lambda(t)t]^{-1} \]
and analogously, $\|\nabla \theta(t,\cdot)v_0(t,\cdot)\|_{L^2(\R^3)}\lesssim [\lambda(t)t]^{-\frac12}$.
Similarly, we infer 
\begin{align*}
\|\chi_c(t,\cdot)\theta(t,\cdot)\partial_t v_0(t,\cdot)\|_{L^2(\R^3)}^2
+\|\theta(t,\cdot)\nabla v_0(t,\cdot)\|_{L^2(\R^3)}^2&\lesssim 
\lambda(t)^{-1}t^{-4}\int_0^{2t}r^2 dr \\
&\lesssim [\lambda(t)t]^{-1}.
\end{align*}

\item \label{item:3}
Note that $\theta(t,r)v_1^g(t,r)+\chi_c(t,r)v_1^b(t,r)=v_1(t,r)$ provided $r\leq \frac12 t$.
Thus, in the case $r\leq \frac12 t$ we put $v_1^g$ and $v_1^b$ together and use the bounds
\begin{align*}
|v_1(t,r)|&\lesssim \lambda(t)^{-\frac12}t^{-1} \\
|\partial_t v_1(t,r)|+|\partial_r v_1^g(t,r)|&
\lesssim \lambda(t)^{-\frac12}t^{-2}
\end{align*}
from Lemma \ref{lem:v1}.
If $\frac12 t\leq r\leq 2t$ we similarly have
\begin{align*}
|v_1^g(t,r)|&\lesssim \lambda(t)^{-\frac12}t^{-1} \\
|\partial_t v_1^g(t,r)|+|\partial_r v_1^g(t,r)|&
\lesssim \lambda(t)^{-\frac12}t^{-2}
\end{align*}
by Lemma \ref{lem:v1}
and thus, these terms can be treated in the exact same fashion as $v_0$.

\item For $\varepsilon$ it suffices to invoke the bound from Lemma \ref{lem:eps} which
immediately yields
\[ \|\chi_c(t,\cdot)\partial_t [\theta(t,\cdot)\varepsilon(t,\cdot)]\|_{L^2(\R^3)}+
\|\nabla [\theta(t,\cdot)\varepsilon(t,\cdot)]\|_{L^2(\R^3)}\lesssim 
t^{-1}\lesssim [\lambda(t)t]^{-\frac12}. \]

\item Finally, we come to the most interesting contribution, the term $\chi_c v_1^b=\chi_c(v_{11}^b+v_{12}^b)$. We emphasize that the following estimates
are key to the whole construction. 
We may restrict ourselves to $r\geq \frac12 t$ since the case $r\leq \frac12 t$ is already included
in point $\eqref{item:3}$ above.
We start with $\chi_c v_{11}^b$.
Lemma \ref{lem:v1} yields
\begin{align*}
|v_{11}^b(t,r)|&\lesssim
\lambda(t)^{-\frac12}t^{-1}(1-\tfrac{r}{t})^{\frac12(1-\nu)}\\
|\partial_t v_{11}^b(t,r)|+|\partial_r v_{11}^b(t,r)|
&\lesssim \lambda(t)^{-\frac12}t^{-2}(1-\tfrac{r}{t})^{\frac12(1-\nu)-1}
\end{align*}
for all $t\geq t_0$ and $\frac12 t\leq r< t$.
Furthermore, note that 
\[ |\partial_t \chi_c(t,r)|+|\partial_r \chi_c(t,r)|\simeq \tfrac{1}{c}|\chi'(\tfrac{t-r}{c})|. \]
We infer
\begin{align*}
\|\partial_t \chi_c(t,\cdot)v_{11}^b(t,\cdot)\|_{L^2(\R^3\backslash B_{t/2})}^2&\lesssim
\tfrac{1}{c^2}\lambda(t)^{-1}t^{-2}\int_{t-c}^{t-\frac{c}{2}}(1-\tfrac{r}{t})^{1-\nu}r^2 dr \\
&=\tfrac{1}{c^2}\lambda(t)^{-1}t\int_{1-\frac{c}{t}}^{1-\frac{c}{2t}}(1-s)^{1-\nu}s^2 ds \\
&\lesssim \tfrac{1}{(2-\nu)c^2}\underbrace{\lambda(t)^{-1}t}_{t^{2-\nu}}\left (\tfrac{c}{t}\right)^{2-\nu}\lesssim c^{-\nu}
\end{align*}
and by the very same calculation we also obtain $\|\nabla \chi_c(t,\cdot)v_{11}^b(t,\cdot)\|_{L^2(\R^3)}\lesssim 
c^{-\frac{\nu}{2}}$.
If the derivative hits $v_{11}^b$ we have
\begin{align*}
\|\chi_c(t,\cdot)\partial_t v_{11}^b(t,\cdot)\|_{L^2(\R^3 \backslash B_{t/2})}^2
&\lesssim
\lambda(t)^{-1}t^{-4}\int_{\frac12 t}^{t-\frac{c}{2}}(1-\tfrac{r}{t})^{-\nu-1}r^2 dr \\
&=[\lambda(t)t]^{-1}\int_{\frac12}^{1-\frac{c}{2t}}(1-s)^{-\nu-1}s^2 ds \\
&\lesssim \tfrac{1}{\nu}t^{-\nu}\left(\tfrac{c}{2t}\right)^{-\nu}\lesssim c^{-\nu}
\end{align*}
and analogously we get $\|\chi_c(t,\cdot)\nabla v_{11}^b(t,\cdot)\|_{L^2(\R^3)}\lesssim c^{-\frac{\nu}{2}}$
as well. The term $v_{12}^b$ is handled by the exact same computations upon replacing $\nu$ by $3\nu$.
\end{enumerate}
\end{proof}

Next, we estimate the residual energy outside the (truncated) lightcone $K_{t_0,c}^\infty$.

\begin{lemma}
\label{lem:boundEout}
Under the assumptions of Lemma \ref{lem:boundE} we have the bounds
\begin{align*} 
\|\chi_c(t,\cdot)\partial_t W_{\lambda(t)}\|_{L^2(\R^3\backslash B_{t-c})}&\lesssim
[\lambda(t)t]^{-\frac12} \\
\|\nabla W_{\lambda(t)}\|_{L^2(\R^3\backslash B_{t-c})}
&\lesssim [\lambda(t)t]^{-\frac12}
\end{align*}
for all $t\geq t_0 > 2c\geq 2$ where $B_{t-c}=\{x\in \R^3: |x|<t-c\}$.
\end{lemma}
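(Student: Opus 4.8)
The plan is to reduce both estimates to a one-line rescaling computation, using that $W$ and its derivatives decay like $|\partial_R^k W(R)|\lesssim \langle R\rangle^{-1-k}$ (cf.\ the proof of Lemma~\ref{lem:e0}), together with $\lambda(t)=t^{-(1-\nu)}$, hence $\lambda'(t)=-(1-\nu)\lambda(t)/t$ and $|\lambda'(t)|\simeq|1-\nu|\lambda(t)t^{-1}$. Throughout I would exploit the standing hypothesis $t\geq t_0>2c$: it gives $t-c\geq t/2$, so any $r\in(t-c,\infty)$ with $r\lesssim t$ satisfies $r\simeq t$, and for $t_0$ large it gives $\lambda(t)(t-c)\geq\tfrac12\lambda(t)t=\tfrac12 t^\nu\gtrsim 1$, so $\langle\lambda(t)(t-c)\rangle^{-1}\lesssim[\lambda(t)t]^{-1}$.

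For the time-derivative bound the quickest route is to note that restricting the domain only decreases the $L^2$ norm, so
\[
\|\chi_c(t,\cdot)\partial_t W_{\lambda(t)}\|_{L^2(\R^3\setminus B_{t-c})}\leq\|\chi_c(t,\cdot)\partial_t W_{\lambda(t)}\|_{L^2(\R^3)}\lesssim[\lambda(t)t]^{-\frac12}
\]
by the second estimate of Lemma~\ref{lem:boundE}. (If one prefers a self-contained argument: from $\partial_t W_{\lambda(t)}(r)=\tfrac12\lambda(t)^{-\frac12}\lambda'(t)W(\lambda(t)r)+\lambda(t)^{\frac12}\lambda'(t)rW'(\lambda(t)r)$ and the decay of $W,W'$ one gets $|\partial_t W_{\lambda(t)}(r)|\lesssim\lambda(t)^{\frac12}t^{-1}\langle\lambda(t)r\rangle^{-1}$; since $\chi_c(t,\cdot)$ is supported in $\{r\leq t-\tfrac c2\}$, intersecting with $\R^3\setminus B_{t-c}$ localizes to the shell $t-c<r\leq t-\tfrac c2$ of width $\tfrac c2$, on which $r\simeq t$ and $\lambda(t)r\simeq\lambda(t)t$, giving $\|\chi_c(t,\cdot)\partial_t W_{\lambda(t)}\|_{L^2(\R^3\setminus B_{t-c})}^2\lesssim\lambda(t)t^{-2}[\lambda(t)t]^{-2}ct^2=\lambda(t)c[\lambda(t)t]^{-2}\lesssim[\lambda(t)t]^{-1}$, using $c\leq t$.)

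The gradient estimate is the one where the restriction to the exterior region is genuinely used, because $\|\nabla W_{\lambda(t)}\|_{L^2(\R^3)}=\|\nabla W\|_{L^2(\R^3)}$ is a fixed non-small constant by scaling invariance of $\dot H^1$. Here I compute directly: $\partial_r W_{\lambda(t)}(r)=\lambda(t)^{\frac32}W'(\lambda(t)r)$, so $|\nabla W_{\lambda(t)}(r)|\lesssim\lambda(t)^{\frac32}\langle\lambda(t)r\rangle^{-2}$, and the substitution $R=\lambda(t)r$ yields
\[
\|\nabla W_{\lambda(t)}\|_{L^2(\R^3\setminus B_{t-c})}^2\simeq\int_{t-c}^\infty|\nabla W_{\lambda(t)}(r)|^2r^2\,dr\lesssim\lambda(t)^3\int_{t-c}^\infty\langle\lambda(t)r\rangle^{-4}r^2\,dr=\int_{\lambda(t)(t-c)}^\infty\langle R\rangle^{-4}R^2\,dR.
\]
Since $\langle R\rangle^{-4}R^2\leq R^{-2}$ for $R\geq 1$, this last integral is $\lesssim\langle\lambda(t)(t-c)\rangle^{-1}\lesssim[\lambda(t)t]^{-1}$ by the observations in the first paragraph, and taking square roots finishes the proof.

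There is no serious obstacle here; the only points needing care are that one must \emph{not} bound the gradient by its (scale-invariant, hence non-small) full-space norm, and that the elementary facts $t-c\simeq t$ and $\lambda(t)(t-c)\gtrsim 1$ must be read off from $t\geq t_0>2c$ with $t_0$ taken large.
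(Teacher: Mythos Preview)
Your proof is correct and follows essentially the same approach as the paper: the first bound is obtained by invoking Lemma~\ref{lem:boundE} (restriction only decreases the norm), and the second by a direct rescaling computation using $|W'(R)|\lesssim\langle R\rangle^{-2}$ and $t-c\simeq t$. The paper's gradient computation avoids the substitution $R=\lambda(t)r$ and instead bounds $|\partial_r W(\lambda(t)r)|^2\lesssim \lambda(t)^{-4}r^{-4}$ directly on $r\geq t-c$, but this is the same estimate written differently.
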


\begin{proof}
The first bound follows from Lemma \ref{lem:boundE}. 
Thus, it suffices to note that
\begin{align*}
\|\nabla W_{\lambda(t)}\|_{L^2(\R^3\backslash B_{t-c})}^2&\simeq 
\lambda(t)\int_{t-c}^\infty |\partial_r W(\lambda(t)r)|^2r^2 dr \\
&\lesssim \lambda(t)^3\int_{t-c}^\infty
\lambda(t)^{-4}r^{-2} dr 
\lesssim \lambda(t)^{-1}(t-c)^{-1} \\
&\lesssim [\lambda(t)t]^{-1}.
\end{align*}
\end{proof}

To conclude the energy bounds, we provide estimates for the $L^6$ norms.

\begin{corollary}
\label{cor:L6}
Under the assumptions of Lemma \ref{lem:boundE} we have
\begin{align*} 
\|u_c(t,\cdot)-W_{\lambda(t)}\|_{L^6(\R^3)}&\lesssim [\lambda(t)t]^{-\frac12}+c^{-\frac{\nu}{2}} \\
\|W_{\lambda(t)}\|_{L^6(\R^3\backslash B_{t-c})}&\lesssim [\lambda(t)t]^{-\frac12}
\end{align*}
for all $t\geq t_0>2c\geq 2$.
\end{corollary}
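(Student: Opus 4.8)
The plan is to obtain both bounds from the Sobolev embedding $\dot H^1(\R^3)\hookrightarrow L^6(\R^3)$ together with the energy estimates already established in Lemmas \ref{lem:boundE} and \ref{lem:boundEout}. Recall that in three space dimensions $\|f\|_{L^6(\R^3)}\lesssim\|\nabla f\|_{L^2(\R^3)}$ for every $f\in\dot H^1(\R^3)$.

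For the first inequality I would first record that the difference $d(t,\cdot):=u_c(t,\cdot)-W_{\lambda(t)}$ is compactly supported. Indeed, from Eq.~\eqref{eq:solu},
\[ d(t,r)=\theta(t,r)\bigl[v_0(t,r)+v_1^g(t,r)+\varepsilon(t,r)\bigr]+\chi_c(t,r)\,v_1^b(t,r), \]
and the cut-offs are such that $\theta(t,\cdot)$ vanishes for $r\ge 2t$ while $\chi_c(t,\cdot)$ vanishes for $r\ge t-\tfrac c2$; hence $\supp d(t,\cdot)\subset\overline{B_{2t}}$. Moreover $d(t,\cdot)\in\dot H^1(\R^3)$, since $\|\nabla d(t,\cdot)\|_{L^2(\R^3)}$ was shown to be finite (and bounded by $[\lambda(t)t]^{-1/2}+c^{-\nu/2}$) in Lemma \ref{lem:boundE}. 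Applying the Sobolev embedding then gives
\[ \|u_c(t,\cdot)-W_{\lambda(t)}\|_{L^6(\R^3)}\lesssim\|\nabla[u_c(t,\cdot)-W_{\lambda(t)}]\|_{L^2(\R^3)}\lesssim[\lambda(t)t]^{-\tfrac12}+c^{-\tfrac\nu2}, \]
which is the first claim. This step is essentially immediate once the support property is noted.

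For the second inequality I would argue by a direct computation, in the spirit of the proof of Lemma \ref{lem:boundEout}. Passing to the rescaled variable $R=\lambda(t)r$ and using the elementary bound $|W(R)|\lesssim\langle R\rangle^{-1}$,
\[ \|W_{\lambda(t)}\|_{L^6(\R^3\backslash B_{t-c})}^6\simeq\int_{t-c}^{\infty}\lambda(t)^3|W(\lambda(t)r)|^6r^2\,dr=\int_{\lambda(t)(t-c)}^{\infty}|W(R)|^6R^2\,dR\lesssim\int_{\lambda(t)(t-c)}^{\infty}R^{-4}\,dR. \]
Here I use that, because $t>2c$ gives $t-c>\tfrac t2$ and $\nu$ is close to $1$, the lower limit satisfies $\lambda(t)(t-c)\simeq\lambda(t)t=t^\nu\gg1$ for $t\ge t_0$, so that $\langle R\rangle^{-6}\simeq R^{-6}$ throughout the domain of integration. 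The last integral equals $\tfrac13[\lambda(t)(t-c)]^{-3}\lesssim[\lambda(t)t]^{-3}$, and taking sixth roots yields $\|W_{\lambda(t)}\|_{L^6(\R^3\backslash B_{t-c})}\lesssim[\lambda(t)t]^{-\tfrac12}$. Alternatively, one could multiply $W_{\lambda(t)}$ by a smooth cut-off equal to $1$ on $\R^3\backslash B_{t-c}$ and to $0$ on $B_{(t-c)/2}$, invoke the Sobolev embedding, and control the resulting gradient by $\|\nabla W_{\lambda(t)}\|_{L^2(\R^3\backslash B_{(t-c)/2})}$ --- estimated exactly as in Lemma \ref{lem:boundEout} --- plus the cut-off term $(t-c)^{-1}\|W_{\lambda(t)}\|_{L^2(B_{t-c}\backslash B_{(t-c)/2})}$, which the same change of variables shows to be $\lesssim[\lambda(t)t]^{-\tfrac12}$; both routes are routine.

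I do not expect a genuine obstacle in this corollary: it is a soft consequence of the two preceding lemmas. The only points deserving a moment of care are to verify that $u_c(t,\cdot)-W_{\lambda(t)}$ is compactly supported (and in $\dot H^1$), so that the Sobolev inequality on $\R^3$ applies with no boundary contribution, and, in the exterior computation, to check that the integration region is bounded away from the origin in the rescaled variable, so that it is the $R^{-1}$ tail of $W$ --- rather than its behaviour near $R=0$ --- that governs the estimate.
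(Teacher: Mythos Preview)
Your proof is correct and follows essentially the same approach as the paper: the first bound via the Sobolev embedding $\dot H^1(\R^3)\hookrightarrow L^6(\R^3)$ applied to the gradient estimate of Lemma~\ref{lem:boundE}, and the second by a direct computation of the $L^6$ integral using the $R^{-1}$ decay of $W$. Your additional remark on the compact support of $u_c(t,\cdot)-W_{\lambda(t)}$ and the change of variables in the exterior integral are minor presentational variations, not substantive differences.
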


\begin{proof}
The first assertion is an immediate consequence of Lemma \ref{lem:boundE}
and the Sobolev embedding $\dot{H}^1(\R^3)\hookrightarrow L^6(\R^3)$.
For the second bound we calculate explicitly
\begin{align*}
\|W_{\lambda(t)}\|_{L^6(\R^3\backslash B_{t-c})}^6\lesssim \lambda(t)^3 \int_{t-c}^\infty
\lambda(t)^{-6}r^{-6}r^2dr\lesssim \lambda(t)^{-3}(t-c)^{-3}\lesssim [\lambda(t)t]^{-3}.
\end{align*}
\end{proof}

\subsection{Extension of the solution to the whole space}
Lemmas \ref{lem:boundE} and \ref{lem:boundEout} show that the bulk of the energy of $u_c$ is concentrated
on the soliton inside the truncated lightcone.
Now we pick a sequence of times $(T_n)$ with $T_n \geq t_0$ and $T_n \to \infty$ as $n\to \infty$.
Then we consider the sequence of Cauchy data $(f_c^n,g_c^n)$ given by
\begin{equation}
\label{eq:fg}
\begin{aligned}
f_c^n(r)&=u_c(T_n,r) \\
g_c^n(r)&=\chi_c(T_n,r)\partial_t u_c(t,r)|_{t=T_n}.
\end{aligned}
\end{equation}
Since $\chi_c(T_n,r)\equiv 1$ for $r\leq T_n-c$, we have 
\footnote{Here and in the following we employ the convenient abbreviation $u[t]=(u(t,\cdot),\partial_t u(t,\cdot))$.}
$(f_c^n,g_c^n)=u_c[T_n]$ on $B_{T_n-c}$.
As a consequence of Lemma \ref{lem:boundE}, the sequence $(f_c^n,g_c^n)$ is uniformly bounded
in $\dot{H}^1\times L^2(\R^3)$ for all $n \in \N$.
Now we solve the equation backwards in time with data $(f_c^n,g_c^n)$ at $t=T_n$.
For the following it is useful to introduce the notation
\[ K_{t_0,c}^{T_n}:=\{(t,x) \in \R\times \R^3: t_0\leq t\leq T_n, |x|\leq t-c\}. \]
Furthermore, for $U\subset \R^3$ open we set
\begin{align*} 
\mc{E}_U(f,g)&:=\frac12 \int_U (|\nabla f(x)|^2+|g(x)|^2)dx-\frac16 \int_U |f(x)|^6 dx \\
&=\tfrac12 \|(f,g)\|_{\dot{H^1}\times L^2(U)}^2-\tfrac16 \|f\|_{L^6(U)}^6.
\end{align*}
Thus, $\mc{E}_{\R^3}$ is the energy functional associated with the focusing quintic wave equation.

\begin{lemma}
\label{lem:extend}
Let $t_0\geq 1$ and $c\geq 1$ be sufficiently large and assume $t_0\geq 2c$. 
Then, for any $n\in\N$, there exists an energy class solution $u^{(T_n)}$ of
\[ \left \{ \begin{array}{l}
\Box u^{(T_n)}(t,x)+u^{(T_n)}(t,x)^5=0,\quad (t,x) \in [t_0,T_n] \times \R^3 \\
u^{(T_n)}[T_n]=(f_c^n,g_c^n) \end{array} \right . \]
which satisfies $u^{(T_n)}=u_c$ on $K_{t_0,c}^{T_n}$. 
Furthermore, 
\[ \|u^{(T_n)}[t_0]\|_{\dot{H}^1\times L^2(\R^3\backslash B_{t_0-c})}\to 0 \]
as $t_0,c \to \infty$, uniformly in $n$.
\end{lemma}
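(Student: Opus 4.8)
The plan is to combine finite speed of propagation with the small--data perturbation theory for the energy-critical wave equation. First I note that $(f_c^n,g_c^n)\in\dot H^1\times L^2(\R^3)$ with uniformly bounded norm by Lemma~\ref{lem:boundE}, so the standard local theory (\cite{Pec84},\cite{Sog08}) applies.

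Next I would extend $u_c$ from the truncated cone to all of $[t_0,T_n]\times\R^3$ by integrating the cut-off time derivative, setting $\tilde u_c(t,x):=f_c^n(x)+\int_{T_n}^t\chi_c(s,x)\,\partial_s u_c(s,x)\,ds$. Then $\tilde u_c[T_n]=(f_c^n,g_c^n)$; $\tilde u_c[t]\in\dot H^1\times L^2(\R^3)$ for every $t\in[t_0,T_n]$ by Lemma~\ref{lem:boundE} together with the pointwise bounds of Lemmas~\ref{lem:e1},~\ref{lem:v1},~\ref{lem:eps} for the corrections; and, since $\chi_c(s,x)=1$ whenever $|x|\le t-c\le s-c$, one has $\tilde u_c(t,x)=u_c(t,x)$ for $|x|\le t-c$, so $\Box\tilde u_c+\tilde u_c^5=0$ there. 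Hence the defect $a:=\Box\tilde u_c+\tilde u_c^5$ is supported in $\{|x|\ge t-c\}$. I would then solve, by a contraction argument in a suitable Strichartz space, the equation
$$\Box w+5\tilde u_c^4 w+N(\tilde u_c,w)=-a,\qquad w[T_n]=0,$$
on $[t_0,T_n]$, and set $u^{(T_n)}:=\tilde u_c+w$; a direct computation shows $\Box u^{(T_n)}+(u^{(T_n)})^5=0$ and $u^{(T_n)}[T_n]=(f_c^n,g_c^n)$. By finite speed of propagation $w$ is supported in $\{|x|\ge t-c\}$, so $u^{(T_n)}=u_c$ on $K_{t_0,c}^{T_n}$. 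The contraction closes because (i) on the support of $w$ the rescaled soliton potential $5W_{\lambda(t)}^4$ and the amplitude of $\tilde u_c$ are negligible --- indeed $|x|\ge t-c\ge t/2\gg\lambda(t)^{-1}$ for $t\ge t_0$ with $\nu$ close to $1$ --- so the linear part is essentially the free wave operator and the small-data theory applies with constants independent of the length of $[t_0,T_n]$; and (ii) $a$ is $L^1$ in time with total dual-Strichartz mass $\lesssim[\lambda(t_0)t_0]^{-1/2}+c^{-\nu/2}$, coming from the cut-off errors ($\Box$ hitting $\theta$ and $\chi_c$), from $\Box\varepsilon$ (controlled on $B_{2t}$ by Lemma~\ref{lem:eps}), and from the non-smooth-at-the-cone profile $v_1^b$; for the last one the smallness in $c$ is extracted by integrating the $(1-r/t)$-weights over the thin shell $t-c\le r\le t$, exactly as in part~(5) of the proof of Lemma~\ref{lem:boundE}. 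Consequently $\|w\|_{L^\infty_{[t_0,T_n]}(\dot H^1\times L^2)}\lesssim[\lambda(t_0)t_0]^{-1/2}+c^{-\nu/2}$, uniformly in $n$.

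Finally, for the residual energy write $u^{(T_n)}[t_0]=\tilde u_c[t_0]+w[t_0]$. On $\R^3\setminus B_{t_0-c}$ the function $\tilde u_c(t_0,\cdot)$ differs from $W_{\lambda(t_0)}$ only by the (time-integrated) corrections $v_0,v_1^g,v_1^b,\varepsilon$, so Lemmas~\ref{lem:boundE} and~\ref{lem:boundEout} give $\|\tilde u_c[t_0]\|_{\dot H^1\times L^2(\R^3\setminus B_{t_0-c})}\lesssim[\lambda(t_0)t_0]^{-1/2}+c^{-\nu/2}$, while the bound on $w$ from the previous step gives $\|w[t_0]\|_{\dot H^1\times L^2}\lesssim[\lambda(t_0)t_0]^{-1/2}+c^{-\nu/2}$, both uniformly in $n$. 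Letting $t_0,c\to\infty$ yields $\|u^{(T_n)}[t_0]\|_{\dot H^1\times L^2(\R^3\setminus B_{t_0-c})}\to0$ uniformly in $n$.

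The main obstacle is the estimate for the defect $a=\Box\tilde u_c+\tilde u_c^5$: one must show it is small in a dual Strichartz norm and $L^1$ in time with total mass controlled by $[\lambda(t_0)t_0]^{-1/2}+c^{-\nu/2}$, which forces a careful accounting of how the cut-offs $\theta,\chi_c$ interact with the singular-at-the-cone profile $v_1^b$ and with $\varepsilon$ near $|x|\approx 2t$, and then running the small-data contraction for $w$ on the interval $[t_0,T_n]$ --- whose length is unbounded as $n\to\infty$ --- with constants that do not see $n$. This last point is only possible because the rescaled soliton potential is concentrated well inside the cone and hence invisible to $w$.
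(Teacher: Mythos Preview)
Your route differs substantially from the paper's, and the obstacle you flag at the end is a genuine gap rather than a routine verification. The paper does not construct an explicit extension $\tilde u_c$ or estimate a defect over $[t_0,T_n]$; it argues via \emph{energy conservation}. One runs the local theory backward from $T_n$ on a maximal interval $(T_n^*,T_n]$ and assumes $T_n^*>t_0$. By causality $u^{(T_n)}=u_c$ inside the cone, and Lemmas~\ref{lem:boundE}, \ref{lem:boundEout} together with Corollary~\ref{cor:L6} show that both the total conserved energy $\mc E_{\R^3}(u^{(T_n)}[t])$ and the portion $\mc E_{B_{t-c}}(u^{(T_n)}[t])$ carried inside the cone are within $O(\delta^2)$ of $\mc E_{\R^3}(W,0)$. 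Subtracting, $|\mc E_{\R^3\setminus B_{t-c}}(u^{(T_n)}[t])|\lesssim\delta^2$ for every $t\in(T_n^*,T_n]$. The exterior Sobolev inequality $\|u\|_{L^6(\R^3\setminus B_{t-c})}\lesssim\|\nabla u\|_{L^2(\R^3\setminus B_{t-c})}$ then turns this into $\delta^2\gtrsim\tfrac12 X^2(1-CX^4)$ for $X=\|u^{(T_n)}[t]\|_{\dot H^1\times L^2(\R^3\setminus B_{t-c})}$; since $X\le\delta$ at $t=T_n$ and depends continuously on $t$, a bootstrap keeps $X\lesssim\delta$ throughout. This simultaneously yields the residual estimate and allows the small-data theory to continue the solution past $T_n^*$, contradicting minimality. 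No time-integrated error is ever needed.

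Your scheme, by contrast, requires $\|a\|_{L^1([t_0,T_n];L^2)}$ (or an analogous dual Strichartz norm) to be bounded uniformly as $T_n\to\infty$, and this appears to fail. Writing $\tilde u_c=u_c-h$ with $h=-\int_t^{T_n}(1-\chi_c)\partial_s u_c\,ds$, the defect contains $\Box u_c+u_c^5$ restricted to $\{|x|>t-c\}$, hence in particular $[\Box,\chi_c]v_1^b$ on the transition shell. Even after exploiting that $\chi_c$ depends on $t-r$ (so $\Box\chi_c=-\tfrac{2}{rc}\chi'$ and the cross term involves the favorable derivative $(\partial_t+\partial_r)v_1^b$), this contributes $L^2$ mass $\sim c^{-\nu/2}t^{-1}$ at each time, whence $\int_{t_0}^{T_n}\sim c^{-\nu/2}\log(T_n/t_0)\to\infty$. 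Other pieces of $a$---the $-(\partial_t\chi_c)(\partial_t u_c)$ term from $-\Box h$, or the more singular $v_{12}^b$ contribution with weight $(1-r/t)^{(1-3\nu)/2}$---are no better and for $\nu<1$ produce polynomial growth in $T_n$. I see no cancellation mechanism in your outline that removes these divergences; without one the contraction cannot close on intervals of unbounded length with $n$-independent constants. The energy-conservation argument is precisely the device that supplies global-in-time control without having to integrate pointwise errors.
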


\begin{proof}
Recall that, given data in $\dot{H}^1\times L^2(\R^3)$, 
the Cauchy problem for the quintic wave equation can be solved locally in time.  
Furthermore, if the data are \emph{small} in $\dot{H}^1\times L^2(\R^3)$, the corresponding
solution exists globally in time, see \cite{Pec84} or \cite{Sog08}, p.~142, Theorem 3.1.
Given any $\delta>0$, we have $\|(f_c^n,g_c^n)\|_{\dot{H}^1(\R^3)\times L^2(\R^3)}\leq 
\|W\|_{\dot{H}^1(\R^3)}+\delta$ for all $n\in \N$ if we assume $t_0$ and $c$ to be sufficiently 
large (see Lemma \ref{lem:boundE}).
Thus, we infer the existence of $u^{(T_n)}$ on $(T_n^*,T_n]\times \R^3$
with some $T_n^*<T_n$ and we assume $T_n^*$ to be minimal with this property.
Furthermore, the map 
\[ t \mapsto \|u^{(T_n)}[t]\|_{\dot{H}^1\times L^2(\R^3)}: (T_n^*,T_n]\to \R \]
is continuous (\cite{Sog08}, p.~142, Theorem 3.1).
If $T_n^*\leq t_0$ we are done.
Thus, assume that $T_n^*>t_0$.
By causality it is clear that $u^{(T_n)}=u_c$ on $({K}_{T_n^*,c}^{T_n})^\circ$, the interior
of the truncated lightcone.
Furthermore, by Lemma \ref{lem:boundE} and Corollary \ref{cor:L6}, the data satisfy
\begin{align*} 
\|(f_c^n,g_c^n)-(W_{\lambda(T_n)},0)\|_{\dot{H}^1 \times L^2(\R^3)}&\lesssim \delta \\
\|f_c^n-W_{\lambda(t)}\|_{L^6(\R^3)}&\lesssim \delta
\end{align*}
and, by using that $\|\nabla W_{\lambda(t)}\|_{L^2(\R^3)}=\|\nabla W\|_{L^2(\R^3)}$ and analogously
for $\|W_{\lambda(t)}\|_{L^6(\R^3)}$, 
this implies $|\mc E_{\R^3}(f_c^n,g_c^n)-\mc E_{\R^3} (W,0)|\lesssim \delta^2+\delta^6\lesssim \delta^2$ for the total energy.
In other words, the bulk of the total energy is concentrated on the soliton.
By conservation of energy we infer that this has to hold for all times, i.e., 
$|\mc E_{\R^3} (u^{(T_n)}[t])-\mc E_{\R^3} (W,0)|\lesssim \delta^2$
for all $t \in (T_n^*, T_n]$.
Since $u^{(T_n)}=u_c$ on $(K_{T^*_n,c}^{T_n})^\circ$, we infer from Lemma \ref{lem:boundE} 
and Corollary \ref{cor:L6} the bounds
\begin{align*} 
\|u^{(T_n)}[t]-(W_{\lambda(t)},0)\|_{\dot{H}^1\times L^2(B_{t-c})}&\lesssim \delta \\
\|u^{(T_n)}(t,\cdot)-W_{\lambda(t)}\|_{L^6(B_{t-c})}&\lesssim \delta
\end{align*}
which imply $|\mc E_{B_{t-c}}(u^{(T_n)}[t])-\mc E_{B_{t-c}}(W_{\lambda(t)},0)|\lesssim \delta^2$.
Moreover, by Lemma \ref{lem:boundEout} and Corollary \ref{cor:L6} we have
$|\mc E_{\R^3\backslash B_{t-c}}(W,0)|\lesssim \delta^2$ and thus,
$|\mc E_{\R^3\backslash B_{t-c}}(u^{(T_n)}[t])|\lesssim \delta^2$ which shows that the
total energy of the solution $u^{(T_n)}$ outside the truncated lightcone stays small for
all times.
By a (slight modification of) the Sobolev inequality we infer 
$\|u^{(T_n)}(t,\cdot)\|_{L^6(\R^3\backslash B_{t-c})}\lesssim \|\nabla u^{(T_n)}(t,\cdot)\|_{L^2(\R^3\backslash
B_{t-c})}$ with an implicit constant that is independent of $t$.
This estimate implies
\begin{align*} \delta^2&\gtrsim \mc E_{\R^3\backslash B_{t-c}}(u^{(T_n)}[t])
=\tfrac12 \|u^{(T_n)}[t]\|_{\dot{H}^1\times L^2(\R^3\backslash B_{t-c})}^2-\tfrac16
\|u^{(T_n)}(t,\cdot)\|_{L^6(\R^3\backslash B_{t-c})}^6 \\
&\geq \tfrac12 \|u^{(T_n)}[t]\|_{\dot{H}^1\times L^2(\R^3\backslash B_{t-c})}^2
\left [1-C\|u^{(T_n)}[t]\|_{\dot{H}^1\times L^2(\R^3\backslash B_{t-c})}^4 \right ]
\end{align*}
for all $t\in (T_n^*,T_n]$.
Initially, at $t=T_n$, we have 
\[ \|u^{(T_n)}[T_n]\|_{\dot{H}^1\times L^2(\R^3\backslash B_{t-c})}
=\|(f_c^n,g_c^n)\|_{\dot{H}^1\times L^2(\R^3\backslash B_{t-c})}\leq \delta \]
by Lemma \ref{lem:boundEout} and therefore, we must have
\[ \|u^{(T_n)}[t]\|_{\dot{H}^1\times L^2(\R^3\backslash B_{t-c})}^2\lesssim \delta^2 \]
for all $t \in (T_n^*,T_n]$ (provided $\delta>0$ is sufficiently small) 
since the map $t \mapsto u^{(T_n)}[t]$ is continuous from $(T_n^*,T_n]$
to $\dot{H}^1\times L^2(\R^3\backslash B_{t-c})$ by a classical $\frac{\varepsilon}{2}$ argument.
We conclude that not only the total energy but also the \emph{kinetic} energy of $u^{(T_n)}$
stays small outside the truncated cone.
Consequently, the small data global existence result
allows us to extend the solution beyond time $T_n^*$ which contradicts the minimality of $T_n^*$.
Thus, we must have $T_n^*\leq t_0$ and the Lemma is proved.
\end{proof}

\subsection{The Bahouri-G\'erard decomposition}

Our idea now is to consider the sequence of data $u^{(T_n)}[t_0]$ and attempt to extract
a limit as $n\to \infty$.
In effect, we shall not be able to do so, but we shall nonetheless be able to construct
new initial data $u_*[t_0]$ resulting in an energy class solution $u_*(t,x)$ defined on all of
$[t_0,\infty) \times \R^3$ with the property that
\[ u_*=u_c \mbox{ on }K_{t_0,c}^\infty. \]
In order to achieve this, we apply the celebrated Bahouri-G\'erard decomposition \cite{BG99}.
From now on we always assume $t_0$ and $c$ to be sufficiently large with $t_0\geq 2c$.
Note also that no space translations are necessary in the following lemma since all functions
are in fact radial.
Furthermore, it is convenient to introduce the notation 
\[ u^{\lambda,t_0}(t,x):=\lambda^{-\frac12}u\left (\frac{t-t_0}{\lambda}, \frac{x}{\lambda}\right ). \]

\begin{lemma}[Linear profile decomposition]
\label{lem:BG}
Consider the sequence $(u^{(T_n)}[t_0])_{n\in\N}$ of Cauchy data for the 
solutions constructed in Lemma \ref{lem:extend}.
Then, upon passing to a subsequence,
there exists a sequence $(V_i)_{i\in\N}$ of (fixed) radial free waves 
\footnote{A ``free wave'' is a function $v:\R\times\R^3\to\R$ such that the map 
$t \mapsto \|v[t]\|_{\dot{H}^1\times L^2(\R^3)}$ is continuous (in particular, 
$\|v[t]\|_{\dot{H}^1\times L^2(\R^3)}\lesssim 1$ for any $t\in\R$) and
$v(t,\cdot)=\cos(t|\nabla|)v(0,\cdot)+|\nabla|^{-1}\sin(t|\nabla|)\partial_1 v(0,\cdot)$, i.e., 
$\Box v=0$ in the weak sense.},
such that
\begin{equation}
\label{eq:BG}
u^{(T_n)}[t_0]=\sum_{i=1}^A V_i^{\lambda_n^i,t_n^i}[t_0]
+W^{nA}[t_0]
\end{equation}
for all $n, A\in \N$ where $(t_n^i)_{n\in\N}$ and $(\lambda_n^i)_{n\in\N}$ 
are suitable sequences of times and positive 
scaling factors, respectively, that satisfy
\begin{equation}
\label{eq:orth}
\left |\log \left ( \frac{\lambda_n^i}{\lambda_n^j}\right )\right |+\frac{|t_n^i-t_n^j|}{\lambda_n^i}
\to \infty,\quad i\not= j 
\end{equation}
as $n \to \infty$, and $W^{nA}$ is a free wave with the property
\[ \lim_{A\to \infty} \limsup_{n\to \infty}\|W^{nA}\|_{L^\infty(\R) L^6(\R^3)}=0. \]
Furthermore, for any $A\in\N$, we have asymptotic orthogonality in the sense that
\begin{align*} 
\left ( \left . V_i^{\lambda_n^i,t_n^i}[t_0]\right | V_j^{\lambda_n^j,t_n^j}[t_0] \right )_{\dot{H}^1
\times L^2(\R^3)} &\to 0, \quad 1\leq i,j\leq A,\quad i\not= j \\
\left ( \left . V_i^{\lambda_n^i,t_n^i}[t_0]\right | W^{nA}[t_0] \right )_{\dot{H}^1
\times L^2(\R^3)} &\to 0,\quad 1\leq i\leq A \\
\end{align*}
as $n\to\infty$.
\end{lemma}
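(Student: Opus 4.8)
The plan is to obtain this lemma as a direct application of the Bahouri--G\'erard concentration--compactness (linear profile) decomposition \cite{BG99} for the wave equation on $\R^{1+3}$, together with a handful of by-now-standard orthogonality arguments. There is essentially nothing new to prove beyond what is already contained in \cite{BG99}; the content is in setting things up correctly.

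First I would check that the hypotheses of \cite{BG99} are satisfied. By Lemma \ref{lem:extend} each $u^{(T_n)}[t_0]$ is a well-defined element of $\dot H^1\times L^2(\R^3)$, and by Lemma \ref{lem:boundE} and Corollary \ref{cor:L6} the sequence $(u^{(T_n)}[t_0])_{n\in\N}$ is uniformly bounded in $\dot H^1\times L^2(\R^3)$; moreover each member is radial. Applying the linear profile decomposition of \cite{BG99} to this sequence --- run within the radial subspace, so that no spatial translation parameters enter and the profiles and remainders may be taken radial --- yields, after passing to a subsequence, radial free waves $V_i$, scaling factors $\lambda_n^i>0$ and time parameters $t_n^i\in\R$ obeying the pseudo-orthogonality \eqref{eq:orth}, the identity \eqref{eq:BG} for all $n,A\in\N$ (an identity of Cauchy data at $t=t_0$, with $W^{nA}$ the free wave whose data at time $t_0$ equals $u^{(T_n)}[t_0]-\sum_{i=1}^A V_i^{\lambda_n^i,t_n^i}[t_0]$), and the smallness $\lim_{A\to\infty}\limsup_{n\to\infty}\|W^{nA}\|_{L^\infty(\R)L^6(\R^3)}=0$. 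Here I use the normalization $u^{\lambda,t_0}(t,x)=\lambda^{-1/2}u((t-t_0)/\lambda,x/\lambda)$ introduced above; matching it with the normalization employed in \cite{BG99} is purely bookkeeping, and the time parameters may be arranged so that $(t_0-t_n^i)/\lambda_n^i$ either converges or diverges to $\pm\infty$ (any finite limit being absorbed into $V_i$).

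It then remains to verify the two asymptotic orthogonality assertions, both of which are classical consequences of \eqref{eq:orth} and of the inductive construction in \cite{BG99}. For the profile--profile relation I would expand $(V_i^{\lambda_n^i,t_n^i}[t_0]\,|\,V_j^{\lambda_n^j,t_n^j}[t_0])_{\dot H^1\times L^2}$, apply the ($\dot H^1\times L^2$-unitary) rescaling-and-time-shift normalizing the $i$-th profile, and observe that the $j$-th piece becomes a rescaling of a fixed energy-class function by the ratio $\lambda_n^j/\lambda_n^i$, evolved by the linear flow through a time of order $(t_0-t_n^j)/\lambda_n^j-(t_0-t_n^i)/\lambda_n^i$; by \eqref{eq:orth} at least one of these two quantities degenerates, and in either case the inner product tends to $0$ --- when the scales separate by weak convergence to $0$ of the degenerately rescaled function, and when the time parameters separate by the fact that a free wave with $\dot H^1\times L^2$ data tends weakly to $0$ in the energy space as its time argument runs to $\pm\infty$. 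For the profile--remainder relation I would invoke that in \cite{BG99} the profiles $V_i$ are extracted precisely as weak limits of the $(\lambda_n^i,t_n^i)$-rescaled-and-shifted remainders from the preceding step, so that the correspondingly rescaled and shifted $W^{nA}[t_0]$ converges weakly to $0$ in $\dot H^1\times L^2(\R^3)$, which is exactly the stated orthogonality.

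The proof thus contains no genuinely hard step. The only points demanding a little care are the reduction to the radial category (ensuring the core positions vanish and the profiles are radial), which is handled by reinterpreting the argument of \cite{BG99} inside the radial energy space, and the transcription of normalization conventions; the dispersive smallness of $W^{nA}$ in $L^\infty(\R)L^6(\R^3)$ is taken verbatim from \cite{BG99}.
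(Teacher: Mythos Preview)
Your proposal is correct and follows essentially the same approach as the paper: verify uniform boundedness of $(u^{(T_n)}[t_0])_{n\in\N}$ in $\dot H^1\times L^2(\R^3)$ and then invoke the main theorem of \cite{BG99} (together with the remark on p.~159 there handling the radial case). The paper's proof is even terser than yours, simply deferring all of the orthogonality statements to \cite{BG99}; one small remark is that the uniform $\dot H^1\times L^2$ bound outside $B_{t_0-c}$ comes from the continuity argument in the proof of Lemma~\ref{lem:extend} rather than from Corollary~\ref{cor:L6} (which concerns $L^6$), but since you also cite Lemma~\ref{lem:extend} this does not affect the argument.
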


\begin{proof}
From Lemma \ref{lem:boundE} we deduce 
\[ \|u^{(T_n)}[t_0]\|_{\dot{H}^1\times L^2(B_{t_0-c})}
=\|u_c[t_0]\|_{\dot{H}^1 \times L^2(B_{t_0-c})}\lesssim 1 \]
and in the proof of Lemma \ref{lem:extend} above we had
\[ \|u^{(T_n)}[t_0]\|_{\dot{H}^1\times L^2(\R^3\backslash B_{t_0-c})}\lesssim \delta \]
for all $n\in\N$.
Thus, we infer
\[\sup_{n\in\N}\|u^{(T_n)}[t_0]\|_{\dot{H}^1\times L^2(\R^3)}\lesssim 1\]
and
everything follows by the main theorem in \cite{BG99} and the remark on p.~159.
\end{proof}

\begin{remark}
\label{rem:locorth}
We will also use a kind of ``localized'' orthogonality which can be derived as follows. Suppose
$(v_n[t_0]|w_n[t_0])_{\dot H^1\times L^2(\R^3)}=o(1)$ as $n\to\infty$ and the energy of 
$v_n[t_0]$ concentrates in $U\subset \R^3$ as $n\to \infty$, i.e.,
\[ \|v_n[t_0]\|_{\dot H^1\times L^2(\R^3)}=
\|v_n[t_0]\|_{\dot H^1\times L^2(U)}+o(1) \]
as $n\to\infty$, or, in other words,
\[ \|v_n[t_0]\|_{\dot H^1\times L^2(\R^3 \backslash U)} \to 0 \quad (n\to\infty).\]
Of course, we also assume that $\|v_n[t_0]\|_{\dot H^1\times L^2(\R^3)}, \|w_n[t_0]\|_{\dot H^1\times L^2(\R^3)}\lesssim 1$
for all $n$.
Then
we have the localized orthogonality
\begin{align*} 
(v_n[t_0]|w_n[t_0])_{\dot H^1\times L^2(U)}
&=(v_n[t_0]|w_n[t_0])_{\dot H^1\times L^2(\R^3)}-(v_n[t_0]|w_n[t_0])_{\dot H^1\times L^2(\R^3\backslash U)}=o(1)
\end{align*}
since
\[ \big |(v_n[t_0]|w_n[t_0])_{\dot H^1\times L^2(\R^3\backslash U)}\big |\leq \|v_n[t_0]\|_{\dot H^1\times L^2(\R^3\backslash U)}
\|w_n[t_0]\|_{\dot H^1\times L^2(\R^3\backslash U)}\to 0 \]
as $n\to\infty$.
\end{remark}

A triple $(V_i,(\lambda_n^i),(t_n^i))$ like in Lemma \ref{lem:BG} is called a (concentration) profile.
As a first step we now show that certain profiles with scaling factors tending to zero
cannot exist. 
Heuristically speaking, such profiles are excluded by the fact that they would concentrate at the origin
as $n\to \infty$ but near $x=0$, $u^{(T_n)}[t_0]$ equals $u_c[t_0]$ and is thus independent
of $n$.
In order to make the argument rigorous, one has to show that the concentration effect cannot be
``cancelled'' by the error term $W^{nA}$.
This is a consequence of the asymptotic orthogonality of the profiles.
Before coming to that, however, we introduce another notion.
We call a profile $(V_i,(\lambda_n^i),(t_n^i))$ \emph{bounded}, if 
$\lambda_n^i\simeq 1$ and $|t_0-t_n^i|\lesssim 1$ for all $n\in\N$.
Otherwise, the profile is called \emph{unbounded}.
Note that by condition \eqref{eq:orth} there exists at most one (nonzero) bounded profile
in the decomposition Eq.~\eqref{eq:BG}.

\begin{lemma}
\label{lem:ZA}
Consider the decomposition given in Lemma \ref{lem:BG} and suppose there exists a profile
$(V_i, (\lambda_n^i), (t_n^i))$ with
$\lambda_n^i \to 0$ as $n\to \infty$ and $\frac{|t_0-t_n^i|}{\lambda_n^i}\lesssim 1$ for all $n\in\N$. 
Then $V_i=0$.
\end{lemma}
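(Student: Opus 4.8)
The idea is to test the profile decomposition against a suitable family of compactly supported functions concentrated near the origin and exploit that, near $x=0$, the data $u^{(T_n)}[t_0]$ coincide with the fixed data $u_c[t_0]$, hence do not depend on $n$. First I would fix a small radius $r_0$ with $r_0 \leq \frac12(t_0-c)$ so that $u^{(T_n)}[t_0] = u_c[t_0]$ on $B_{r_0}$ for all $n$; this uses Lemma \ref{lem:extend} together with $\chi_c(t_0,r)\equiv 1$ for $r\leq t_0-c$. Since $\lambda_n^i\to 0$ and $\frac{|t_0-t_n^i|}{\lambda_n^i}\lesssim 1$, the profile $V_i^{\lambda_n^i,t_n^i}[t_0]$ concentrates at the spatial origin in the sense that for any fixed $\psi\in C_c^\infty(\R^3)$ the pairing $\big(V_i^{\lambda_n^i,t_n^i}[t_0]\,\big|\,\psi\big)_{\dot H^1\times L^2}$ captures essentially all of $\|V_i[\cdot]\|$ up to a rescaling, because the $\dot H^1\times L^2$ norm is scaling invariant and the bounded time ratio keeps the concentration point near $x=0$. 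Concretely I would pick $\psi_n$ to be the rescaling adapted to $(\lambda_n^i,t_n^i)$ of a fixed test state localized in $B_{r_0/\lambda_n^i}$ (which is eventually all of the relevant region since $\lambda_n^i\to 0$), so that $\big(V_i^{\lambda_n^i,t_n^i}[t_0]\,\big|\,\psi_n\big)$ converges to a nonzero multiple of some norm of $V_i$ if $V_i\neq 0$.

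Next I would pair the whole decomposition Eq.~\eqref{eq:BG} against this $\psi_n$. On the left-hand side, since $\psi_n$ is supported in $B_{r_0}$ (after undoing the rescaling, or more precisely the relevant region shrinks into $B_{r_0}$ as $n\to\infty$) and $u^{(T_n)}[t_0]=u_c[t_0]$ there, the pairing equals $\big(u_c[t_0]\,\big|\,\psi_n\big)_{\dot H^1\times L^2(B_{r_0})}$, and by the scaling invariance of the inner product this is $\lambda_n^i$-independent up to a harmless normalization — but crucially it involves the fixed, smooth, $n$-independent function $u_c[t_0]$ evaluated against a family that concentrates, so a standard Riemann–Lebesgue / concentration argument forces this to tend to $0$ (the smooth background cannot ``see'' the shrinking bump). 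On the right-hand side, the pairing with the error term $W^{nA}[t_0]$ is handled using the asymptotic orthogonality $\big(V_i^{\lambda_n^i,t_n^i}[t_0]\,\big|\,W^{nA}[t_0]\big)\to 0$ from Lemma \ref{lem:BG} (after identifying $\psi_n$ with the appropriately truncated and rescaled $V_i$-data plus a small correction), and the pairings with the other profiles $V_j^{\lambda_n^j,t_n^j}[t_0]$, $j\neq i$, vanish in the limit by the orthogonality relation \eqref{eq:orth} which guarantees that distinct profiles decouple in $\dot H^1\times L^2$. Combining, the only surviving term is the self-pairing $\big(V_i^{\lambda_n^i,t_n^i}[t_0]\,\big|\,\psi_n\big)$, which on one hand tends to a positive multiple of a norm of $V_i$ and on the other hand tends to $0$; hence $V_i=0$.

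To make the ``background cannot see the bump'' step precise I would argue as follows: writing $u_c[t_0]=(f,g)$ with $f\in \dot H^1(\R^3)$, $g\in L^2(\R^3)$ both fixed, and $\Psi_n$ the concentrating family in $\dot H^1\times L^2$ with $\|\Psi_n\|_{\dot H^1\times L^2}\lesssim 1$ and $\Psi_n\rightharpoonup 0$ weakly (which follows from $\lambda_n^i\to 0$: any rescaling $v^{\lambda_n,t_n}$ with $\lambda_n\to 0$ and bounded time ratio converges weakly to zero in $\dot H^1\times L^2$, a classical fact), one gets $(f,g|\Psi_n)_{\dot H^1\times L^2}\to 0$ directly by weak convergence. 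Then the identity of the left side with $(u_c[t_0]|\Psi_n)_{\dot H^1\times L^2(B_{r_0})}$ — valid because $u^{(T_n)}[t_0]$ and $u_c[t_0]$ agree on $B_{r_0}$ and $\Psi_n$ is (eventually) supported there — and Remark \ref{rem:locorth} to localize the profile and error orthogonalities to $B_{r_0}$ (legitimate since the energy of $\Psi_n$, being a rescaling of $V_i$-data with $\lambda_n^i\to 0$, concentrates in any fixed ball $B_{r_0}$), finish the argument.

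\textbf{Main obstacle.} The delicate point is the bookkeeping around the truncation: $\psi_n$ should be close to the genuine rescaled profile data $V_i^{\lambda_n^i,t_n^i}[t_0]$ in order to both (a) capture a definite fraction of $\|V_i\|$ in the self-pairing and (b) inherit the asymptotic orthogonality with the other profiles and with $W^{nA}$, yet it must also be exactly supported in $B_{r_0}$ so that the left-hand side collapses to the $n$-independent $u_c[t_0]$. Reconciling these requires choosing a cutoff at radius $\sim r_0$, noting that since $\lambda_n^i\to 0$ the tail $\|V_i^{\lambda_n^i,t_n^i}[t_0]\|_{\dot H^1\times L^2(\R^3\setminus B_{r_0})}\to 0$ (the profile energy escapes into $B_{r_0}$), so the truncation error is $o(1)$ and does not spoil either property; controlling this tail and propagating the $o(1)$ through all the inner products is the technical heart of the argument, but it is routine given the orthogonality statements already available in Lemma \ref{lem:BG} and Remark \ref{rem:locorth}.
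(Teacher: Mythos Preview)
Your argument is correct. The key ingredients---that $u^{(T_n)}[t_0]$ equals the fixed $u_c[t_0]$ on a small ball, that profiles with $\lambda_n^i\to 0$ and bounded time ratio concentrate their energy in any such ball, and the asymptotic orthogonality from Lemma \ref{lem:BG}---are exactly what drives the paper's proof as well. Your handling of the truncation (showing $\|V_i^{\lambda_n^i,t_n^i}[t_0]\|_{\dot H^1\times L^2(\R^3\setminus B_{r_0})}\to 0$ so that replacing the profile by its cutoff costs only $o(1)$ in all the pairings) is the right way to close the argument, and Remark \ref{rem:locorth} is precisely the tool that justifies localizing the orthogonality.

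The route differs from the paper's in execution. The paper argues with \emph{norms}: it first subtracts off the unique bounded profile $V_b$, observes that both $u^{(T_n)}[t_0]$ and $V_b^{\lambda_n^b,t_n^b}[t_0]$ have small $\dot H^1\times L^2(B_\delta)$ norm for small $\delta$ (the former because it equals the fixed $u_c[t_0]$ there, the latter because $\lambda_n^b\simeq 1$), and then expands $\|\sum_{i\neq b} V_i^{\lambda_n^i,t_n^i}[t_0]+W^{nA}[t_0]\|_{\dot H^1\times L^2(B_\delta)}^2$ via a Pythagorean decomposition using orthogonality to isolate the concentrating profiles. You instead argue with \emph{inner products}: pair everything against the concentrating profile itself and use that it converges weakly to zero against any fixed element of $\dot H^1\times L^2$. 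Your version is arguably more streamlined---you never need to single out $V_b$, since for you it is just another $j\neq i$ handled by orthogonality---while the paper's version avoids invoking weak convergence and treats all profiles in $Z_A$ in one stroke. Both are standard moves in profile-decomposition arguments.
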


\begin{proof}
Fix $A\in\N$ and
let $V_b$ be the unique bounded profile ($V_b$ might be zero in which
case we set $\lambda_n^b=1$ and $t_n^b=0$ for all $n\in \N$).
First, we claim that for any given $\varepsilon>0$ we can find a $\delta>0$ such that
\begin{equation}
\label{eq:epsdelta}
\left \|u^{(T_n)}[t_0]-V_b^{\lambda_n^b,t_n^b}[t_0] 
\right \|_{\dot{H}^1\times L^2(B_{\delta})}<\varepsilon
\end{equation}
for all $n\in\N$.
Indeed, $u^{(T_n)}[t_0]|_{B_\delta}$ is independent of $n$ for small enough $\delta$ since
by construction we have
$u^{(T_n)}[t_0]=u_c[t_0]$ on $B_{t_0-c}$, see Lemma \ref{lem:extend}.
This shows that $\|u^{(T_n)}[t_0]\|_{\dot{H}^1\times L^2(B_\delta)}\to 0$ as $\delta \to 0$,
uniformly in $n$.
Furthermore,
by scaling and energy conservation we have
\[ \|V_i^{\lambda_n^i,t_n^i}[t_0]\|_{\dot{H}^1\times L^2(\R^3)}=\|V_i[t_0]\|_{\dot{H}^1\times L^2(\R^3)}\]
for any profile $V_i$ (bounded or unbounded).
Since $\lambda_n^b\simeq 1$ and $|t_n^b|\lesssim 1$ for all $n\in\N$, we obtain 
by the continuity of $t\mapsto \|V_b[t]\|_{\dot{H}^1\times L^2(\R^3)}$ the bound
$\|V_b^{\lambda_n^b,t_n^b}[t_0]\|_{\dot{H}^1\times L^2(B_\delta)}
<\tfrac{\varepsilon}{2}$
for all $n\in \N$ provided $\delta>0$ is sufficiently small.
Consequently, the triangle inequality yields the claim \eqref{eq:epsdelta}.

Note that by Eq.~\eqref{eq:BG}, Eq.~\eqref{eq:epsdelta} is equivalent to
\[ \left \|\sum_{i=1,i\not=b}^A V_i^{\lambda_n^i,t_n^i}[t_0]+W^{nA}[t_0] \right \|_{\dot{H}^1\times
L^2(B_\delta)}<\varepsilon. \]
We write $i\in Z_A$ iff $\lambda_n^i\to 0$ as $n\to\infty$
and $\frac{|t_0-t_n^i|}{\lambda_n^i}\lesssim 1$ for all $n\in\N$.
Now observe that, for $i\in Z_A$,
\begin{align*} 
\|V_i^{\lambda_n^i,t_n^i}[t_0]\|_{\dot{H}^1\times L^2(\R^3\backslash B_\delta)}
&\simeq \left \|\partial_1 V_i\left (\frac{t_0-t_n^i}{\lambda_n^i},\cdot\right )\right 
\|_{L^2(\R^3\backslash B_{\delta/\lambda_n^i})} \\
&\quad+\left \|\nabla V_i\left (\frac{t_0-t_n^i}{\lambda_n^i},\cdot\right )\right 
\|_{L^2(\R^3\backslash B_{\delta/\lambda_n^i})} \to 0
\end{align*}
as $n\to\infty$ by the continuity of $t\mapsto \|V_i[t]\|_{\dot{H}^1\times L^2(\R^3)}$.
For brevity we write
\begin{align*}
v_n[t_0]:=\sum_{i \in Z_A} V_i^{\lambda_n^i,t_n^i}[t_0],\quad \quad w_n[t_0]:=\sum_{i \notin Z_A,i\not= b}
V_i^{\lambda_n^i,t_n^i}[t_0]+W^{nA}[t_0].
\end{align*}
By the pairwise orthogonality of the profiles and the triangle inequality we obtain
\begin{align*}
\varepsilon&>\left \|\sum_{i=1,i\not=b}^A V_i^{\lambda_n^i,t_n^i}[t_0]+W^{nA}[t_0]
\right \|_{\dot{H}^1\times
L^2(B_\delta)}^2=\|v_n[t_0]+w_n[t_0]\|_{\dot H^1\times L^2(B_\delta)}^2 \\
&=\|v_n[t_0]+w_n[t_0]\|_{\dot H^1\times L^2(\R^3)}^2-\|v_n[t_0]+w_n[t_0]\|_{\dot H^1\times L^2(\R^3\backslash B_\delta)}^2 \\
&\geq\|v_n[t_0]\|_{\dot H^1\times L^2(\R^3)}^2+\|w_n[t_0]\|_{\dot H^1\times L^2(\R^3)}^2
-\|v_n[t_0]\|_{\dot H^1\times L^2(\R^3\backslash B_\delta)}^2-\|w_n[t_0]\|_{\dot H^1\times L^2(\R^3\backslash B_\delta)}^2 \\
&\quad -2\|v_n[t_0]\|_{\dot H^1\times L^2(\R^3\backslash B_\delta)}
\|w_n[t_0]\|_{\dot H^1\times L^2(\R^3\backslash B_\delta)}+o(1) \\
&= \sum_{i\in Z_A}\|V_i[t_0]\|_{\dot{H}^1\times L^2(\R^3)}^2 
+\left \|\sum_{i\notin Z_A,i\not=b} V_i^{\lambda_n^i,t_n^i}[t_0]+W^{nA}[t_0]
\right \|_{\dot{H}^1\times
L^2(B_\delta)}^2+o(1)
\end{align*}
as $n\to\infty$.
Consequently, $\|V_i[t_0]\|_{\dot{H}^1\times L^2(\R^3)}<\varepsilon$
for all $i\in Z_A$ provided $n$ is sufficiently large and,
since $\varepsilon>0$ and $A\in\N$ were arbitrary, this yields the claim.
\end{proof}

Our next goal is to prove that the energy of \emph{all} unbounded profiles is small.
As a preparation for this we need the following elementary observation which is just an instance
of the strong Huygens' principle. As always, it suffices to consider the radial
case for our purposes.

\begin{lemma}[Strong Huygens' principle]
\label{lem:Huygens}
Let $v:\R\times \R^3\to\R$ be a (radial) free wave and set 
\[ A(R_1,R_2):=\{x\in\R^3: R_1<|x|<R_2\}. \]
Then we have the estimate
\[ \|v[t]\|_{\dot{H}^1\times L^2(B_R)}\lesssim \|v[0]\|_{\dot{H}^1\times L^2(A(|t|-R,|t|+R))}
+\|v(0,\cdot)\|_{L^6(A(|t|-R,|t|+R))}\]
for all $t\in \R$ and all $R>0$ provided $|t|\geq R$.
\end{lemma}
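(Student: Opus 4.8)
The plan is to reduce the statement to the one-dimensional wave equation and then combine d'Alembert's formula with a localized Hardy--Sobolev inequality. By time reversal --- replacing $v$ by $(s,x)\mapsto v(-s,x)$, which is again a radial free wave with the same $\dot H^1\times L^2(\R^3)$ data norm and the same Cauchy datum $v(0,\cdot)$ --- we may assume $t\ge 0$, so that $t\ge R\ge 0$. Set $w(t,r):=rv(t,r)$ and extend $w(t,\cdot)$ and its data oddly to all of $\R$; then $w$ solves the free $1$d wave equation $\partial_t^2 w-\partial_r^2 w=0$, and the odd extensions of $\tilde w_0(s):=s\,v(0,s)$ and $\tilde w_1(s):=s\,\partial_t v(0,s)$ satisfy $\tilde w_0'\in L^2(\R)$, $\tilde w_1\in L^2(\R)$ --- the only point worth noting being that $\int_0^\infty|v(0,s)|^2\,ds=\tfrac1{4\pi}\int_{\R^3}|x|^{-2}|v(0,x)|^2\,dx\lesssim\|\nabla v(0,\cdot)\|_{L^2(\R^3)}^2$ by Hardy's inequality. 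Hence d'Alembert applies and yields $w(t,r)=\phi(r+t)+\psi(r-t)$ with $\phi':=\tfrac12(\tilde w_0'+\tilde w_1)$, $\psi':=\tfrac12(\tilde w_0'-\tilde w_1)$; moreover $\psi'(s)=\phi'(|s|)$ for $s<0$ and $w(t,0)=0$, both consequences of the oddness of $\tilde w_0,\tilde w_1$. All computations below are first carried out for Schwartz data and then extended to general energy-class data by density, since the quantities involved depend continuously on the data in the relevant norms.

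The second step is to rewrite the localized energy in terms of $\phi'$. From $\partial_r v=r^{-1}\partial_r w-r^{-2}w$ and $\partial_t v=r^{-1}\partial_t w$, an integration by parts (valid since $w(t,0)=0$) gives
\[
\|v[t]\|_{\dot H^1\times L^2(B_R)}^2=4\pi\int_0^R\!\big(|\partial_r w|^2+|\partial_t w|^2\big)\,dr-\tfrac{4\pi}{R}\,w(t,R)^2\;\le\;4\pi\int_0^R\!\big(|\partial_r w|^2+|\partial_t w|^2\big)\,dr,
\]
where the good-signed boundary term is simply discarded. Since $\partial_r w(t,r)=\phi'(r+t)+\psi'(r-t)$ and $\partial_t w(t,r)=\phi'(r+t)-\psi'(r-t)$, the last integrand equals $2|\phi'(r+t)|^2+2|\psi'(r-t)|^2$; changing variables and using $t\ge R$ (so that $r-t\le 0$ on $[0,R]$) together with $\psi'(s)=\phi'(|s|)$ for $s<0$, we arrive at $\|v[t]\|_{\dot H^1\times L^2(B_R)}^2\le 8\pi\int_{t-R}^{t+R}|\phi'(s)|^2\,ds$. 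This is exactly where strong Huygens enters: $\partial_r w$ and $\partial_t w$ at $(t,r)$ feel the data only at the two points $r\pm t$.

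It then remains to bound $\int_{t-R}^{t+R}|\phi'(s)|^2\,ds$. For $s>0$ one has $\phi'(s)=\tfrac12\big(v(0,s)+s\,\partial_s v(0,s)+s\,\partial_t v(0,s)\big)$, so, writing $A:=A(t-R,t+R)$ and using $\int_{t-R}^{t+R}s^2|g(s)|^2\,ds=\tfrac1{4\pi}\|g\|_{L^2(A)}^2$, the contributions of the second and third terms are controlled by $\|\nabla v(0,\cdot)\|_{L^2(A)}^2$ and $\|\partial_t v(0,\cdot)\|_{L^2(A)}^2$. For the remaining term $\int_{t-R}^{t+R}|v(0,s)|^2\,ds$ I would prove and use the \emph{localized Hardy--Sobolev estimate}
\[
\int_{\rho_1}^{\rho_2}|f(s)|^2\,ds\;\lesssim\;\int_{\rho_1}^{\rho_2}s^2|f'(s)|^2\,ds+\rho_2^{-2}\int_{\rho_1}^{\rho_2}s^2|f(s)|^2\,ds,\qquad 0\le\rho_1\le\rho_2,
\]
with an \emph{absolute} constant. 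This follows from the identity $\int_{\rho_1}^{\rho_2}|f|^2\,ds=(\rho_2-\rho_1)f(\rho_2)^2-2\int_{\rho_1}^{\rho_2}(s-\rho_1)ff'\,ds$: one bounds $|(s-\rho_1)ff'|\le|f|\cdot s|f'|$ and absorbs $\int|f|^2$ via Young's inequality, while the boundary value $f(\rho_2)^2$ is controlled by averaging $|f|^2$ over the outer half $[\tfrac{\rho_1+\rho_2}{2},\rho_2]$ of the interval together with the weighted derivative integral. Taking $f=v(0,\cdot)$, $\rho_1=t-R$, $\rho_2=t+R$ and then $\rho_2^{-2}\|v(0,\cdot)\|_{L^2(A)}^2\lesssim\|v(0,\cdot)\|_{L^6(A)}^2$ by Hölder (since $\rho_2^{-2}|A|^{2/3}\lesssim 1$ as $|A|\lesssim\rho_2^3$), we obtain $\int_{t-R}^{t+R}|v(0,s)|^2\,ds\lesssim\|v(0,\cdot)\|_{\dot H^1(A)}^2+\|v(0,\cdot)\|_{L^6(A)}^2$. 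Collecting the three bounds and taking square roots gives the lemma.

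The main obstacle is the regime where $t$ is close to $R$, so that the shell $A(t-R,t+R)$ reaches (almost) to the origin. There the crude estimate of $v(0,s)$ near the inner endpoint obtained by ``integrating $\partial_s v(0,\cdot)$'' is only logarithmically divergent, and what repairs this is precisely the Hardy-type argument above, which trades part of the loss for the $L^2$ (hence $L^6$) norm and produces a constant uniform all the way down to $\rho_1=0$. A secondary, routine point is the justification of the pointwise identities and integrations by parts for merely energy-class data, which is handled by the density argument indicated in the first paragraph.
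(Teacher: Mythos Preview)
Your proof is correct and follows essentially the same route as the paper: reduce to $t\ge R$ by time reflection, invoke d'Alembert's formula for the radial wave (the paper writes it directly for $v$, you use the equivalent characteristic decomposition of $w=rv$), and control the resulting one-dimensional integrals via Hardy- and H\"older-type inequalities. Your write-up is considerably more detailed than the paper's sketch; in particular, your localized Hardy--Sobolev estimate makes explicit the step the paper subsumes under ``Hardy's and H\"older's inequalities.''
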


\begin{proof}
Note first that by the Sobolev embedding $\dot{H}^1(\R^3)\hookrightarrow L^6(\R^3)$ we may assume
$v(t,\cdot)\in L^6(\R^3)$ for all $t\in\R$.
Furthermore, by the time reflection symmetry it suffices to consider the case $t\geq R$. Since 
$v$ is radial, it is given explicitly by
d'Alembert's formula
\[ v(t,r)=\frac{1}{2r}\left [(t+r)f(t+r)-(t-r)f(t-r)+\int_{t-r}^{t+r}sg(s)ds \right ] \]
for $0\leq r\leq t$ where $(f,g)=v[0]$.
Based on this formula it is straightforward to prove the claimed estimate by using Hardy's and 
H\"older's inequalities.
\end{proof}

We obtain a simple corollary which applies to certain concentration profiles in the
Bahouri-G\'erard decomposition.

\begin{corollary}
\label{cor:leave}
Suppose $v:\R\times \R^3\to\R$ is a (radial) free wave and let $(\lambda_n)_{n\in\N}$, $(t_n)_{n\in\N}$ be sequences of
(positive) scaling factors and times, respectively.
If 
\begin{itemize}
\item $\lambda_n\to\infty$ 
\item or $\frac{|t_0-t_n|-R}{\lambda_n}\to\infty$ as $n\to \infty$
\end{itemize}
then
\[ \|v^{\lambda_n,t_n}[t_0]\|_{\dot{H}^1\times L^2(B_R)}\to 0 \]
as $n\to\infty$ for any (fixed) $R>0$.
\end{corollary}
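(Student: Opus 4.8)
The plan is to deduce Corollary \ref{cor:leave} directly from the strong Huygens estimate of Lemma \ref{lem:Huygens} by exploiting the scaling invariance of the quantities involved. First I would record how the $\dot H^1\times L^2$ norm behaves under the rescaling $v\mapsto v^{\lambda_n,t_n}$: by a change of variables one has
\[
\|v^{\lambda_n,t_n}[t_0]\|_{\dot H^1\times L^2(B_R)}
=\Big\|v\Big[\tfrac{t_0-t_n}{\lambda_n}\Big]\Big\|_{\dot H^1\times L^2(B_{R/\lambda_n})},
\]
and similarly the $L^6$-norm of $v^{\lambda_n,t_n}(t_0,\cdot)$ over $B_R$ equals the $L^6$-norm of $v(\tfrac{t_0-t_n}{\lambda_n},\cdot)$ over $B_{R/\lambda_n}$. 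Writing $s_n:=\tfrac{t_0-t_n}{\lambda_n}$ and $R_n:=R/\lambda_n$, the goal is to show $\|v[s_n]\|_{\dot H^1\times L^2(B_{R_n})}\to 0$ in the two cases.

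In the first case, $\lambda_n\to\infty$, we have $R_n=R/\lambda_n\to 0$. Since $v$ is a free wave with $\|v[s]\|_{\dot H^1\times L^2(\R^3)}\lesssim 1$ uniformly in $s$ (and, by Sobolev embedding, $v(s,\cdot)\in L^6(\R^3)$ with $\|v(s,\cdot)\|_{L^6(\R^3)}\lesssim 1$), absolute continuity of the integrals defining these norms gives $\|v[s]\|_{\dot H^1\times L^2(B_\varepsilon)}\to 0$ as $\varepsilon\to 0$, \emph{but} this is not obviously uniform in $s$ unless we know something about the family $\{v[s_n]\}$. The clean way around this is to treat the two subcases according to whether $|s_n|$ stays bounded or not. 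If $|s_n|\leq M$ for all $n$ along a subsequence, then by the continuity and boundedness of $s\mapsto v[s]$ on the compact interval $[-M,M]$ the set $\{v[s]:|s|\leq M\}$ is precompact in $\dot H^1\times L^2$, hence has equismall tails, so $\|v[s_n]\|_{\dot H^1\times L^2(B_{R_n})}\to 0$ as $R_n\to0$. If instead $|s_n|\to\infty$ along a subsequence, then eventually $|s_n|\geq R_n$ and we may apply Lemma \ref{lem:Huygens}:
\[
\|v[s_n]\|_{\dot H^1\times L^2(B_{R_n})}
\lesssim \|v[0]\|_{\dot H^1\times L^2(A(|s_n|-R_n,\,|s_n|+R_n))}
+\|v(0,\cdot)\|_{L^6(A(|s_n|-R_n,\,|s_n|+R_n))},
\]
and since $v[0]\in\dot H^1\times L^2(\R^3)$ and $v(0,\cdot)\in L^6(\R^3)$ are fixed functions while the annuli $A(|s_n|-R_n,|s_n|+R_n)$ escape to spatial infinity (their inner radius $|s_n|-R_n\to\infty$), both terms tend to $0$ by dominated convergence. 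Since every subsequence has a further subsequence along which the limit is $0$, the full sequence converges to $0$.

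In the second case the hypothesis is $\tfrac{|t_0-t_n|-R}{\lambda_n}\to\infty$, i.e. $|s_n|-R_n\to\infty$; in particular $|s_n|\geq R_n$ for large $n$, so Lemma \ref{lem:Huygens} applies verbatim and the same annulus argument as above — the annuli $A(|s_n|-R_n,|s_n|+R_n)$ recede to infinity because their inner radius diverges — forces $\|v[s_n]\|_{\dot H^1\times L^2(B_{R_n})}\to 0$. This finishes the proof. The only mildly delicate point, and the one I would be careful about, is the uniformity issue in the first case when $|s_n|$ happens to stay bounded: there the conclusion does not come from Huygens at all but from the precompactness of $\{v[s]:|s|\leq M\}$ together with $R_n\to 0$. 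Everything else is a routine change of variables plus dominated convergence on receding annuli.
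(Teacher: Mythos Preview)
Your proof is correct and follows essentially the same approach as the paper: record the scaling identity, split the first case according to whether $|s_n|$ is bounded or not, use continuity of $s\mapsto v[s]$ in the bounded subcase and Lemma~\ref{lem:Huygens} (annuli escaping to infinity) in the unbounded subcase, and handle the second case directly via Lemma~\ref{lem:Huygens}. If anything, you are more careful than the paper about the uniformity issue in the bounded subcase, where you explicitly invoke precompactness of $\{v[s]:|s|\leq M\}$ to obtain equismall mass near the origin; the paper simply appeals to ``continuity of $t\mapsto\|v[t]\|_{\dot H^1\times L^2(\R^3)}$'' at this step, which is a shorthand for the same idea.
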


\begin{proof}
Note that by scaling we have
\begin{equation}
\label{eq:vt0}
 \|v^{\lambda_n,t_n}[t_0]\|_{\dot{H}^1\times L^2(B_R)}^2=\left \|\partial_1 
v \left (\frac{t_0-t_n}{\lambda_n},\cdot\right )\right \|_{L^2(B_{R/\lambda_n})}^2
+\left \|\nabla 
v \left (\frac{t_0-t_n}{\lambda_n},\cdot\right )\right \|_{L^2(B_{R/\lambda_n})}^2.
\end{equation}
First, we consider the case $\lambda_n\to \infty$.
If $\frac{|t_0-t_n|}{\lambda_n}\lesssim 1$ for all $n\in\N$ then
the continuity of $t\mapsto \|v[t]\|_{\dot{H}^1\times L^2(\R^3)}$ and Eq.~\eqref{eq:vt0}
show that $\|v^{\lambda_n,t_n}[t_0]\|_{\dot{H}^1\times L^2(B_R)}\to 0$ as $n\to \infty$.
On the other hand, if $\frac{|t_0-t_n|}{\lambda_n}\to\infty$, we must have $|t_0-t_n|\to\infty$ and
thus, $|t_0-t_n|/\lambda_n\geq R/\lambda_n$ for large $n$. Consequently, 
Lemma \ref{lem:Huygens} yields
the claim.
The second case is a direct consequence of
Lemma \ref{lem:Huygens}.
\end{proof}

Now we can show the aforementioned smallness of the unbounded profiles.

\begin{lemma}
\label{lem:ubsmall}
Let $\varepsilon>0$.
For the energy of any unbounded profile $(V_i, (\lambda_n^i), (t_n^i))$
in the decomposition Eq.~\eqref{eq:BG} we have the bound
\[ \|V_i[t_0]\|_{\dot{H}^1\times L^2(\R^3)}\leq 
\|u^{(T_n)}[t_0]\|_{\dot{H}^1\times L^2(\R^3\backslash B_{t_0-c})}+\varepsilon \]
as $n\to \infty$.
\end{lemma}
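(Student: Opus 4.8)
The plan is to combine two ingredients: the strong Huygens principle, which makes an unbounded profile leave every fixed ball and hence concentrate in the exterior region $U:=\R^3\backslash B_{t_0-c}$, and the asymptotic orthogonality of the Bahouri--G\'erard profiles, which --- once the concentration is known --- can be localized to $U$ by Remark~\ref{rem:locorth}.

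First I would pass to a further subsequence along which $\lambda_n^i$ converges in $[0,\infty]$ and $|t_0-t_n^i|$ converges in $[0,\infty]$, and dispose of the ``trapped'' profiles. If $\lambda_n^i\to 0$ while $|t_0-t_n^i|/\lambda_n^i$ stays bounded, then $V_i=0$ by Lemma~\ref{lem:ZA} and the claimed inequality is trivial. More generally, any unbounded profile whose fine-scale concentration takes place on a sphere contained in the closed ball $\overline{B_{t_0-c}}$ must vanish: on $B_{t_0-c}$ one has $u^{(T_n)}[t_0]=u_c[t_0]$ \emph{independently of} $n$, and a fixed element of $\dot{H}^1\times L^2$ carries no energy at arbitrarily fine scales, so the argument of Lemma~\ref{lem:ZA} applies verbatim with the ball $B_\delta$ replaced by a thin annulus around the relevant sphere. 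For every remaining unbounded profile I would check that the hypotheses of Corollary~\ref{cor:leave} are met with $R=t_0-c$: after the subsequence reduction, either $\lambda_n^i\to\infty$, or $\lambda_n^i$ is bounded and the concentration point $|t_0-t_n^i|$ eventually exceeds $t_0-c$ (or tends to $\infty$), in which case $\frac{|t_0-t_n^i|-(t_0-c)}{\lambda_n^i}\to\infty$. Either way Corollary~\ref{cor:leave} gives $\|V_i^{\lambda_n^i,t_n^i}[t_0]\|_{\dot{H}^1\times L^2(B_{t_0-c})}\to 0$, i.e. the energy of $V_i$ concentrates in $U$.

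Then I would conclude as follows. By scaling invariance and conservation of the $\dot{H}^1\times L^2$ norm for free waves,
\[ \|V_i[t_0]\|_{\dot{H}^1\times L^2(\R^3)}=\|V_i^{\lambda_n^i,t_n^i}[t_0]\|_{\dot{H}^1\times L^2(\R^3)}=\|V_i^{\lambda_n^i,t_n^i}[t_0]\|_{\dot{H}^1\times L^2(U)}+o(1) \]
as $n\to\infty$, the last equality being the concentration just established. Fix $A\geq i$ and write $u^{(T_n)}[t_0]=V_i^{\lambda_n^i,t_n^i}[t_0]+r_n$ with $r_n:=\sum_{j\neq i}V_j^{\lambda_n^j,t_n^j}[t_0]+W^{nA}[t_0]$. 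Since $V_i^{\lambda_n^i,t_n^i}[t_0]$ concentrates in $U$ and, by Lemma~\ref{lem:BG}, its $\dot{H}^1\times L^2(\R^3)$ inner products with each $V_j^{\lambda_n^j,t_n^j}[t_0]$ ($j\neq i$) and with $W^{nA}[t_0]$ tend to $0$, Remark~\ref{rem:locorth} gives $(V_i^{\lambda_n^i,t_n^i}[t_0]\,|\,r_n)_{\dot{H}^1\times L^2(U)}=o(1)$ (a finite sum). Expanding $\|u^{(T_n)}[t_0]\|_{\dot{H}^1\times L^2(U)}^2$ and discarding the nonnegative term $\|r_n\|_{\dot{H}^1\times L^2(U)}^2$ yields $\|V_i^{\lambda_n^i,t_n^i}[t_0]\|_{\dot{H}^1\times L^2(U)}^2\leq\|u^{(T_n)}[t_0]\|_{\dot{H}^1\times L^2(U)}^2+o(1)$. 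Combining this with the displayed identity, using $\sqrt{a+b}\leq\sqrt a+\sqrt b$ and the uniform bound $\sup_n\|u^{(T_n)}[t_0]\|_{\dot{H}^1\times L^2(\R^3)}\lesssim 1$, we obtain $\|V_i[t_0]\|_{\dot{H}^1\times L^2(\R^3)}\leq\|u^{(T_n)}[t_0]\|_{\dot{H}^1\times L^2(\R^3\backslash B_{t_0-c})}+\varepsilon$ for all sufficiently large $n$.

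The main obstacle is the case bookkeeping in the first step: one has to ensure that \emph{every} unbounded profile either genuinely leaves $B_{t_0-c}$ --- so that Corollary~\ref{cor:leave} (i.e. Lemma~\ref{lem:Huygens}) applies --- or concentrates at fine scales on a sphere inside $\overline{B_{t_0-c}}$, where the $n$-independence of $u^{(T_n)}[t_0]=u_c[t_0]$ forces it to vanish. The borderline instances, where $\lambda_n^i\to 0$ and $|t_0-t_n^i|/\lambda_n^i\to\infty$ but $|t_0-t_n^i|$ does not tend to infinity, are precisely the ones not covered word for word by Lemma~\ref{lem:ZA} or Corollary~\ref{cor:leave}; they are dealt with by the annulus version of the Lemma~\ref{lem:ZA} argument described above. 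Everything else is soft: the only genuine analysis (strong Huygens) is already in hand, and the orthogonality is supplied by Lemma~\ref{lem:BG} and Remark~\ref{rem:locorth}.
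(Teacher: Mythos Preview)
Your argument is essentially the same as the paper's: dispose of profiles trapped near the origin via Lemma~\ref{lem:ZA}, use Corollary~\ref{cor:leave} (Huygens) for profiles that escape to the exterior, invoke an annulus variant of Lemma~\ref{lem:ZA} for fine-scale concentration at a finite interior radius, and conclude via localized orthogonality (Remark~\ref{rem:locorth}). The case bookkeeping you describe matches the paper's third paragraph almost exactly.

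There is one small over-claim. In the borderline sub-case $\lambda_n^i\to 0$, $|t_0-t_n^i|\to r^i=t_0-c$, the thin concentration annulus around the sphere $|x|=t_0-c$ is \emph{not} contained in $B_{t_0-c}$, so the ``$u^{(T_n)}[t_0]=u_c[t_0]$ is $n$-independent'' reasoning does not apply on the exterior half of the annulus, and you cannot deduce $V_i=0$. The paper does not claim $V_i=0$ there either: it splits the annulus, observes that the interior half contributes $o(1)$ (fixed function on a shrinking set) while the exterior half contributes at most $\|u^{(T_n)}[t_0]\|_{\dot H^1\times L^2(\R^3\backslash B_{t_0-c})}$, and combines this with the localized orthogonality to get exactly the bound asserted in the lemma. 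Your concluding orthogonality paragraph already contains this argument once you replace ``$V_i=0$'' by ``the energy of $V_i^{\lambda_n^i,t_n^i}[t_0]$ inside $B_{t_0-c}$ is $o(1)$'' for this borderline case; the rest goes through unchanged.
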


\begin{remark}
By Lemma \ref{lem:extend} we see that the energy of the unbounded profiles can be assumed to be
arbitrarily small provided $t_0$ and $c$ are chosen large enough.
\end{remark}

\begin{proof}[Proof of Lemma \ref{lem:ubsmall}]
By Lemma \ref{lem:ZA} the claim holds trivially for those unbounded profiles $V_i$ where
$\lambda_n^i\to 0$ as $n\to\infty$ and $\frac{|t_0-t_n^i|}{\lambda_n^i}\lesssim 1$ for all $n\in\N$.
Furthermore, if the sequences $(\lambda_n^i)$ and $(t_n^i)$ satisfy the hypothesis of 
Corollary \ref{cor:leave}, we infer
\[ 
\|V_i[t_0]\|_{\dot{H}^1\times L^2(\R^3)}=\|V_i[t_0]\|_{\dot{H}^1\times L^2(\R^3\backslash 
B_{t_0-c})}+o(1) \]
as $n\to\infty$ and the claim follows by the orthogonality of the profiles stated in 
Lemma \ref{lem:BG} and Remark \ref{rem:locorth}.

It remains to study those profiles where $\frac{|t_0-t_n^i|-(t_0-c)}{\lambda_n^i}\lesssim 1$ and $\lambda_n^i\lesssim 1$.
Since the profiles in question are unbounded, we must have $\lambda_n^i\to 0$.
Consequently, $|t_0-t_n^i|$ is bounded and after selecting a subsequence, we may assume that $t_n^i\to t^i$. 
Let $\varepsilon>0$ be given. Then we can find an $R>0$ such that
\[ \|V_i[0]\|_{\dot H^1\times L^2(\R^3)}=\|V_i[0]\|_{\dot H^1\times L^2(B_R)}+\varepsilon, \]
i.e., the energy of $V_i[0]$ is essentially contained in $B_R$.
Taking into account the fact that $V$ is a free wave, we infer by the strong Huygens' principle that the 
energy of $V$ at time $\frac{t_0-t_n^i}{\lambda_n^i}$ (for large enough $n$) is essentially contained in 
\[ \Omega_n:=\left \{x\in\R^3: \tfrac{|t_0-t_n^i|}{\lambda_n^i}-R\leq |x|\leq \tfrac{|t_0-t_n^i|}{\lambda_n^i}+R\right \}, \] 
cf.~Lemma \ref{lem:Huygens}.
More precisely, we have
\[ \left \|V_i\left [\tfrac{t_0-t_n^i}{\lambda_n^i}\right ]\right \|_{\dot H^1\times L^2(\R^3)}
=\left \|V_i\left [\tfrac{t_0-t_n^i}{\lambda_n^i}\right ]\right \|_{\dot H^1\times L^2(\Omega_n)}+\varepsilon \]
as $n \to \infty$.
By rescaling this is equivalent to
\[ \|V_i^{\lambda_n^i,t_n^i}[t_0]\|_{\dot H^1\times L^2(\R^3)}=
\|V_i^{\lambda_n^i,t_n^i}[t_0]\|_{\dot H^1\times L^2(\tilde \Omega_n)}+\varepsilon \]
where $\tilde \Omega_n=\{x\in\R^3: |t_0-t_n|-\lambda_n^i R\leq |x|\leq |t_0-t_n|+\lambda_n^i R\}$.
Thus, as $n\to\infty$, $V_i^{\lambda_n^i,t_n^i}[t_0]$ concentrates at $r^i=|t_0-t^i|$.
If $r^i<t_0-c$, we apply the argument from Lemma \ref{lem:ZA} to conclude that $V_i=0$ (the logic being
that $V_i^{\lambda_n^i,t_n^i}[t_0]$ cannot concentrate inside of $B_{t_0-c}$ because there, $u^{(T_n)}[t_0]$
equals $u_c[t_0]$).
If $r^i\geq t_0-c$, it follows from Lemma \ref{lem:extend} and the orthogonality of Lemma \ref{lem:BG}, see also
Remark \ref{rem:locorth}, that the energy of the profile $V_i$ is small.
In any case, we arrive at the desired conclusion.
\end{proof}

An immediate consequence of Lemma \ref{lem:ubsmall} is the fact that the bulk of the energy
of $u^{(T_n)}[t_0]$ is concentrated on the bounded profile $V_b$ (in particular, $V_b$ is nonzero).

\subsection{The nonlinear profile decomposition}

After extracting a subsequence we have $\lambda_n^b\to \lambda^b$, $t_n^b\to t^b$ as $n\to\infty$
where $\lambda^b$ and $t^b$ are some finite numbers.
Now we define new initial data by
\[ (f,g):=\lim_{n\to\infty}V_b^{\lambda_n^b,t_n^b}[t_0]=V_b^{\lambda^b,t^b}[t_0] \]
with convergence in $\dot{H}^1\times L^2(\R^3)$.
By the local existence theory \cite{Sog08} we obtain a time $t_*>t_0$ and an energy class solution $u_*$ satisfying
\begin{equation}
\label{eq:u*}
\left \{ \begin{array}{l}\Box u_*(t,x)+u_*(t,x)^5=0,\quad (t,x)\in (t_0,t_*)\times \R^3 \\
u_*[t_0]=(f,g) \end{array} \right .
\end{equation}
where we assume $t_*$ to be maximal.

To each profile $(V_i, (\lambda_n^i), (t_n^i))$ 
in the decomposition of Lemma \ref{lem:BG} there is associated a \emph{nonlinear profile}
$(U_i, (\lambda_n^i), (t_n^i))$ which is characterized by $\Box U_i+U_i^5=0$ and either
\[
\|V_i[t]-U_i[t]\|_{\dot{H}^1\times L^2(\R^3)}\to 0 \mbox{ as }t\to \pm \infty \]
if $\frac{t_0-t_n^i}{\lambda_n^i}\to \pm \infty$ in the limit $n\to\infty$ or
\[ U_i[t_0]=V_i[t_0] \]
in the case $\frac{|t_0-t_n^i|}{\lambda_n^i}\lesssim 1$ for all $n\in \N$.
The existence of the nonlinear profiles is a consequence of the small data 
scattering theory 
\cite{Pec84}, cf.~also \cite{KM08}.
Moreover, since the energy of all unbounded profiles $V_i$ is small, it follows that the $U_i$
exist globally (and scatter) provided $i\not= b$ and by a continuity argument as in the proof of Lemma \ref{lem:extend}
we may assume $\|U_i[t]\|_{\dot{H}^1\times L^2(\R^3)}$ to be small for all $t\in\R$.
In the case of $U_b$ we have at least existence for small times.
By the symmetries of the equation we see that, for all $n\in\N$, 
$U_b^{\lambda_n^b,t_n^b}$ is a solution
with data $U_b^{\lambda_n^b,t_n^b}[t_0]=V_b^{\lambda_n^b,t_n^b}[t_0]$ and thus, by the local
well-posedness we infer
\begin{equation}
\label{eq:Ubu*}
\|U_b^{\lambda_n^b,t_n^b}[t]-u_*[t]\|_{\dot{H}^1\times L^2(\R^3)}\to 0\quad (n\to\infty) 
\end{equation}
for any $t \in [t_0,t_*)$.
Now we want to compare the solution $u_*$ with $u^{(T_n)}$.
The following lemma yields a representation of the \emph{nonlinear} evolution of the decomposition
Eq.~\eqref{eq:BG}.

\begin{lemma}
\label{lem:BGnl}
Let $t_1 \in [t_0,t_*)$.
Then there exists an $n_0\in \N$ such that, for all $n\geq n_0$, 
the nonlinear profiles $U_i^{\lambda_n^i, t_n^i}$
associated to the decomposition in Lemma \ref{lem:BG} exist on $[t_0,t_1]\times \R^3$ and
\[ u^{(T_n)}(t,x)=\sum_{i=1}^A U_i^{\lambda_n^i,t_n^i}(t,x)+W^{nA}(t,x)+R^{nA}(t,x) \]
for all $A\in \N$ and $t\in [t_0,t_1]$ where $W^{nA}$ is the free wave from Lemma \ref{lem:BG}.
Furthermore, the error $R^{nA}$ satisfies 
\[ \lim_{A\to\infty}\limsup_{n\to\infty}\|R^{nA}[\cdot]\|_{L^\infty([t_0,t_1]) \dot{H}^1\times L^2(\R^3)}=0. \]
\end{lemma}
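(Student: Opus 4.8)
The plan is to run the standard nonlinear profile / perturbation argument in the spirit of Bahouri--G\'erard \cite{BG99} and Kenig--Merle \cite{KM08}. Fix $t_1\in[t_0,t_*)$ and let $S$ denote a Strichartz norm for $\Box$ on $[t_0,t_1]\times\R^3$ that controls the quintic nonlinearity (say $L^4_tL^{12}_x$), with $N$ the corresponding dual norm. First I would record that $u_*$, being an energy class solution on the compact interval $[t_0,t_1]$ contained in its maximal interval of existence $[t_0,t_*)$, has finite Strichartz norm $\|u_*\|_{S([t_0,t_1])}=:M<\infty$. For $i\neq b$ the nonlinear profile $U_i$ has small energy (Lemma \ref{lem:ubsmall} together with Lemma \ref{lem:extend}), so small data theory \cite{Pec84} makes $U_i$ global with $\|U_i\|_{S(\R)}$ small, a quantity invariant under $(\lambda,t)\mapsto(\cdot)^{\lambda,t}$; for $i=b$ the convergence \eqref{eq:Ubu*} together with stability of the flow shows that for $n$ large $U_b^{\lambda_n^b,t_n^b}$ exists on $[t_0,t_1]$ with $\|U_b^{\lambda_n^b,t_n^b}\|_{S([t_0,t_1])}\le 2M$. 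This, together with $T_n\to\infty$ (so that $u^{(T_n)}$ from Lemma \ref{lem:extend} is already defined on $[t_0,t_1]$), is where I would fix $n_0$.

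Next I would introduce the approximate solution $u^{nA}_{\mathrm{app}}:=\sum_{i=1}^A U_i^{\lambda_n^i,t_n^i}+W^{nA}$ and compute its error. Since $\Box U_i^{\lambda_n^i,t_n^i}=-(U_i^{\lambda_n^i,t_n^i})^5$ and $\Box W^{nA}=0$,
\[
e^{nA}:=\Box u^{nA}_{\mathrm{app}}+(u^{nA}_{\mathrm{app}})^5
=\Big(\sum_{i=1}^A U_i^{\lambda_n^i,t_n^i}+W^{nA}\Big)^5-\sum_{i=1}^A (U_i^{\lambda_n^i,t_n^i})^5 .
\]
Expanding the fifth power, $e^{nA}$ is a sum of mixed monomials, each containing either a factor $W^{nA}$ or a product $U_i^{\lambda_n^i,t_n^i}U_j^{\lambda_n^j,t_n^j}$ with $i\neq j$. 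The two ingredients needed are (i) a uniform Strichartz bound $\sup_n\|u^{nA}_{\mathrm{app}}\|_{S([t_0,t_1])}\lesssim 1$ for all $A$, which follows from the almost-orthogonality of the profiles in $\dot H^1\times L^2$ (Lemma \ref{lem:BG}), the fact that all but finitely many $U_i$ have small Strichartz norm, and a uniform Strichartz bound on $W^{nA}$ (a Pythagorean-expansion argument); and (ii) the vanishing
\[
\lim_{A\to\infty}\limsup_{n\to\infty}\|e^{nA}\|_{N([t_0,t_1])}=0 .
\]
For the monomials carrying a $W^{nA}$ factor one upgrades the $L^\infty_tL^6_x$-smallness of $W^{nA}$ to smallness in an intermediate Strichartz norm by interpolating against the uniform Strichartz bound, exactly as in \cite{BG99}; for the pure cross terms the orthogonality \eqref{eq:orth} forces the profiles onto asymptotically disjoint regions of spacetime, so these terms tend to $0$ in $S$ (hence in $N$ after pairing with the remaining bounded factors) as $n\to\infty$.

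Finally I would invoke the perturbation lemma for the energy-critical focusing wave equation. By construction $u^{(T_n)}[t_0]-u^{nA}_{\mathrm{app}}[t_0]=u^{(T_n)}[t_0]-\big(\sum_{i=1}^A U_i^{\lambda_n^i,t_n^i}[t_0]+W^{nA}[t_0]\big)$, whose $\dot H^1\times L^2$-norm tends to $0$ as $n\to\infty$ for each fixed $A$ by \eqref{eq:BG} and the defining relation $U_i^{\lambda_n^i,t_n^i}[t_0]=V_i^{\lambda_n^i,t_n^i}[t_0]+o(1)$ (which holds by the characterization of the nonlinear profiles for the unbounded profiles and by \eqref{eq:Ubu*} for the bounded one). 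Combined with the small error (ii) and the uniform $S$-bound (i), the stability statement yields that $R^{nA}:=u^{(T_n)}-u^{nA}_{\mathrm{app}}$ satisfies $\lim_{A\to\infty}\limsup_{n\to\infty}\|R^{nA}[\cdot]\|_{L^\infty([t_0,t_1])\dot H^1\times L^2}=0$, which is the claim. I expect the main obstacle to be step (ii): the decoupling of the profiles via \eqref{eq:orth} in the nonlinear (fifth-power) setting and the interpolation trick converting the $L^\infty_tL^6_x$-smallness of $W^{nA}$ into smallness in a norm controlling the nonlinear interaction are by-now-classical but genuinely technical, whereas the uniform bound (i) is comparatively routine once one knows only finitely many profiles carry non-negligible energy.
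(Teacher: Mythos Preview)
Your proposal is correct and follows essentially the same approach as the paper: the paper simply cites the main theorem of \cite{BG99} (noting that the argument in Section IV there is insensitive to the sign of the nonlinearity once the nonlinear profiles are known to exist on $[t_0,t_1]$), while you have spelled out that very argument---the approximate solution, the orthogonality-based vanishing of cross terms, and the perturbation/stability lemma. The key observations you isolate (global existence of the small-energy profiles via Lemma~\ref{lem:ubsmall}, and existence of $U_b^{\lambda_n^b,t_n^b}$ on $[t_0,t_1]$ via \eqref{eq:Ubu*}) are exactly the points the paper singles out as requiring verification before invoking \cite{BG99}.
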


\begin{proof}
This is (the second part of) the main theorem in \cite{BG99}.
Note that in \cite{BG99} the result is actually proved for the defocusing critical wave equation.
However, once the existence of the nonlinear profiles $U_i$ is established, one checks that the argument
in Section IV of
\cite{BG99} is in fact insensitive to the sign of the nonlinearity.
As already mentioned, the $U_i$ for $i\not=b$ exist globally and scatter since the energy of the corresponding
$V_i$ is small (in fact, arbitrarily small if we choose $t_0$ and $c$ large enough).
In the case of $U_b$ 
it follows from Eq.~\eqref{eq:Ubu*} that we may assume the existence
of $U_b^{\lambda_n^b,t_n^b}$ on $[t_0,t_1]\times \R^3$ provided $n$ is sufficiently large.
\end{proof}

The final step in the construction consists of showing that $u_*=u_c$ on $(K_{t_0,c}^{t_*})^\circ$.
In particular, we thereby obtain that $u_*$ can be extended beyond time $t_*$ and thus extends
globally.

\begin{lemma}
The solution $u_*$ extends to all of $[t_0,\infty)\times \R^3$ and satisfies
\[ u_*=u_c \mbox{ on }K_{t_0,c}^\infty. \]
\end{lemma}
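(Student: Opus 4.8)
The plan is to prove the identity $u_*=u_c$ by matching the Cauchy data of $u_*$ and $u_c$ on $B_{t_0-c}$ at time $t_0$, then propagating this agreement into the truncated cone by finite speed of propagation, and finally obtaining globality of $u_*$ by re-running the continuity argument of Lemma~\ref{lem:extend}.

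\emph{Step 1: $u_*[t_0]=u_c[t_0]$ on $B_{t_0-c}$.} The key point is that, after passing to the subsequence fixed above, $u^{(T_n)}[t_0]\rightharpoonup u_*[t_0]$ weakly in $\dot H^1\times L^2(\R^3)$ as $n\to\infty$. Indeed, $V_b^{\lambda_n^b,t_n^b}[t_0]\to u_*[t_0]=V_b^{\lambda^b,t^b}[t_0]$ strongly in $\dot H^1\times L^2(\R^3)$, while the remaining part of the decomposition \eqref{eq:BG},
\[ u^{(T_n)}[t_0]-V_b^{\lambda_n^b,t_n^b}[t_0]=\sum_{i=1,\,i\ne b}^A V_i^{\lambda_n^i,t_n^i}[t_0]+W^{nA}[t_0], \]
converges weakly to zero in $\dot H^1\times L^2(\R^3)$: each rescaled unbounded profile converges weakly to zero because its scale degenerates or its core escapes (Corollary~\ref{cor:leave} and the strong Huygens principle, Lemma~\ref{lem:Huygens}, already give this on fixed balls, and the same reasoning yields weak convergence on all of $\R^3$), and the Bahouri--G\'erard remainder $W^{nA}[t_0]$ converges weakly to zero for each fixed $A$ as part of the construction \cite{BG99}. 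Since $u^{(T_n)}[t_0]=u_c[t_0]$ on $B_{t_0-c}$ for every $n$ by Lemma~\ref{lem:extend}, the restriction $u^{(T_n)}[t_0]|_{B_{t_0-c}}$ is a constant sequence in $\dot H^1\times L^2(B_{t_0-c})$; as restriction to $B_{t_0-c}$ is weakly continuous, its weak limit equals $u_c[t_0]|_{B_{t_0-c}}$ on the one hand and $u_*[t_0]|_{B_{t_0-c}}$ on the other, so $u_*[t_0]=u_c[t_0]$ on $B_{t_0-c}$. It is precisely this weak-limit argument, rather than a smallness estimate, that produces the \emph{exact} equality.

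\emph{Steps 2 and 3: propagation into the cone and globality.} Since $u_*$ solves \eqref{eq:u*} on $(t_0,t_*)\times\R^3$ and $u_c$ solves $\Box u_c+u_c^5=0$ on $K_{t_0,c}^\infty$, with coinciding data on $B_{t_0-c}$ by Step~1, finite speed of propagation gives $u_*=u_c$ on $(K_{t_0,c}^{t_*})^\circ=\{(t,x):t_0<t<t_*,\ |x|<t-c\}$; in particular $u_*$ agrees inside this cone with the energy-bounded function $u_c$. It then remains to show $t_*=\infty$, which is entirely analogous to the proof of Lemma~\ref{lem:extend}. By Step~1 together with Lemmas~\ref{lem:boundE}, \ref{lem:boundEout} and Corollary~\ref{cor:L6}, the data $u_*[t_0]=V_b^{\lambda^b,t^b}[t_0]$ have total energy close to $E(W,0)$, with the bulk concentrated on the rescaled soliton inside $B_{t_0-c}$ (where $u_*[t_0]=u_c[t_0]$), and $\|u_*[t_0]\|_{\dot H^1\times L^2(\R^3\setminus B_{t_0-c})}^2=\|V_b[t_0]\|_{\dot H^1\times L^2(\R^3)}^2-\|u_c[t_0]\|_{\dot H^1\times L^2(B_{t_0-c})}^2=O(\delta^2)$ by the Pythagorean expansion of \eqref{eq:BG} and Lemma~\ref{lem:extend}. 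By conservation of energy this distribution of the energy persists, and, using $u_*=u_c$ inside the cone together with the modified Sobolev inequality $\|u_*(t,\cdot)\|_{L^6(\R^3\setminus B_{t-c})}\lesssim\|\nabla u_*(t,\cdot)\|_{L^2(\R^3\setminus B_{t-c})}$, the same $\tfrac{\varepsilon}{2}$ continuity argument as in Lemma~\ref{lem:extend} shows that the kinetic energy of $u_*$ outside $B_{t-c}$ stays $\lesssim\delta^2$ throughout $(t_0,t_*)$; hence by small-data global existence \cite{Pec84}, \cite{Sog08} the solution $u_*$ cannot break down at a finite time and $t_*=\infty$. Finally, $(K_{t_0,c}^\infty)^\circ=(K_{t_0,c}^{t_*})^\circ$ and, by continuity of $t\mapsto u_*[t]$ and $t\mapsto u_c[t]$ in the energy space, the identity extends from the open cone to all of $K_{t_0,c}^\infty$.

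The hard part will be Step~1, specifically the claim that neither the unbounded profiles nor the Bahouri--G\'erard remainder $W^{nA}$ can contaminate the $n$-independent data inside the fixed ball $B_{t_0-c}$: the resolution is to exploit the (weak) vanishing of these terms and the rigidity it provides, which upgrades the merely approximate identity one would get from energy smallness to the exact identity $u_*=u_c$ needed here.
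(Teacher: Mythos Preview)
Your proof is correct, and it takes a genuinely different route from the paper's argument.

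The paper establishes $u_*=u_c$ by working at a \emph{generic} time $t\in[t_0,t_*)$: it invokes the \emph{nonlinear} profile decomposition (Lemma~\ref{lem:BGnl}) and compares $u^{(T_n)}(t,\cdot)$ with $u_*(t,\cdot)$ in $L^6(B_{t-c})$, using a Littlewood--Paley cutoff $P_{<N}$ to neutralize the profiles with $\lambda_n^i\to 0$ and Huygens/scattering to dispose of the others. You instead work only at the \emph{initial} time $t_0$, using the \emph{linear} decomposition~\eqref{eq:BG} and a weak-convergence argument in $\dot H^1\times L^2$ to pin down $u_*[t_0]=u_c[t_0]$ on $B_{t_0-c}$, and then appeal to finite speed of propagation. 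This bypasses Lemma~\ref{lem:BGnl} altogether and avoids the $L^6$/frequency-localization machinery; it also makes transparent \emph{why} the identity is exact rather than approximate (the constant sequence $u^{(T_n)}[t_0]|_{B_{t_0-c}}$ pins down its own weak limit). The only point requiring care in your argument is the claim $W^{nA}[t_0]\rightharpoonup 0$ once the bounded profile has been extracted: this is indeed a standard feature of the Bahouri--G\'erard construction (in \cite{BG99} each profile is obtained as a rescaled weak limit and subtracted off, so the remainder has vanishing weak limit at every already-extracted scale, in particular at the bounded one), but it is worth stating explicitly since it is the linchpin of Step~1. Your Steps~2 and~3 are fine and correctly parallel the continuity argument of Lemma~\ref{lem:extend}.
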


\begin{proof}
Let $t \in [t_0,t_*)$ and choose $n$ so large that Lemma \ref{lem:BGnl} applies.
For $N\in\N$ denote by $P_{<N}$ the Littlewood-Paley projector to 
frequencies $\{\xi \in \R^3: |\xi|\leq N\}$.
Given $\varepsilon>0$ we choose $N$ so large that
\[ \|u_*(t,\cdot)-P_{<N}u_*(t,\cdot)\|_{L^6(B_{t-c})}<\varepsilon. \]
Consider the decomposition in Lemma \ref{lem:BGnl}.
For an unbounded profile $(U_i,(\lambda_n^i),(t_n^i))$ with $\lambda_n^i \to 0$ as $n\to\infty$ we
clearly have $\|P_{<N}U_i^{\lambda_n^i,t_n^i}(t,\cdot)\|_{L^6(B_{t-c})} \to 0$ as $n \to \infty$.
Furthermore, for a profile $(U_i, (\lambda_n^i),(t_n^i))$ with $\lambda_n^i \to \infty$ we obtain
$\|U_i^{\lambda_n^i,t_n^i}(t,\cdot)\|_{L^6(B_{t-c})} \to 0$ as $n\to\infty$. 
Finally, if $\lambda_n^i \simeq 1$ and $|t_n^i| \to \infty$ we infer
$\|U_i^{\lambda_n^i,t_n^i}(t,\cdot)\|_{L^6(B_{t-c})} \to 0$ as $n\to\infty$
by the small data scattering theory and Huygens' principle (recall that the $\dot{H}^1\times L^2(\R^3)$
norm of all unbounded profiles is small).
By choosing $A$ sufficiently large we can also achieve
\[ \|W^{nA}(t,\cdot)\|_{L^6(\R^3)}+\|R^{nA}(t,\cdot)\|_{L^6(\R^3)}<\varepsilon \]
by Lemma \ref{lem:BGnl}.
These estimates and the decomposition from Lemma \ref{lem:BGnl} imply 
\[ \|u_*(t,\cdot)-u^{(T_n)}(t,\cdot)\|_{L^6(B_{t-c})}\lesssim \varepsilon \]
provided $n$ is chosen large enough.
Since $\varepsilon>0$ was arbitrary and $u^{(T_n)}=u_c$ on $K_{t_0,c}^t$ by Lemma \ref{lem:extend},
we obtain $u_*=u_c$ on $K_{t_0,c}^t$.
Furthermore, the solution can be continued since outside of $K_{t_0,c}^t$ the kinetic energy
stays small for all times by a continuity argument as in the proof of Lemma \ref{lem:extend} and
inside of $K_{t_0,c}^t$ the solution $u_*$ equals $u_c$ which exists on $K_{t_0,c}^\infty$.
\end{proof}

\bibliography{nsc}
\bibliographystyle{plain}

\end{document}